\documentclass[a4paper,12pt]{amsart}
\title[Tropical curves with parallel rays]{Tropical curves with parallel rays}

\author{Song JuAe}
\address{Faculty of Mathematics, Kyushu University, 744 Motooka, Nishi-ku, Fukuoka, 819-0395, Japan.}
\email{songjuae@math.kyushu-u.ac.jp}

\subjclass[2020]{14T10, 14T20, 15A80}
\keywords{tropical curves, rational function semifields, parallel rays}

\usepackage{xcolor}
\definecolor{cadmiumgreen}{rgb}{0.0, 0.42, 0.24}
\usepackage{amsmath,amssymb,amscd}
\usepackage{amsthm}
\usepackage{subfigure}
\usepackage{mathrsfs}
\usepackage{appendix}
\usepackage{tikz}
\usepackage{geometry}
\usepackage[
    colorlinks=true,
    linkcolor=blue,
    urlcolor=blue,
    citecolor=cadmiumgreen,
]{hyperref}

\usetikzlibrary{arrows.meta}

\newtheorem{dfn}{Definition}[section]
\newtheorem{thm}[dfn]{Theorem}
\newtheorem{prop}[dfn]{Proposition}
\newtheorem{cor}[dfn]{Corollary}

\newtheorem{rem}[dfn]{Remark}
\newtheorem{ex}[dfn]{Example}

\newtheorem{prom}[dfn]{Promise}

\def\Gamma{\varGamma}
\def\t{\,{}^t\!}

\begin{document}

\begin{abstract}
In the previous works, the rational function semifields of abstract tropical curves were characterized.
In this paper, we give a contravariant categorical equivalence between the category of abstract tropical curves with morphisms and the category of semifields over the tropical semifield $\boldsymbol{T}$ characterized above with $\boldsymbol{T}$-algebra homomorphisms.

The characterization tells us that the traditional definition of abstract tropical curves has a fatal flaw such that we are never able to deal with parallel rays, unlike the traditional tropical curves, which generally admit them.
To address this flaw, we introduce a new notion of abstract tropical curves with parallel rays.
Then we define the rational function semifields of these curves and give a characterization of them, and a variant of the categorical equivalence between their categories with a suitable notion of morphisms between these curves.

Under the categorical equivalences, we translate several geometric notions for traditional or abstract tropical curves (with parallel rays) into algebraic ones, including weights on edges and the balancing condition.
\end{abstract}

\maketitle

\tableofcontents

\section{Introduction}
	\label{section1}

\subsection*{Background}

The definition of abstract tropical curves we give in Subsection~\ref{subsection2.4} is very classical in tropical geometry.
These are (equivalence classes of) graphs endowed with a metric.
This definition traces its origins to \cite{Kirchhoff}, with a version more closely resembling our own appearing in \cite{Roth}.
Our definition of abstract tropical curves was also introduced in \cite{Kuchment} under the name of metric graphs within the framework of quantum graphs.
The name ``abstract tropical curve" was given in \cite{Mikhalkin=Zharkov}, and in \textit{loc.~cit.} and \cite{Gathmann=Kerber}, using the Riemann--Roch theorem for finite graphs, which had been established earlier in \cite{Baker=Norine}, the Riemann--Roch theorem for abstract tropical curves was proven independently.
This made abstract tropical curves attracted particular interest as objects of study in tropical geometry.

By Kapranov’s theorem (or its generalization, the fundamental theorem of tropical algebraic geometry, see \cite[Subsections~3.1 and 3.2]{Maclagan=Sturmfels}), tropicalizations of algebraic curves or tropical plane curves, that is, objects that had traditionally been called tropical curves, can be viewed as abstract tropical curves by lattice length.
This justifies the name ``abstract tropical curves".

Another justification can be found in situations where tropicalization is considered via Berkovich spaces, as in \cite{Amini=Baker=Brugalle=Rabinoff}.
In such cases as well, it is natural to call the support of a connected one-dimensional polyhedral complex equipped with a metric an abstract tropical curve.

On the other hand, it should be noted that in this abstraction from classical tropical curves via lattice length, information such as edge weights and the balancing condition inherent to classical tropical curves is discarded.
Furthermore, phenomena that can never occur in classical tropical curves due to the balancing condition—such as the existence of leaves of finite length or the absence of leaves—are permitted within the framework of abstract tropical curves.
It seems that the motivation for introducing this broader framework, compared with the abstraction of classical tropical curves using lattice lengths, largely stems from the proof of the Riemann--Roch theorem mentioned above.

At this point, at least the following issue naturally arises:

(i) How can weights and the balancing condition be incorporated into the framework of abstract tropical curves?

There are broadly two possible approaches: an intrinsic one and an extrinsic one.
The former modifies the very definition of abstract tropical curves in order to build a framework that incorporates weights and the balancing condition, while the latter keeps the existing framework intact and describes weights and the balancing condition by imposing additional external conditions.

Research close to the former approach includes the study of tropical ideals introduced in \cite{Maclagan=Rincon}.
In this theory, tropical ideals are defined by abstracting the properties possessed by tropicalizations of ideals; that is, by imposing conditions on ideals to restrict the class under consideration, \cite{Maclagan=Rincon2} shows that the varieties they define have structure of weighted polyhedral complexes satisfying the balancing condition, and then studies further properties.
In this paper, however, we pursue the latter approach.
Moreover, in view of the fact that tropical geometry is a form of algebraic geometry, this approach is realized within an algebraic framework:

(ii) How do we translate several geometric notions (containing weights and the balancing condition) for abstract tropical curves into algebraic ones?

To this end, we must first recall that for an abstract tropical curve $\Gamma$, its rational function semifield $\operatorname{Rat}(\Gamma)$ is characterized as follows (for notations, see Section~\ref{section2}):

\begin{thm}[{\cite[Corollary~3.19]{JuAe5}, cf.~\cite[Section~3]{JuAe=Nakajima}}]
	\label{thm1-1}
Let $S$ be a $\boldsymbol{T}$-algebra.
Then $S$ is isomorphic to $\operatorname{Rat}(\Gamma)$ as a $\boldsymbol{T}$-algebra with some abstract tropical curve $\Gamma$ if and only if some (and any) surjective $\boldsymbol{T}$-algebra homomorphism $\psi$ from a tropical rational function semifield to $S$ satisfies the following five conditions:

$(1)$ $\operatorname{Ker}(\psi)$ is finitely generated as a congruence,

$(2)$ $\operatorname{Ker}(\psi) = \boldsymbol{E}(\boldsymbol{V}(\operatorname{Ker}(\psi)))$ holds,

$(3)$ $\boldsymbol{V}(\operatorname{Ker}(\psi))$ is connected,

$(4)$ $\boldsymbol{V}(\operatorname{Ker}(\psi))$ is of dimension zero or one, and

$(5)$ if $L_1, L_2$ are rays of $\boldsymbol{V}(\operatorname{Ker}(\psi))$ with the same direction vector toward infinity, then $L_1 \subset L_2$ or $L_1 \supset L_2$ holds.
\end{thm}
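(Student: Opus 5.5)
Both directions go through the correspondence between congruences on a tropical rational function semifield $\overline{\boldsymbol{T}}(X_1, \ldots, X_n)$ and subsets of $\boldsymbol{R}^n$ given by $\boldsymbol{E}$ and $\boldsymbol{V}$. The idea is to realize $\operatorname{Rat}(\Gamma)$ as the semifield of restrictions of tropical rational functions to an embedded copy of $\Gamma$. I would first settle the ``some'' versus ``any'' clause. If $\psi_1, \psi_2$ are two surjections from $\overline{\boldsymbol{T}}(X_1,\ldots,X_n)$ and $\overline{\boldsymbol{T}}(Y_1,\ldots,Y_m)$ onto $S$, then lifting images of generators gives mutually inverse piecewise $\boldsymbol{Z}$-linear maps between $\boldsymbol{V}(\operatorname{Ker}(\psi_1))$ and $\boldsymbol{V}(\operatorname{Ker}(\psi_2))$. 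These maps are compatible with $\boldsymbol{E}$. Finite generation, the closure condition (2), connectedness and dimension are preserved by such maps. Condition (5) is also preserved, because a piecewise $\boldsymbol{Z}$-linear homeomorphism eventually maps a ray linearly, and it sends rays with a common direction to rays with a common direction.

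For the ``only if'' direction, fix $\Gamma$. Since $\operatorname{Rat}(\Gamma)$ is finitely generated as a semifield over $\boldsymbol{T}$, choose generators $f_1,\ldots,f_n$. Adding finitely many chip-firing-type functions if needed, I would arrange that $\phi=(f_1,\ldots,f_n)\colon \Gamma\setminus\{\text{points at infinity}\}\to\boldsymbol{R}^n$ is injective and unimodular on each edge, so that its image is a one-dimensional rational polyhedral set, and I would let $\psi$ be the induced surjection $X_i \mapsto f_i$. The kernel is then exactly the congruence of functions agreeing on the image, which gives (2) and (3). Condition (4) is clear, and (1) follows from the finite-generation theorem for congruences of the form $\boldsymbol{E}(P)$ with $P$ a finite union of rational polyhedra. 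For (5), two leaf ends of $\Gamma$ that are eventually parallel in $\boldsymbol{R}^n$ would have to be separated by some $f_i$. On an unbounded edge, however, every $f_i$ is eventually linear with integer slope. So parallel rays force every $f_i$ to differ by a constant on the two ends. I then need to show that a rational function on $\Gamma$ can always be chosen to separate the two ends asymptotically, with distinct slopes: for instance a function with slope $1$ on one end and $0$ on the other. Including such separating functions among the generators makes distinct ends non-parallel, so the nested alternative in (5) is the only possibility.

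For the ``if'' direction, put $V=\boldsymbol{V}(\operatorname{Ker}(\psi))$. By (2), $S\cong \overline{\boldsymbol{T}}(X)/\boldsymbol{E}(V)$ is the semifield of restrictions to $V$. By (1), $V$ is a finite union of rational polyhedra, and by (3), (4) it is a connected polyhedral set of dimension at most one. I would metrize $V$ by lattice length and add a point at infinity to each ray, obtaining $\Gamma$. The restriction map then goes from $S$ into $\operatorname{Rat}(\Gamma)$; injectivity is (2). The main obstacle is surjectivity: every rational function on $\Gamma$, that is, any continuous piecewise integer-slope function with finitely many pieces, must be the restriction of a tropical rational function on $\boldsymbol{R}^n$. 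Here (5) is essential. I would first try to extend locally, near each vertex and along each bounded edge, using max/min of affine functions with integer gradients. On rays, tropical rational functions on $\boldsymbol{R}^n$ have the same eventual slope along all rays sharing a direction once those rays are parallel and disjoint in a suitable sense, and (5) rules out exactly this obstruction. The final step is to glue these local pieces via the lattice-theoretic description of piecewise linear functions, using max--min formulas.
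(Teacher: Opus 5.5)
Your handling of the ``some/any'' clause is fine: the two presentations induce mutually inverse piecewise $\boldsymbol{Z}$-linear bijections between the congruence varieties, and Proposition~\ref{prop4-1-3} makes these respect parallel ends. Your ``only if'' direction is also fine. The genuine gap is in the ``if'' direction, at exactly the step you flag as the main obstacle. You need surjectivity of the restriction map $S\cong\overline{\boldsymbol{T}(\boldsymbol{X})}_n/\boldsymbol{E}(V)\to\operatorname{Rat}(\Gamma)$, and that is the whole content of this direction.

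``Extend locally near vertices and edges, then glue by max--min formulas'' does not establish it. The max--min (lattice) representation of a piecewise affine function by its affine pieces is a theorem about functions on a \emph{convex} domain. To invoke it, you must already have a global continuous piecewise $\boldsymbol{Z}$-affine extension of $h$ from the non-convex set $V$ to $\boldsymbol{R}^n$, and that is precisely what has to be proved. An affine function with integer gradient chosen to match $h$ on one edge will in general be too large on other branches of $V$ or far out along rays, and your sketch has no mechanism that switches it off there. Likewise, observing that (5) removes the one obstruction you know of (equal slopes at infinity along parallel ends) does not show that no other obstruction exists.

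One way to close the gap uses tools the paper already quotes.
\begin{itemize}
\item Subdivide $V$ so that $h$ is affine on every edge $e$.
\item The primitive direction $\boldsymbol{v}_e$ of $e$ has coprime entries, so you can pick $\boldsymbol{a}_e\in\boldsymbol{Z}^n$ with $\langle\boldsymbol{a}_e,\boldsymbol{v}_e\rangle$ equal to the slope of $h$ on $e$. This gives a tropical Laurent monomial $g_e$ with $g_e|_e=h|_e$.
\item Take $d_e\in\overline{\boldsymbol{T}(\boldsymbol{X})}_n$ with $\boldsymbol{V}((d_e,0))=e$ and $d_e\ge\operatorname{dist}(e,\cdot)$, by \cite[Lemmas~3.9 and 3.10]{JuAe5}; this is the same device used in the proof of Proposition~\ref{prop4-1-4}.
\item Set $F:=\bigoplus_e g_e\odot d_e^{\odot(-C)}$.
\end{itemize}
Then $F|_V=h$ for a large integer $C$, provided $(g_e-h)/\operatorname{dist}(e,\cdot)$ is bounded above on $V\setminus e$ for every $e$. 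This bound holds for three reasons:
\begin{itemize}
\item Near $e$, it follows from the local structure at the endpoints of $e$, where $g_e=h$.
\item On bounded parts, it follows from compactness and the positive distance between disjoint polyhedral sets.
\item At infinity, the distance to $e$ grows linearly along every ray of $V$ not nested with $e$. This is exactly where (5) enters: two parallel non-nested rays stay at bounded distance from each other.
\end{itemize}
The paper's line of argument (see the proof of Proposition~\ref{prop4-2-2}) gets surjectivity differently. As in the proof of Proposition~\ref{prop4-1-1}, it reduces to finitely many chip-firing-type generators of $\operatorname{Rat}(\Gamma)$ and realizes each one by an explicit tropical rational function via \cite[Lemmas~3.15--3.17]{JuAe5}.
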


By the classical fact that over an algebraically closed field $k$, the category of smooth projective curves with dominant morphisms is contravariantly equivalent to the category of one-dimensional function fields with $k$-homomorphisms, together with Theorem~\ref{thm1-1}, we expect a contravariant equivalence between the category of abstract tropical curves and the category of semifields over $\boldsymbol{T}$ in Theorem~\ref{thm1-1}.
If such an equivalence can be established, various notions associated with abstract tropical curves can be translated into algebraic notions of their rational function semifields, allowing abstract tropical curves to be studied within an algebro-geometric framework.

This categorical equivalence has already been partially resolved in \cite[Corollary~3.20]{JuAe5} and \cite[Section~3]{JuAe=Nakajima}, and in this paper we aim to provide a complete resolution in Corollary~\ref{cor3-1-1}.
There are three points that deserve special attention here.

The first concern is the definition of morphisms between abstract tropical curves.
This notion was introduced by Chan in \cite{Chan}, where only (finite) harmonic morphisms were considered, rather than general morphisms.
This is a reasonable assumption since finite morphisms between algebraic curves are sent to harmonic morphisms between abstract tropical curves under tropicalization as explained in \cite{Amini=Baker=Brugalle=Rabinoff}.
On the other hand, no justification had previously been given for why the definition of morphisms adopted in \cite{Chan} should be used; the justification provided in Corollary~\ref{cor3-1-1} is the first of its kind.
Namely, morphisms between abstract tropical curves are geometric realizations of $\boldsymbol{T}$-algebra homomorphisms between their rational function semifields.

The second point to note concerns condition $(5)$ of Theorem~\ref{thm1-1}.
From the proof of this theorem, the following statement can be derived:

\begin{prop}[{\cite[Proposition~3.18]{JuAe5}, \cite[Proposition~3.11]{JuAe4}}]
    \label{prop1-1}
Let $\Gamma$ be an abstract tropical curve.
For $f_1, \ldots, f_n \in \operatorname{Rat}(\Gamma) \setminus \{ -\infty \}$, let $\theta : \Gamma \setminus \Gamma_{\infty} \to \boldsymbol{R}^n; x \mapsto (f_1(x), \ldots, f_n(x))$.
Then the following are equivalent:

$(1)$ $\operatorname{Rat}(\Gamma)$ is isomorphic to $\overline{\boldsymbol{T}(\boldsymbol{X})}_n / \boldsymbol{E}(\operatorname{Im}(\theta))$ as a $\boldsymbol{T}$-algebra, and

$(2)$ $\theta$ is an injective local isometry and for any rays $L_1$ and $L_2$ of $\Gamma$, the images $\theta(L_1 \setminus \Gamma_{\infty})$ and $\theta(L_2 \setminus \Gamma_{\infty})$ do not contain rays with the same direction vector toward infinity.
\end{prop}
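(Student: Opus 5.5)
The plan is to study the natural $\boldsymbol{T}$-algebra homomorphism
\[
\Theta : \overline{\boldsymbol{T}(\boldsymbol{X})}_n \to \operatorname{Rat}(\Gamma), \qquad X_i \mapsto f_i .
\]
We will show that $(2)$ holds exactly when $\Theta$ is surjective, and then identify its kernel. The first step is the kernel, which is the easier half. For tropical rational functions $g,h$, the pair $(g,h)$ lies in $\operatorname{Ker}(\Theta)$ if and only if $g\circ\theta = h\circ\theta$ on $\Gamma\setminus\Gamma_\infty$; this uses continuity to pass to $\Gamma_\infty$. Equivalently, $g=h$ on $\operatorname{Im}(\theta)$, so $\operatorname{Ker}(\Theta)=\boldsymbol{E}(\operatorname{Im}(\theta))$ by definition of $\boldsymbol{E}$. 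Hence $(1)$ is equivalent to surjectivity of $\Theta$. For $(1)\Rightarrow(2)$ it also suffices that some isomorphism exists, since any isomorphism composed with the quotient map gives a surjection onto $\operatorname{Rat}(\Gamma)$ whose kernel $\boldsymbol{E}(\operatorname{Im}\theta)$ has variety $\overline{\operatorname{Im}\theta}$. We can then compare with Theorem~\ref{thm1-1}, whose proof identifies $\boldsymbol{V}(\operatorname{Ker})$ with an isometric embedded copy of $\Gamma$.

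The second step is $(2)\Rightarrow$ surjectivity. Assume $\theta$ is an injective local isometry with no two rays having parallel image tails. Then $\theta$ realizes $\Gamma\setminus\Gamma_\infty$ as a one-dimensional rational polyhedral set $P=\operatorname{Im}(\theta)$ with primitive edge directions, because a local isometry for lattice length forces the slopes of the $f_i$ along each edge to form a primitive integer vector. We must show that every $f\in\operatorname{Rat}(\Gamma)$ is the restriction of a tropical rational function on $\boldsymbol{R}^n$. Since $\operatorname{Rat}(\Gamma)$ is generated by $\boldsymbol{T}$, by functions of the form ``distance-type'' chip functions, and by the analogous functions at leaves at infinity, it suffices to extend piecewise linear functions with integer slopes that are supported near a single edge or ray. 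We will do this with standard bump constructions in $\boldsymbol{R}^n$. For a bounded edge, we write a tropical polynomial that is affine along the primitive direction of the edge and cuts off via max/min with linear forms separating the edge from the rest of $P$; injectivity of $\theta$ guarantees that such separating forms exist. For an unbounded ray, we need a linear form that grows along that ray's direction while staying bounded on the other rays of $P$. This is precisely where the condition on direction vectors enters: distinct directions can be separated by the tropical rational function $\max(0,\langle u,x\rangle)-\max(0,\langle u',x\rangle)$-type combinations, whereas parallel rays cannot.

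The third step is $(1)\Rightarrow(2)$, which I would prove by contrapositive. If $\theta$ is not injective, or not locally isometric (for instance, a slope vector on some edge that is not primitive or is zero), then $\Theta$ misses a function. An example is a function on $\Gamma$ with slope $1$ on that edge, or one separating two points with the same image. If two rays have parallel image tails, then any tropical rational function $g$ on $\boldsymbol{R}^n$ is eventually linear with the same slope along both tails, since the tails are parallel translates far out. Consequently $g\circ\theta$ has equal slopes at infinity on $L_1$ and $L_2$. The function on $\Gamma$ with slope $1$ on $L_1$ and slope $0$ on $L_2$ is therefore not in the image, which contradicts surjectivity. By the first step, failure of surjectivity rules out $(1)$ for this particular $\Theta$. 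To rule out $(1)$ for an abstract isomorphism, I would apply Theorem~\ref{thm1-1}: if $\operatorname{Rat}(\Gamma)\cong\overline{\boldsymbol{T}(\boldsymbol{X})}_n/\boldsymbol{E}(\operatorname{Im}\theta)$, then condition $(5)$ for $\overline{\operatorname{Im}\theta}$ fails when there are parallel tails, which gives a contradiction.

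The main obstacle is the extension step in the second paragraph, namely producing explicit global tropical rational functions that restrict to prescribed slopes on one ray while staying constant on the others. I would first attempt to use induction on the number of rays and edges combined with genericity of the separating linear forms.
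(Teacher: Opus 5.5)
Your overall route is the one behind the cited proofs (compare the proof of Proposition~\ref{prop4-2-2}): pass to $\Theta\colon X_i\mapsto f_i$, identify $\operatorname{Ker}(\Theta)=\boldsymbol{E}(\operatorname{Im}(\theta))$, get necessity from slope obstructions, and get sufficiency by extending generators of $\operatorname{Rat}(\Gamma)$ to $\boldsymbol{R}^n$.

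\textbf{Drop the abstract-isomorphism reading.} Do not try to deduce $(2)$ from an \emph{abstract} isomorphism in $(1)$: under that reading $(1)\Rightarrow(2)$ is false. Take $\Gamma=[0,2]$ and let $f_1$ have slope $2$ on $[0,1]$ and $-2$ on $[1,2]$. Then $\operatorname{Im}(\theta)=[0,2]\subset\boldsymbol{R}$, so $\overline{\boldsymbol{T}(\boldsymbol{X})}_1/\boldsymbol{E}(\operatorname{Im}(\theta))\cong\operatorname{Rat}(\Gamma)$, because the coordinate $x$ generates $\operatorname{Rat}([0,2])$. Yet $\theta$ is $2$-to-$1$ and doubles lattice lengths. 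Theorem~\ref{thm1-1} only says that $\operatorname{Im}(\theta)$ is isometric to $\Gamma\setminus\Gamma_\infty$, never that $\theta$ itself is an isometry.

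So $(1)$ must be read as ``the isomorphism induced by $X_i\mapsto f_i$'', i.e.\ $\Theta$ is surjective. With that reading:
\begin{itemize}
\item your first step is exactly right, and the detour through Theorem~\ref{thm1-1} is unnecessary;
\item your contrapositive for $(1)\Rightarrow(2)$ works.
\end{itemize}
In the parallel-tails case, justify equal slopes by the global Lipschitz continuity of $g$, so that $g(p+t\boldsymbol{w})-g(q+t\boldsymbol{w})$ stays bounded. ``Parallel translates far out'' is not enough, since the two tails may end in different cells of linearity of $g$ (Proposition~\ref{prop4-1-3}).

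\textbf{The genuine gap: $(2)\Rightarrow$ surjectivity.} This is the real content of the proposition, and you leave it as a plan whose stated mechanism fails. An edge of $P=\operatorname{Im}(\theta)$ meets the rest of $P$ at its endpoints, and $P$ can wind around a compact piece of itself. So no combination of finitely many separating linear forms isolates an edge, and bounded bumps need tropical division anyway, not a tropical polynomial. Three ingredients are missing.
\begin{itemize}
\item[(a)] $\theta$ is proper and injective, hence a homeomorphism onto the closed set $P$. So near $\theta(x)$ the set $P$ is exactly the image of a small star at $x$, with pairwise distinct primitive directions $\boldsymbol{v}_1,\ldots,\boldsymbol{v}_m$.
\item[(b)] You need conewise-linear tropical rational functions taking arbitrary prescribed integer values on $\boldsymbol{v}_1,\ldots,\boldsymbol{v}_m$. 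For example, take combinations of $\max(0,\min(a_j,N\min_{k\neq j}h_{jk}))$, with integral forms satisfying $a_j(\boldsymbol{v}_j)=1$ (this is where primitivity is used) and $h_{jk}(\boldsymbol{v}_j)>0>h_{jk}(\boldsymbol{v}_k)$.
\item[(c)] You need a global cut-off: a tropical rational function on $\boldsymbol{R}^n$ vanishing exactly on a prescribed finite union of $\boldsymbol{R}$-rational polyhedral sets and bounded below by the distance to it \cite[Lemmas~3.9 and 3.10]{JuAe5}. This turns (b) into the moves $\operatorname{CF}(\{x\},\varepsilon)$ without touching distant parts of $P$.
\end{itemize}
For a ray $L$, apply (b) at infinity with value $1$ on the direction of $\theta(L)$ and $0$ on all other ray directions of $P$. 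This is the only place the ray condition in $(2)$ enters. Then correct by a function that is eventually constant on every ray, as in equation~(\ref{prop4-1-1:eq1}). This is what \cite[Lemmas~3.15--3.17]{JuAe5} supply; until it is carried out, sufficiency is unproved.
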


Considering that, under the above definition of morphisms, the notion of isomorphism between abstract tropical curves coincides with that of isometries, this statement is rather surprising.
Indeed, if an abstract tropical curve $\Gamma$ is ``realized" (other than points at infinity) as the image of a map $\theta$ consisting of a finite number of rational functions on $\Gamma$ other than $-\infty$ as in Proposition~\ref{prop1-1}, their ``rational function semifields" $\operatorname{Rat}(\Gamma)$ and $\overline{\boldsymbol{T}(\boldsymbol{X})}_n / \boldsymbol{E}(\operatorname{Im}(\theta))$ should coincide, but this is not the case.
This discrepancy arises from a deficiency in the definition of abstract tropical curves, which extracts only the information of topology and lattice length in the abstraction of classical tropical curves, and the definition of morphisms between them that respect only their topologies and metrics.

In fact, for example, tropical plane curves can clearly have parallel rays that do not contain each other, but information about such parallelism—ultimately originating from lattice data—cannot be captured in the above category.
Similar issues concerning parallel rays also arise in higher-dimensional cases; see \cite{Amini=Kawaguchi=Song} for details.

The third concern is finitely generated subsemifields over $\boldsymbol{T}$ of rational function semifields of abstract tropical curves.
This arises from condition $(5)$ of Theorem~\ref{thm1-1} again.
The author explained in \cite[Section~$1$]{JuAe4} that $\overline{\boldsymbol{T}(\boldsymbol{X})}_n / E$ plays the role of coordinate rings for any congruence $E$ on $\overline{\boldsymbol{T}(\boldsymbol{X})}_n$.
From such a perspective, we naturally expect that for an abstract tropical curve $\Gamma$, a finitely generated subsemifield over $\boldsymbol{T}$ of $\operatorname{Rat}(\Gamma)$ is again a rational function semifield of some abstract tropical curve.
However condition $(5)$ of Theorem~\ref{thm1-1} clearly denies this.

(iii) How do we solve these problems arising from condition $(5)$ of Theorem~\ref{thm1-1}?

\subsection*{Main results}

In this paper, to solve them, we introduce an extension of the notion of abstract tropical curves that allows the presence of parallel rays, namely, abstract tropical curves with parallel rays.
We then define the rational function semifields of such curves and provide a characterization in Theorem~\ref{thm4-1-1} analogous to Theorem~\ref{thm1-1}, thereby establishing the aforementioned categorical equivalence for this extended category as well (Corollary~\ref{cor4-2-1}).
Corollary~\ref{cor4-1-2} and Remark~\ref{rem4-1-2} show that a finitely generated subsemifield over $\boldsymbol{T}$ of the rational function semifield of an abstract tropical curve with parallel rays is again the rational function semifield of some abstract tropical curve with parallel rays.
We interpret weights as lattice dilations and define them via bijective morphisms between abstract tropical curves with parallel rays.
The balancing condition is understood through rational functions that are harmonic away from points at infinity.

Since harmonicity of rational functions at a point is a local property, we introduce localization at points in order to translate this notion into algebraic terms.

Taken together, these constructions yield a framework that treats lattice information in a more refined manner.
An advantage of this new framework is that, since everything is described algebraically, it provides a clear path toward higher-dimensional generalizations.
Although we do not pursue higher-dimensional cases in this paper, such generalizations appear feasible in light of the results of \cite{Amini=Kawaguchi=Song}.

\subsection*{Organization}

The remainder of this paper is organized as follows.
In Section~\ref{section2}, we briefly review the notions necessary for the discussion.
In Subsection~\ref{subsection2.8}, we further investigate the pseudodirect product of semifields over $\boldsymbol{T}$, introduced in \cite{JuAe=Nakajima} in tropical geometry, and introduce the corresponding geometric notion, i.e., disconnected abstract tropical curves.
In Sections~\ref{section3} and \ref{section4}, we frequently refer to results from \cite{JuAe3}, \cite{JuAe}, \cite{JuAe2}, \cite{JuAe5}, \cite{JuAe4}, and \cite{JuAe=Nakajima}, but due to their volume, they cannot be reproduced in Section~\ref{section2}; the reader is therefore encouraged to consult them as needed.

In Section~\ref{section3}, we first give the categorical equivalence mentioned above (Corollary~\ref{cor3-1-1}) in Subsection~\ref{subsection3.1}.
Next, we provide an algebraic characterization of subgraphs of abstract tropical curves in Subsection~\ref{subsection3.2}.
We then use this characterization to describe algebraically the operation of gluing abstract tropical curves along specified subgraphs in Subsection~\ref{subsection3.3}.

In Subsection~\ref{subsection4.1}, we introduce the notion of abstract tropical curves with parallel rays and define their rational function semifields.
After characterizing these semifields in Theorem~\ref{thm4-1-1}, we define morphisms between abstract tropical curves with parallel rays in Subsection~\ref{subsection4.2}, and Subsection~\ref{subsection4.3} is devoted to introduce disconnected abstract tropical curves with parallel rays.
We again provide an algebraic characterization of subgraphs in Subsection~\ref{subsection4.4}.
Subsection~\ref{subsection4.5} is devoted to an algebraic description of weights, and in Subsection~\ref{subsection4.6} localization is discussed using subgraphs.
In this process, we consider a semifield over $\boldsymbol{T}$ obtained by endowing the set of all tropical Laurent monomials together with $-\infty$ with natural operations; this structure is examined in detail in Appendix~\ref{appendixA}.
In Subsection~\ref{subsection4.7}, we define a notion of degree of finitely generated $\boldsymbol{T}$-modules in rational function semifields of abstract tropical curves with parallel rays.
In Subsection~\ref{subsection4.8}, by relating the balancing condition to the harmonicity of rational functions, we characterize weighted one-dimensional polyhedral complexes in $\boldsymbol{R}^n$ satisfying the balancing condition as images of maps given by collections of rational functions that are harmonic away from points at infinity on abstract tropical curves with parallel rays.
With this viewpoint, as explained in Subsection~\ref{subsection4.9}, the intersection number at an intersection point of two tropical plane curves can be understood purely in terms of rational functions on abstract tropical curves with parallel rays.

\section*{Acknowledgements}
The author thanks Yuki Tsutsui for his helpful comments, which improved a draft of this paper.
This work was supported by JSPS KAKENHI Grant Number 25K17230.

\section{Preliminaries}
	\label{section2}

In this section, we recall several definitions which we need later.
We refer to \cite{Golan} (resp. \cite{Maclagan=Sturmfels}) for an introduction to the theory of semirings (resp. tropical geometry) and employ definitions in \cite{Jun} (resp. \cite{JuAe3}) related to semirings (resp. abstract tropical curves).
The definition of morphisms between abstract tropical curves we employ in Subsection \ref{subsection2.7} is given in \cite{Chan}.
Today, it is usual for us to assume that a morphism between abstract tropical curves is (finite) harmonic (cf. \cite{Chan}, \cite{JuAe3}).
However, since with this definition, we obtain a contravariant categorical equivalence between the categories of abstract tropical curves and thier rational function semifields (see Corollary~\ref{cor3-1-1}), in our setting, it is natural to employ Chan's definition of morphisms between abstract tropical curves in \cite{Chan}.

\subsection{Semirings and algebras}
	\label{subsection2.1}

In this paper, a \textit{semiring} $S$ is a commutative semiring with the absorbing identity $0_S$ for addition $+$ and the identity $1_S$ for multiplication $\cdot$.
If every nonzero element of a semiring $S$ is multiplicatively invertible and $0_S \not= 1_S$, then $S$ is called a \textit{semifield}.

A map between semirings $\varphi : S_1 \to S_2$ is a \textit{semiring homomorphism} if for any $x, y \in S_1$,
\begin{align*}
\varphi(x + y) = \varphi(x) + \varphi(y), \	\varphi(x \cdot y) = \varphi(x) \cdot \varphi(y), \	\varphi(0_{S_1}) = 0_{S_2}, \   \text{and}\	\varphi(1_{S_1}) = 1_{S_2}.
\end{align*}

Given a semiring homomorphism $\varphi : S_1 \to S_2$, we call the pair $(S_2, \varphi)$ (for short, $S_2$) a \textit{$S_1$-algebra}.
For a semiring $S_1$, a map $\psi : (S_2, \varphi) \to (S_2^{\prime}, \varphi^{\prime})$ between $S_1$-algebras is a \textit{$S_1$-algebra homomorphism} if $\psi$ is a semiring homomorphism and $\varphi^{\prime} = \psi \circ \varphi$.
When there is no confusion, we write $\psi : S_2 \to S_2^{\prime}$ simply.

If a semiring homomorphism $\varphi : S_1 \to S_2$ is injective and both $S_1$ and $S_2$ are semifield, then $S_2$ is called a \textit{semifield over $S_1$}.
Then for $x_1, \ldots, x_n \in S_2 \setminus \{ 0_{S_2} \}$, we write the smallest subsemifield over $S_1$ of $S_2$ that contains $x_1, \ldots, x_n$ as $S_1(x_1, \ldots, x_n)$ and call it the semifield over $S_1$ \textit{generated by $x_1, \ldots, x_n$} (we do not use in this paper, but we can also define the same thing for any subset of $S_2 \setminus \{ 0_{S_2} \}$).
If there exist finitely many $x_1, \ldots, x_n \in S_2 \setminus \{ 0_{S_2} \}$ such that $S_1(x_1, \ldots, x_n) = S_2$, then $S_2$ is said to be \textit{finitely generated as a semifield over $S_1$}.

Let $\boldsymbol{R}$ be the set of real numbers.
The set $\boldsymbol{T} := \boldsymbol{R} \cup \{ -\infty \}$ with two tropical operations:
\begin{align*}
a \oplus b := \operatorname{max}\{ a, b \} \quad	\text{and} \quad a \odot b := a + b,
\end{align*}
where $a, b \in \boldsymbol{T}$, becomes a semifield.
Here, for any $a \in \boldsymbol{T}$, we handle $-\infty$ as follows:
\begin{align*}
a \oplus (-\infty) = (-\infty) \oplus a = a \quad \text{and} \quad a \odot (-\infty) = (-\infty) \odot a = -\infty.
\end{align*}
Then $\boldsymbol{T} = (\boldsymbol{T}, \oplus, \odot)$ is called the \textit{tropical semifield}.
In addition, $\boldsymbol{B} := (\{ 0, -\infty \}, \operatorname{max}, +)$ is a subsemifield of $\boldsymbol{T}$ called the \textit{Boolean semifield}.

The \textit{tropical polynomials} (resp.~\textit{tropical Laurent polynomials}) are defined as polynomials (resp.~Laurent polynomials) with respect to tropical operations $\oplus$ and $\odot$, where the coefficients are taken in $\boldsymbol{T}$, and the set of all tropical polynomials (resp.~tropical Laurent polynomials) in $n$-variables is denoted by $\boldsymbol{T}[X_1, \ldots, X_n]$ (resp.~$\boldsymbol{T}[X_1^{\pm}, \ldots, X_n^{\pm}]$).
It becomes a semiring with two tropical operations $\oplus$ and $\odot$ and is called the \textit{tropical polynomial semiring} (resp.~\textit{tropical Laurent polynomial semiring}).
We often write $\boldsymbol{T}[\boldsymbol{X}^{\pm}]_n$ instead of $\boldsymbol{T}[X_1^{\pm}, \ldots, X_n^{\pm}]$.
By the same way, we obtain the subsemiring $\boldsymbol{B}[\boldsymbol{X}^{\pm}]_n$ of $\boldsymbol{T}[\boldsymbol{X}^{\pm}]_n$ consisting of tropical Laurent polynomials whose coefficients are in $\boldsymbol{B}$ together with $-\infty$.
A \textit{tropical Laurent monomial} means a tropical Laurent polynomial of the form $a \odot \boldsymbol{X}^{\odot \boldsymbol{i}} := a \odot X_1^{\odot i_1} \odot \cdots \odot X_n^{\odot i_n}$ with $a \in \boldsymbol{R}$ and an integer vector $\boldsymbol{i} = (i_1, \ldots, i_n) \in \boldsymbol{Z}^n$.
Note that $-\infty$ is not a tropical Laurent monomial.
When $\boldsymbol{i} \in \boldsymbol{Z}_{\ge 0}^n$, we call $a \odot \boldsymbol{X}^{\odot \boldsymbol{i}}$ a \textit{tropical monomial}, and its \textit{degree} is the sum $i_1 + \cdots + i_n$. 
A tropical Laurent polynomial $F$ other than $-\infty$ is written by the sum of finitely many tropical Laurent monomials, each of which is called a \textit{term} of $F$.
When $F$ is a tropical polynomial, its \textit{degree} is defined by the maximum of the degrees of terms of $F$.
For $F = -\infty$, we define that it has degree $-\infty$.

\subsection{Congruences}
	\label{subsection2.2}

A \textit{congruence} $E$ on a semiring $S$ is a subset of $S^2 = S \times S$ satisfying

$(1)$ for any $x \in S$, $(x, x) \in E$,

$(2)$ if $(x, y) \in E$, then $(y, x) \in E$,

$(3)$ if $(x, y) \in E$ and $(y, z) \in E$, then $(x, z) \in E$,

$(4)$ if $(x, y) \in E$ and $(z, w) \in E$, then $(x + z, y + w) \in E$, and

$(5)$ if $(x, y) \in E$ and $(z, w) \in E$, then $(x \cdot z, y \cdot w) \in E$.

Applying the operations of $S$ coordinatewise, the set $S^2$ becomes a semiring and is a congruence on $S$.
This is called the \textit{improper} congruence on $S$.
Congruences other than the improper congruence are said to be \textit{proper}.
For a congruence $E$ on $S$, by $(1), (2)$ and $(3)$ above, we have the quotient set $S / E$ of $S$ by $E$.
This forms a semiring by $(4)$ and $(5)$ above with the following two well-defined operations: $[x] + [y] := [x + y]$ and $[x] \cdot [y] := [x \cdot y]$ for $x, y \in S$, where $[z]$ denotes the equivalence class of $z \in S$ under $E$.
Then the natural surjection $\pi_E : S \twoheadrightarrow S / E; x \mapsto [x]$ is a semiring homomorphism.

The intersection of (possibly infinitely many) congruences is again a congruence.
For a subset $T$ of $S^2$, let $\langle T \rangle$ be the smallest congruence on $S$ containing $T$, i.e., the intersection of all congruences on $S$ containing $T$.
A congruence $E$ on $S$ is \textit{finitely generated} if there exist a finite number of elements $(a_1, b_1), \ldots, (a_n, b_n) \in E$ such that $\langle \{ (a_1, b_1), \ldots, (a_n, b_n) \} \rangle = E$.

For a semiring homomorphism $\psi : S_1 \to S_2$, the \textit{kernel congruence} $\operatorname{Ker}(\psi)$ of $\psi$ is the congruence $\{ (x, y) \in S_1^2 \,|\, \psi(x) = \psi(y) \}$.
For semirings and congruences on them, the fundamental homomorphism theorem holds (\cite[Proposition 2.4.4]{Giansiracusa=Giansiracusa}).
Then, for the above $\pi_E$, we have $\operatorname{Ker}(\pi_E) = E$.

A congruence $P$ on a semiring $S$ is \textit{prime} if it is proper and $(x_1 \cdot y_1 + x_2 \cdot y_2, x_1 \cdot y_2 + x_2 \cdot y_1) \in P$ implies that $(x_1, y_1) \in P$ or $(x_2, y_2) \in P$ holds.
The \textit{Krull dimension} of $S$ is defined by the maximum length of strict inclusions of prime congruences on $S$.
If $S$ has no prime congruence, then we define that $S$ has Krull dimension $-\infty$.

Let $\overline{E} := \{ (f, g) \in \boldsymbol{T}[\boldsymbol{X}^{\pm}]_n^2 \,|\, \forall x \in \boldsymbol{R}^n, f(x) = g(x) \}$.
Then $E$ is a congruence on $\boldsymbol{T}[\boldsymbol{X}^{\pm}]_n$ and we write the quotient semiring $\boldsymbol{T}[\boldsymbol{X}^{\pm}]_n / \overline{E}$ as $\overline{\boldsymbol{T}[\boldsymbol{X}^{\pm}]}_n$.
It has the \textit{tropical rational function semifield} $\overline{\boldsymbol{T}(\boldsymbol{X})}_n$ in $n$-variables as its semifield of fractions.
An element of $\overline{\boldsymbol{T}(\boldsymbol{X})}_n \setminus \{ -\infty \}$ is a \textit{rational function} on $\boldsymbol{R}^n$, which is a real-valued function on $\boldsymbol{R}^n$ defined by the fraction of two tropical (Laurent) polynomials in $n$-variables.
By abuse of notation, we write the image of $X_i \in \boldsymbol{T}[\boldsymbol{X}^{\pm}]_n$ in $\overline{\boldsymbol{T}(\boldsymbol{X})}_n$ again $X_i$.

For a subset $V \subset \boldsymbol{R}^n$, let $\boldsymbol{E}(V)$ be the subset $\{ (f, g) \in \overline{\boldsymbol{T}(\boldsymbol{X})}_n^2 \,|\, \forall x \in V, f(x) = g(x) \}$, which is a congruence on $\overline{\boldsymbol{T}(\boldsymbol{X})}_n$.
Similarly, for a subset $T \subset \overline{\boldsymbol{T}(\boldsymbol{X})}_n^2$, let $\boldsymbol{V}(T) := \{ x \in \boldsymbol{R}^n \,|\, \forall (f, g) \in T, f(x) = g(x) \}$.
Since $\boldsymbol{V}(T) = \boldsymbol{V}(\langle T \rangle)$ holds by definition, we call $\boldsymbol{V}(T)$ the \textit{congruence variety associated with $T$}.

\subsection{Polyhedral sets}
	\label{subsection2.3}

A \textit{polyhedral set} in $\boldsymbol{R}^n$ is the solution set of a system of a finite number of linear inequalities (in the usual sense).
A \textit{polyhedral complex} $X$ is a complex consisting of a finite number of polyhedral sets.
Each polyhedral set in $X$ is a \textit{cell}.
Its \textit{support} $|X|$ is the union of its polyhedral sets.
When $|X|$ is connected as a subset of $\boldsymbol{R}^n$, we say that $X$ is \textit{connected}.
A finite union of polyhedral sets in $\boldsymbol{R}^n$ is the support of a polyhedral complex in $\boldsymbol{R}^n$, and vice versa (cf.~\cite[Proposition~4.1.1(a)]{Mikhalkin=Rau}).

For a nonempty polyhedral set $P$ in $\boldsymbol{R}^n$, its \textit{dimension} $\operatorname{dim}P$ is defined by the dimension of the smallest affine subspace of $\boldsymbol{R}^n$ containing $P$.
If $P$ is empty, we define that it has dimension $-1$.
The \textit{dimension} $\operatorname{dim}X$ of a polyhedral complex $X$ consisting of polyhedral sets $P_1, \ldots, P_m$ is defined as $\operatorname{max}\{ \operatorname{dim}P_i \,|\, i = 1, \ldots, m\}$.
We say that $|X|$ has dimension $\operatorname{dim}X$, i.e., $\operatorname{dim}(|X|) := \operatorname{dim}X$.
Since $X$ consists of only a finite number of polyhedral sets, $\operatorname{dim}(|X|)$ is independent of the choice of its \textit{polyhedral structures}, i.e., polyhedral complexes with $|X|$ as their supports.

A polyhedral set $P$ in $\boldsymbol{R}^n$ is \textit{$\boldsymbol{R}$-rational} if $P$ is of the form $\{ \boldsymbol{x} \in \boldsymbol{R}^n \,|\, A\boldsymbol{x} \ge \boldsymbol{b} \}$ with some $l \times n$-matrix $A$ with rational entries and some vector $\boldsymbol{b} \in \boldsymbol{R}^l$.
A polyhedral complex is \textit{$\boldsymbol{R}$-rational} if it consists only of $\boldsymbol{R}$-rational polyhedral sets.

For a congruence on $\overline{\boldsymbol{T}(\boldsymbol{X})}_n$, if it is finitely generated as a congruence, then the associated congruence variety is a finite union of $\boldsymbol{R}$-rational polyhedral sets in $\boldsymbol{R}^n$ by \cite[Corollary~3.5]{JuAe5}.
For a subset $V$ of $\boldsymbol{R}^n$, the congruence $\boldsymbol{E}(V)$ on $\overline{\boldsymbol{T}(\boldsymbol{X})}_n$ is finitely generated if and only if the closure of $V$ for the Euclidean topology of $\boldsymbol{R}^n$ is a finite union of $\boldsymbol{R}$-rational polyhedral sets by \cite[Theorem~1.1]{JuAe5}.
By \cite[Corollary~3.30]{JuAe=Nakajima}, for a nonempty finite union $V$ of $\boldsymbol{R}$-rational polyhedral sets in $\boldsymbol{R}^n$, the Krull dimension of $\overline{\boldsymbol{T}(\boldsymbol{X})}_n / \boldsymbol{E}(V)$ is equal to the dimension of $V$ (as the support of a polyhedral complex) plus one.

For a tropical Laurent polynomial $F \in \boldsymbol{T}[\boldsymbol{X}^{\pm}]_n \setminus \{ -\infty \}$, we define the \textit{tropical hypersurface} $\boldsymbol{V}(F)$ defined by $F$ as
\begin{align*}
\{ x \in \boldsymbol{R}^n \,|\, F \text{ has at least two terms that take the value } F(x) \}.
\end{align*}
For $F = -\infty \in \boldsymbol{T}[\boldsymbol{X}^{\pm}]_n$, we set $\boldsymbol{V}(-\infty) = \boldsymbol{R}^n$.
Note that if $F$ is a tropical Laurent monomial, i.e., it has just one term, then $\boldsymbol{V}(F) = \varnothing$, and vice versa.
It is well-known that $\boldsymbol{V}(F)$ has a connected weighted one-dimensional $\boldsymbol{R}$-rational polyhedral complex structure satisfying the balancing condition when $n = 2$ and $F$ is not a tropical Laurent monomial or $-\infty$ (see \cite[Proposition~3.1.6 and Theorem~3.3.5]{Maclagan=Sturmfels} for more details and more general cases).
Here \textit{weighted} means that its maximal cells (hence, in this case, one-dimensional cells) have positive integers called their \textit{weights}.
In particular, the weights are naturally defined by $F$.
Also satisfying the \textit{balancing condition} means that for any zero-dimensional cell $x$, if $P_1, \ldots, P_k$ are the one-dimensional cells incident to $x$, then the sum of primitive vectors from $x$ to $P_1, \ldots, P_k$ times their weights, respectively, is the zero vector.
A vector in $\boldsymbol{R}^n$ is \textit{primitive} if it is in $\boldsymbol{Z}^n$ and the greatest common divisor of the components is one.
For a primitive vector $\boldsymbol{v} = (v_1, \ldots, v_n) \in \boldsymbol{Z}^n$ and a nonnegative number $\lambda \ge 0$, the vector $\lambda \boldsymbol{v} = (\lambda v_1, \ldots, \lambda v_n)$ has $\lambda$ as its \textit{lattice length}.

\subsection{Tropical curves}
    \label{subsection2.4}

In this paper, a \textit{graph} is an unweighted, undirected, finite, connected nonempty multigraph that may have loops.
For a graph $G$, the set of vertices is denoted by $V(G)$ and the set of edges by $E(G)$.
A vertex $v$ of $G$ is a \textit{leaf end} if $v$ is incident to only one edge and this edge is not a loop.
A \textit{leaf edge} is an edge of $G$ incident to a leaf end.

An \textit{abstract tropical curve} is the underlying topological space of the pair $(G, l)$ of a graph $G$ and a function $l: E(G) \to {\boldsymbol{R}}_{>0} \cup \{\infty\}$, where $l$ can take the value $\infty$ only on leaf edges, together with an identification of each edge $e$ of $G$ with the closed interval $[0, l(e)]$.
The interval $[0, \infty]$ is the one-point compactification of the interval $[0, \infty)$.
We regard $[0, \infty]$ not just as a topological space but as an extended metric space.
The distance between $\infty$ and any other point is infinite.
When $l(e)=\infty$, the leaf end of $e$ must be identified with $\infty$.
If $E(G) = \{ e \}$ and $l(e)=\infty$, then we can identify either leaf ends of $e$ with $\infty$.
In what follows, we say abstract tropical curves just tropical curves.
When a tropical curve $\Gamma$ is obtained from $(G, l)$, the pair $(G, l)$ is called a \textit{model} for $\Gamma$.
There are many possible models for $\Gamma$.
An \textit{edge} of $\Gamma$ means an edge of $G$ for some model $(G, l)$ for $\Gamma$.
We frequently identify a vertex (resp. an edge) of $G$ with the corresponding point (resp. the corresponding closed subset) of $\Gamma$.
A model $(G, l)$ is \textit{loopless} if $G$ is loopless.
For a point $x$ of $\Gamma$, if $x$ is identified with $\infty$, then $x$ is called a \textit{point at infinity}, otherwise, $x$ is called a \textit{finite point}.
Let $\Gamma_{\infty}$ denote the set of all points at infinity of $\Gamma$.
If $x$ is a finite point, then the \textit{valence} $\operatorname{val}(x)$ is the number of connected components of $U \setminus \{ x \}$ with any sufficiently small connected neighborhood $U$ of $x$; if $x$ is a point at infinity, then $\operatorname{val}(x) := 1$.
We construct a model $(G_{\circ}, l_{\circ})$ called the {\it canonical model} for $\Gamma$ as follows.
Generally, we define $V(G_{\circ}) := \{ x \in \Gamma \,|\, \operatorname{val}(x) \not= 2 \}$ except for the following two cases.
When $\Gamma$ is homeomorphic to a circle $S^1$, we define $V(G_{\circ})$ as the set consisting of one arbitrary point of $\Gamma$.
When $\Gamma$ has the pair $(T, l)$ as its model, where $T$ is a tree consisting of three vertices and two edges and $l(E(T)) = \{ \infty \}$, we define $V(G_{\circ})$ as the set of two points at infinity and any finite point of $\Gamma$.
A \textit{ray} of $\Gamma$ is an edge of $G$ of length infinity for some model $(G, l)$ for $\Gamma$, so it contains a point at infinity.
A \textit{subgraph} of $\Gamma$ is a closed subset of $\Gamma$ with a finite number of connected components.

\subsection{Rational functions and chip firing moves}
    \label{subsection2.5}

Let $\Gamma$ be a tropical curve.
A continuous map $f : \Gamma \to \boldsymbol{R} \cup \{ \pm \infty \}$ is a \textit{rational function} on $\Gamma$ if $f$ is identically $-\infty$ or a piecewise affine function with integer slopes, with a finite number of pieces and that can take the values $\pm \infty$ at only points at infinity.
Let $\operatorname{Rat}(\Gamma)$ denote the set of all rational functions on $\Gamma$.
For rational functions $f, g \in \operatorname{Rat}(\Gamma)$ and a point $x \in \Gamma \setminus \Gamma_{\infty}$, we define
\begin{align*}
(f \oplus g) (x) := \operatorname{max}\{f(x), g(x)\} \quad \text{and} \quad (f \odot g) (x) := f(x) + g(x).
\end{align*}
We extend $f \oplus g$ and $f \odot g$ to points at infinity to be continuous on the whole of $\Gamma$.
Then both are rational functions on $\Gamma$.
Note that for any $f \in \operatorname{Rat}(\Gamma)$, we have
\begin{align*}
f \oplus (-\infty) = (-\infty) \oplus f = f \quad \text{and} \quad f \odot (-\infty) = (-\infty) \odot f = -\infty.
\end{align*}
Then $\operatorname{Rat}(\Gamma)$ becomes a semifield with these two operations.
Also, $\operatorname{Rat}(\Gamma)$ becomes a $\boldsymbol{T}$-algebra with the natural inclusion $\boldsymbol{T} \hookrightarrow \operatorname{Rat}(\Gamma)$.
Note that for $f, g \in \operatorname{Rat}(\Gamma)$, the equality $f = g$ means that $f(x) = g(x)$ for any $x \in \Gamma$.

Let $\Gamma^{\prime}$ be a subgraph of $\Gamma$ which has no connected components consisting of only a point at infinity and $l$ a positive number or infinity.
The \textit{chip firing move} by $\Gamma^{\prime}$ and $l$ is defined as the rational function $\operatorname{CF}(\Gamma^{\prime}, l)(x) := - \operatorname{min}\{ \operatorname{dist}(\Gamma^{\prime}, x), l \}$ with $x \in \Gamma$, where $\operatorname{dist}(\Gamma^{\prime}, x)$ denotes the distance between $\Gamma^{\prime}$ and $x$ in $\Gamma$.

\subsection{Divisors and \texorpdfstring{$\boldsymbol{T}$}{T}-modules}
    \label{subsection2.6}

Let $\Gamma$ be a tropical curve.
A \textit{divisor} on $\Gamma$ is an element of the free abelian group $\operatorname{Div}(\Gamma)$ generated by the points of $\Gamma$.
For a divisor $D$ on $\Gamma$ and a point $x$ of $\Gamma$, we write the coefficient of $D$ at $x$ as $D(x)$.
The sum of coefficients of $D$ at all points is denoted by $\operatorname{deg}(D)$, and called the \textit{degree} of $D$.
If all coefficients of $D$ are nonnegative, then $D$ is said to be \textit{effective} and written by $D \ge 0$.

For a finite point $x$ of $\Gamma$, a rational function $f \in \operatorname{Rat}(\Gamma) \setminus \{ -\infty \}$ and one outgoing direction at $x$, the \textit{outgoing slope} of $f$ at $x$ is the ratio $\frac{f(y) - f(x)}{\operatorname{dist}(x,y)}$ (in the usual sense) with a finite point $y$ in a sufficiently small neighborhood of $x$ in the direction.
If $x$ is a point at infinity, then we regard the \textit{outgoing slope} of $f$ at $x$ as the slope of $f$ from $y$ to $x$ times minus one, where $y$ is a finite point on the leaf edge incident to $x$ such that $f$ has a constant slope on the interval $(y, x)$.
In both cases, the definition of outgoing slope of $f$ at $x$ in the direction is independent of the choice of $y$.
The point $x$ is a \textit{zero} (resp.~\textit{pole}) of $f$ if the sign of the sum of outgoing slopes of $f$ at $x$ is positive (resp.~negative).
The absolute value of the sum is its \textit{degree}.
By the definition of rational functions on tropical curves, $f$ has at most finitely many zeros and poles.
For the zeros $x_1, \ldots, x_n \in \Gamma$ with degrees $d_1, \ldots, d_n$, respectively, and the poles $y_1, \ldots, y_m \in \Gamma$ with degrees $s_1, \ldots, s_m$, respectively, of $f$, let $\operatorname{div}(f)$ be the divisor $\sum_{i = 1}^n d_i x_i - \sum_{j = 1}^m s_j y_j \in \operatorname{Div}(\Gamma)$, which is called the \textit{principal divisor} defined by $f$.
When $f \in \boldsymbol{R}$, we consider $\operatorname{div}(f) = 0 \in \operatorname{Div}(\Gamma)$, and for $f = -\infty$, we do not define its principal divisor.
For a divisor $D \in \operatorname{Div}(\Gamma)$, let $R(D) := \{ f \in \operatorname{Rat}(\Gamma) \setminus \{ -\infty \} \,|\, D + \operatorname{div}(f) \ge 0 \} \cup \{ -\infty \}$.
It is known that $R(D)$ is a finitely generated $\boldsymbol{T}$-module by \cite[Theorem~6]{Haase=Musiker=Yu} (in the case of excluding points at infinity, \cite[Theorem~3.15]{JuAe6} in the case of including such points).
Here a subset $R$ of $\operatorname{Rat}(\Gamma)$ is a \textit{$\boldsymbol{T}$-module} (in $\operatorname{Rat}(\Gamma)$) means that for any $f, g \in R$ and $t \in \boldsymbol{T}$, the sum $f \oplus g$ and the tropical scalar multiplication $t \odot f$ are in $R$.
Also a subset $R$ of $\operatorname{Rat}(\Gamma)$ is \textit{finitely generated as a $\boldsymbol{T}$-module} means that there exist finitely many $f_1, \ldots, f_n$ in $R$ such that for any $f \in R$, there exist $t_1, \ldots, t_n \in \boldsymbol{T}$ satisfying $f = t_1 \odot f_1 \oplus \cdots \oplus t_n \odot f_n$.
Let $R$ be a $\boldsymbol{T}$-module in $\operatorname{Rat}(\Gamma)$.
An element $f \in R \setminus \{ -\infty \}$ is an \textit{extremal} if for any $g, h \in R$, the equality $f = g \oplus h$ implies $f = g$ or $f = h$.
By \cite[Proposition~8]{Haase=Musiker=Yu}, any finitely generated $\boldsymbol{T}$-module $R \not= \{ -\infty \}$ in $\operatorname{Rat}(\Gamma)$ is generated by extremals in $R$ and this generating set is minimal and unique up to tropical scalar multiplication other than $-\infty$.

One might expect that for a tropical curve $\Gamma$, any finitely generated $\boldsymbol{T}$-module in $\operatorname{Rat}(\Gamma)$ is of the form $R(D)$ for some divisor $D$.
However, this is false:

\begin{ex}
    \label{ex2-6-1}
\upshape{
Let $\Gamma := [-\infty, \infty]$, which is regarded as a tropical curve.

For the rational function $f(x) := 2x$ for $x \in \Gamma$, the $\boldsymbol{T}$-module $R$ generated by $f$ is not equal to $R(D)$ for any $D \in \operatorname{Div}(\Gamma)$.
In fact, the map $\Gamma \setminus \Gamma_{\infty} = (-\infty, \infty) \to \boldsymbol{R}; x \mapsto f(x)$ is injective, and if $R = R(D)$ holds for some divisor $D$, then $f$ generates $\operatorname{Rat}(\Gamma)$ as a semifield over $\boldsymbol{T}$ by \cite[Corollary~3.21]{JuAe5}.
But this is a contradiction, clearly $f$ does not generate $\operatorname{Rat}(\Gamma)$.

By the same reason, for $g(x) := -x$ with $x \le 0$ and $g(x) := x$ with $x > 0$ and $h(x) := 0$ with $x \le 0$ and $h(x) := x$ with $0 < x \le 1$ and $h(x) := 1$ with $x > 1$, the $\boldsymbol{T}$-module generated by $g$ and $h$ is never equal to $R(D)$ for any divisor $D$ on $\Gamma$.
}
\end{ex}

\subsection{Morphisms between tropical curves}
    \label{subsection2.7}

Let $\varphi : \Gamma \to \Gamma^{\prime}$ be a continuous map between tropical curves.
This $\varphi$ is a \textit{morphism} if there exist loopless models $(G, l)$ and $(G^{\prime}, l^{\prime})$ for $\Gamma$ and $\Gamma^{\prime}$, respectively, such that $\varphi$ can be regarded as a map $V(G) \cup E(G) \to V(G^{\prime}) \cup E(G^{\prime})$ satisfying $\varphi(V(G)) \subset V(G^{\prime})$ and for $e \in \varphi(E(G))$, there exists a nonnegative integer $\operatorname{deg}_e(\varphi)$ such that for any finite points $x$ and $y$ of $e$, the equality $\operatorname{dist}_{\varphi(e)}(\varphi (x), \varphi (y)) = \operatorname{deg}_e(\varphi) \cdot \operatorname{dist}_e(x, y)$ holds, where $\operatorname{dist}_{\varphi(e)}(\varphi(x), \varphi(y))$ (resp.~$\operatorname{dist}_{e}(x, y)$) denotes the distance between $\varphi(x)$ and $\varphi(y)$ in $\varphi(e)$ (resp.~$x$ and $y$ in $e$).
This integer $\operatorname{deg}_e(\varphi)$ is called the \textit{degree} of $\varphi$ on $e$.

For a morphism $\varphi : \Gamma \to \Gamma^{\prime}$ and $f^{\prime} \in \operatorname{Rat}(\Gamma^{\prime})$, it is easy to check that the composition $f^{\prime} \circ \varphi$ is a rational function on $\Gamma$.
The induced map $\varphi^{\ast} : \operatorname{Rat}(\Gamma^{\prime}) \to \operatorname{Rat}(\Gamma); f^{\prime} \mapsto f^{\prime} \circ \varphi$ is called the \textit{pull-back map} or the \textit{pull-back $\boldsymbol{T}$-algebra homomorphism} of $\varphi$, which is a $\boldsymbol{T}$-algebra homomorphism.

\subsection{Pseudodirect products and disconnected tropical curves}
    \label{subsection2.8}

Let $S_i$ be a semifield over $\boldsymbol{T}$.
The \textit{pseudodirect product} $S_1 \bowtie S_2 := (S_1 \bowtie S_2, +, \cdot)$ of $S_1$ and $S_2$ is defined by
\begin{align*}
S_1 \bowtie S_2 &:= \{ (s_1, s_2) \in (S_1 \setminus \{ 0_{S_1} \}) \times (S_2 \setminus \{ 0_{S_2} \}) \} \cup \{ (0_{S_1}, 0_{S_2}) \},\\
(s_1, s_2) + (t_1, t_2) &:= (s_1 + t_1, s_2 + t_2), \text{ and}\\
(s_1, s_2) \cdot (t_1, t_2) &:= (s_1 \cdot t_1, s_2 \cdot t_2).
\end{align*}
This is a semifield over $\boldsymbol{T}$ with the diagonal semiring homomorphism $\boldsymbol{T} \hookrightarrow S_1 \bowtie S_2; t \mapsto (t, t)$ by \cite[Lemma~3.32]{JuAe=Nakajima} and has the universal mapping property below.
This $S_1 \bowtie S_2$ is a subsemifield over $\boldsymbol{T}$ of the $\boldsymbol{T}$-algebra $S_1 \times S_2$ equipped with the operations of $S_1$ and $S_2$ coordinatewise, respectively.
Note that $S_1 \bowtie S_2$ has the natural surjective $\boldsymbol{T}$-algebra homomorphism $\pi_i : S_1 \bowtie S_2 \twoheadrightarrow S_i; (s_1, s_2) \not= (0_{S_1}, 0_{S_2}) \mapsto s_i, (0_{S_1}, 0_{S_2}) \mapsto 0_{S_i}$ by \cite[Lemma~3.33]{JuAe=Nakajima}.
We also write $S_1 \bowtie S_2$ as $\bowtie_{i = 1}^2 S_i$.

The pseudodirect product was originally defined for more general semirings, see \cite[Chapter~2]{Golan}.

\begin{prop}[Universal mapping property]
    \label{prop2-8-1}
In the above setting, for a semifield $T$ over $\boldsymbol{T}$ and a $\boldsymbol{T}$-algebra homomorphism $\psi_i : T \to S_i$ with $i = 1, 2$, there exists a unique $\boldsymbol{T}$-algebra homomorphism $\psi : T \to S_1 \bowtie S_2$ satisfying $\psi_i = \pi_i \circ \psi$ with $i = 1, 2$.
\end{prop}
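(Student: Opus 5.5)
We prove the statement by building $\psi$ coordinatewise and then checking that it actually lands in $S_1 \bowtie S_2$; uniqueness comes from the projections $\pi_i$.

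\textbf{Definition.} Set
\[
\psi(t) := (\psi_1(t), \psi_2(t)) \quad \text{for } t \in T.
\]
Since addition and multiplication in $S_1 \times S_2$ are coordinatewise, $\psi$ is a semiring homomorphism $T \to S_1 \times S_2$. Since each $\psi_i$ is a $\boldsymbol{T}$-algebra homomorphism, for $a \in \boldsymbol{T} \subset T$ we get $\psi(a) = (a, a)$. So $\psi$ is compatible with the diagonal structure map $\boldsymbol{T} \hookrightarrow S_1 \bowtie S_2$.

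\textbf{Key step: the image lies in $S_1 \bowtie S_2$.} We must show that $\psi_1(t) = 0_{S_1}$ if and only if $\psi_2(t) = 0_{S_2}$. Here we use that $T$ is a semifield.
\begin{itemize}
\item If $t = 0_T$, then $\psi(t) = (0_{S_1}, 0_{S_2})$.
\item If $t \neq 0_T$, then $t$ has an inverse $t^{-1}$, so $\psi_i(t)\psi_i(t^{-1}) = \psi_i(1_T) = 1_{S_i}$. Because $0_{S_i} \neq 1_{S_i}$ in the semifield $S_i$, this forces $\psi_i(t) \neq 0_{S_i}$ for both $i$.
\end{itemize}
Hence $\psi(T) \subset S_1 \bowtie S_2$. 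As $S_1 \bowtie S_2$ is a subsemifield over $\boldsymbol{T}$ of $S_1 \times S_2$ with the diagonal inclusion, $\psi$ is a $\boldsymbol{T}$-algebra homomorphism into $S_1 \bowtie S_2$.

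\textbf{The factorization holds.} We check $\pi_i \circ \psi = \psi_i$ directly from the definition of $\pi_i$. On nonzero pairs $\pi_i$ is the $i$-th coordinate projection. On $(0_{S_1}, 0_{S_2})$ it returns $0_{S_i}$, which agrees with the $i$-th coordinate.

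\textbf{Uniqueness.} Let $\psi'$ be another $\boldsymbol{T}$-algebra homomorphism with $\pi_i \circ \psi' = \psi_i$ for $i = 1, 2$. Every element of $S_1 \bowtie S_2$ is a pair whose $i$-th coordinate is its image under $\pi_i$. So both coordinates of $\psi'(t)$ equal $\psi_1(t)$ and $\psi_2(t)$, which gives $\psi' = \psi$.

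The only nontrivial point is the key step, namely that nonzero elements are not sent to pairs with exactly one zero coordinate. This follows from the invertibility of nonzero elements in $T$.
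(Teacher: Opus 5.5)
Your proof is correct and follows the same route as the paper: define $\psi$ coordinatewise, check that it lands in $S_1 \bowtie S_2$, and get both the factorization and uniqueness from the projections $\pi_i$. The only difference is the key step $\psi_i(t) \neq 0_{S_i}$ for $t \neq 0_T$: the paper notes that $\psi_i(T)$ is a subsemifield over $\boldsymbol{T}$ of $S_i$ and cites a lemma from earlier work, whereas you derive it directly from $\psi_i(t) \cdot \psi_i(t^{-1}) = 1_{S_i} \neq 0_{S_i}$, which is more self-contained.
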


\begin{proof}
For $a \in T$, if $\psi_1(a) = 0_{S_1}$ or $\psi_2(a) = 0_{S_2}$, then $a$ must be $0_T$.
In fact, since both $T$ and $S_i$ are semifields over $\boldsymbol{T}$ and $\psi_i$ is a $\boldsymbol{T}$-algebra homomorphism, the image $\psi_i(T)$ is a subsemifield over $\boldsymbol{T}$ of $S_i$, and hence $a$ must be $0_T$ when $\psi_i(a) = 0_{S_i}$ by \cite[Lemma~2.2]{JuAe4}.
Thus for any $a \in T \setminus \{ 0_T \}$, the pair $(\psi_1(a), \psi_2(a))$ defines an element of $S_1 \bowtie S_2$. 
Defining $\psi$ by the correspondence $a \not= 0_T \mapsto (\psi_1(a), \psi_2(a)), 0_T \mapsto (0_{S_1}, 0_{S_2})$, it becomes a $\boldsymbol{T}$-algebra homomorphism $T \to S_1 \bowtie S_2$ as $t \in \boldsymbol{T} \subset T$ is mapped to $(t, t) \in \boldsymbol{T} \subset S_1 \bowtie S_2$ by $\psi$.
This $\psi$ clearly satisfies $\psi_i = \pi_i \circ \psi$ by definition.
Let $\psi^{\prime} : T \to S_1 \bowtie S_2$ be a $\boldsymbol{T}$-algebra homomorpshim satisfying $\psi_i = \pi_i \circ \psi^{\prime}$.
Then $\psi = \psi^{\prime}$ holds since for any $a \in T \setminus \{ 0_T \}$, when $\psi^{\prime}(a) = (b_1, b_2)$, each $b_i$ coincides with $\psi_i(a)$ by $\psi_i = \pi_i \circ \psi^{\prime}$ and so $\psi^{\prime}(a) = (b_1, b_2) = (\psi_1(a), \psi_2(a)) = \psi(a)$ and $\psi^{\prime}(0_T) = (0_{S_1}, 0_{S_2}) = \psi(0_T)$.
\end{proof}

The following is clear:

\begin{prop}
    \label{prop2-8-2}
Let $S_i$ be a semifield over $\boldsymbol{T}$.
Then the map $(S_1 \bowtie S_2) \bowtie S_3 \to S_1 \bowtie (S_2 \bowtie S_3); ((s_1, s_2), s_3) \not= ((0_{S_1}, 0_{S_2}), 0_{S_3}) \mapsto (s_1, (s_2, s_3)), ((0_{S_1}, 0_{S_2}), 0_{S_3}) \mapsto (0_{S_1}, (0_{S_2}, 0_{S_3}))$ is a $\boldsymbol{T}$-algebra isomorpshim.
\end{prop}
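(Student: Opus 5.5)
Two routes are available. The first checks the map directly. The second, cleaner one, uses the universal mapping property (Proposition~\ref{prop2-8-1}).

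\emph{Direct route.} Call the map $\Phi$. It is well defined: if $((s_1,s_2),s_3)$ is not the zero element, then by definition of the pseudodirect product $(s_1,s_2) \neq (0_{S_1},0_{S_2})$ and $s_3 \neq 0_{S_3}$. Hence $s_1, s_2, s_3$ are all nonzero. Then $(s_2,s_3)$ is a nonzero element of $S_2 \bowtie S_3$, and $(s_1,(s_2,s_3))$ lies in $S_1 \bowtie (S_2 \bowtie S_3)$.

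The inverse is the map $(s_1,(s_2,s_3)) \mapsto ((s_1,s_2),s_3)$, sending zero to zero. It is well defined by the symmetric argument, so $\Phi$ is bijective.

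$\Phi$ respects addition and multiplication because both operations are coordinatewise. On nonzero elements they are computed in the same coordinates $s_1, s_2, s_3$ on both sides. When one argument is zero, the zero element is additively neutral and multiplicatively absorbing on both sides, and $\Phi$ maps zero to zero. The sum of two nonzero elements is again nonzero, which the paper derives from \cite[Lemma~2.2]{JuAe4}; in any case this is automatic from the formula, since both sides are then the coordinatewise sum. Finally, $\Phi$ is a $\boldsymbol{T}$-algebra homomorphism because $t \in \boldsymbol{T}$ maps to $((t,t),t)$ on the left and $(t,(t,t))$ on the right, and $\Phi$ matches these.

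\emph{Universal-property route.} Let $\pi_{12}$ and $\pi_3$ be the projections from $(S_1 \bowtie S_2) \bowtie S_3$. Composing $\pi_{12}$ with the projections of $S_1 \bowtie S_2$ gives homomorphisms to $S_1$ and $S_2$. Proposition~\ref{prop2-8-1} applied to the maps into $S_2$ and $S_3$ yields a homomorphism to $S_2 \bowtie S_3$. Applying Proposition~\ref{prop2-8-1} again to this together with the map to $S_1$ yields a homomorphism to $S_1 \bowtie (S_2 \bowtie S_3)$, and this homomorphism is exactly $\Phi$. The inverse is built symmetrically. Uniqueness in Proposition~\ref{prop2-8-1} shows that both composites are identities.

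There is no real obstacle; the only point needing care is the well-definedness check that nonzero elements correspond to triples with all three entries nonzero.
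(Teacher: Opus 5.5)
Your proposal is correct and matches the paper, which gives no argument and simply calls the statement clear. Your direct route is precisely the verification left to the reader: nonzero elements on either side are exactly the triples with all three entries nonzero, the map is a reshuffling of coordinates with the evident inverse, and addition, multiplication and the diagonal embedding of $\boldsymbol{T}$ are all coordinatewise. The universal-property route via the uniqueness part of Proposition~\ref{prop2-8-1} is also valid, provided uniqueness is applied at both levels of the iterated product to see that the composites are identities. You are also right that no zero-sum-freeness input is needed, since the formula $((s_1,s_2),s_3) \mapsto (s_1,(s_2,s_3))$ holds for the zero element as well.
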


By Proposition~\ref{prop2-8-2}, we can write $(S_1 \bowtie S_2) \bowtie S_3$ (or $S_1 \bowtie (S_2 \bowtie S_3)$) just $S_1 \bowtie S_2 \bowtie S_3$.

For two tropical curves $\Gamma_1$ and $\Gamma_2$, by \cite[Proposition~3.11]{JuAe4} and \cite[Lemma~3.41]{JuAe=Nakajima}, we can regard $\operatorname{Rat}(\Gamma_1) \bowtie \operatorname{Rat}(\Gamma_2)$ as the rational function semifield of the direct sum (as a topological space) $\Gamma_1 \sqcup \Gamma_2$ of $\Gamma_1$ and $\Gamma_2$, i.e.,
\begin{align*}
\operatorname{Rat}(\Gamma_1 \sqcup \Gamma_2) := \operatorname{Rat}(\Gamma_1) \bowtie \operatorname{Rat}(\Gamma_2).
\end{align*}
Here $\Gamma_1 \sqcup \Gamma_2$ has the same metric as $\Gamma_i$ as in the image of the natural inclusion $\Gamma_i \hookrightarrow \Gamma_1 \sqcup \Gamma_2$ for each $i = 1, 2$.
We do not define the distance between the points of the image of $\Gamma_1$ and those of the image of $\Gamma_2$.
Thus, for a subgraph $\Gamma^{\prime}_i$ of $\Gamma_i$ that has no connected components consisting of a point at infinity and a positive number or infinity $l$, the chip firing move $\operatorname{CF}(\Gamma^{\prime}_1, l) \in \operatorname{Rat}(\Gamma_1 \sqcup \Gamma_2)$ (resp.~$\operatorname{CF}(\Gamma^{\prime}_1 \cup \Gamma^{\prime}_2, l) \in \operatorname{Rat}(\Gamma_1 \sqcup \Gamma_2)$) is the rational function $(\operatorname{CF}(\Gamma^{\prime}_1, l), 0) \in \operatorname{Rat}(\Gamma_1 \sqcup \Gamma_2)$ (resp.~$(\operatorname{CF}(\Gamma^{\prime}_1, l), \operatorname{CF}(\Gamma^{\prime}_2, l)) \in \operatorname{Rat}(\Gamma_1 \sqcup \Gamma_2)$).
We write the addition and multiplication in $\operatorname{Rat}(\Gamma_1 \sqcup \Gamma_2)$ as $\oplus$ and $\odot$, respectively.
In $\operatorname{Rat}(\Gamma_1 \sqcup \Gamma_2)$, the identity $0_{\operatorname{Rat}(\Gamma_1 \sqcup \Gamma_2)} = (-\infty, -\infty)$ with respect to $\oplus$ is written by $-\infty$.

Note that $\operatorname{Rat}(\Gamma_1 \sqcup \Gamma_2)$ is finitely generated as a semifield over $\boldsymbol{T}$ by \cite[Theorem~1.1]{JuAe3} and \cite[Lemma~3.34]{JuAe=Nakajima}.
For a surjective $\boldsymbol{T}$-algebra homomorphism $\psi : \overline{\boldsymbol{T}(\boldsymbol{X})}_n \twoheadrightarrow \operatorname{Rat}(\Gamma_1 \sqcup \Gamma_2)$, the congruence variety $\boldsymbol{V}(\operatorname{Ker}(\psi))$ is a finite union of $\boldsymbol{R}$-rational polyhedral sets in $\boldsymbol{R}^n$ with two connected components each of which is isometric to $\Gamma_1 \setminus \Gamma_{1, \infty}$ and $\Gamma_2 \setminus \Gamma_{2, \infty}$ with the lattice length, respectively, and $\boldsymbol{V}(\operatorname{Ker}(\psi))$ has nontrivial rays that have the same direction vector toward infinity by \cite[Corollary~3.19]{JuAe5} and \cite[Corollary~3.43]{JuAe=Nakajima}.

\begin{rem}
    \label{rem2-8-1}
\upshape{
In the above setting, we call the direct sum $\Gamma_1 \sqcup \Gamma_2$ a \textit{disconnected tropical curve} (with two connected components).
We can also define disconnected tropical curves with a finite number of connected components in the same way.
Then we can characterize the rational function semifields of such disconnected tropical curves as in \cite[Corollary~3.19]{JuAe5} or Theorem~\ref{thm1-1} without condition $(3)$ by the same proofs in \cite[Section~3]{JuAe5}.
But since we do not use this fact in this paper, to avoid redundancy, we do not more discuss about these ones.

Note here that the definition of rational function semifields of disconnected tropical curves is provisional.
As an analogue of the classical algebraic geometry, it might be more natural employing the product $\boldsymbol{T}$-algebra $\operatorname{Rat}(\Gamma_1) \times \operatorname{Rat}(\Gamma_2)$ as its definition.
However, in this paper, we use the pseudodirect product since with this notion, our discussion will be completed solely within the context of finitely generated semifields over $\boldsymbol{T}$, and we have many benefits explained in Subsections~\ref{subsection3.2}, \ref{subsection3.3}, \ref{subsection4.3} and \ref{subsection4.4}.
}
\end{rem}

For a disconnected tropical curve $\Gamma$, a \textit{ray} of $\Gamma$ is a ray of a connected component of $\Gamma$.

\section{Tropical curves}
	\label{section3}

In this section, we study (abstract) tropical curves and their rational function semifields beyond the results we gave in Section~\ref{section2} and \cite{JuAe3}, \cite{JuAe}, \cite{JuAe2}, \cite{JuAe5}, \cite{JuAe4}, \cite{JuAe=Nakajima}.
This section corresponds to question~(ii) in Section~\ref{section1}.

\subsection{Categorical equivalence}
    \label{subsection3.1}

It has not been pointed out until now, but \cite[the proof of Corollary~A1]{JuAe4} provides the following theorem:

\begin{thm}
    \label{thm3-1-1}
Let $\Gamma$ and $\Gamma^{\prime}$ be tropical curves, respectively.
For a $\boldsymbol{T}$-algebra homomorphism $\psi : \operatorname{Rat}(\Gamma^{\prime}) \to \operatorname{Rat}(\Gamma)$, there exists a unique morphism $\varphi : \Gamma \to \Gamma^{\prime}$ such that $\psi = \varphi^{\ast}$ holds.
\end{thm}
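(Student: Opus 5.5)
We construct $\varphi$ pointwise from $\psi$ by evaluation, and then check that it is a morphism and that it is unique. For a finite point $x \in \Gamma \setminus \Gamma_{\infty}$, the map $\operatorname{ev}_x \circ \psi : \operatorname{Rat}(\Gamma^{\prime}) \to \boldsymbol{T}$, given by $f^{\prime} \mapsto \psi(f^{\prime})(x)$, is a $\boldsymbol{T}$-algebra homomorphism. At a point at infinity, evaluation lands in $\boldsymbol{R} \cup \{ \pm \infty \}$, so we first treat finite points and extend by continuity.

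The first key step is to show that every $\boldsymbol{T}$-algebra homomorphism $\operatorname{Rat}(\Gamma^{\prime}) \to \boldsymbol{T}$ is evaluation at a unique point of $\Gamma^{\prime}$. To do this, choose generators $f_1, \ldots, f_n$ of $\operatorname{Rat}(\Gamma^{\prime})$ as in Proposition~\ref{prop1-1}, so that $\operatorname{Rat}(\Gamma^{\prime}) \cong \overline{\boldsymbol{T}(\boldsymbol{X})}_n / \boldsymbol{E}(\operatorname{Im}(\theta))$ with $\theta$ an injective local isometry; this is possible by Theorem~\ref{thm1-1}, \cite{JuAe3} and \cite{JuAe5}. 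A homomorphism to $\boldsymbol{T}$ is determined by the values $a_i := \psi(f_i)(x)$. Because $x$ is finite, these values lie in $\boldsymbol{R}$, since $f_i \neq -\infty$ is invertible. Hence the homomorphism factors through $\overline{\boldsymbol{T}(\boldsymbol{X})}_n \to \boldsymbol{T}$, which is evaluation at $a = (a_1, \ldots, a_n)$. Since it kills $\boldsymbol{E}(\operatorname{Im}(\theta))$, and $\operatorname{Im}(\theta)$ is closed because its closure is polyhedral, we get $a \in \boldsymbol{V}(\boldsymbol{E}(\operatorname{Im}\theta)) = \operatorname{Im}(\theta)$. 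We define $\varphi(x) := \theta^{-1}(a)$. Then $\psi(f^{\prime})(x) = f^{\prime}(\varphi(x))$ holds for the generators, and therefore for all $f^{\prime}$. Uniqueness of $\varphi$ follows because the rational functions on $\Gamma^{\prime}$ separate points; for example, chip firing moves $\operatorname{CF}(\{y\}, l)$ suffice.

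The second step is to show that $\varphi$ is a morphism. On $\Gamma \setminus \Gamma_{\infty}$, the composite $\theta \circ \varphi = (\psi(f_1), \ldots, \psi(f_n))$ is continuous and piecewise affine with integer slopes, and has finitely many pieces. Because $\theta$ is an injective local isometry onto a polyhedral set, where metric means lattice length, $\varphi$ is continuous. On each segment of a fine enough model where every $\psi(f_i)$ is affine, the image moves along a single edge of $\operatorname{Im}(\theta)$ with integer slope vector. That vector is a nonnegative integer multiple of the primitive edge direction, and this multiple is $\operatorname{deg}_e(\varphi)$. We then refine the models, subdividing $\Gamma^{\prime}$ at the images of the vertices and $\Gamma$ at the preimages of the vertices of $\Gamma^{\prime}$ along edges of nonzero degree, so that vertices map to vertices and edges map to edges or vertices, and we subdivide loops to make the models loopless. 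Finally we extend $\varphi$ continuously to $\Gamma_{\infty}$. On a ray the functions $\psi(f_i)$ are eventually affine. If the slope vector is zero, the limit is a finite point. Otherwise the image runs off along a ray of $\operatorname{Im}(\theta)$, and we send the point at infinity to the corresponding point at infinity of $\Gamma^{\prime}$. The identity $\psi = \varphi^{\ast}$ at points at infinity then follows by continuity.

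I expect the main obstacle to be the behaviour at infinity and the identification of edges. One must ensure that the ray directions of $\operatorname{Im}(\theta)$ identify a unique ray of $\Gamma^{\prime}$, and this is exactly where condition $(2)$ of Proposition~\ref{prop1-1} on non-parallel rays is used. One must also ensure that no finite point of $\Gamma$ can be forced to infinity, which holds because $\psi(f_i)$ is real-valued at finite points. The first thing I would try is to compare slopes of $\psi(f_i)$ with the chip firing generators of $\operatorname{Rat}(\Gamma^{\prime})$ from \cite{JuAe3}. This keeps the finitely many-pieces bookkeeping under control.
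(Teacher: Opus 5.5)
Your proposal is correct and follows essentially the same route as the paper's proof, which imports the argument of Corollary~A1 and Theorem~3.14 of the author's earlier work. Both realize $\Gamma^{\prime}$ through generators $f_1, \ldots, f_n$, show that $x \mapsto (\psi(f_1)(x), \ldots, \psi(f_n)(x))$ lands in $\operatorname{Im}(\theta)$, and set $\varphi := \theta^{-1} \circ (\psi(f_1), \ldots, \psi(f_n))$; you get the containment pointwise from evaluation homomorphisms, so you only need the trivial inclusion of an image in its congruence variety and $\boldsymbol{V}(\boldsymbol{E}(\operatorname{Im}(\theta))) = \operatorname{Im}(\theta)$. Two small corrections: $\operatorname{Im}(\theta)$ is closed because $\theta$ is proper (its slope vectors on rays are primitive, hence nonzero), not because ``its closure is polyhedral''; and condition $(2)$ of Proposition~\ref{prop1-1} is not what identifies the target ray at infinity, since the image half-line is connected and the injective $\theta$ sends tails of distinct rays to disjoint sets.
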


\begin{proof}
Replacing $\Gamma_2$ and $\Gamma_1$ in \cite[Corollary~A1 and its proof]{JuAe4} with $\Gamma$ and $\Gamma^{\prime}$, respectively, the same proof works except for the parts corresponding to the surjectivity of $\psi$ and the injectivity of $\varphi$ by \cite[Theorem~3.14]{JuAe4}.
\end{proof}

So far, $\psi$ in Theorem~\ref{thm3-1-1} is always assumed to be injective (and then $\varphi$ is surjective), but in fact this assumption is not necessary.

Theorem~\ref{thm3-1-1} clearly gives the following:

\begin{cor}[cf.~{\cite[Corollary~3.20]{JuAe5}}]
    \label{cor3-1-1}
The following categories $\mathscr{C}$ and $\mathscr{D}$ are equivalent via the following (contravariant) functors $F$ and $G$.

$(1)$ The class $\operatorname{Ob}(\mathscr{C})$ of objects of $\mathscr{C}$ is the tropical curves.

For $\Gamma_1, \Gamma_2 \in \operatorname{Ob}(\mathscr{C})$, the set $\operatorname{Hom}_{\mathscr{C}}(\Gamma_1, \Gamma_2)$ of morphisms from $\Gamma_1$ to $\Gamma_2$ consists of the morphisms $\Gamma_1 \to \Gamma_2$ (in the sense of Subsection~\ref{subsection2.7}).

$(2)$ The class $\operatorname{Ob}(\mathscr{D})$ of objects of $\mathscr{D}$ is the finitely generated semifields over $\boldsymbol{T}$ satisfying conditions $(1), \ldots, (5)$ in Theorem~\ref{thm1-1}.

For $S_1, S_2 \in \operatorname{Ob}(\mathscr{D})$, the set $\operatorname{Hom}_{\mathscr{D}}(S_1, S_2)$ of morphisms from $S_1$ to $S_2$ consists of the $\boldsymbol{T}$-algebra homomorphisms $S_1 \to S_2$.

The functor $F \colon \mathscr{C} \to \mathscr{D}$ maps $\Gamma \in \operatorname{Ob}(\mathscr{C})$ to $\operatorname{Rat}(\Gamma) \in \operatorname{Ob}(\mathscr{D})$ and for $\Gamma_1, \Gamma_2 \in \operatorname{Ob}(\mathscr{C})$, maps $\varphi \in \operatorname{Hom}_{\mathscr{C}}(\Gamma_1, \Gamma_2)$ to $\varphi^{\ast} \in \operatorname{Hom}_{\mathscr{D}}(\operatorname{Rat}(\Gamma_2), \operatorname{Rat}(\Gamma_1))$.

The functor $G \colon \mathscr{D} \to \mathscr{C}$ maps $S \in \operatorname{Ob}(\mathscr{D})$ to $\overline{\boldsymbol{V}(\operatorname{Ker}(\psi_S))} \in \operatorname{Ob}(\mathscr{C})$ and for $S_1, S_2 \in \operatorname{Ob}(\mathscr{D})$, maps $\widetilde{\psi} \in \operatorname{Hom}_{\mathscr{D}}(S_1, S_2)$ to $\varphi_{\widetilde{\psi}} \in \operatorname{Hom}_{\mathscr{C}}(\overline{\boldsymbol{V}(\operatorname{Ker}(\psi_{S_2}))}, \overline{\boldsymbol{V}(\operatorname{Ker}(\psi_{S_1}))})$, where $\psi_S$ is a fixed surjective $\boldsymbol{T}$-algebra homomorphism from a tropical rational function semifield to $S$ and $\overline{\boldsymbol{V}(\operatorname{Ker}(\psi_S))}$ is the natural compactification of $\boldsymbol{V}(\operatorname{Ker}(\psi_S))$ as a tropical curve for any $S \in \operatorname{Ob}(\mathscr{D})$ and $\varphi_{\widetilde{\psi}}$ is the unique morphism $\overline{\boldsymbol{V}(\operatorname{Ker}(\psi_{S_2}))} \to \overline{\boldsymbol{V}(\operatorname{Ker}(\psi_{S_1}))}$ such that $\widetilde{\psi} = \left(\varphi_{\widetilde{\psi}}\right)^{\ast}$ given in Theorem~\ref{thm3-1-1}.
\end{cor}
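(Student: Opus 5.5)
We verify that $F$ and $G$ are well-defined contravariant functors and that they are quasi-inverse to each other, using Theorem~\ref{thm1-1} for objects and Theorem~\ref{thm3-1-1} for morphisms.

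\emph{Functor $F$.} For a tropical curve $\Gamma$, $\operatorname{Rat}(\Gamma)$ is a finitely generated semifield over $\boldsymbol{T}$ (\cite[Theorem~1.1]{JuAe3}). By Theorem~\ref{thm1-1}, it satisfies conditions $(1), \ldots, (5)$, so $F(\Gamma) \in \operatorname{Ob}(\mathscr{D})$. For a morphism $\varphi$, the pull-back $\varphi^{\ast}$ is a $\boldsymbol{T}$-algebra homomorphism (Subsection~\ref{subsection2.7}). Functoriality is immediate: $(\varphi_2 \circ \varphi_1)^{\ast} = \varphi_1^{\ast} \circ \varphi_2^{\ast}$ and $\mathrm{id}^{\ast} = \mathrm{id}$, because pull-back is precomposition.

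\emph{Full faithfulness of $F$.} Theorem~\ref{thm3-1-1} says exactly that, for tropical curves $\Gamma_1, \Gamma_2$, the map
\[
\operatorname{Hom}_{\mathscr{C}}(\Gamma_1,\Gamma_2) \to \operatorname{Hom}_{\mathscr{D}}(\operatorname{Rat}(\Gamma_2),\operatorname{Rat}(\Gamma_1)),\quad \varphi \mapsto \varphi^{\ast},
\]
is bijective: existence gives fullness and uniqueness gives faithfulness.

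\emph{Essential surjectivity.} Let $S \in \operatorname{Ob}(\mathscr{D})$ with fixed surjection $\psi_S$. By Theorem~\ref{thm1-1} (and its proof in \cite[Section~3]{JuAe5}), $V := \boldsymbol{V}(\operatorname{Ker}(\psi_S))$ is a connected finite union of $\boldsymbol{R}$-rational polyhedral sets of dimension at most one. With the lattice length and adding one point at infinity for each ray, it becomes a tropical curve $\overline{V}$. Moreover, $S \cong \overline{\boldsymbol{T}(\boldsymbol{X})}_n/\operatorname{Ker}(\psi_S) = \overline{\boldsymbol{T}(\boldsymbol{X})}_n/\boldsymbol{E}(V) \cong \operatorname{Rat}(\overline{V})$. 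The last isomorphism is given by restricting functions to $V$; it is precisely the content of \cite[Corollary~3.19]{JuAe5} and uses condition $(5)$, through Proposition~\ref{prop1-1} applied to the coordinate functions. Hence $F(G(S)) \cong S$.

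\emph{Functor $G$ and the equivalence.} On morphisms, $G(\widetilde{\psi}) = \varphi_{\widetilde{\psi}}$ is defined via these fixed isomorphisms $\iota_S : S \cong \operatorname{Rat}(G(S))$. Specifically, $\varphi_{\widetilde{\psi}}$ is the unique morphism with $\varphi_{\widetilde{\psi}}^{\ast} = \iota_{S_2} \circ \widetilde{\psi} \circ \iota_{S_1}^{-1}$, supplied by Theorem~\ref{thm3-1-1}. Uniqueness in that theorem yields functoriality of $G$: both $\varphi_{\widetilde{\psi}_2 \circ \widetilde{\psi}_1}$ and $\varphi_{\widetilde{\psi}_1} \circ \varphi_{\widetilde{\psi}_2}$ pull back to the same homomorphism, so they coincide. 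The $\iota_S$ then form a natural isomorphism $\mathrm{id}_{\mathscr{D}} \Rightarrow F \circ G$. In the other direction, for a tropical curve $\Gamma$, the composite $\operatorname{Rat}(\Gamma) \cong \operatorname{Rat}(G(F(\Gamma)))$ is, by Theorem~\ref{thm3-1-1}, the pull-back of a unique morphism $G(F(\Gamma)) \to \Gamma$. Applying the same argument to the inverse isomorphism and using uniqueness again, this morphism is an isomorphism, and naturality also follows from uniqueness.

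\emph{Main obstacle.} The only nonformal step is identifying $S$ with $\operatorname{Rat}(\overline{V})$. One must check that $\overline{V}$, as a tropical curve with lattice metric, really has $\overline{\boldsymbol{T}(\boldsymbol{X})}_n/\boldsymbol{E}(V)$ as its rational function semifield. I would take this directly from \cite[Corollary~3.19]{JuAe5} rather than reprove it. Everything else reduces to Theorem~\ref{thm3-1-1}, so this step is where the corollary's content actually lies.
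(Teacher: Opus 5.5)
Your proposal is correct and takes the paper's route. The paper states this corollary as an immediate consequence of Theorem~\ref{thm3-1-1}, which gives full faithfulness of $F$, together with Theorem~\ref{thm1-1}, which shows $F$ is well defined on objects and essentially surjective; you have written out the standard categorical bookkeeping, including the identification $S \cong \operatorname{Rat}(G(S))$ that is left implicit in the paper's definition of $G$ on morphisms.
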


In Corollary~\ref{cor3-1-1}, the \textit{natural compactification} of $\boldsymbol{V}(\operatorname{Ker}(\psi_S))$ as a tropical curve means doing one-point compactifications on all points at infinity and inheriting topology and metric from those of $\boldsymbol{V}(\operatorname{Ker}(\psi_S))$ when we regard $\boldsymbol{V}(\operatorname{Ker}(\psi_S))$ as a metric space by the lattice length.

As explained in Sections~\ref{section1} and~\ref{section2}, the definition of morphisms between tropical curves was given by Chan in \cite{Chan}, but she used only (finite) harmonic morphisms and did not expalin why she called it a morphism.
Corollary~\ref{cor3-1-1} justifies this definition by $\boldsymbol{T}$-algebra homomorphisms, which are an algebraically definite notion.

\subsection{Subgraphs}
    \label{subsection3.2}

In this subsection, we investigate the relation between subgraphs (other than points at infinity) of tropical curves and surjective $\boldsymbol{T}$-algebra homomorphisms between their rational function semifields.

\begin{prop}
    \label{prop3-2-1}
Let $\Gamma$ be a tropical curve and $\Gamma^{\prime}$ a connected subgraph of $\Gamma$ that is not contained in $\Gamma_{\infty}$.
Then, regarding $\Gamma^{\prime}$ itself as a tropical curve, the restriction $f|_{\Gamma^{\prime}}$ on $\Gamma^{\prime}$ for any $f \in \operatorname{Rat}(\Gamma)$ is in $\operatorname{Rat}(\Gamma^{\prime})$ and the restriction map $\psi : \operatorname{Rat}(\Gamma) \to \operatorname{Rat}(\Gamma^{\prime}) : f \mapsto f|_{\Gamma^{\prime}}$ is a surjective $\boldsymbol{T}$-algebra homomorphism.
\end{prop}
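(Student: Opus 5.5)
Since $\Gamma^{\prime}$ is a closed, connected subset of $\Gamma$ that is not contained in $\Gamma_{\infty}$, I first check that it is itself a tropical curve. Take a loopless model $(G, l)$ for $\Gamma$ whose vertex set contains the finitely many boundary points of $\Gamma^{\prime}$. Then $\Gamma^{\prime}$ is a union of vertices and edges of $G$, so it inherits a model. Its leaf edges of infinite length are exactly the rays of $\Gamma$ contained in $\Gamma^{\prime}$, and their points at infinity are the points of $\Gamma_{\infty}$ lying in $\Gamma^{\prime}$. An isolated point at infinity cannot occur because $\Gamma^{\prime}$ is connected and not contained in $\Gamma_{\infty}$. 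I equip $\Gamma^{\prime}$ with the inner metric coming from its edges; restricted to each edge, this agrees with the metric of $\Gamma$.

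Next I show $f|_{\Gamma^{\prime}} \in \operatorname{Rat}(\Gamma^{\prime})$ for every $f \in \operatorname{Rat}(\Gamma)$. If $f = -\infty$, this is trivial. Otherwise, refine the model so that $f$ is affine with integer slope on each edge. Then $f|_{\Gamma^{\prime}}$ is continuous, piecewise affine with integer slopes with respect to the edge parametrizations, has finitely many pieces, and takes the values $\pm\infty$ only at points of $\Gamma_{\infty} \cap \Gamma^{\prime} = \Gamma^{\prime}_{\infty}$. Because $\oplus$ and $\odot$ are defined pointwise on finite points and then extended continuously, restriction commutes with both operations. It also fixes the constants in $\boldsymbol{T}$. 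Hence $\psi$ is a $\boldsymbol{T}$-algebra homomorphism.

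The main step is surjectivity: every $g \in \operatorname{Rat}(\Gamma^{\prime})$ must extend to some $f \in \operatorname{Rat}(\Gamma)$. For $g = -\infty$, take $f = -\infty$. Otherwise, choose a model $(G, l)$ as above, refined so that $g$ is affine on each edge of $\Gamma^{\prime}$. The set $\overline{\Gamma \setminus \Gamma^{\prime}}$ is a finite union of edges of $G$ not lying in $\Gamma^{\prime}$. I define $f$ edge by edge on these.

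For a finite-length edge $e$ with endpoints $u, v$, the values $f(u), f(v)$ are already prescribed if those endpoints lie in $\Gamma^{\prime}$, and I prescribe them as $0$ otherwise. I then need a continuous piecewise affine function on $e \cong [0, l(e)]$ with integer slopes joining $f(u)$ to $f(v)$. Such a function exists, for example by going with slope $\pm N$ for large integer $N$ and then with slope $0$. This argument needs both endpoint values to be finite. This is guaranteed: $g$ is finite at all finite points, and an endpoint at infinity of an edge outside $\Gamma^{\prime}$ is a leaf end, so it does not lie in $\Gamma^{\prime}$ unless that edge is a ray belonging to $\Gamma^{\prime}$.

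For a ray $e$ not contained in $\Gamma^{\prime}$, I set $f$ equal to the prescribed finite value at its finite endpoint and constant along $e$. These choices agree at shared vertices, so $f$ is continuous, is a rational function on $\Gamma$, and satisfies $f|_{\Gamma^{\prime}} = g$.

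The point requiring the most care is the claim that $\Gamma^{\prime}$ with its inner metric is a tropical curve whose rational functions are exactly the edgewise piecewise integer-affine functions. Once a common model is fixed, this is bookkeeping. An alternative route would express $g$ via chip firing moves and extend each one, but the direct edgewise construction is simpler, so I would use it.
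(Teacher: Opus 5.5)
Your proof is correct, and it takes a different route from the paper's.

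\textbf{The paper's route.} The paper splits into cases. If $\Gamma' \subset \Gamma \setminus \Gamma_{\infty}$, it uses $\boldsymbol{T}$-linearity to normalize the real minimum of $f'$ on $\Gamma'$ to $0$. It then extends by a steep negative integer slope out of each boundary point until the value $0$ is reached, and sets the function constant $0$ elsewhere. If $\Gamma'$ meets $\Gamma_{\infty}$, it does not extend directly. Instead it argues algebraically: the image of $\psi$ contains $\psi(\operatorname{CF}(\overline{\Gamma \setminus L'}, \infty))$ for rays $L' \subset \Gamma'$ together with the functions from the compact case. By the generation argument in the proof of Lemma~1.4 of \cite{JuAe3}, these generate $\operatorname{Rat}(\Gamma')$. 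The case split is needed there because a global minimum of $f'$ need not exist once $\Gamma'$ contains rays.

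\textbf{Your route.} You fix a common model and extend edge by edge. This handles all cases uniformly because:
\begin{itemize}
\item the topological boundary of $\Gamma'$ consists of finitely many \emph{finite} points, so only finitely many real values have to be matched;
\item the rays on which $g$ may be unbounded lie entirely in $\Gamma'$, so you never touch them.
\end{itemize}
This gives a self-contained proof with no normalization and no external citation.

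\textbf{What the paper's structure buys.} The same case structure is reused verbatim for Proposition~\ref{prop4-4-1} in the parallel-ray setting. There, your constant extension along rays outside $\Gamma'$ would be illegal whenever such a ray is parallel to a ray of $\Gamma'$. You would instead have to give it the prescribed slope at infinity, as in the proof of Proposition~\ref{prop4-4-3}.

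\textbf{One fact to justify.} You use without proof that a closed connected $\Gamma'$ has finitely many boundary points. This follows in one line. If $a < b$ are points of an open edge with $a, b \notin \Gamma'$, then $\Gamma' \cap (a,b)$ is clopen in $\Gamma'$, so it is empty unless $\Gamma' \subset (a,b)$. Hence each open edge contributes at most two boundary points. The same argument applied to $\Gamma' \cap (a, p]$, for a point at infinity $p$, shows that no point at infinity can be a boundary point of $\Gamma'$. This is exactly what makes your prescribed boundary data finite.
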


\begin{proof}
Since $\Gamma^{\prime} \not\subset \Gamma_{\infty}$, the restriction $f|_{\Gamma^{\prime}}$ for any $f \in \operatorname{Rat}(\Gamma)$ is in $\operatorname{Rat}(\Gamma^{\prime})$, that is, $f|_{\Gamma^{\prime}}$ is never a constant function of $\infty$.

Clearly $\psi$ is a $\boldsymbol{T}$-algebra homomorphism and if $\Gamma^{\prime} = \Gamma$, then $\psi$ is surjective.

Assume that $\Gamma^{\prime} \subset \Gamma \setminus \Gamma_{\infty}$.
Let $f^{\prime} \in \operatorname{Rat}(\Gamma^{\prime}) \setminus \{ -\infty \}$.
Since $\Gamma^{\prime}$ is a subgraph of $\Gamma$ contained in $\Gamma \setminus \Gamma_{\infty}$, the rational function $f^{\prime}$ takes some real minimum value $a \in \boldsymbol{R}$ on $\Gamma^{\prime}$.
If there exists $f \in \operatorname{Rat}(\Gamma)$ such that $\psi(f) = a^{\odot (-1)} \odot f^{\prime}$, then $\psi(a \odot f) = f^{\prime}$ holds as $\psi$ is a $\boldsymbol{T}$-algebra homomorphism.
Hence it is enough to show the case $a = 0$, and assume this.
For a negative integer $s$ whose absolute value is sufficiently large, let $g$ be the function of $\Gamma$ such that $(1)$ $g$ coincides with $f^{\prime}$ on $\Gamma^{\prime}$ and $(2)$ $g$ has $s$ as its slope in the outgoing direction at each boundary point $x$ of $\Gamma^{\prime}$ in $\Gamma$ until it takes the value zero and $(3)$ $g$ is constant zero the outside of the union of $\Gamma^{\prime}$ and all such segments incident to each $x$. 
Then $g \in \operatorname{Rat}(\Gamma)$ and $\psi(g) = f^{\prime}$ hold by definition, and thus $\psi$ is surjective in this case.

Assume that $\Gamma^{\prime} \not= \Gamma$ and $\Gamma^{\prime} \cap \Gamma_{\infty} \not= \varnothing$.
Let $L^{\prime} \subset \Gamma^{\prime}$ be a ray of $\Gamma$.
Then $\psi(\operatorname{CF}(\overline{\Gamma \setminus L^{\prime}}, \infty)) \in \operatorname{Rat}(\Gamma^{\prime})$ holds, where $\overline{\Gamma \setminus L^{\prime}}$ denotes the closure of $\Gamma \setminus L^{\prime}$ in $\Gamma$.
By the previous case and \cite[the proof of Lemma~1.4]{JuAe3}, the $\boldsymbol{T}$-algebra homomorphism $\psi$ is surjective.
\end{proof}

The converse of Proposition~\ref{prop3-2-1} holds as follows.

\begin{prop}
    \label{prop3-2-2}
Let $\Gamma$ and $\Gamma^{\prime}$ be tropical curves and $\psi : \operatorname{Rat}(\Gamma) \twoheadrightarrow \operatorname{Rat}(\Gamma^{\prime})$ a surjective $\boldsymbol{T}$-algebra homomorphism.
Then the corresponding injective morphism $\iota : \Gamma^{\prime} \hookrightarrow \Gamma$ such that $\psi = \iota^{\ast}$ is a local isometry and its image $\operatorname{Im}(\iota)$ is not contained in $\Gamma_{\infty}$.
\end{prop}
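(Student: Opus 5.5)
By Theorem~\ref{thm3-1-1} there is a unique morphism $\iota : \Gamma^{\prime} \to \Gamma$ with $\psi = \iota^{\ast}$. It remains to show three things: $\iota$ is injective, $\iota$ is a local isometry (every degree $\operatorname{deg}_e(\iota)$ equals $1$), and $\operatorname{Im}(\iota) \not\subset \Gamma_{\infty}$. The last is the easiest and I will do it first.

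\emph{Image not contained in $\Gamma_{\infty}$.} Since $\Gamma^{\prime}$ is connected and $\Gamma_{\infty}$ is finite, $\iota$ would have to be constant, equal to a point at infinity $p$. Then every $f \in \operatorname{Rat}(\Gamma)$ is sent to the constant $f(p) \in \boldsymbol{T} \cup \{\infty\}$. Since $\psi$ is a $\boldsymbol{T}$-algebra homomorphism into rational functions, its image would lie in $\boldsymbol{T}$. But $\operatorname{Rat}(\Gamma^{\prime})$ contains nonconstant functions (for instance a chip firing move $\operatorname{CF}(\{x\}, l)$), so $\psi$ would not be surjective. (If $\Gamma^{\prime}$ were a single point, the same argument forces $f(p)$ finite for all $f$, which fails for a chip firing move that is $-\infty$ at $p$.)

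\emph{Injectivity.} Suppose $\iota(x) = \iota(y)$ with $x \neq y$. Then $\psi(f)(x) = \psi(f)(y)$ for all $f$, so every element of $\operatorname{Rat}(\Gamma^{\prime}) = \operatorname{Im}(\psi)$ takes equal values at $x$ and $y$. This fails for $\operatorname{CF}(\{x\}, l)$ with $l$ small, which vanishes at $x$ and not at $y$; if $x$ is a point at infinity, use $\operatorname{CF}$ of the complement of the ray toward $x$. Hence $\iota$ is injective. In particular no edge is contracted, so every $\operatorname{deg}_e(\iota) \ge 1$.

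\emph{Local isometry.} This is the main step. Choose loopless models as in Subsection~\ref{subsection2.7}, and suppose some edge $e$ of $\Gamma^{\prime}$ has $d := \operatorname{deg}_e(\iota) \ge 2$. For any $f \in \operatorname{Rat}(\Gamma)$, the slopes of $f \circ \iota$ along $e$ equal $d$ times the integer slopes of $f$ along $\iota(e)$, so they are all divisible by $d$. On the other hand, the chip firing move $\operatorname{CF}(\{x\}, l)$, with $x$ an interior finite point of $e$ and $l$ small, lies in $\operatorname{Rat}(\Gamma^{\prime})$ and has slope $\pm 1$ on $e$ near $x$. Such a function cannot be of the form $f \circ \iota$, contradicting surjectivity. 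Therefore $\operatorname{deg}_e(\iota) = 1$ for every edge.

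Combining the three parts, $\iota$ is an injective morphism that preserves lengths edgewise and, being injective, has an image that is an embedded subgraph. It is therefore a local isometry with image not contained in $\Gamma_{\infty}$. The point needing most care is the bookkeeping with the chosen models, in particular checking that slopes at points at infinity behave the same way. The divisibility argument handles those too, since the slope on a ray is also scaled by $\operatorname{deg}_e(\iota)$.
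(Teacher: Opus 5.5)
Your proof is correct, and its central step matches the paper's. Pulling back a chip firing move centered at an interior point, whose slope there is $\pm 1$, forces $\operatorname{deg}_e(\iota)=1$, because all slopes of $f\circ\iota$ on $e$ lie in $\operatorname{deg}_e(\iota)\boldsymbol{Z}$; the paper phrases this as $-\operatorname{dist}(x',y')/\operatorname{dist}(\iota(x'),\iota(y'))$ being an integer of absolute value at most one. The only difference is that you derive injectivity and $\operatorname{Im}(\iota)\not\subset\Gamma_\infty$ directly from Theorem~\ref{thm3-1-1} using chip firing moves, whereas the paper simply takes both from \cite[Proposition~3.11 and Theorem~3.14]{JuAe4}.
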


\begin{proof}
The existence of the injective morphism $\iota$ that satisfies $\psi(f) = f \circ \iota$ for any $f \in \operatorname{Rat}(\Gamma)$ and $\operatorname{Im}(\iota) \not\subset \Gamma_{\infty}$ is guaranteed by \cite[Proposition~3.11 and Theorem~3.14]{JuAe4}.
For any $x^{\prime} \in \Gamma^{\prime} \setminus \Gamma^{\prime}_{\infty}$, since $\psi$ is surjective, there exists $f \in \operatorname{Rat}(\Gamma)$ such that $\operatorname{CF}(\{ x^{\prime} \}, \infty) = \psi(f) = f \circ \iota$.
For an edge $e^{\prime}$ of $\Gamma^{\prime}$ containing $x^{\prime}$ and any point $y^{\prime} \in e^{\prime}$ close enough to $x^{\prime}$, since $0 = \operatorname{CF}(\{ x^{\prime} \}, \infty)(x^{\prime}) = (f \circ \iota)(x^{\prime}) = f(\iota(x^{\prime}))$ and $- \operatorname{dist}(x^{\prime}, y^{\prime}) = \operatorname{CF}(\{ x^{\prime} \}, \infty)(y^{\prime}) = (f \circ \iota)(y^{\prime}) = f(\iota(y^{\prime}))$, the slope of $f$ from $\iota(x^{\prime})$ to $\iota(y^{\prime})$ is $- \frac{\operatorname{dist}(x^{\prime}, y^{\prime})}{\operatorname{dist}(\iota(x^{\prime}), \iota(y^{\prime}))} \in \boldsymbol{Z}$.
As $\varphi$ is an injective morphism, the inequality $\operatorname{dist}(\iota(x^{\prime}), \iota(y^{\prime})) \ge \operatorname{dist}(x^{\prime}, y^{\prime})$ holds, and so $\operatorname{dist}(\iota(x^{\prime}), \iota(y^{\prime}))$ must be $\operatorname{dist}(x^{\prime}, y^{\prime})$, that is, $\iota$ is a local isometry.
\end{proof}

By these two propositions, we can identify a connected subgraph $\Gamma^{\prime}$ of a tropical curve $\Gamma$ not contained in $\Gamma_{\infty}$ with a surjective $\boldsymbol{T}$-algebra homomorphism $\operatorname{Rat}(\Gamma) \twoheadrightarrow \operatorname{Rat}(\Gamma^{\prime})$.
More generally, we can identify a (possibly disconnected) subgraph $\Gamma^{\prime}$ of $\Gamma$ whose every connected component $\Gamma_i^{\prime}$ is not contained in $\Gamma_{\infty}$ with a surjective $\boldsymbol{T}$-algebra homomorphism $\psi : \operatorname{Rat}(\Gamma) \twoheadrightarrow \bowtie_{i = 1}^l \operatorname{Rat}(\Gamma_i^{\prime})$ as follows.

\begin{prop}
    \label{prop3-2-3}
Let $\Gamma$ be a tropical curve and $\Gamma^{\prime}$ a subgraph of $\Gamma$ with connected components $\Gamma^{\prime}_1, \ldots, \Gamma^{\prime}_l$ such that $\Gamma^{\prime}_j \not\subset \Gamma_{\infty}$ for each $j = 1, \ldots, l$.
Then the $l$-tuple $(f|_{\Gamma^{\prime}_1}, \ldots, f|_{\Gamma^{\prime}_l})$ for any $f \in \operatorname{Rat}(\Gamma)$ is in $\bowtie_{i = 1}^l \operatorname{Rat}(\Gamma_i^{\prime})$ and the restriction map $\psi : \operatorname{Rat}(\Gamma) \to \bowtie_{i = 1}^l \operatorname{Rat}(\Gamma_i^{\prime}); f \mapsto (f|_{\Gamma^{\prime}_1}, \ldots, f|_{\Gamma^{\prime}_l})$ is a surjective $\boldsymbol{T}$-algebra homomorphism.
Moreover for each natural surjective $\boldsymbol{T}$-algebra homomorphism $\pi_j : \bowtie_{i = 1}^l \operatorname{Rat}(\Gamma_i^{\prime}) \twoheadrightarrow \operatorname{Rat}(\Gamma_j^{\prime})$, the composition $\pi_j \circ \psi$ coincides with the restriction map $\operatorname{Rat}(\Gamma) \twoheadrightarrow \operatorname{Rat}(\Gamma_j^{\prime}); f \mapsto f|_{\Gamma_j^{\prime}}$.
\end{prop}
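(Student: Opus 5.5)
The plan is to reduce everything to Proposition~\ref{prop3-2-1}, applied componentwise, plus one new construction: gluing local preimages into a single rational function on $\Gamma$.

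\emph{Well-definedness and the homomorphism property.} For $f \in \operatorname{Rat}(\Gamma)$, Proposition~\ref{prop3-2-1} applied to each $\Gamma^{\prime}_j$ gives $f|_{\Gamma^{\prime}_j} \in \operatorname{Rat}(\Gamma^{\prime}_j)$. A rational function on the connected curve $\Gamma$ is either identically $-\infty$ or takes real values on $\Gamma \setminus \Gamma_{\infty}$. Since $\Gamma^{\prime}_j \not\subset \Gamma_{\infty}$, the restriction $f|_{\Gamma^{\prime}_j}$ is $-\infty$ exactly when $f = -\infty$. Hence the $l$-tuple is either all $-\infty$ or has no $-\infty$ entry, so it lies in $\bowtie_{i=1}^l \operatorname{Rat}(\Gamma_i^{\prime})$. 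Restriction commutes with $\oplus$ and $\odot$ componentwise and fixes constants, so $\psi$ is a $\boldsymbol{T}$-algebra homomorphism. The identity $\pi_j \circ \psi = (f \mapsto f|_{\Gamma^{\prime}_j})$ is immediate from the definition of $\pi_j$: both sides send $-\infty$ to $-\infty$ and send $f \ne -\infty$ to $f|_{\Gamma^{\prime}_j}$.

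\emph{Surjectivity.} Fix $(f_1^{\prime}, \ldots, f_l^{\prime})$ with every $f_j^{\prime} \neq -\infty$; the case of $(-\infty,\ldots,-\infty)$ is trivial. By Proposition~\ref{prop3-2-1}, choose $g_j \in \operatorname{Rat}(\Gamma)$ with $g_j|_{\Gamma^{\prime}_j} = f^{\prime}_j$. The task is to combine the $g_j$ into one $f$ with $f|_{\Gamma^{\prime}_j} = f^{\prime}_j$ for every $j$. The components $\Gamma^{\prime}_j$ are pairwise disjoint closed sets. Using the chip firing moves $\operatorname{CF}(\Gamma^{\prime}_j, \epsilon)$, which vanish exactly on $\Gamma^{\prime}_j$ and equal $-\epsilon$ away from an $\epsilon$-neighborhood, I would form
\begin{align*}
h_j := g_j \odot \operatorname{CF}(\Gamma^{\prime}_j, \epsilon)^{\odot N}, \qquad f := \bigoplus_{j=1}^l h_j,
\end{align*}
with $\epsilon$ smaller than half the minimal distance between distinct components and $N$ a large positive integer. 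Then $h_j = g_j$ on $\Gamma^{\prime}_j$, while on $\Gamma^{\prime}_k$ with $k \neq j$ we have $h_j = g_j - N\epsilon$. So $f = f^{\prime}_k$ on $\Gamma^{\prime}_k$ provided $g_j - N\epsilon < g_k$ on $\Gamma^{\prime}_k$ for all $j \neq k$.

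\emph{Main obstacle.} The inequality above needs $g_j - g_k$ to be bounded above on $\Gamma^{\prime}_k$, and this fails when $\Gamma^{\prime}_k$ contains a point at infinity. There the functions may tend to $\pm\infty$ with different slopes, so no finite $N\epsilon$ suffices. To handle this, I would first normalize the $g_j$ on the rays. The plan is to choose $g_j$ so that, outside a compact neighborhood of $\Gamma^{\prime}_j$, it agrees with a fixed function having prescribed slopes on each ray. This is possible because, as in the proof of Proposition~\ref{prop3-2-1}, $g_j$ may be modified freely off $\Gamma^{\prime}_j$: one can use the construction with steep slopes leaving $\Gamma^{\prime}_j$, and on rays of $\Gamma$ contained in $\Gamma^{\prime}_k$ ($k \ne j$) one can make $g_j$ eventually constant by capping with $\operatorname{CF}$. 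Rays contained in $\Gamma^{\prime}_k$ are disjoint from $\Gamma^{\prime}_j$, since the components are disjoint and a ray meeting both would join them. On such a ray, $g_k = f^{\prime}_k$ may tend to $-\infty$ with a negative slope, while the modified $g_j$ is bounded. In that case I would replace the constant cap by a function decreasing along the ray more steeply than $f^{\prime}_k$. This requires a steeper integer slope, which exists because $f_k^{\prime}$ has only finitely many pieces.

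After this normalization, each difference $g_j - g_k$ is bounded above on $\Gamma^{\prime}_k$, and a large $N$ completes the argument. Verifying this normalization on the rays is the step I expect to need the most care.
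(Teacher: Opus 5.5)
Your proposal is correct, but the surjectivity step takes a more laborious route than the paper.

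\textbf{The paper's route.} The paper glues with $\odot$ rather than $\oplus$. For each $i$ it takes $g_i \in \operatorname{Rat}(\Gamma)$ with $g_i|_{\Gamma'_i} = f_i$ and $g_i|_{\Gamma'_j} = 0$ for all $j \neq i$. Such a $g_i$ exists because the components are disjoint. For instance, use the steep-slope construction from the proof of Proposition~\ref{prop3-2-1}: leave the finitely many boundary points of $\Gamma'_i$, which are finite points at positive distance from the other components, with large integer slopes until the value $0$ is reached. Then $\psi(g_i) = (0, \ldots, 0, f_i, 0, \ldots, 0)$. Since multiplication in $\bowtie_{i=1}^l \operatorname{Rat}(\Gamma'_i)$ is componentwise, $\psi(g_1 \odot \cdots \odot g_l) = (f_1, \ldots, f_l)$. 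On $\Gamma'_k$ the other factors are identically $0$, the multiplicative unit, so nothing has to be compared. The behaviour of $f_k$ at points at infinity is irrelevant, and your ``main obstacle'' never arises.

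\textbf{Your route.} In your $\oplus$-version, $\bigoplus_j h_j$ picks out $h_k$ on $\Gamma'_k$ only if the other $h_j$ are uniformly dominated there. That is why you need the chip firing cut-offs, the large $N$, and the ray-by-ray normalization (making $g_j$ fall more steeply than $f'_k$ on tails where $f'_k \to -\infty$). That normalization does work:
\begin{itemize}
\item with it, $g_j - f'_k$ is bounded above on $\Gamma'_k$;
\item $\operatorname{dist}(\Gamma'_j, \Gamma'_k) > 0$, so you can fix $\epsilon$;
\item continuity gives the correct values at points at infinity.
\end{itemize}
However, it already uses the same freedom to prescribe $g_j$ off $\Gamma'_j$ that the paper relies on. Once you have that freedom, the product $g_1 \odot \cdots \odot g_l$ finishes at once, and the cut-off machinery adds nothing.

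\textbf{A small slip.} You justify that a ray contained in $\Gamma'_k$ misses $\Gamma'_j$ by saying ``a ray meeting both would join them.'' That is not right as stated: a leaf edge can meet two components, e.g.\ $[0,1]$ and $[2,\infty]$ inside $[0,\infty]$. The claim you actually need follows immediately from $\Gamma'_j \cap \Gamma'_k = \varnothing$.

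Your well-definedness argument and the identity for $\pi_j \circ \psi$ match the paper, which cites Propositions~\ref{prop3-2-1} and~\ref{prop2-8-1} for them.
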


\begin{proof}
The existence and definition of $\psi$ is clear by Propositions~\ref{prop3-2-1} and \ref{prop2-8-1}.
Since $\Gamma_i^{\prime} \cap \Gamma^{\prime}_j = \varnothing$ for $i \not= j$, for any $f_i \in \operatorname{Rat}(\Gamma^{\prime}_i) \setminus \{ -\infty \}$, there exists $g_i \in \operatorname{Rat}(\Gamma)$ such that $g_i|_{\Gamma^{\prime}_i} = f_i$ and $g|_{\Gamma^{\prime}_j} = 0$ for $j \not= i$.
Then $\psi(g_i) = (0, \ldots, 0, f_i, 0, \ldots, 0)$, where $f_i$ is the $i$th component, and thus $\psi(g_1 \odot \cdots \odot g_l) = \psi(g_1) \odot \cdots \odot \psi(g_l) = (f_1, 0, \ldots, 0) \odot \cdots \odot (0, \ldots, 0, f_l) = (f_1, \ldots, f_l)$.
This means that $\psi$ is surjective.
\end{proof}

\begin{prop}
    \label{prop3-2-4}
Let $\Gamma$ and $\Gamma_i$ be tropical curves and $\psi : \operatorname{Rat}(\Gamma) \twoheadrightarrow \bowtie_{i = 1}^l \operatorname{Rat}(\Gamma_i)$ a surjective $\boldsymbol{T}$-algebra homomorphism.
Then the corresponding injective map $\iota : \bigsqcup_{i = 1}^l \Gamma_i^{\prime} \hookrightarrow \Gamma$ satisfying $\psi(f) = f \circ \iota$ for any $f \in \operatorname{Rat}(\Gamma)$ is a local isometry and each connected component of its image $\operatorname{Im}(\iota)$ is not contained in $\Gamma_{\infty}$.
\end{prop}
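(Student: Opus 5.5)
The plan is to reduce to the connected case, Proposition~\ref{prop3-2-2}, by composing $\psi$ with the natural projections. For each $j = 1, \ldots, l$, let $\pi_j : \bowtie_{i = 1}^l \operatorname{Rat}(\Gamma_i) \twoheadrightarrow \operatorname{Rat}(\Gamma_j)$ be the natural surjective $\boldsymbol{T}$-algebra homomorphism (iterating \cite[Lemma~3.33]{JuAe=Nakajima} via Proposition~\ref{prop2-8-2}). Then $\psi_j := \pi_j \circ \psi : \operatorname{Rat}(\Gamma) \twoheadrightarrow \operatorname{Rat}(\Gamma_j)$ is a surjective $\boldsymbol{T}$-algebra homomorphism between rational function semifields of (connected) tropical curves. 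By Proposition~\ref{prop3-2-2}, it corresponds to an injective morphism $\iota_j : \Gamma_j \hookrightarrow \Gamma$ with $\psi_j = \iota_j^{\ast}$. Moreover, $\iota_j$ is a local isometry and $\operatorname{Im}(\iota_j) \not\subset \Gamma_{\infty}$.

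Define $\iota : \bigsqcup_{i=1}^l \Gamma_i \to \Gamma$ by $\iota|_{\Gamma_j} := \iota_j$. Then $\psi(f) = (f \circ \iota_1, \ldots, f \circ \iota_l) = f \circ \iota$, where we identify $\operatorname{Rat}(\bigsqcup \Gamma_i)$ with $\bowtie \operatorname{Rat}(\Gamma_i)$ as in Subsection~\ref{subsection2.8}. The map $\iota$ is a local isometry because each $\iota_j$ is one. The image $\operatorname{Im}(\iota)$ is the union of the connected sets $\operatorname{Im}(\iota_j)$, none of which lies in $\Gamma_{\infty}$. Hence once injectivity is established, the connected components of $\operatorname{Im}(\iota)$ are exactly the $\operatorname{Im}(\iota_j)$, and none is contained in $\Gamma_{\infty}$.

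The main obstacle is injectivity of $\iota$ across components, namely $\operatorname{Im}(\iota_i) \cap \operatorname{Im}(\iota_j) = \varnothing$ for $i \neq j$. For this I would use surjectivity of $\psi$ directly.

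Suppose $x = \iota_i(p) = \iota_j(q)$ with $i \neq j$. Choose $(f_1, \ldots, f_l) \in \bowtie \operatorname{Rat}(\Gamma_k)$ with all $f_k = 0$ except $f_i$. If $p$ is a finite point, take $f_i$ to be a function with $f_i(p) = 1$, e.g.\ $1 \odot \operatorname{CF}(\{p\}, 1) \oplus 0$ suitably adjusted; any element with $f_i(p) \neq 0$ works. By surjectivity there is $f \in \operatorname{Rat}(\Gamma)$ with $\psi(f) = (f_1, \ldots, f_l)$. Then $f(x) = f_i(p) \neq 0 = f_j(q) = f(x)$, a contradiction.

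If $p$ (or $q$) is a point at infinity, the same argument works with a function on $\Gamma_i$ that is finite near $p$ but takes value $-\infty$ at $p$, such as $\operatorname{CF}(\overline{\Gamma_i \setminus L}, \infty)$ for the ray $L$ containing $p$. The right-hand side is then $0$, giving $-\infty = 0$. Finally, injectivity within each component comes from Proposition~\ref{prop3-2-2}, which completes the proof.
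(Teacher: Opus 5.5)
Your argument is correct, and it handles the disconnected target differently from the paper. The paper's proof is a one-line citation. It uses \cite[Lemma~3.41]{JuAe=Nakajima} together with \cite[Proposition~3.11 and Theorem~3.14]{JuAe4} to treat $\bowtie_{i=1}^l \operatorname{Rat}(\Gamma_i)$ as the rational function semifield of the disjoint union. The injective map $\iota$ on all of $\bigsqcup_i \Gamma_i$, including injectivity across components, then comes out of the general correspondence between surjections and embeddings. Proposition~\ref{prop3-2-2} is used only for the local-isometry part.

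You instead project to each factor and apply Proposition~\ref{prop3-2-2} to each $\pi_j \circ \psi$. You then prove that the images $\operatorname{Im}(\iota_i)$ are pairwise disjoint by hand from surjectivity of $\psi$. This avoids the external lemma and shows exactly where surjectivity onto the pseudodirect product, rather than onto each factor separately, is needed. The paper's route is shorter but leaves that point inside the references.

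Your case split is unnecessary. The element $(0, \ldots, 0, 1, 0, \ldots, 0)$, with the constant $1$ in the $i$th slot, already lies in $\bowtie_{k} \operatorname{Rat}(\Gamma_k)$. Any preimage $f$ would then satisfy both $f(x) = 1$ and $f(x) = 0$, whether $p$, $q$, $x$ are finite points or points at infinity, since constant functions extend continuously with the same value. This also avoids a small issue in your infinity case: if $\Gamma_i$ is itself the ray $L$, then $\overline{\Gamma_i \setminus L}$ is empty and $\operatorname{CF}(\overline{\Gamma_i \setminus L}, \infty)$ is undefined.
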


\begin{proof}
It is clear by \cite[Lemma~3.41]{JuAe=Nakajima}, \cite[Proposition~3.11 and Theorem~3.14]{JuAe4} and Proposition~\ref{prop3-2-2}.
\end{proof}

\begin{rem}
    \label{rem3-2-1}
\upshape{
From Propositions~\ref{prop3-2-3} and \ref{prop3-2-4}, we improve the definition of subgraphs of tropical curves by adding the condition that they have no connected components consisting of only one point at infinity.
In what follows, we always use the word ``subgraphs of tropical curves" in this manner.
And note that these mean that we can regard such subgraphs as disconnected tropical curves embedded in a tropical curve.
Also we can generalize these propositions by replacing $\Gamma_1$ and $\Gamma_2$ with disconnected tropical curves respectively, but in this paper we do not use it, and so we omit it.
}
\end{rem}

\subsection{Gluing and decomposition}
    \label{subsection3.3}

In this subsection, we give the notion of gluing tropical curves along subgraphs, and translate this into algebraic language.

\begin{dfn}
    \label{dfn3-3-1}
\upshape{
Let $\Gamma_i$ and $\Gamma^{\prime}$ be tropical curves, respectively, and $\psi_i : \operatorname{Rat}(\Gamma_i) \twoheadrightarrow \operatorname{Rat}(\Gamma^{\prime})$ a surjective $\boldsymbol{T}$-algebra homomorphism.
For the natural surjective $\boldsymbol{T}$-algebra homomorphism $\pi_i : \operatorname{Rat}(\Gamma_1) \bowtie \operatorname{Rat}(\Gamma_2) \twoheadrightarrow \operatorname{Rat}(\Gamma_i); (f_1, f_2) \mapsto f_i$, we define
\begin{align*}
&~\operatorname{Rat}(\Gamma_1) \bowtie_{\operatorname{Rat}(\Gamma^{\prime})}\operatorname{Rat}(\Gamma_2)\\
:=&~\{ h \in \operatorname{Rat}(\Gamma_1) \bowtie \operatorname{Rat}(\Gamma_2) \,|\, \psi_1(\pi_1(h)) = \psi_2(\pi_2(h))\}.
\end{align*}
}
\end{dfn}

As in the setting of Definition~\ref{dfn3-3-1}, let $\iota_i : \Gamma^{\prime} \hookrightarrow \Gamma_i$ be the injective morphism that is a local isometry such that $\psi_i = \iota_i^{\ast}$ and $\Gamma_1 \sqcup_{\Gamma^{\prime}} \Gamma_2$ the tropical curve obtained by gluing $\Gamma_1$ and $\Gamma_2$ via $\iota_1$ and $\iota_2$.
More explicitly, $\Gamma_1 \sqcup_{\Gamma^{\prime}} \Gamma_2$ is the quotient set of the disconnected tropical curve $\Gamma_1 \sqcup \Gamma_2$ by the equivalence relation $\sim$ generated by
\begin{align*}
x_1 \sim x_2 \quad \Longleftrightarrow \quad x_1 \in \iota_1(\Gamma^{\prime}) \text{ and } x_2 = \iota_2(\iota_1^{-1}(x_1))
\end{align*}
\noindent
for $x_i \in \Gamma_i$.
Since $\iota_i$ is an injective local isometry, $\Gamma_1 \sqcup_{\Gamma^{\prime}} \Gamma_2$ has the natural topology and metric structure that inherit those of $\Gamma_i$.

\begin{prop}
    \label{prop3-3-1}
In the above setting, the set $\operatorname{Rat}(\Gamma_1) \bowtie_{\operatorname{Rat}(\Gamma^{\prime})} \operatorname{Rat}(\Gamma_2)$ is a subsemifiled over $\boldsymbol{T}$ of $\operatorname{Rat}(\Gamma_1) \bowtie \operatorname{Rat}(\Gamma_2)$ isomorphic to $\operatorname{Rat}(\Gamma_1 \sqcup_{\Gamma^{\prime}} \Gamma_2)$ as a $\boldsymbol{T}$-algebra.
\end{prop}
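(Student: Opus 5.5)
Write $\Gamma := \Gamma_1 \sqcup_{\Gamma^{\prime}} \Gamma_2$, and let $p : \Gamma_1 \sqcup \Gamma_2 \twoheadrightarrow \Gamma$ be the quotient map. The restrictions $p_i := p|_{\Gamma_i} : \Gamma_i \to \Gamma$ are injective local isometries, hence morphisms of degree one. The plan is to show that the map
\begin{align*}
\Phi : \operatorname{Rat}(\Gamma) \to \operatorname{Rat}(\Gamma_1) \bowtie \operatorname{Rat}(\Gamma_2); \quad f \mapsto (f \circ p_1, f \circ p_2)
\end{align*}
is an injective $\boldsymbol{T}$-algebra homomorphism whose image is exactly $\operatorname{Rat}(\Gamma_1) \bowtie_{\operatorname{Rat}(\Gamma^{\prime})} \operatorname{Rat}(\Gamma_2)$. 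Both claims of the proposition then follow at once: the subset is a subsemifield over $\boldsymbol{T}$, being the image of a semifield under an injective $\boldsymbol{T}$-algebra homomorphism, and it is isomorphic to $\operatorname{Rat}(\Gamma)$.

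\textbf{Step 1 ($\Phi$ is a well-defined homomorphism).} The map $\Phi$ is the homomorphism given by Proposition~\ref{prop2-8-1} applied to the pull-backs $p_1^{\ast}$ and $p_2^{\ast}$. For this we only need that each $p_i^{\ast}$ is a $\boldsymbol{T}$-algebra homomorphism, which holds since $p_i$ is a morphism (Subsection~\ref{subsection2.7}).

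\textbf{Step 2 (injectivity and image containment).} Injectivity is clear, since $\Gamma = p_1(\Gamma_1) \cup p_2(\Gamma_2)$, so $f$ is determined by $f \circ p_1$ and $f \circ p_2$. For containment, note that $p_1 \circ \iota_1 = p_2 \circ \iota_2$ by the definition of $\sim$. Hence
\begin{align*}
\psi_1(f \circ p_1) = f \circ p_1 \circ \iota_1 = f \circ p_2 \circ \iota_2 = \psi_2(f \circ p_2),
\end{align*}
using $\psi_i = \iota_i^{\ast}$ from Proposition~\ref{prop3-2-2}. For $f = -\infty$ both sides are $-\infty$.

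\textbf{Step 3 (surjectivity onto the fibre product; the main step).} Take $(h_1, h_2)$ in the fibre product, and assume it is not $(-\infty, -\infty)$, so that neither $h_i$ is $-\infty$. The compatibility condition says $h_1 = h_2 \circ \iota_2 \circ \iota_1^{-1}$ on $\iota_1(\Gamma^{\prime})$. Therefore the function $f$ on $\Gamma$ defined by $f(p_i(x)) := h_i(x)$ is well defined. Indeed, points identified by $\sim$ are related through a chain through $\Gamma^{\prime}$, and since the $\iota_i$ are injective the chain has length one.

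It remains to check $f \in \operatorname{Rat}(\Gamma)$. Continuity follows from the gluing lemma, because $p(\Gamma_1)$ and $p(\Gamma_2)$ are closed and cover $\Gamma$. Each $p_i$ is an isometry onto its image locally, so on each edge of a model of $\Gamma$ refining the images of models of $\Gamma_1$ and $\Gamma_2$, $f$ is piecewise affine with integer slopes and finitely many pieces. The values $\pm\infty$ can only occur at points at infinity, which come from points at infinity of $\Gamma_i$.

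The main care point is producing this common model. I would take the images of the vertex sets of models of $\Gamma_1$ and $\Gamma_2$, together with the boundary points of $p(\iota_1(\Gamma^{\prime}))$ in $\Gamma$; this is a finite set because $\Gamma^{\prime}$ is a subgraph with finitely many components. One must also check that rays of $\Gamma$ still carry infinite length only on leaf edges. This holds because a point at infinity of $\Gamma_i$ either stays a leaf end, or is identified with a point at infinity of the other curve along a common ray, in which case it still has valence one.

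Once these points are settled, $\Phi(f) = (h_1, h_2)$, so the image of $\Phi$ is the whole fibre product, which completes the proof.
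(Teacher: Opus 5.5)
Your proof is correct and is essentially the paper's argument run in the opposite direction. The paper defines the gluing map from $\operatorname{Rat}(\Gamma_1) \bowtie_{\operatorname{Rat}(\Gamma^{\prime})} \operatorname{Rat}(\Gamma_2)$ to $\operatorname{Rat}(\Gamma_1 \sqcup_{\Gamma^{\prime}} \Gamma_2)$ and obtains the subsemifield property directly, as the equalizer of the two $\boldsymbol{T}$-algebra homomorphisms $\psi_i \circ \pi_i$; you instead take the restriction map $\Phi$ as primary, get the subsemifield property as its injective image, and use the same gluing construction for surjectivity, while spelling out the checks the paper leaves to ``by definition'' (the length-one chains for $\sim$, the common model, and the behaviour at points at infinity).
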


\begin{proof}
Since $\operatorname{Rat}(\Gamma_1) \bowtie \operatorname{Rat}(\Gamma_2)$ is a semifield over $\boldsymbol{T}$ via the diagonal map $\boldsymbol{T} \hookrightarrow \operatorname{Rat}(\Gamma_1) \bowtie \operatorname{Rat}(\Gamma_2); t \mapsto (t, t)$ and the composition $\psi_i \circ \pi_i$ is a $\boldsymbol{T}$-algebra homomorphism, $\operatorname{Rat}(\Gamma_1) \bowtie_{\operatorname{Rat}(\Gamma^{\prime})}\operatorname{Rat}(\Gamma_2)$ contains $\boldsymbol{T}$ naturally and is closed by the operations taking tropical sum $\oplus$, tropical multiplication $\odot$, and tropical division $^{\odot (-1)}$, and thus is a subsemifiled over $\boldsymbol{T}$ of $\operatorname{Rat}(\Gamma_1) \bowtie \operatorname{Rat}(\Gamma_2)$.

For $f \in \operatorname{Rat}(\Gamma_1) \bowtie_{\operatorname{Rat}(\Gamma^{\prime})} \operatorname{Rat}(\Gamma_2)$, let $\psi(f)$ be the function on $\Gamma_1 \sqcup_{\Gamma^{\prime}} \Gamma_2$ defined by
\begin{align*}
\psi(f) = \begin{cases}
\psi_1(\pi_1(f)) \quad \text{on} \quad (\Gamma_1 \setminus \iota_1(\Gamma^{\prime})) \subset \Gamma_1 \sqcup_{\Gamma^{\prime}} \Gamma_2,\\
\psi_2(\pi_2(f)) \quad \text{on} \quad (\Gamma_2 \setminus \iota_2(\Gamma^{\prime})) \subset \Gamma_1 \sqcup_{\Gamma^{\prime}} \Gamma_2,\\
\psi_1(\pi_1(f)) \quad \text{on} \quad \iota_1(\Gamma^{\prime}) = \iota_2(\Gamma^{\prime})  \subset \Gamma_1 \sqcup_{\Gamma^{\prime}} \Gamma_2.
\end{cases}
\end{align*}
This is well-defined and is a rational function on $\Gamma_1 \sqcup_{\Gamma^{\prime}} \Gamma_2$ by definition.
Clearly this map $\psi : \operatorname{Rat}(\Gamma_1) \bowtie_{\operatorname{Rat}(\Gamma^{\prime})} \operatorname{Rat}(\Gamma_2) \to \operatorname{Rat}(\Gamma_1 \sqcup_{\Gamma^{\prime}} \Gamma_2)$ is a $\boldsymbol{T}$-algebra isomorphism.
\end{proof}

By this proposition, we know that $\operatorname{Rat}(\Gamma_1) \bowtie_{\operatorname{Rat}(\Gamma^{\prime})} \operatorname{Rat}(\Gamma_2)$ is identified with the glued tropical curve $\Gamma_1 \sqcup_{\Gamma^{\prime}} \Gamma_2$ through Corollary~\ref{cor3-1-1}, and the natural inclusion $\operatorname{Rat}(\Gamma_1) \bowtie_{\operatorname{Rat}(\Gamma^{\prime})} \operatorname{Rat}(\Gamma_2) \hookrightarrow \operatorname{Rat}(\Gamma_1) \bowtie \operatorname{Rat}(\Gamma_2)$ is regarded as the \textit{decomposition} of the tropical curve $\Gamma_1 \sqcup_{\Gamma^{\prime}} \Gamma_2$ into $\Gamma_1$ and $\Gamma_2$ when $\Gamma^{\prime}$ is a point.
A variant of the notion of decomposition is explained in \cite[Example~2.7]{Maclagan=Rincon}, where it is written by tropical ideals.

\begin{rem}
    \label{rem3-3-1}
\upshape{
We do not give the proof, but the same thing is true by replacing $\Gamma_1, \Gamma_2$ and $\Gamma^{\prime}$ with disconnected tropical curves, respectively, in Proposition~\ref{prop3-3-1}.
}
\end{rem}

\section{(Abstract) tropical curves with parallel rays}
    \label{section4}

In this section, we extend the several definitions and results gathered and discussed in Sections~\ref{section2} and \ref{section3}.
Furthermore, we consider the notions of weights and the balancing condition in the algebraic language.
To do so, we also prepare the notion of localization.
This section corresponds to questions~(i), (ii) and (iii) in Section~\ref{section1}.

\subsection{(Abstract) tropical curves with parallel rays and rational function semifields}
    \label{subsection4.1}

We give a characterization of rational function semifields of tropical curves with parallel rays in this subsection.
To do so, we first define these notions.

\begin{dfn}
    \label{dfn4-1-1}
\upshape{
An \textit{(abstract) tropical curve with parallel rays} $\Gamma_{\operatorname{par}}$ is an (abstract) tropical curve $\Gamma$ whose rays have an equivalence relation $\sim$ such that for rays $L_1$ and $L_2$ of $\Gamma$, if $L_1 \subset L_2$, then $L_1 \sim L_2$ holds.
Then $\Gamma$ is called its \textit{underlying tropical curve} and a ray of $\Gamma$ a \textit{ray} of $\Gamma_{\operatorname{par}}$.
Two rays of $\Gamma_{\operatorname{par}}$ are \textit{parallel} if these are equivalent under $\sim$, otherwise, these are \textit{nonparallel}.

For a tropical curve $\Gamma$, a \textit{parallelization} of $\Gamma$ is to give an equivalence relation $\sim$ on rays of $\Gamma$ satisfying the condition above. 
}
\end{dfn}

In what follows, we just say tropical curves with parallel rays without ``abstract".
Clearly one tropical curve possibly has several parallelizations.

\begin{rem}
    \label{rem4-1-1}
\upshape{
Clearly, for a tropical curve $\Gamma$, for rays $L_1, L_2 \subset \Gamma$, defining $L_1 \sim_{\operatorname{triv}} L_2$ if $L_1 \subset L_2$ or $L_1 \supset L_2$, the relation $\sim_{\operatorname{triv}}$ is an equivalence relation on rays of $\Gamma$ and makes the pair $(\Gamma, \sim_{\operatorname{triv}})$ a tropical curve with parallel rays.
Hence Definition~\ref{dfn4-1-1} gives an extended notion of tropical curves.
}
\end{rem}

\begin{dfn}
    \label{dfn4-1-2}
\upshape{
Let $\Gamma_{\operatorname{par}}$ be a tropical curve with parallel rays and $\Gamma$ the underlying tropical curve of $\Gamma_{\operatorname{par}}$.
A function $f : \Gamma \to \boldsymbol{R} \cup \{ \pm \infty \}$ is a \textit{rational function} on $\Gamma_{\operatorname{par}}$ if, $f \in \operatorname{Rat}(\Gamma)$ and $f$ has the same slope at infinity along any pair of parallel rays of $\Gamma_{\operatorname{par}}$, or $f \equiv - \infty$.
Let $\operatorname{Rat}(\Gamma_{\operatorname{par}})$ denote the set of all rational functions on $\Gamma_{\operatorname{par}}$, which is a subset of $\operatorname{Rat}(\Gamma)$.
We call $\operatorname{Rat}(\Gamma_{\operatorname{par}})$ the \textit{rational function semifield} of $\Gamma_{\operatorname{par}}$.
}
\end{dfn}

\begin{prop}[{cf.~\cite[Theorem~1.1]{JuAe3}}]
    \label{prop4-1-1}
For a tropical curve with parallel rays $\Gamma_{\operatorname{par}}$ and its underlying tropical curve $\Gamma$, the set $\operatorname{Rat}(\Gamma_{\operatorname{par}})$ is a finitely generated subsemifield over $\boldsymbol{T}$ of $\operatorname{Rat}(\Gamma)$.
\end{prop}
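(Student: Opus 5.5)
Two things must be shown: that $\operatorname{Rat}(\Gamma_{\operatorname{par}})$ is a subsemifield over $\boldsymbol{T}$ of $\operatorname{Rat}(\Gamma)$, and that it is finitely generated over $\boldsymbol{T}$.

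\emph{Subsemifield.} The first claim is a direct check using the linearity of slopes at infinity. For a ray $L$ and $f \in \operatorname{Rat}(\Gamma) \setminus \{-\infty\}$, write $s_L(f)$ for the slope of $f$ at infinity along $L$.
\begin{itemize}
\item For constants $t \in \boldsymbol{R}$, $s_L(t) = 0$, so $\boldsymbol{T} \subset \operatorname{Rat}(\Gamma_{\operatorname{par}})$.
\item For products and inverses, $s_L(f \odot g) = s_L(f) + s_L(g)$ and $s_L(f^{\odot(-1)}) = -s_L(f)$. Hence equal slopes on parallel rays are preserved.
\item For sums, near the point at infinity of $L$ one of $f, g$ dominates, or they agree up to a constant. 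So $s_L(f \oplus g) = \max\{s_L(f), s_L(g)\}$, which again depends only on the pair $(s_L(f), s_L(g))$.
\end{itemize}
Thus if $f$ and $g$ each have equal slopes along parallel rays $L_1 \sim L_2$, so does $f \oplus g$. Note also that the slope at infinity is the same for rays $L_1 \subset L_2$, so the compatibility condition in Definition~\ref{dfn4-1-1} is consistent.

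\emph{Finite generation: the generators.} Fix a model of $\Gamma$ with rays $L_1, \ldots, L_m$, each a leaf edge chosen maximal, so every ray of $\Gamma$ is contained in some $L_i$. The equivalence $\sim$ induces a partition of $\{1, \ldots, m\}$ into classes $C_1, \ldots, C_k$. By \cite[Theorem~1.1]{JuAe3} (and its proof), $\operatorname{Rat}(\Gamma)$ is generated by finitely many functions:
\begin{itemize}
\item chip firing moves $\operatorname{CF}(\Gamma', l)$ with finite $l$, which are constant near every point at infinity and hence lie in $\operatorname{Rat}(\Gamma_{\operatorname{par}})$;
\item the functions $h_i := \operatorname{CF}(\overline{\Gamma \setminus L_i}, \infty)$, which have slope $-1$ along $L_i$ and $0$ along the other rays.
\end{itemize}
I would take as generators of $\operatorname{Rat}(\Gamma_{\operatorname{par}})$ the finite-length generators together with $H_j := \bigodot_{i \in C_j} h_i$ for $j = 1, \ldots, k$. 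Each $H_j$ has slope $-1$ on every ray in $C_j$ and $0$ elsewhere, so $H_j \in \operatorname{Rat}(\Gamma_{\operatorname{par}})$.

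\emph{Finite generation: the reduction.} Given $f \in \operatorname{Rat}(\Gamma_{\operatorname{par}}) \setminus \{-\infty\}$, let $-s_j$ be its common slope at infinity on the rays of $C_j$. Then $g := f \odot \bigodot_j H_j^{\odot(-s_j)}$ has slope $0$ at infinity along every ray. So $g$ is constant near every point at infinity, and $f$ is generated by the $H_j$ together with $g$.

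The main obstacle is showing that such a $g$ is generated by the finite-length chip firing moves. I would argue this by re-running the proof of \cite[Theorem~1.1]{JuAe3} on the compact truncation $\Gamma^{\circ}$, obtained by cutting each ray at a point $p_i$ beyond which $g$ is constant. Concretely:
\begin{enumerate}
\item Construct $g$ as a tropical rational expression in functions $\operatorname{CF}(\Gamma', l)$ with $l$ finite on $\Gamma^{\circ}$, using that $\operatorname{Rat}(\Gamma^{\circ})$ is generated by such moves (Proposition~\ref{prop3-2-1} applied to the compact subgraph $\Gamma^{\circ}$).
\item Arrange that all of them are constant on the tails beyond $p_i$. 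This can be done by taking the $\Gamma'$ to contain those tails, so the moves extend constantly along the rays.
\end{enumerate}
The delicate part is making this generating set finite and independent of $g$. I would handle this as in \cite{JuAe3}, where the finite-length part of the generating set depends only on the model. If needed, I would refine the generators to chip firing moves $\operatorname{CF}(\overline{\Gamma \setminus e}, l)$ type functions supported on the compact edges of the fixed model, plus a reduction bounding the lengths $l$ via the model's edge lengths.
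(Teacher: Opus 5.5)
Your subsemifield check is fine, and your $H_j$ are essentially the paper's generators $g_i = \operatorname{CF}\bigl(\overline{\Gamma \setminus \bigcup_j L_{ij}}, \infty\bigr)$. The gap is the final step, and the problem there is not only one of organisation. The claim you set out to prove is false. The normalized function $g = f \odot \bigodot_j H_j^{\odot(-s_j)}$ is in general \emph{not} generated by any fixed finite set of finite-length chip firing moves.

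Each $\operatorname{CF}(\Gamma'', l)$ with $l < \infty$ is constant along every ray beyond some distance from its base point. So finitely many such moves are all constant beyond a common distance $T_0$, and then so is everything they generate over $\boldsymbol{T}$. Yet $H_j \oplus (-N)$ with $N > T_0$ lies in $\operatorname{Rat}(\Gamma_{\operatorname{par}})$, has slope $0$ at infinity along every ray, and bends at distance $N$ along the rays of $C_j$. Your truncation $\Gamma^{\circ}$ is chosen after $g$, so re-running \cite{JuAe3} on it gives a generating set that depends on $g$. Bounding the lengths $l$ by the model's edge lengths cannot fix this. The profiles along rays must come from infinite-length moves such as $H_j$; this is what the proof of \cite[Lemma~1.4]{JuAe3} provides. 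So normalizing the slopes at infinity does not remove the need for $H_j$. The real difficulty is producing \emph{different} profiles on different parallel rays from generators that do not depend on $f$.

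That mechanism is the heart of the paper's proof.
\begin{itemize}
\item Take a connected ray-free core $\Gamma'$ whose complement consists of the rays $L_{ij}$, grouped by parallel class $i$, with base points $x_{ij}$.
\item By the proof of \cite[Lemma~1.4]{JuAe3}, the semifield generated by $g_i$ contains a function $G_{ij}$. It equals $f(x_{ij})^{\odot(-1)} \odot f$ on $L_{ij}$, is $0$ on $\overline{\Gamma \setminus \bigcup_j L_{ij}}$, and repeats the same profile on the other rays of the class.
\item To isolate $L_{ij}$, multiply by a power of the fixed finite-length move $h_{ij} = \operatorname{CF}(L_{ij}, \varepsilon)$. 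Since $f$ has the same slope at infinity along all rays of the class, the profiles differ by a bounded amount. Hence a finite exponent $l_{ij}$ gives $G_{ij} \odot h_{ij}^{\odot l_{ij}} \le G_{ij'}$ off $L_{ij}$; it depends on $f$, which is harmless. Then $\bigoplus_j G_{ij} \odot h_{ij}^{\odot l_{ij}}$ agrees with $G_{ij}$ on every $L_{ij}$.
\item Correct the values on the core using the pull-backs $\psi, \phi$ of the retraction $\Gamma \to \Gamma'$ and its section, together with a finite generating set of $\operatorname{Rat}(\Gamma')$ from \cite[Proposition~3.6]{JuAe3}, as in~(\ref{prop4-1-1:eq1}).
\end{itemize}
The generators are thus the $g_i$, the $h_{ij}$, and the pulled-back core generators. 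All dependence on $f$ sits in the coefficients and exponents of the rational expression. Your slope normalization is compatible with this but unnecessary.

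You must also make sure separating moves like $h_{ij}$ are in your generating set. If two parallel rays share a base point, swapping them is an isometry preserving the parallel structure. The $H_j$ and any generators constant along rays (e.g.\ those pulled back from the core) are invariant under this swap, so they cannot generate $\operatorname{CF}(L_{ij}, \varepsilon)$.
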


\begin{proof}
Clearly $\operatorname{Rat}(\Gamma_{\operatorname{par}})$ is a subsemifield over $\boldsymbol{T}$ of $\operatorname{Rat}(\Gamma)$.

Let $\Gamma^{\prime} \subset \Gamma$ be a connected subgraph which has no rays and the closure of each connected component of $\Gamma \setminus \Gamma^{\prime}$ is a ray of $\Gamma$.
Let $L_{11}, \ldots, L_{1m_1}, L_{21}, \ldots, L_{km_k}$ be all distinct rays of $\Gamma_{\operatorname{par}}$, each of which is the closure of a connected component of $\Gamma \setminus \Gamma^{\prime}$ and each two of $L_{i1}, \ldots, L_{im_i}$ are parallel and $L_{i1}$ and $L_{j1}$ are nonparallel for any $i \not= j$.
By definition, the chip firing moves $g_i:= \operatorname{CF} \left(\overline{\Gamma \setminus \bigcup_{j = 1}^{m_i} L_{ij}}, \infty \right)$ and $h_{ij} := \operatorname{CF} \left( L_{ij}, \varepsilon \right)$ for $\varepsilon > 0$ are in $\operatorname{Rat}(\Gamma_{\operatorname{par}})$.
Let $x_{ij}$ be the unique boundary point of $L_{ij}$ in $\Gamma$.
By \cite[the proof of Lemma~1.4]{JuAe3}, for $f \in \operatorname{Rat}(\Gamma_{\operatorname{par}}) \setminus \{ -\infty \}$, the subsemifield over $\boldsymbol{T}$ of $\operatorname{Rat}(\Gamma_{\operatorname{par}})$ generated by $g_i$ contains a rational function $G_{ij}$ which coincides with $f(x_{ij})^{\odot (-1)} \odot f$ on $L_{ij}$ and is constant zero on $\overline{\Gamma \setminus \bigcup_{j = 1}^{m_i} L_{ij}}$.
Since $f$ has the same slope at infinity along $L_{i1}, \ldots, L_{im_i}$ and has only a finite number of pieces, there exists a positive integer $l_{ij} \in \boldsymbol{Z}_{>0}$ such that
\begin{align*}
G_{ij} \odot h_{ij}^{\odot l_{ij}} \le G_{i1}, \ldots, G_{im_i} \quad \text{on} \quad \Gamma \setminus L_{ij}.
\end{align*}
For the injective $\boldsymbol{T}$-algebra homomorphism $\psi : \operatorname{Rat}(\Gamma^{\prime}) \hookrightarrow \operatorname{Rat}(\Gamma)$ and the surjective homomorphism $\phi : \operatorname{Rat}(\Gamma) \twoheadrightarrow \operatorname{Rat}(\Gamma^{\prime})$ such that $\phi \circ \psi$ is the identity map $\operatorname{id}_{\operatorname{Rat}(\Gamma^{\prime})}$ of $\operatorname{Rat}(\Gamma^{\prime})$, the image $\psi(\phi(f))$ is equal to $f$ on $\overline{\Gamma \setminus \bigcup_{j = 1}^{m_i} L_{ij}}$ and is constant $f(x_{ij})$ on $L_{ij}$.
Note that these are the pull-back maps of the surjective morphism $\varphi : \Gamma \twoheadrightarrow \Gamma^{\prime}$ and the injective morphism $\iota : \Gamma^{\prime} \hookrightarrow \Gamma$ such that $\varphi \circ \iota = \operatorname{id}_{\Gamma^{\prime}}$, which exist by definition.
This implies that
\begin{equation}
\label{prop4-1-1:eq1}
\psi(\phi(f)) \odot \bigodot_{i = 1}^k  \left( \bigoplus_{j = 1}^{m_i} G_{ij} \odot h_{ij}^{\odot l_{ij}} \right) \odot \psi \left( \phi \left( \bigodot_{i = 1}^k  \left( \bigoplus_{j = 1}^{m_i} G_{ij} \odot h_{ij}^{\odot l_{ij}} \right) \right) \right)^{\odot (-1)} = f.
\end{equation}
Hence $\operatorname{Rat}(\Gamma_{\operatorname{par}})$ is generated by all $g_i, h_{ij}$ and the images of a finite generating set of $\operatorname{Rat}(\Gamma^{\prime})$ by $\psi$, which exists by \cite[Proposition~3.6]{JuAe3}, in particular, $\operatorname{Rat}(\Gamma_{\operatorname{par}})$ is finitely generated as a semifield over $\boldsymbol{T}$.
\end{proof}

\begin{prom}
    \label{prom4-1-1}
\upshape{
Usually, two rays $L_1$ and $L_2$ in $\boldsymbol{R}^n$ are said to be parallel if $\boldsymbol{d}_1 = \boldsymbol{d}_2$ or $\boldsymbol{d}_1 = - \boldsymbol{d}_2$ holds for their unit direction vectors (toward infinity) $\boldsymbol{d}_1, \boldsymbol{d}_2 \in \boldsymbol{R}^n$.
But in this paper, it is more convenient to say that $L_1$ and $L_2$ are parallel just when $\boldsymbol{d}_1 = \boldsymbol{d}_2$.
In what follows, when we say that two rays in $\boldsymbol{R}^n$ are \textit{parallel}, these have the same unit direction vector.
}
\end{prom}

By Proposition~\ref{prop4-1-1}, for a tropical curve with parallel rays $\Gamma_{\operatorname{par}}$, there exist generators $f_1, \ldots, f_n \in \operatorname{Rat}(\Gamma_{\operatorname{par}}) \setminus \{ -\infty \}$ of $\operatorname{Rat}(\Gamma_{\operatorname{per}})$ as a semifield over $\boldsymbol{T}$.
Then we have the surjective $\boldsymbol{T}$-algebra homomorphism $\psi : \overline{\boldsymbol{T}(\boldsymbol{X})}_n \twoheadrightarrow \operatorname{Rat}(\Gamma_{\operatorname{par}})$ induced by the correspondence $X_i \mapsto f_i$ as in \cite[the proof of Lemma~3.10]{JuAe4}.
Hence the quotient semifield $\overline{\boldsymbol{T}(\boldsymbol{X})}_n / \operatorname{Ker}(\psi)$ is isomorphic to $\operatorname{Rat}(\Gamma_{\operatorname{par}})$ by \cite[Proposition~2.4.4]{Giansiracusa=Giansiracusa}.

\begin{prop}[cf.~{\cite[Proposition~3.11]{JuAe4}}]
    \label{prop4-1-2}
In the above setting,

$(1)$ $\operatorname{Ker}(\psi) = \boldsymbol{E}(\boldsymbol{V}(\operatorname{Ker}(\psi)))$ holds,

$(2)$ $\operatorname{Im}(\theta) = \boldsymbol{V}(\operatorname{Ker}(\psi))$ holds,

$(3)$ $\theta$ is an injective local isometry for the lattice length, and

$(4)$ $\theta$ preserves the equivalence relation for parallelity of rays, i.e., for rays $L_1$ and $L_2$ of $\Gamma_{\operatorname{par}}$, if these are parallel, then both $\theta(L_1 \setminus \Gamma_{\infty})$ and $\theta(L_2 \setminus \Gamma_{\infty})$ are connected sets consisting of finitely many line segments (possibly degenerating to a point) and parallel rays in $\boldsymbol{R}^n$, otherwise, these are connected sets consisting of finitely many line segments (possibly degenerating to a point) and nonparallel rays in $\boldsymbol{R}^n$, where $\Gamma$ denotes the underlying tropical curve of $\Gamma_{\operatorname{par}}$.
\end{prop}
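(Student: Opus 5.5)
The plan is to reduce as much as possible to the case of the underlying tropical curve $\Gamma$, where the analogous statements are known from \cite[Proposition~3.11]{JuAe4} and the proof of Theorem~\ref{thm1-1} in \cite{JuAe5}. Here $\theta : \Gamma \setminus \Gamma_{\infty} \to \boldsymbol{R}^n$ is $x \mapsto (f_1(x), \ldots, f_n(x))$ as in Proposition~\ref{prop1-1}.

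We prove (3) first. Injectivity follows because $\operatorname{Rat}(\Gamma_{\operatorname{par}})$ separates finite points. For distinct finite points $x \neq y$, the function $\operatorname{CF}(\{x\}, \varepsilon)$ with $\varepsilon$ small is constant on every ray outside a compact set, so it lies in $\operatorname{Rat}(\Gamma_{\operatorname{par}})$. It is therefore a tropical rational expression in the $f_i$. If $\theta(x) = \theta(y)$, every such expression would agree at $x$ and $y$, a contradiction. For the local isometry, fix a finite point $x'$ and an edge direction. The function $\operatorname{CF}(\{x'\}, \varepsilon)$ has slope $-1$ along that direction and is a rational function $F \in \overline{\boldsymbol{T}(\boldsymbol{X})}_n$ evaluated at $\theta$. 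Near $x'$, $\theta$ is affine with an integer direction vector $\boldsymbol{v}$, and $F$ is piecewise linear with integer gradient, so $-1 = \langle \nabla F, \boldsymbol{v} \rangle$. Hence $\boldsymbol{v}$ is primitive, which is exactly the argument of Proposition~\ref{prop3-2-2}. Thus $\theta$ is an injective local isometry for the lattice length.

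For (4), take parallel rays $L_1 \sim L_2$. Each $f_i$ has the same slope at infinity along both, so far out both images are rays with the same integer direction vector, that is, parallel in the sense of Promise~\ref{prom4-1-1}. Near infinity each image is a ray because the $f_i$ have finitely many pieces; by the local isometry the direction vector is nonzero. If $L_1 \not\sim L_2$, the chip firing move $g_i$ from the proof of Proposition~\ref{prop4-1-1} has slope $1$ at infinity on one class and slope $0$ on the other. If the direction vectors $\boldsymbol{d}_1, \boldsymbol{d}_2$ were equal, writing $g_i = F \circ \theta$ shows that $F$ has asymptotic slope $\langle \nabla F, \boldsymbol{d} \rangle$ along both image rays. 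These are far out in the same recession direction, so a finitely generated piecewise linear $F$ has a common eventual gradient along parallel rays lying in one unbounded cell of its linearity domain. Ensuring the two rays eventually lie in the same cell is the main obstacle. I would handle it by noting that the linearity regions of $F$ are finitely many polyhedra, and two parallel rays eventually lie in regions whose recession cones contain $\boldsymbol{d}$. The slope along $\boldsymbol{d}$ is then the value of the support-like recession function of $F$ at $\boldsymbol{d}$, which is well defined because the asymptotic slope of a tropical rational function $P - Q$ in direction $\boldsymbol{d}$ equals the difference of the maxima of $\langle \boldsymbol{i}, \boldsymbol{d} \rangle$ over the exponents of $P$ and of $Q$, independently of the base point. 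This gives a contradiction.

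For (2), the inclusion $\operatorname{Im}(\theta) \subset \boldsymbol{V}(\operatorname{Ker}(\psi))$ is immediate from the definitions. For the reverse inclusion, note that $\operatorname{Im}(\theta)$ is a closed finite union of $\boldsymbol{R}$-rational polyhedral sets, hence $\boldsymbol{E}(\operatorname{Im}(\theta))$ is finitely generated by \cite[Theorem~1.1]{JuAe5}. Since $\operatorname{Ker}(\psi) = \boldsymbol{E}(\operatorname{Im}(\theta))$ by definition ($F$ and $G$ have the same image exactly when $F \circ \theta = G \circ \theta$), we get $\boldsymbol{V}(\operatorname{Ker}(\psi)) = \boldsymbol{V}(\boldsymbol{E}(\operatorname{Im}(\theta))) = \operatorname{Im}(\theta)$, because for closed $\boldsymbol{R}$-rational polyhedral sets any point outside can be separated by a chip-firing-type tropical rational function vanishing on the set. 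This is \cite[Corollary~3.5]{JuAe5}. Finally, (1) follows: $\operatorname{Ker}(\psi) = \boldsymbol{E}(\operatorname{Im}(\theta)) = \boldsymbol{E}(\boldsymbol{V}(\operatorname{Ker}(\psi)))$ by (2).
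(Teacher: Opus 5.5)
Your proof is correct. The paper proves this statement mostly by citation: it defers $(1)$--$(3)$ to the proof of \cite[Proposition~3.11]{JuAe4} and says $(4)$ follows immediately from the definitions. You argue directly instead, and this makes two points explicit that the paper leaves implicit.

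First, you check that the chip firing moves $\operatorname{CF}(\{x\},\varepsilon)$ have slope zero at infinity, so they lie in $\operatorname{Rat}(\Gamma_{\operatorname{par}})$. This is what allows the separation and primitivity argument (as in Proposition~\ref{prop3-2-2}) to pass from $\operatorname{Rat}(\Gamma)$ to the smaller semifield $\operatorname{Rat}(\Gamma_{\operatorname{par}})$.

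Second, only the ``parallel $\Rightarrow$ parallel'' half of $(4)$ is purely definitional. The ``nonparallel $\Rightarrow$ nonparallel'' half needs three ingredients, all of which you supply:
\begin{itemize}
\item a function in $\operatorname{Rat}(\Gamma_{\operatorname{par}})$ that separates the two classes at infinity (your $g_i$);
\item the primitivity from $(3)$, so that parallel image rays have equal direction vectors;
\item base-point independence of asymptotic slopes in $\overline{\boldsymbol{T}(\boldsymbol{X})}_n$, which is Proposition~\ref{prop4-1-3}.
\end{itemize}

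Two minor remarks.
\begin{itemize}
\item The ``same cell'' obstacle you flag does not arise. Parallel rays can lie in different linearity regions, e.g.\ $F = X_1 \oplus X_2$ with $\boldsymbol{d} = (1,1)$. Your computation via $\max_{\boldsymbol{i}} \langle \boldsymbol{i}, \boldsymbol{d} \rangle$ already bypasses this.
\item In $(2)$ you need neither polyhedrality nor \cite[Theorem~1.1]{JuAe5}, since $\boldsymbol{V}(\boldsymbol{E}(V)) = V$ for every closed $V \subset \boldsymbol{R}^n$. For $p \notin V$, take $\min\{\max_i |X_i - p_i|, \delta\}$, where $\delta > 0$ is the $\ell^\infty$-distance from $p$ to $V$. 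Also, \cite[Corollary~3.5]{JuAe5} is a different statement from the separation you attribute to it. Closedness of $\operatorname{Im}(\theta)$ holds because each ray tail maps affinely onto a closed half-line.
\end{itemize}
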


\begin{proof}
By \cite[the proof of Proposition~3.11]{JuAe4}, three assertions $(1), (2)$ and $(3)$ hold.
Assertion $(4)$ immediately follows from the definitions of rational functions on tropical curves with parallel rays and $\theta$.
\end{proof}

The following is clear by \cite[the proof of Proposition~3.18]{JuAe5}:

\begin{prop}
    \label{prop4-1-3}
Any rational function $f \in \overline{\boldsymbol{T}(\boldsymbol{X})}_n \setminus \{ -\infty \}$ has the same slope at infinity along any pair of parallel rays in $\boldsymbol{R}^n$.
\end{prop}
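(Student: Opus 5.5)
The idea is to reduce to a single tropical Laurent polynomial and then argue about which terms dominate far out along a ray. Write $f = F \odot G^{\odot(-1)}$ with $F, G \in \boldsymbol{T}[\boldsymbol{X}^{\pm}]_n \setminus \{ -\infty \}$. For a ray $L = \{ \boldsymbol{p} + t\boldsymbol{d} \mid t \ge 0 \}$, the slope of $f$ at infinity along $L$ is the eventual slope of $t \mapsto f(\boldsymbol{p} + t\boldsymbol{d})$, measured with respect to lattice length. This slope equals the eventual slope of $F$ along $L$ minus that of $G$. It therefore suffices to prove the claim for a single tropical Laurent polynomial $F \neq -\infty$.

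Let $F = \bigoplus_{k} a_k \odot \boldsymbol{X}^{\odot \boldsymbol{i}_k}$. Along $L$ we have $F(\boldsymbol{p} + t\boldsymbol{d}) = \max_k \{ a_k + \langle \boldsymbol{i}_k, \boldsymbol{p} \rangle + t \langle \boldsymbol{i}_k, \boldsymbol{d} \rangle \}$. This is a maximum of finitely many affine functions of $t$. For $t$ large enough, the maximum is attained by terms with the largest coefficient $\langle \boldsymbol{i}_k, \boldsymbol{d} \rangle$. Hence the eventual slope in $t$ is $\max_k \langle \boldsymbol{i}_k, \boldsymbol{d} \rangle$. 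This number depends only on $\boldsymbol{d}$ and the exponent vectors, and not on the base point $\boldsymbol{p}$.

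Next, convert to lattice length. The rays considered are $\boldsymbol{R}$-rational, so $\boldsymbol{d}$ may be taken to be the primitive integer vector in that direction; then $t$ is exactly the lattice length parameter. By Promise~\ref{prom4-1-1}, parallel rays share the same unit direction vector. Consequently they share the same primitive vector $\boldsymbol{d}$, and therefore the same slope $\max_k \langle \boldsymbol{i}_k, \boldsymbol{d} \rangle$ at infinity. If a ray with an irrational direction were allowed, the same computation still shows that the slope, in any fixed normalization of $\boldsymbol{d}$, depends only on $\boldsymbol{d}$. So the conclusion does not change.

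The only point needing care is that the slope at infinity is well defined, that is, that $f$ is eventually affine along $L$. This holds because $F$ and $G$ are each eventually affine, being maxima of finitely many affine functions, so their difference is eventually affine. I do not expect any genuine obstacle; the argument is essentially the one in \cite[the proof of Proposition~3.18]{JuAe5}.
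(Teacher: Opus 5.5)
Your proof is correct and follows the paper's approach. The paper gives no separate argument and only defers to the proof of Proposition~3.18 in \cite{JuAe5}. Your computation supplies exactly the fact needed there: write $f$ as a difference of two tropical Laurent polynomials; along a direction $\boldsymbol{d}$, each has eventual slope $\max_k \langle \boldsymbol{i}_k, \boldsymbol{d} \rangle$, which does not depend on the base point of the ray.
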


\begin{cor}
    \label{cor4-1-1}
For $f_1, \ldots, f_m \in \overline{\boldsymbol{T}(\boldsymbol{X})}_n \setminus \{ -\infty \}$, let $\theta : \boldsymbol{R}^n \to \boldsymbol{R}^m; x \mapsto (f_1(x), \ldots, f_m(x))$.
Then $\theta$ maps parallel rays $L_1$ and $L_2$ in $\boldsymbol{R}^n$ that are $\boldsymbol{R}$-rational as polyhedral sets to

$(1)$ both connected sets consisting of finitely many line segments (possibly degenerating to a point) that are $\boldsymbol{R}$-rational as polyhedral sets, or

$(2)$ both connected sets consisting of finitely many line segments (possibly degenerating to a point) and parallel rays that are $\boldsymbol{R}$-rational as polyhedral sets and the expansion factors of $\theta$ on $L_1$ and $L_2$ at infinity coincide.
\end{cor}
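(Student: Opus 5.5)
The plan is to reduce everything to the behaviour of each coordinate function $f_k$ near infinity on the two rays, using Proposition~\ref{prop4-1-3}. Write $L_i = \{ p_i + t\boldsymbol{d} \,|\, t \ge 0 \}$ with the common direction vector $\boldsymbol{d}$; since $L_i$ is $\boldsymbol{R}$-rational we may take $\boldsymbol{d}$ to be a primitive integer vector. Each $f_k$ is a tropical rational function, hence piecewise affine with finitely many pieces, each an $\boldsymbol{R}$-rational polyhedron with integer linear part. Its restriction to $L_i$ is therefore piecewise affine in $t$ with finitely many breakpoints and integer slopes. So there is $T > 0$ such that for $t \ge T$ we have $f_k(p_i + t\boldsymbol{d}) = c_{ik} + s_{ik} t$ for all $k$ and $i = 1, 2$. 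By Proposition~\ref{prop4-1-3}, $s_{1k} = s_{2k} =: s_k$ for every $k$. Set $\boldsymbol{s} := (s_1, \ldots, s_m) \in \boldsymbol{Z}^m$.

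First I will handle the bounded part. On $[0, T]$, the map $t \mapsto \theta(p_i + t\boldsymbol{d})$ is continuous and piecewise affine with integer slope vectors. Hence $\theta(L_i \cap \{ t \le T \})$ is a connected union of finitely many segments, possibly degenerate. Each such segment has the form $\{ a + u\boldsymbol{w} \,|\, u \in [\alpha, \beta] \}$ with $\boldsymbol{w} \in \boldsymbol{Z}^m$, so it is cut out by rational linear equations and inequalities with real constants, i.e., it is $\boldsymbol{R}$-rational.

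Then I split according to $\boldsymbol{s}$. If $\boldsymbol{s} = 0$, the tail $t \ge T$ is sent to a single point, so both images are connected finite unions of $\boldsymbol{R}$-rational segments; this is case $(1)$. If $\boldsymbol{s} \neq 0$, the tail of $L_i$ is sent to the ray $\theta(p_i + T\boldsymbol{d}) + \boldsymbol{R}_{\ge 0}\boldsymbol{s}$. This ray is $\boldsymbol{R}$-rational because $\boldsymbol{s}$ is an integer vector, and the rays for $i = 1, 2$ share the direction $\boldsymbol{s}/|\boldsymbol{s}|$, so they are parallel in the sense of Promise~\ref{prom4-1-1}. The expansion factor of $\theta$ at infinity on $L_i$, measured in lattice length, is the ratio of the lattice length of $\boldsymbol{s}$ to that of $\boldsymbol{d}$, namely $\gcd(s_1, \ldots, s_m)$, since $\boldsymbol{d}$ is primitive. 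This does not depend on $i$, which gives case $(2)$. Connectedness of each image follows because each is the continuous image of the connected set $L_i$.

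The main point to check is the existence of a common $T$ beyond which every $f_k$ is affine on both rays, and the claim that the resulting slopes are integers. This rests on the finiteness of the polyhedral pieces of tropical rational functions, the same fact underlying Proposition~\ref{prop4-1-3}. Once this is in place, the rest is bookkeeping.
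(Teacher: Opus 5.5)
Your proposal is correct and follows the paper's intended route: the paper gives no separate proof and presents this corollary as an immediate consequence of Proposition~\ref{prop4-1-3} (each $f_k$ has the same slope at infinity along parallel rays). Your argument uses that proposition as its key input and spells out the bookkeeping the paper leaves implicit, namely eventual affineness along both rays, integrality of the common slope vector $\boldsymbol{s}$, and the expansion factor $\gcd(s_1, \ldots, s_m)$ computed with a primitive direction vector.
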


In the setting of Corollary~\ref{cor4-1-1}, the \textit{expansion factor} of $\theta$ on a segment $\overline{xy}$ that is an $\boldsymbol{R}$-rational polyhedral set in $\boldsymbol{R}^n$ means the ratio of the lattice length of $\overline{\theta(x)\theta(y)}$ to that of $\overline{xy}$.
Note that this corollary is already implicitly used to give Theorem~\ref{thm1-1}, but it plays an essential role in this paper, so we put the statement here explicitly.

By Proposition~\ref{prop4-1-2}, Corollary~\ref{cor4-1-1} and \cite[Theorem~3.14]{JuAe4}, we can characterize rational function semifields of tropical curves with parallel rays as Theorem~\ref{thm1-1}:

\begin{thm}
    \label{thm4-1-1}
Let $S$ be a $\boldsymbol{T}$-algebra.
There exists a tropical curve with parallel rays $\Gamma_{\operatorname{par}}$ whose rational function semifield $\operatorname{Rat}(\Gamma_{\operatorname{par}})$ is isomorphic to $S$ as a $\boldsymbol{T}$-algebra if and only if there exists a surjective $\boldsymbol{T}$-algebra homomorphism $\psi$ from a tropical rational function semifield to $S$ satisfying the following four conditions:

$(1)$ $\operatorname{Ker}(\psi)$ is finitely generated as a congruence,

$(2)$ $\operatorname{Ker}(\psi) = \boldsymbol{E}(\boldsymbol{V}(\operatorname{Ker}(\psi)))$ holds,

$(3)$ $\boldsymbol{V}(\operatorname{Ker}(\psi))$ is connected, and

$(4)$ $\boldsymbol{V}(\operatorname{Ker}(\psi))$ is of dimension zero or one.

These are also equivalent to that any surjective $\boldsymbol{T}$-algebra homomorphism $\psi$ from a tropical rational function semifield to $S$ satisfies four conditions $(1), \ldots, (4)$ above.
\end{thm}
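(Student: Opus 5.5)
We prove the two directions separately, modelled on the proof of Theorem~\ref{thm1-1} in \cite[Section~3]{JuAe5}, with condition $(5)$ dropped and its role taken over by the parallelization.

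\emph{Necessity.} Let $S \cong \operatorname{Rat}(\Gamma_{\operatorname{par}})$. By Proposition~\ref{prop4-1-1}, $\operatorname{Rat}(\Gamma_{\operatorname{par}})$ is finitely generated. Pick generators $f_1, \ldots, f_n \in \operatorname{Rat}(\Gamma_{\operatorname{par}}) \setminus \{ -\infty \}$; they give a surjection $\psi : \overline{\boldsymbol{T}(\boldsymbol{X})}_n \twoheadrightarrow S$ and a map $\theta$. Proposition~\ref{prop4-1-2} then supplies three facts:
\begin{itemize}
\item condition $(2)$;
\item $\boldsymbol{V}(\operatorname{Ker}(\psi)) = \operatorname{Im}(\theta)$;
\item $\theta$ is an injective local isometry.
\end{itemize}
Since $\Gamma \setminus \Gamma_\infty$ is connected and of dimension at most one, this gives $(3)$ and $(4)$. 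Each $f_i$ is piecewise affine with integer slopes and finitely many pieces, so $\operatorname{Im}(\theta)$ is a finite union of $\boldsymbol{R}$-rational polyhedral sets, and its closure is that set itself. By \cite[Theorem~1.1]{JuAe5}, $\boldsymbol{E}(\operatorname{Im}(\theta))$ is finitely generated, hence so is $\operatorname{Ker}(\psi)$, which is $(1)$.

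For the ``any surjection'' clause, let $\psi'$ be another surjection. We compare $\psi$ and $\psi'$ through the isomorphisms $\overline{\boldsymbol{T}(\boldsymbol{X})}_n/\operatorname{Ker}(\psi) \cong \overline{\boldsymbol{T}(\boldsymbol{X})}_m/\operatorname{Ker}(\psi')$. Invoking \cite[Theorem~3.14]{JuAe4}, these induce mutually inverse piecewise-linear maps between the congruence varieties, in the same way as the ``some (and any)'' part of Theorem~\ref{thm1-1}. Under such maps, $(1)$ through $(4)$ transfer.

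\emph{Sufficiency.} Let $\psi$ satisfy $(1)$--$(4)$ and set $V := \boldsymbol{V}(\operatorname{Ker}(\psi))$.
\begin{enumerate}
\item By $(1)$ and \cite[Corollary~3.5]{JuAe5}, $V$ is a finite union of $\boldsymbol{R}$-rational polyhedral sets.
\item By $(3)$ and $(4)$, $V$ is connected and of dimension at most one.
\item Equip $V$ with the lattice length and take the natural compactification $\Gamma := \overline{V}$, a tropical curve as in Corollary~\ref{cor3-1-1}.
\item Parallelize $\Gamma$: declare two rays of $\Gamma$ parallel when their tails in $V$ have the same unit direction vector, in the sense of Promise~\ref{prom4-1-1}. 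This is an equivalence relation, and nested rays are parallel, so Definition~\ref{dfn4-1-1} holds and we obtain $\Gamma_{\operatorname{par}}$.
\end{enumerate}
By $(2)$, $S \cong \overline{\boldsymbol{T}(\boldsymbol{X})}_n/\boldsymbol{E}(V)$, which is the semifield of restrictions $f|_V$. It remains to identify this semifield with $\operatorname{Rat}(\Gamma_{\operatorname{par}})$.

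One inclusion: each restriction $f|_V$ is piecewise affine with integer slopes for the lattice length, so it lies in $\operatorname{Rat}(\Gamma)$. By Proposition~\ref{prop4-1-3}, equivalently Corollary~\ref{cor4-1-1}, it has equal slopes at infinity on parallel rays, so it lies in $\operatorname{Rat}(\Gamma_{\operatorname{par}})$.

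\emph{Main obstacle: surjectivity of restriction.} We must show every $g \in \operatorname{Rat}(\Gamma_{\operatorname{par}})$ extends to a tropical rational function on $\boldsymbol{R}^n$. For the finite part, use the extension arguments from \cite[Section~3]{JuAe5}, which realize chip firing moves on bounded subgraphs and on single rays by max/min combinations of affine functions with integer gradients. Then apply the decomposition from the proof of Proposition~\ref{prop4-1-1}. There, $g$ is expressed through the following ingredients:
\begin{itemize}
\item pieces $\psi(\phi(g))$ coming from the ray-free core;
\item functions $g_i$ and $h_{ij}$;
\item functions $G_{ij}$ supported on rays, arranged so that each parallel class carries one common slope at infinity.
\end{itemize}
So it suffices to realize each such building block by an element of $\overline{\boldsymbol{T}(\boldsymbol{X})}_n$.

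The key case is the function $g_i$, which has a common slope along an entire parallel class of rays in direction $\boldsymbol{d}$. We realize it by a linear form $\langle \boldsymbol{w}, \boldsymbol{x}\rangle$ with $\boldsymbol{w} \in \boldsymbol{Z}^n$ chosen to give the correct slope on direction $\boldsymbol{d}$, truncated by max/min with affine functions. The truncation must separate the other direction classes; this is possible because finitely many distinct rational directions can be separated by integral linear functionals. Combining the building blocks exactly as in \eqref{prop4-1-1:eq1} then gives surjectivity, and hence $S \cong \operatorname{Rat}(\Gamma_{\operatorname{par}})$.
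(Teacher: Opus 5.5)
Your proposal is correct and follows the paper's route. The paper obtains the theorem from Proposition~\ref{prop4-1-2}, Corollary~\ref{cor4-1-1} and \cite[Theorem~3.14]{JuAe4} by rerunning the proof of Theorem~\ref{thm1-1}, and your surjectivity step (realizing the generators from the proof of Proposition~\ref{prop4-1-1} via the extension lemmas of \cite{JuAe5}) is exactly the argument the paper gives in the proof of Proposition~\ref{prop4-2-2}. One simplification: the ``any surjection'' clause needs no transfer argument, because your necessity proof already works for an arbitrary generating set, that is, for an arbitrary surjection.
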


Note that by \cite[Theorem~1.1]{JuAe5} and \cite[Corollary~3.30]{JuAe=Nakajima}, or as explained in Subsection~\ref{subsection2.3}, in Theorem~\ref{thm4-1-1}, under conditions $(1)$ and $(2)$, condition $(4)$ is equivalent to that the Krull dimension of $\overline{\boldsymbol{T}(\boldsymbol{X})}_n / \operatorname{Ker}(\psi)$ is one or two.

One might be dissatisfied with condition $(3)$ in Theorem~\ref{thm4-1-1} written in geometric language and wonder if we can replace condition $(3)$ with some condition using the pseudodirect product of semifields over $\boldsymbol{T}$ as in \cite[Section~3]{JuAe=Nakajima}.
However, this idea does not work in our setting now, see Subsection~\ref{subsection4.3}.
To solve this problem here, we give the following proposition:

\begin{prop}
    \label{prop4-1-4}
Let $S$ be a $\boldsymbol{T}$-algebra and assume that there exists a surjective $\boldsymbol{T}$-algebra homomorphism $\psi : \overline{\boldsymbol{T}(\boldsymbol{X})}_n \twoheadrightarrow S$.
When $\operatorname{Ker}(\psi)$ is finitely generated as a congruence and $\operatorname{Ker}(\psi) = \boldsymbol{E}(\boldsymbol{V}(\operatorname{Ker}(\psi)))$ holds, the following are equivalent:

$(1)$ $\boldsymbol{V}(\operatorname{Ker}(\psi))$ is nonempty and disconnected, and

$(2)$ there exist $s \in S \setminus \{ 0_S \}$ and $a_i \in \boldsymbol{R}$ such that $a_3 < a_2 < a_1$ and $a_1 - a_2 = a_2 - a_3$ and $s \not= s \oplus a_3, \left( s^{\odot (-1)} \oplus a_1^{\odot (-1)} \right)^{\odot (-1)}$ and $(s \oplus a_1) \odot \left(s^{\odot (-1)} \oplus a_2^{\odot (-1)} \right)^{\odot (-1)} = \left(a_1 \odot a_2^{\odot (-1)} \right) \odot (s \oplus a_2) \odot \left(s^{\odot (-1)} \oplus a_3^{\odot (-1)} \right)^{\odot (-1)}$.
\end{prop}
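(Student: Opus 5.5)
Throughout, I identify $S$ with a semifield of functions on $V := \boldsymbol{V}(\operatorname{Ker}(\psi))$. Since $\psi$ is surjective and $\operatorname{Ker}(\psi) = \boldsymbol{E}(V)$, every $s \in S \setminus \{0_S\}$ is the restriction to $V$ of some $f \in \overline{\boldsymbol{T}(\boldsymbol{X})}_n \setminus \{-\infty\}$. Two such elements are equal in $S$ exactly when they agree pointwise on $V$. The plan is to decode condition $(2)$ pointwise on $V$ and show that it says exactly: $s$ is a continuous function on $V$ whose image avoids an open interval but meets both sides of it.

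\textbf{Decoding condition $(2)$.} Recall that $\odot$ is ordinary addition and $\left(s^{\odot(-1)} \oplus a^{\odot(-1)}\right)^{\odot(-1)} = \min\{s,a\}$.
\begin{itemize}
\item $s \neq s \oplus a_3$ means that $s(x) < a_3$ for some $x \in V$.
\item $s \neq \min\{s,a_1\}$ means that $s(y) > a_1$ for some $y \in V$.
\item The displayed identity reads pointwise
\begin{align*}
\max\{s,a_1\} + \min\{s,a_2\} = (a_1 - a_2) + \max\{s,a_2\} + \min\{s,a_3\}.
\end{align*}
\end{itemize}
I check this identity case by case, using $a_1 + a_3 = 2a_2$.
\begin{itemize}
\item If $s \ge a_1$ or $s \le a_3$, both sides agree.
\item If $a_2 \le s \le a_1$, the identity becomes $a_1 + a_2 = s + a_2$, so it holds only when $s = a_1$.
\item If $a_3 \le s \le a_2$, it becomes $a_1 + s = a_1 + a_3$, so it holds only when $s = a_3$.
\end{itemize}
Hence condition $(2)$ is equivalent to: $s$ takes no value in the open interval $(a_3, a_1)$ on $V$, but takes values both below $a_3$ and above $a_1$.

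\textbf{$(2) \Rightarrow (1)$.} $V$ is nonempty because it contains the points $x$ and $y$ above. The function $s$ is continuous on $V$, since it is the restriction of a rational function on $\boldsymbol{R}^n$. Then $\{s \le a_3\}$ and $\{s \ge a_1\}$ are disjoint, nonempty, relatively closed sets covering $V$. So $V$ is disconnected.

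\textbf{$(1) \Rightarrow (2)$.} Since $\operatorname{Ker}(\psi)$ is finitely generated, $V$ is a finite union of $\boldsymbol{R}$-rational polyhedral sets by \cite[Corollary~3.5]{JuAe5}. Each polyhedral set is connected, so if $V$ is disconnected we can group them into nonempty closed sets $V = V_1 \sqcup V_2$, each again a finite union of $\boldsymbol{R}$-rational polyhedral sets. Disjoint closed polyhedral sets are at positive Euclidean distance, and hence so are finite unions of them; write $\delta := \operatorname{dist}(V_1, V_2) > 0$.

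For each polyhedral set $P = \{A\boldsymbol{x} \ge \boldsymbol{b}\}$ in $V_1$, I clear denominators so that $A$ has integer entries. I then put
\begin{align*}
g_P := \min\Bigl\{0, \min_j \bigl(A_j \boldsymbol{x} - b_j\bigr)\Bigr\},
\end{align*}
which is a tropical rational function (a tropical inverse of a tropical sum of monomials). Let $h := \max_P g_P$. Then $h = 0$ on $V_1$ and $h < 0$ off $V_1$.

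The main obstacle is that $h$ must be bounded away from $0$ on the possibly unbounded set $V_2$. For this I invoke Hoffman's error bound: there is $c_P > 0$ with $\max_j \bigl(b_j - A_j\boldsymbol{x}\bigr)^{+} \ge c_P \operatorname{dist}(\boldsymbol{x}, P)$. With $c := \min_P c_P$ this gives $h \le -c\,\delta =: -\eta < 0$ on $V_2$.

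Finally I take $s := \psi(h)$, the restriction of $h$ to $V$, which is nonzero in $S$. Choose $a_1 := -\eta/4$, $a_2 := -\eta/2$, $a_3 := -3\eta/4$. Then $a_3 < a_2 < a_1$ and $a_1 - a_2 = a_2 - a_3$. On $V$ the function $s$ takes the value $0 > a_1$ on $V_1$, takes values $\le -\eta < a_3$ on $V_2$, and takes no value in $(a_3, a_1)$. By the decoding above, this gives $(2)$.
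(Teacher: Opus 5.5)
Your proof is correct. Its overall architecture matches the paper's. For (1)$\Rightarrow$(2), both split $\boldsymbol{V}(\operatorname{Ker}(\psi))=V_1\sqcup V_2$, take a rational function that is constant on $V_1$ and uniformly far from that constant on $V_2$, and place three equally spaced thresholds in the gap. For (2)$\Rightarrow$(1), both use continuity of $s$ on the congruence variety.

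The differences are in the ingredients.

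\textbf{(1)$\Rightarrow$(2).} The paper does not construct the separating function. It cites \cite[Lemmas~3.9 and 3.10]{JuAe5} to get $f$ with $\boldsymbol{V}((f,0))=V_1$ and $f\ge\operatorname{dist}(V_1,\cdot)$ on all of $\boldsymbol{R}^n$. It then takes $a_1=3\varepsilon$, $a_2=2\varepsilon$, $a_3=\varepsilon$ with $3\varepsilon<\operatorname{dist}(V_1,V_2)$, and leaves the pointwise identity as ``easily checked''. You instead build $h=\max_P g_P$ from integral halfspace descriptions and use Hoffman's error bound to get $-h\ge c\operatorname{dist}(V_1,\cdot)$. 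This is in effect a self-contained proof of the cited lemmas up to a constant $c$, which you harmlessly absorb by choosing the $a_i$ after $\eta=c\delta$. You also do not need $V_1$ to be connected, as the paper arranges.

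\textbf{(2)$\Rightarrow$(1).} The paper argues by contradiction. It uses Proposition~\ref{prop4-1-5} for nonemptiness, disposes of the one-point case, and applies the intermediate value theorem to find $x$ with $a_3<[f](x)<a_2$, where the identity fails. Your full pointwise decoding makes this direct: the identity holds at a point exactly when the value of $s$ there lies outside $(a_3,a_1)$. So $\{s\le a_3\}$ and $\{s\ge a_1\}$ separate $V$, with no special cases and no use of finite generation.

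Your version is more elementary and makes the geometric meaning of condition~(2) explicit: $s$ omits an open interval on $V$ but takes values on both sides of it. The paper's version is shorter because it outsources the separating function to \cite{JuAe5}, where the sharper bound $f\ge\operatorname{dist}(V_1,\cdot)$ holds without a constant.
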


To see this proposition, we prepare a clear statement:

\begin{prop}
    \label{prop4-1-5}
Let $S$ be a $\boldsymbol{T}$-algebra and $\psi : \overline{\boldsymbol{T}(\boldsymbol{X})}_n \twoheadrightarrow S$ a surjective $\boldsymbol{T}$-algebra homomorphism.
When $\operatorname{Ker}(\psi) = \boldsymbol{E}(\boldsymbol{V}(\operatorname{Ker}(\psi)))$ holds, the following are equivalent:

$(1)$ $\boldsymbol{V}(\operatorname{Ker}(\psi))$ is empty, and

$(2)$ $\operatorname{Ker}(\psi)$ is improper.
\end{prop}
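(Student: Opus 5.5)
We prove the two implications separately; both reduce to a few elementary facts about rational functions on $\boldsymbol{R}^n$ and about $\boldsymbol{E}$ and $\boldsymbol{V}$.

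For $(1) \Rightarrow (2)$, assume $\boldsymbol{V}(\operatorname{Ker}(\psi)) = \varnothing$. Then every pair $(f, g) \in \overline{\boldsymbol{T}(\boldsymbol{X})}_n^2$ satisfies the condition ``$f(x) = g(x)$ for all $x \in \varnothing$'' vacuously. Hence $\boldsymbol{E}(\varnothing) = \overline{\boldsymbol{T}(\boldsymbol{X})}_n^2$. The hypothesis $\operatorname{Ker}(\psi) = \boldsymbol{E}(\boldsymbol{V}(\operatorname{Ker}(\psi)))$ then gives that $\operatorname{Ker}(\psi)$ is the improper congruence.

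For $(2) \Rightarrow (1)$, assume $\operatorname{Ker}(\psi)$ is improper. In particular $(0, -\infty) \in \operatorname{Ker}(\psi)$. Suppose some $x \in \boldsymbol{V}(\operatorname{Ker}(\psi))$ existed. Evaluating this pair at $x$ would give $0 = -\infty$, since $0$ is the constant function $0$ and $-\infty$ is the constant $-\infty$ on $\boldsymbol{R}^n$. This is absurd, so $\boldsymbol{V}(\operatorname{Ker}(\psi)) = \varnothing$. Equally, one can use the pair $(0, 1)$: two distinct real constants never agree at any point. This direction does not use the hypothesis $\operatorname{Ker}(\psi) = \boldsymbol{E}(\boldsymbol{V}(\operatorname{Ker}(\psi)))$.

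The only delicate point is conventions. We must make sure that evaluation of $-\infty \in \overline{\boldsymbol{T}(\boldsymbol{X})}_n$ at a point of $\boldsymbol{R}^n$ is $-\infty$ in the definition of $\boldsymbol{V}$. Using the pair $(0,1)$ of distinct real constants avoids this issue entirely. Beyond that check, there is no real obstacle.
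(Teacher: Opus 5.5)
Your argument is correct. Direction $(1)\Rightarrow(2)$ holds because $\boldsymbol{E}(\varnothing)=\overline{\boldsymbol{T}(\boldsymbol{X})}_n^2$, and direction $(2)\Rightarrow(1)$ holds because the pair of distinct constants $(0,1)$ lies in the improper congruence and agrees at no point. The paper gives no proof, calling the statement clear, and your unwinding of the definitions of $\boldsymbol{E}$ and $\boldsymbol{V}$ is exactly the intended argument.
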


\begin{proof}[Proof of Proposition~\ref{prop4-1-4}]
Assume that $(1)$ holds.
By \cite[Corollary~3.5]{JuAe5} and $(1)$, there exist nonempty finite unions $V_1$ and $V_2$ of $\boldsymbol{R}$-rational polyhedral sets such that $\boldsymbol{V}(\operatorname{Ker}(\psi)) = V_1 \cup V_2$ and $V_1 \cap V_2 = \varnothing$ and $V_1$ is connected.
Then, by \cite[Lemmas~3.9 and 3.10]{JuAe5}, there exists $f \in \overline{\boldsymbol{T}(\boldsymbol{X})}_n$ such that $\boldsymbol{V}((f,0)) = V_1$ and $f(x) \ge \operatorname{dist}(V_1, x)$ holds for any $x \in \boldsymbol{R}^n$.
Thus for a positive number $\varepsilon$ such that $0 < \varepsilon < \frac{1}{3} \operatorname{dist}(V_1, V_2)$, taking as $a_1 := 3\varepsilon, a_2 := 2\varepsilon, a_3 := \varepsilon$, the equalities $a_1 - a_2 = a_2 - a_3 = \varepsilon$ and
\begin{align*}
&~([f] \oplus a_1) \odot \left([f]^{\odot (-1)} \oplus a_2^{\odot (-1)} \right)^{\odot (-1)}\\
=&~ \left( a_1 \odot a_2^{\odot (-1)} \right) \odot ([f] \oplus a_2) \odot \left( [f]^{\odot (-1)} \oplus a_3^{\odot (-1)} \right)^{\odot (-1)}
\end{align*}
hold, where $[f]$ denotes the equivalence class of $f$ in $\overline{\boldsymbol{T}(\boldsymbol{X})}_n / \operatorname{Ker}(\psi)$.
For the latter equality, since $\operatorname{Ker}(\psi) = \boldsymbol{E}(\boldsymbol{V}(\operatorname{Ker}(\psi)))$, it is enough to see that at any $x \in \boldsymbol{V}(\operatorname{Ker}(\psi)) = V_1 \sqcup V_2$, the values of both sides coincide, and it is easily checked.
The inequalities $[f] \not= [f] \oplus a_3, \left( [f]^{\odot (-1)} \oplus a_1^{\odot (-1)} \right)^{\odot (-1)}$ are also clear.
By \cite[Proposition~2.4.4]{Giansiracusa=Giansiracusa}, the image $\psi(f)$ is a desired element $s \in S \setminus \{ 0_S \}$, that is, $(2)$ holds.

Assume $(2)$ holds.
Again by \cite[Proposition~2.4.4]{Giansiracusa=Giansiracusa}, there exists $f \in \overline{\boldsymbol{T}(\boldsymbol{X})}_n \setminus \{ -\infty \}$ such that $\psi(f) = s$.
As $\operatorname{Ker}(\psi) = \boldsymbol{E}(\boldsymbol{V}(\operatorname{Ker}(\psi)))$ and $s \not= s \oplus a_3$, by Proposition~\ref{prop4-1-5} and \cite[Proposition~2.4.4]{Giansiracusa=Giansiracusa}, the congruence variety $\boldsymbol{V}(\operatorname{Ker}(\psi))$ is nonempty.
Assume that $\boldsymbol{V}(\operatorname{Ker}(\psi))$ is connected.
If it consists of only a point $x \in \boldsymbol{R}^n$, then, by the inequality $s \not= s \oplus a_3$, the value $[f](x) = f(x)$ is less than $a_3$, but this contradicts $s \not= \left( s^{\odot (-1)} \oplus a_1^{\odot (-1)} \right)^{\odot (-1)}$ as $[f](x) = \left( [f](x)^{\odot (-1)} \oplus a_1^{\odot (-1)} \right)^{\odot (-1)}$.
Hence $\boldsymbol{V}(\operatorname{Ker}(\psi))$ contains at least two points.
Because $\boldsymbol{V}(\operatorname{Ker}(\psi))$ is a finite union of $\boldsymbol{R}$-rational polyhedral sets by \cite[Corollary~3.5]{JuAe5} and connected, it must consist of infinitely many points.
By the inequalities in $(2)$, there are $y, z \in \boldsymbol{V}(\operatorname{Ker}(\psi))$ such that $[f](y) < a_3$ and $[f](z) > a_1$.
As $[f]$ is continuous on $\boldsymbol{V}(\operatorname{Ker}(\psi))$ and $\boldsymbol{V}(\operatorname{Ker}(\psi))$ is connected, there exists $x \in \boldsymbol{V}(\operatorname{Ker}(\psi))$ such that $a_3 < [f](x) < a_2$.
Then
\begin{align*}
&~([f](x) \oplus a_1) \odot \left( [f](x)^{\odot (-1)} \oplus a_2^{\odot (-1)} \right)^{\odot (-1)}\\
=&~ a_1 \odot [f](x)\\
\not=&~a_1 \odot a_3\\
=&~a_1 \odot a_2^{\odot (-1)} \odot a_2 \odot a_3\\
=&~\left(a_1 \odot a_2^{\odot (-1)} \right) \odot ([f](x) \oplus a_2) \odot \left( [f](x)^{\odot (-1)} \oplus a_3^{\odot (-1)} \right)^{\odot (-1)},
\end{align*}
which is a contradiction.
In conclusion, $(1)$ holds.
\end{proof}

Condition $(4)$ in Theorem~\ref{thm4-1-1} guaranties that $\boldsymbol{V}(\operatorname{Ker}(\psi))$ is nonempty.
Therefore, in Theorem~\ref{thm4-1-1}, condition $(3)$ is equivalent to that there exist no $s \in S \setminus \{ 0_S \}$ and $a_i \in \boldsymbol{R}$ satisfying the conditions in $(2)$ in Propsition~\ref{prop4-1-4}.

\begin{cor}
    \label{cor4-1-2}
Let $\Gamma_{\operatorname{par}}$ be a tropical curve with parallel rays.
For $f_1, \ldots, f_n \in \operatorname{Rat}(\Gamma_{\operatorname{par}}) \setminus \{ -\infty \}$ and the $\boldsymbol{T}$-algebra homomorphism $\psi : \overline{\boldsymbol{T}(\boldsymbol{X})}_n \to \operatorname{Rat}(\Gamma_{\operatorname{par}})$ induced by the correspondence $X_i \mapsto f_i$, the subsemifield $\boldsymbol{T}(f_1, \ldots, f_n)$ over $\boldsymbol{T}$ of $\operatorname{Rat}(\Gamma_{\operatorname{par}})$ generated by $f_1, \ldots, f_n$ is isomorphic to $\overline{\boldsymbol{T}(\boldsymbol{X})}_n / \operatorname{Ker}(\psi)$ as a $\boldsymbol{T}$-algebra.
Moreover, $\overline{\boldsymbol{T}(\boldsymbol{X})}_n / \operatorname{Ker}(\psi)$ is a semifield over $\boldsymbol{T}$ in Theorem~\ref{thm4-1-1} and $\boldsymbol{V}(\operatorname{Ker}(\psi))$ coincides with the image $\operatorname{Im}(\theta)$ of the map $\theta : \Gamma_{\operatorname{par}} \setminus \Gamma_{\operatorname{par}, \infty} \to \boldsymbol{R}^n; x \mapsto (f_1(x), \ldots, f_n(x))$.
\end{cor}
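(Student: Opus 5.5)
The plan is to prove the three assertions in order: the isomorphism with the generated subsemifield, the equality $\boldsymbol{V}(\operatorname{Ker}(\psi)) = \operatorname{Im}(\theta)$, and the fact that the quotient satisfies conditions $(1)$--$(4)$ of Theorem~\ref{thm4-1-1}.

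\emph{Isomorphism.} The image $\operatorname{Im}(\psi)$ is a $\boldsymbol{T}$-subalgebra of $\operatorname{Rat}(\Gamma_{\operatorname{par}})$ containing $f_1, \ldots, f_n$. It is closed under $^{\odot(-1)}$, since $\psi(g^{\odot(-1)}) = \psi(g)^{\odot(-1)}$ for $g \neq -\infty$. By \cite[Lemma~2.2]{JuAe4}, $\psi(g) = -\infty$ forces $g = -\infty$. Hence $\operatorname{Im}(\psi)$ is a subsemifield over $\boldsymbol{T}$. It lies inside $\boldsymbol{T}(f_1, \ldots, f_n)$, because every rational expression in the $X_i$ is sent to the same expression in the $f_i$. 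It is therefore equal to $\boldsymbol{T}(f_1, \ldots, f_n)$. The fundamental homomorphism theorem \cite[Proposition~2.4.4]{Giansiracusa=Giansiracusa} then gives $\overline{\boldsymbol{T}(\boldsymbol{X})}_n / \operatorname{Ker}(\psi) \cong \boldsymbol{T}(f_1, \ldots, f_n)$.

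\emph{The variety.} For $g \in \overline{\boldsymbol{T}(\boldsymbol{X})}_n$ and finite $x$, we have $\psi(g)(x) = g(\theta(x))$. This holds on finite points because the operations are pointwise, and it extends to points at infinity by continuity. So $(g,h) \in \operatorname{Ker}(\psi)$ exactly when $g = h$ on $\operatorname{Im}(\theta)$, i.e.\ $\operatorname{Ker}(\psi) = \boldsymbol{E}(\operatorname{Im}(\theta))$. Next, $\operatorname{Im}(\theta)$ is a finite union of $\boldsymbol{R}$-rational polyhedral sets: on each edge of a fine enough model the $f_i$ are affine with integer slopes, so each edge maps to a rational segment, ray, or point. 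In particular $\operatorname{Im}(\theta)$ is closed. Then $\boldsymbol{V}(\boldsymbol{E}(\operatorname{Im}(\theta))) = \operatorname{Im}(\theta)$, because a closed finite union of rational polyhedra is separated from each outside point by some rational function (for example a chip-firing-type function as in \cite[Lemmas~3.9 and 3.10]{JuAe5}). This gives $\boldsymbol{V}(\operatorname{Ker}(\psi)) = \operatorname{Im}(\theta)$.

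\emph{Conditions of Theorem~\ref{thm4-1-1}.} Condition $(2)$ follows from $\operatorname{Ker}(\psi) = \boldsymbol{E}(\operatorname{Im}(\theta))$ together with $\operatorname{Im}(\theta) = \boldsymbol{V}(\operatorname{Ker}(\psi))$. Condition $(1)$ follows from \cite[Theorem~1.1]{JuAe5}, since $\operatorname{Im}(\theta)$ is closed and a finite union of $\boldsymbol{R}$-rational polyhedral sets. Condition $(3)$ holds because $\Gamma \setminus \Gamma_\infty$ is connected and $\theta$ is continuous. Condition $(4)$ holds because the image of a one-dimensional complex under a piecewise affine map has dimension at most one, and it is nonempty.

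The main obstacle is that $\theta$ need not be injective, so Proposition~\ref{prop4-1-2} cannot be invoked directly. The argument therefore goes through the characterization Theorem~\ref{thm4-1-1}, which is exactly what makes the parallel-rays framework necessary. In the older framework, non-injectivity, or parallel rays appearing in the image, would violate condition $(5)$ of Theorem~\ref{thm1-1}. Here condition $(5)$ is absent, so no such check is required. Consequently, Theorem~\ref{thm4-1-1} produces some $\Gamma'_{\operatorname{par}}$ with $\operatorname{Rat}(\Gamma'_{\operatorname{par}}) \cong \boldsymbol{T}(f_1, \ldots, f_n)$.
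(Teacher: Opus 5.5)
Your proof is correct. Its skeleton matches the paper's: the fundamental homomorphism theorem, then $\operatorname{Ker}(\psi) = \boldsymbol{E}(\boldsymbol{V}(\operatorname{Ker}(\psi)))$, then $\boldsymbol{V}(\operatorname{Ker}(\psi)) = \operatorname{Im}(\theta)$ from closedness of $\operatorname{Im}(\theta)$, then finite generation via \cite[Theorem~1.1]{JuAe5}. The difference is in how you justify the central step.

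\textbf{The paper's route.} It obtains $\operatorname{Ker}(\psi) = \boldsymbol{E}(\boldsymbol{V}(\operatorname{Ker}(\psi)))$ abstractly. It applies Theorem~\ref{thm4-1-1} to $\operatorname{Rat}(\Gamma_{\operatorname{par}})$ itself, which presupposes the finite generation of Proposition~\ref{prop4-1-1}. It then transfers the property to the possibly non-surjective $\psi$ via \cite[Corollary~3.15]{JuAe4}. Only afterwards does it identify the variety with $\operatorname{Im}(\theta)$, through \cite[Theorem~3.14]{JuAe4}.

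\textbf{Your route.} You use that the operations of $\operatorname{Rat}(\Gamma_{\operatorname{par}})$ are pointwise at finite points. So $\psi(g) = g \circ \theta$ on $\Gamma_{\operatorname{par}} \setminus \Gamma_{\operatorname{par}, \infty}$, and hence $\operatorname{Ker}(\psi) = \boldsymbol{E}(\operatorname{Im}(\theta))$ outright. You combine this with $\boldsymbol{V}(\boldsymbol{E}(V)) = V$ for closed $V$, which is essentially the content of \cite[Theorem~3.14]{JuAe4}. For $p \notin V$, comparing $\min\{\max_i |X_i - p_i|, \varepsilon\}$ with the constant $\varepsilon$, for small $\varepsilon$, already separates $p$ from $V$. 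From these, conditions $(1)$ and $(2)$ and the equality $\boldsymbol{V}(\operatorname{Ker}(\psi)) = \operatorname{Im}(\theta)$ all follow at once. You also verify connectedness and dimension explicitly, where the paper only asserts them.

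\textbf{Trade-off.} Your version is self-contained and does not need to know beforehand that $\operatorname{Rat}(\Gamma_{\operatorname{par}})$ lies in the characterized class. The paper's version is shorter and purely algebraic, so it works for any semifield in that class without a concrete function model.

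\textbf{Minor inaccuracy.} In your closing remark, non-injectivity of $\theta$ alone does not violate condition $(5)$ of Theorem~\ref{thm1-1}. What can fail there is that the image acquires parallel rays neither of which contains the other.
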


\begin{proof}
The first statement is clear by \cite[Proposition~2.4.4]{Giansiracusa=Giansiracusa}.
By Theorem~\ref{thm4-1-1} and \cite[Corollary~3.15]{JuAe4}, the kernel congruence $\operatorname{Ker}(\psi)$ coincides with $\boldsymbol{E}(\boldsymbol{V}(\operatorname{Ker}(\psi)))$.
By the definition of $\theta$, the image $\operatorname{Im}(\theta)$ is closed in $\boldsymbol{R}^n$.
Thus $\operatorname{Im}(\theta) = \boldsymbol{V}(\operatorname{Ker}(\psi))$ holds by \cite[Theorem~3.14]{JuAe4}, and hence it is a (connected) finite union of $\boldsymbol{R}$-rational polyhedral sets of dimension zero or one by the definition of $\theta$ and rational functions on tropical curves with parallel rays.
Therefore $\operatorname{Ker}(\psi)$ is finitely generated as a congruence by \cite[Theorem~1.1]{JuAe5}.
\end{proof}

\begin{rem}
    \label{rem4-1-2}
\upshape{
Proposition~\ref{prop4-1-1} and Corollary~\ref{cor4-1-2} verify that all rational function semifields of tropical curves with parallel rays are finitely generated subsemifields over $\boldsymbol{T}$ of rational function semifileds of tropical curves and vise versa.
This is also a characterization of them.
In addition, this also means that a finitely generated subsemifield over $\boldsymbol{T}$ of rational function semifileds of tropical curves with parallel rays is the rational function semifield of some tropical curve with parallel rays.
This is an analogue of the classical fact that any finitely generated subalgebra of an affine coordinate ring is itself an affine coordinate ring, which is a (partial) answer to question~(iii) in Section~\ref{section1}.
}
\end{rem}

For a tropical curve with parallel rays $\Gamma_{\operatorname{par}}$, in what follows, we say a \textit{point} of $\Gamma_{\operatorname{par}}$ to indicate the corresponding point of the underlying tropical curve of $\Gamma_{\operatorname{par}}$.
The same is true for other words or notations other than when we specify.

\subsection{Morphisms and categorical equivalence}
    \label{subsection4.2}

The properties given in Corollary~\ref{cor4-1-1} define morphisms between tropical curves with parallel rays as follows:

\begin{dfn}
    \label{dfn4-2-1}
\upshape{
Let $\Gamma_{\operatorname{par}}$ and $\Gamma^{\prime}_{\operatorname{par}}$ be two tropical curves with parallel rays.

A map $\varphi : \Gamma_{\operatorname{par}} \to \Gamma^{\prime}_{\operatorname{par}}$ is a \textit{morphism} if

$(1)$ $\varphi$ is a morphism between the underying tropical curves, and

$(2)$ for any parallel rays $L_1$ and $L_2$ of $\Gamma_{\operatorname{par}}$, if one of the images $\varphi(L_1)$ and $\varphi(L_2)$ has no ray, then so do the other, otherwise, these are parallel rays (possibly coincide) of $\Gamma^{\prime}_{\operatorname{par}}$ and the degrees of $\varphi$ on $L_1$ and $L_2$ at infinity coincide.

A morphism $\varphi : \Gamma_{\operatorname{par}} \to \Gamma^{\prime}_{\operatorname{par}}$ is an \textit{isomorphism} if it is a bijective morphism and the inverse map $\varphi^{-1} : \Gamma^{\prime}_{\operatorname{par}} \to \Gamma_{\operatorname{par}}$ is also a morphism between tropical curves with parallel rays.
}
\end{dfn}

Note that by definition, a morphism between tropical curves with parallel rays $\varphi : \Gamma_{\operatorname{par}} \to \Gamma^{\prime}_{\operatorname{par}}$ is an isomorphism if and only if $\varphi$ is continuous on the whole of $\Gamma_{\operatorname{par}}$ and induces an isometry $\Gamma_{\operatorname{par}} \setminus \Gamma_{\operatorname{par}, \infty} \twoheadrightarrow \Gamma^{\prime}_{\operatorname{par}} \setminus \Gamma^{\prime}_{\operatorname{par}, \infty}$ (and hence, $\varphi$ is bijective) and for rays $L_1$ and $L_2$ of $\Gamma_{\operatorname{par}}$, these are parallel if and only if so are the images $\varphi(L_1)$ and $\varphi(L_2)$ by $\varphi$.

The reader might wonder whether the condition on the degrees along parallel rays can be omitted for morphisms between tropical curves with parallel rays.
However, we saw in Corollary~\ref{cor4-1-1} that this condition is natural for maps defined by a finite number of rational functions, and we also know in the proof of the following proposition that without this condition we cannot define the pull-back map.

\begin{prop}
    \label{prop4-2-1}
Let $\Gamma_{\operatorname{par}}$ and $\Gamma^{\prime}_{\operatorname{par}}$ be tropical curves with parallel rays and $\Gamma$ and $\Gamma^{\prime}$ their underlying tropical curves, respectively.
For a morphism $\varphi : \Gamma \to \Gamma^{\prime}$ between tropical curves, if it is a morphism between tropical curves with parallel rays from $\Gamma_{\operatorname{par}}$ to $\Gamma^{\prime}_{\operatorname{par}}$, i.e., $\varphi$ satisfies condition $(2)$ in Definition~\ref{dfn4-2-1}, then the restriction of $\varphi^{\ast}$ on $\operatorname{Rat}(\Gamma_{\operatorname{par}})$ gives a $\boldsymbol{T}$-algebra homomorphism $\operatorname{Rat}(\Gamma^{\prime}_{\operatorname{par}}) \to \operatorname{Rat}(\Gamma_{\operatorname{par}})$.
\end{prop}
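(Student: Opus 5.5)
The plan is to show that for every $f^{\prime} \in \operatorname{Rat}(\Gamma^{\prime}_{\operatorname{par}})$, the pull-back $\varphi^{\ast}(f^{\prime}) = f^{\prime} \circ \varphi$ lies in $\operatorname{Rat}(\Gamma_{\operatorname{par}})$. The homomorphism property then comes for free. Indeed, $\varphi^{\ast} : \operatorname{Rat}(\Gamma^{\prime}) \to \operatorname{Rat}(\Gamma)$ is already a $\boldsymbol{T}$-algebra homomorphism (Subsection~\ref{subsection2.7}), and $\operatorname{Rat}(\Gamma^{\prime}_{\operatorname{par}})$ and $\operatorname{Rat}(\Gamma_{\operatorname{par}})$ are subsemifields over $\boldsymbol{T}$ of $\operatorname{Rat}(\Gamma^{\prime})$ and $\operatorname{Rat}(\Gamma)$, respectively (Proposition~\ref{prop4-1-1}). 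So once the image is shown to land in $\operatorname{Rat}(\Gamma_{\operatorname{par}})$, the restriction is a $\boldsymbol{T}$-algebra homomorphism.

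The case $f^{\prime} = -\infty$ is trivial. Otherwise $f^{\prime} \circ \varphi \in \operatorname{Rat}(\Gamma)$, and it remains to check the condition of Definition~\ref{dfn4-1-2}: for parallel rays $L_1, L_2$ of $\Gamma_{\operatorname{par}}$, the function $f^{\prime}\circ\varphi$ has the same slope at infinity along $L_1$ and $L_2$. I will work near infinity by shrinking each $L_i$ to a subray $L_i^{\circ}$. On $L_i^{\circ}$, $\varphi$ is linear with a constant degree $d_i := \operatorname{deg}_{L_i^{\circ}}(\varphi)$, and $f^{\prime}$ has constant slope on the relevant image piece. The shrinking is possible because the models are finite and $f^{\prime}$ has finitely many pieces. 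Note that condition $(2)$ of Definition~\ref{dfn4-2-1} applies equally to the subrays, since $L_i^{\circ} \sim L_i$ by Definition~\ref{dfn4-1-1}.

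Next I split according to condition $(2)$ of Definition~\ref{dfn4-2-1}.

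\emph{Case (a): neither $\varphi(L_1)$ nor $\varphi(L_2)$ contains a ray.} Since $\varphi$ is a morphism and $\varphi(L_i)$ is compact and contains no point at infinity, $\varphi$ must eventually be constant on $L_i$: a positive degree on an infinite edge would produce infinite length, hence a ray. So $d_1 = d_2 = 0$, $f^{\prime}\circ\varphi$ is constant near infinity on both rays, and both slopes are $0$.

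\emph{Case (b): $\varphi(L_1^{\circ})$ and $\varphi(L_2^{\circ})$ are parallel rays $L_1^{\prime} \sim L_2^{\prime}$ of $\Gamma^{\prime}_{\operatorname{par}}$, with $d_1 = d_2 =: d > 0$.} Here $\varphi$ maps $L_i^{\circ}$ onto a subray of $L_i^{\prime}$, going toward infinity and scaling distances by $d$. Therefore the slope of $f^{\prime}\circ\varphi$ at infinity along $L_i$ equals $d$ times the slope of $f^{\prime}$ at infinity along $L_i^{\prime}$. Since $f^{\prime} \in \operatorname{Rat}(\Gamma^{\prime}_{\operatorname{par}})$ and $L_1^{\prime} \sim L_2^{\prime}$, these latter slopes agree, so the two slopes of $f^{\prime}\circ\varphi$ agree.

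The main obstacle is making Case (b) precise. One point is orientation: because $\varphi$ is continuous and maps the point at infinity of $L_i$ into $\varphi(L_i)$, the point at infinity of $L_i$ goes to the point at infinity of $L_i^{\prime}$, so the ray is not traversed backwards. The other point is choosing the subrays so that $\varphi$ sits inside a single edge with a single degree and $f^{\prime}$ is affine there. This is also exactly where the equality of degrees in Definition~\ref{dfn4-2-1} is used: without it the factors $d_1 \ne d_2$ would break the equality of slopes. I will remark on this, since it is the justification the paper promised for including that condition.
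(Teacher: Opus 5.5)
Your proposal is correct and follows the same route as the paper. For parallel rays whose images are parallel rays, the common degree of $\varphi$ at infinity multiplies the equal slopes of $f^{\prime}$ at infinity along the image rays, so $f^{\prime}\circ\varphi$ has the same slope at infinity along $L_1$ and $L_2$. Your version is actually more complete than the paper's one-line proof: the paper leaves implicit both the case where neither image contains a ray (where both slopes are zero) and the passage to subrays near infinity on which $\varphi$ and $f^{\prime}$ are affine.
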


\begin{proof}
Let $L_1$ and $L_2$ be parallel rays of $\Gamma_{\operatorname{par}}$.
For $f^{\prime} \in \operatorname{Rat}(\Gamma^{\prime}_{\operatorname{par}})$, when $\varphi(L_1)$ and $\varphi(L_2)$ are parallel rays, since $L_1$ and $L_2$ are parallel and the degrees of $\varphi$ on $L_1$ and $L_2$ at infinity coincide, the slopes of $\psi(f^{\prime})$ at infinity along $\varphi(L_1)$ and $\varphi(L_2)$ coincide, that is, $\varphi^{\ast}(f^{\prime}) \in \operatorname{Rat}(\Gamma_{\operatorname{par}})$.
\end{proof}

As in Proposition~\ref{prop4-3-2}, for a morphism between tropical curves with parallel rays $\varphi : \Gamma_{\operatorname{par}} \to \Gamma^{\prime}_{\operatorname{par}}$, we write the $\boldsymbol{T}$-algebra homomorphism $\operatorname{Rat}(\Gamma^{\prime}_{\operatorname{par}}) \to \operatorname{Rat}(\Gamma_{\operatorname{par}}); f^{\prime} \mapsto f^{\prime} \circ \varphi$ as $\varphi^{\ast}$, and call it the \textit{pull-back map} or the \textit{pull-back $\boldsymbol{T}$-algebra homomorphism} of $\varphi$ again. 

The converse of Proposition~\ref{prop4-2-1} holds by the almost same proof of Theorem~\ref{thm3-1-1}:

\begin{thm}
    \label{thm4-2-1}
Let $\Gamma_{\operatorname{par}}$ and $\Gamma^{\prime}_{\operatorname{par}}$ be tropical curves with parallel rays, respectively.
For a $\boldsymbol{T}$-algebra homomorphism $\psi : \operatorname{Rat}(\Gamma^{\prime}_{\operatorname{par}}) \to \operatorname{Rat}(\Gamma_{\operatorname{par}})$, there exists a unique morphism $\varphi : \Gamma_{\operatorname{par}} \to \Gamma^{\prime}_{\operatorname{par}}$ such that $\psi = \varphi^{\ast}$.
\end{thm}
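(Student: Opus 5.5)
The strategy is to use finite generation to realize both curves inside a tropical torus, obtain $\varphi$ as the map induced by coordinates, and then check that it is a morphism compatible with the parallel classes.

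Choose generators $f_1', \ldots, f_m' \in \operatorname{Rat}(\Gamma^{\prime}_{\operatorname{par}}) \setminus \{ -\infty \}$, which exist by Proposition~\ref{prop4-1-1}. Let $\theta' : \Gamma' \setminus \Gamma'_{\infty} \to \boldsymbol{R}^m$, $x' \mapsto (f_1'(x'), \ldots, f_m'(x'))$. By Proposition~\ref{prop4-1-2}, $\theta'$ is an injective local isometry for the lattice length onto $\boldsymbol{V}(\operatorname{Ker}(\psi'))$, where $\psi' : \overline{\boldsymbol{T}(\boldsymbol{X})}_m \twoheadrightarrow \operatorname{Rat}(\Gamma^{\prime}_{\operatorname{par}})$ is the induced surjection. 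It also respects the parallel classes.

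Put $g_i := \psi(f_i') \in \operatorname{Rat}(\Gamma_{\operatorname{par}}) \setminus \{-\infty\}$; these are nonzero because $\psi$ maps a semifield over $\boldsymbol{T}$ into one. Consider $\Theta : \Gamma \setminus \Gamma_\infty \to \boldsymbol{R}^m$, $x \mapsto (g_1(x), \ldots, g_m(x))$. The composition $\psi \circ \psi'$ is a $\boldsymbol{T}$-algebra homomorphism $\overline{\boldsymbol{T}(\boldsymbol{X})}_m \to \operatorname{Rat}(\Gamma_{\operatorname{par}})$ whose kernel contains $\operatorname{Ker}(\psi')$. By Corollary~\ref{cor4-1-2}, $\operatorname{Im}(\Theta) = \boldsymbol{V}(\operatorname{Ker}(\psi \circ \psi')) \subset \boldsymbol{V}(\operatorname{Ker}(\psi')) = \operatorname{Im}(\theta')$. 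Define $\varphi := \theta'^{-1} \circ \Theta$ on finite points, and extend it continuously to points at infinity. This is exactly the construction of \cite[Corollary~A1]{JuAe4} used in the proof of Theorem~\ref{thm3-1-1}.

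That proof, run on the underlying curves, shows that $\varphi$ is a morphism $\Gamma \to \Gamma'$. Here one uses that $\Theta$ is piecewise linear with integer slopes and that $\theta'$ is a local isometry. The identity $\psi(f') = f' \circ \varphi$ holds for generators by construction. Since both sides are $\boldsymbol{T}$-algebra homomorphisms, it extends to all of $\operatorname{Rat}(\Gamma^{\prime}_{\operatorname{par}})$, as every element is a tropical rational expression in the $f_i'$ with coefficients in $\boldsymbol{T}$.

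Uniqueness is a separation argument. The functions $f_i'$ separate points of $\Gamma' \setminus \Gamma'_\infty$, because $\theta'$ is injective. Hence two morphisms $\varphi_1, \varphi_2$ with $\varphi_1^* = \varphi_2^*$ agree on $\Gamma \setminus \Gamma_\infty$, and by continuity on all of $\Gamma$.

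The main obstacle is condition $(2)$ of Definition~\ref{dfn4-2-1}. Let $L_1, L_2$ be parallel rays of $\Gamma_{\operatorname{par}}$. I would apply Corollary~\ref{cor4-1-1} in the form of Proposition~\ref{prop4-1-2}(4) for $\Gamma_{\operatorname{par}}$. Concretely, enlarge the $g_i$ by generators $h_1, \ldots, h_k$ of $\operatorname{Rat}(\Gamma_{\operatorname{par}})$, so that $\Gamma_{\operatorname{par}}$ embeds via $(h, g)$ with parallel rays going to parallel rays of $\boldsymbol{R}^{k+m}$. The map $\Theta$ is then the composition of this embedding with a coordinate projection.

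Corollary~\ref{cor4-1-1} applied to that projection gives the following. Either both $\Theta(L_1 \setminus \Gamma_\infty)$ and $\Theta(L_2 \setminus \Gamma_\infty)$ are bounded, so neither $\varphi(L_j)$ contains a ray. Or both end in parallel rays of $\boldsymbol{R}^m$ with the same expansion factor at infinity. In the latter case, these rays lie in $\operatorname{Im}(\theta')$. Since $\theta'$ sends nonparallel rays of $\Gamma^{\prime}_{\operatorname{par}}$ to nonparallel rays (Proposition~\ref{prop4-1-2}(4)), $\varphi(L_1)$ and $\varphi(L_2)$ are parallel rays of $\Gamma^{\prime}_{\operatorname{par}}$. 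Since $\theta'$ is a lattice isometry, the equality of expansion factors translates into $\operatorname{deg}_{L_1}(\varphi) = \operatorname{deg}_{L_2}(\varphi)$ at infinity.

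Care is needed where $\varphi$ is eventually constant on one ray, or when the ray direction is computed with respect to the lattice length. The equality of expansion factors from Corollary~\ref{cor4-1-1}, with factor zero, handles the constant case symmetrically.
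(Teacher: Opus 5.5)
Your proposal is correct and follows the paper's route. The paper also reruns the construction of \cite[Corollary~A1]{JuAe4}: it realizes $\Gamma^{\prime}_{\operatorname{par}}$ via generators using Proposition~\ref{prop4-1-2}, sets $\varphi = (\theta^{\prime})^{-1} \circ \Theta$, and then uses the realization results of Subsection~\ref{subsection4.1} to check the parallel-ray condition. Your auxiliary embedding via the extra generators $h_j$ is harmless but unnecessary: each $g_i = \psi(f_i^{\prime})$ already lies in $\operatorname{Rat}(\Gamma_{\operatorname{par}})$, so it has equal slopes at infinity along parallel rays $L_1$ and $L_2$, which directly gives $\Theta$ the same terminal direction and expansion factor on both.
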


\begin{proof}
Replacing $\Gamma_2$ and $\Gamma_1$ in \cite[Corollary~A1 and its proof]{JuAe4} with $\Gamma_{\operatorname{par}}$ and $\Gamma^{\prime}_{\operatorname{par}}$, respectively, the same proof works except for the parts corresponding to the surjectivity of $\psi$ and the injectivity of $\varphi$ by Proposition~\ref{prop4-1-2} and \cite[Theorem~3.14]{JuAe4}.
Corollary~\ref{cor4-1-2} guaranties that the obtained map $\varphi$ is a morphism.
\end{proof}

The composition of morphisms between tropical curves with parallel rays is again a morphism.
Hence, a variant of Corollary~\ref{cor3-1-1} holds:

\begin{cor}
    \label{cor4-2-1}
The following categories $\mathscr{C}$ and $\mathscr{D}$ are equivalent via the following (contravariant) functors $F$ and $G$.

$(1)$ The class $\operatorname{Ob}(\mathscr{C})$ of objects of $\mathscr{C}$ is the tropical curves with parallel rays.

For $\Gamma_{\operatorname{par}, 1}, \Gamma_{\operatorname{par}, 2} \in \operatorname{Ob}(\mathscr{C})$, the set $\operatorname{Hom}_{\mathscr{C}}(\Gamma_{\operatorname{par}, 1}, \Gamma_{\operatorname{par}, 2})$ of morphisms from $\Gamma_{\operatorname{par}, 1}$ to $\Gamma_{\operatorname{par}, 2}$ consists of the morphisms $\Gamma_{\operatorname{par}, 1} \to \Gamma_{\operatorname{par}, 2}$ (in the sense of Definition~\ref{dfn4-2-1}).

$(2)$ The class $\operatorname{Ob}(\mathscr{D})$ of objects of $\mathscr{D}$ is the finitely generated semifields over $\boldsymbol{T}$ satisfying conditions $(1), \ldots, (4)$ in Theorem~\ref{thm4-1-1}.

For $S_1, S_2 \in \operatorname{Ob}(\mathscr{D})$, the set $\operatorname{Hom}_{\mathscr{D}}(S_1, S_2)$ of morphisms from $S_1$ to $S_2$ consists of the $\boldsymbol{T}$-algebra homomorphisms $S_1 \to S_2$.

The functor $F \colon \mathscr{C} \to \mathscr{D}$ maps $\Gamma_{\operatorname{par}} \in \operatorname{Ob}(\mathscr{C})$ to $\operatorname{Rat}(\Gamma_{\operatorname{par}}) \in \operatorname{Ob}(\mathscr{D})$ and for $\Gamma_{\operatorname{par}, 1}, \Gamma_{\operatorname{par}, 2} \in \operatorname{Ob}(\mathscr{C})$, maps $\varphi \in \operatorname{Hom}_{\mathscr{C}}(\Gamma_{\operatorname{par}, 1}, \Gamma_{\operatorname{par}, 2})$ to $\varphi^{\ast} \in \operatorname{Hom}_{\mathscr{D}}(\operatorname{Rat}(\Gamma_{\operatorname{par}, 2}), \operatorname{Rat}(\Gamma_{\operatorname{par}, 1}))$.

The functor $G \colon \mathscr{D} \to \mathscr{C}$ maps $S \in \operatorname{Ob}(\mathscr{D})$ to $\overline{\boldsymbol{V}(\operatorname{Ker}(\psi_S))} \in \operatorname{Ob}(\mathscr{C})$ and for $S_1, S_2 \in \operatorname{Ob}(\mathscr{D})$, maps $\widetilde{\psi} \in \operatorname{Hom}_{\mathscr{D}}(S_1, S_2)$ to $\varphi_{\widetilde{\psi}} \in \operatorname{Hom}_{\mathscr{C}}(\overline{\boldsymbol{V}(\operatorname{Ker}(\psi_{S_2}))}, \overline{\boldsymbol{V}(\operatorname{Ker}(\psi_{S_1}))})$, where $\psi_S$ is a fixed surjective $\boldsymbol{T}$-algebra homomorphism from a tropical rational function semifield to $S$ and $\overline{\boldsymbol{V}(\operatorname{Ker}(\psi_S))}$ is the natural compactification of $\boldsymbol{V}(\operatorname{Ker}(\psi_S))$ as a tropical curve with parallel rays for any $S \in \operatorname{Ob}(\mathscr{D})$ and $\varphi_{\widetilde{\psi}}$ is the unique morphism $\overline{\boldsymbol{V}(\operatorname{Ker}(\psi_{S_2}))} \to \overline{\boldsymbol{V}(\operatorname{Ker}(\psi_{S_1}))}$ such that $\widetilde{\psi} = \left(\varphi_{\widetilde{\psi}}\right)^{\ast}$ given in Theorem~\ref{thm4-2-1}.
\end{cor}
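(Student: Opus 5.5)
We have to prove Corollary~\ref{cor4-2-1}: the functors $F$ and $G$ are well defined and are mutually quasi-inverse. The approach follows Corollary~\ref{cor3-1-1}, with Theorem~\ref{thm4-1-1} replacing Theorem~\ref{thm1-1} and Theorem~\ref{thm4-2-1} replacing Theorem~\ref{thm3-1-1}.

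\emph{Step 1: $F$ is a well-defined contravariant functor.} For $\Gamma_{\operatorname{par}} \in \operatorname{Ob}(\mathscr{C})$, Proposition~\ref{prop4-1-1} shows that $\operatorname{Rat}(\Gamma_{\operatorname{par}})$ is a finitely generated semifield over $\boldsymbol{T}$. Theorem~\ref{thm4-1-1} then shows that it satisfies conditions $(1), \ldots, (4)$ for any surjection $\psi_S$ from a tropical rational function semifield, so $\operatorname{Rat}(\Gamma_{\operatorname{par}}) \in \operatorname{Ob}(\mathscr{D})$. Proposition~\ref{prop4-2-1} shows that $\varphi^{\ast}$ is a $\boldsymbol{T}$-algebra homomorphism. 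The identities $(\varphi_2 \circ \varphi_1)^{\ast} = \varphi_1^{\ast} \circ \varphi_2^{\ast}$ and $\operatorname{id}^{\ast} = \operatorname{id}$ hold because pull-back is composition of functions.

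\emph{Step 2: $G$ is well defined on objects.} Fix $S \in \operatorname{Ob}(\mathscr{D})$ and a surjection $\psi_S$. By conditions $(1)$, $(3)$, $(4)$ and \cite[Corollary~3.5]{JuAe5}, $V := \boldsymbol{V}(\operatorname{Ker}(\psi_S))$ is a connected $\boldsymbol{R}$-rational polyhedral set of dimension at most one. Its natural compactification $\overline{V}$ is therefore a tropical curve for the lattice length. We parallelize $\overline{V}$ by declaring two rays parallel exactly when their unbounded parts have the same direction vector in the sense of Promise~\ref{prom4-1-1}. This relation is an equivalence relation, and if $L_1 \subset L_2$ then $L_1 \sim L_2$, so $\overline{V}$ is a tropical curve with parallel rays.

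\emph{Step 3: the isomorphism $\operatorname{Rat}(\overline{V}) \cong S$.} This is the key step. Restriction to $V$ gives a map $\overline{\boldsymbol{T}(\boldsymbol{X})}_n \to \operatorname{Rat}(\overline{V})$. By Proposition~\ref{prop4-1-3}, every element of $\overline{\boldsymbol{T}(\boldsymbol{X})}_n \setminus \{ -\infty \}$ has equal slopes at infinity along parallel rays, so the image lands in $\operatorname{Rat}(\overline{V})$. The kernel of this map is $\boldsymbol{E}(V)$, which equals $\operatorname{Ker}(\psi_S)$ by condition $(2)$. So the map induces an injection $S \cong \overline{\boldsymbol{T}(\boldsymbol{X})}_n / \operatorname{Ker}(\psi_S) \hookrightarrow \operatorname{Rat}(\overline{V})$.

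Surjectivity is the main obstacle. I would follow the proof of Theorem~\ref{thm4-1-1}, which uses Proposition~\ref{prop4-1-2} and \cite[Theorem~3.14]{JuAe4}. The coordinate functions $X_i|_V$ realize $V$ injectively and isometrically. I would then show that restrictions of tropical rational functions generate all of $\operatorname{Rat}(\overline{V})$. The generators needed are those in the proof of Proposition~\ref{prop4-1-1}: the chip firing moves $g_i$ along whole parallel classes, the $h_{ij}$, and generators coming from the bounded core. Each can be realized by functions on $\boldsymbol{R}^n$ via \cite[Lemmas~3.9 and 3.10]{JuAe5}. The delicate point is the $g_i$: the function on $\boldsymbol{R}^n$ must have the same slope on all rays of one direction class and be constant on the other rays, and this is possible because the rays in different classes have different directions. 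Because the answer is independent of the choice of $\psi_S$ by the last sentence of Theorem~\ref{thm4-1-1}, the choice does not matter.

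\emph{Step 4: morphisms and the natural isomorphisms.} For $\widetilde{\psi} \in \operatorname{Hom}_{\mathscr{D}}(S_1, S_2)$, compose with the isomorphisms of Step 3 to get a homomorphism $\operatorname{Rat}(\overline{V_1}) \to \operatorname{Rat}(\overline{V_2})$. Theorem~\ref{thm4-2-1} gives a unique morphism $\varphi_{\widetilde{\psi}}$ with the pull-back property. Uniqueness in Theorem~\ref{thm4-2-1} gives functoriality of $G$, and it also shows that $F$ is fully faithful. Step 3 supplies natural isomorphisms $F G \cong \operatorname{id}_{\mathscr{D}}$. For $G F \cong \operatorname{id}_{\mathscr{C}}$, take $S = \operatorname{Rat}(\Gamma_{\operatorname{par}})$ with generators $f_1, \ldots, f_n$. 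By Proposition~\ref{prop4-1-2}, $\theta$ is an injective local isometry onto $V$, and by Proposition~\ref{prop4-1-2}(4) it preserves parallelity in both directions. So $\theta$ extends to an isomorphism $\Gamma_{\operatorname{par}} \to \overline{V}$ in the sense of Definition~\ref{dfn4-2-1}. Naturality again follows from the uniqueness in Theorem~\ref{thm4-2-1}.
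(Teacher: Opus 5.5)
Your proposal is correct and follows the paper's route, which gets the corollary from Theorem~\ref{thm4-1-1} (with Proposition~\ref{prop4-1-2} identifying the curve with $\overline{\boldsymbol{V}(\operatorname{Ker}(\psi_S))}$), Proposition~\ref{prop4-2-1} and Theorem~\ref{thm4-2-1}, together with a fact you use only implicitly in Step~1: a composite of morphisms in the sense of Definition~\ref{dfn4-2-1} is again such a morphism, since degrees at infinity multiply; note also that fullness of $F$ comes from the existence part of Theorem~\ref{thm4-2-1}, not its uniqueness. Your direct surjectivity sketch in Step~3 is unnecessary, because Theorem~\ref{thm4-1-1} already gives some $\Gamma_{\operatorname{par}}$ with $\operatorname{Rat}(\Gamma_{\operatorname{par}}) \cong S$, your own Step~4 argument identifies that curve with $\overline{V}$, and hence the restriction map $\overline{\boldsymbol{T}(\boldsymbol{X})}_n \to \operatorname{Rat}(\overline{V})$ is onto.
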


In Corollary~\ref{cor4-2-1}, the \textit{natural compactification} of $\boldsymbol{V}(\operatorname{Ker}(\psi_S))$ as a tropical curve with parallel rays means doing one-point compactifications on all points at infinity and inheriting topology, metric and parallelity of rays from those of $\boldsymbol{V}(\operatorname{Ker}(\psi_S))$ when we regard $\boldsymbol{V}(\operatorname{Ker}(\psi_S))$ as a metric space by the lattice length.

Corresponding to Proposition~\ref{prop1-1}, we can prove the following:

\begin{prop}
    \label{prop4-2-2}
Let $\Gamma_{\operatorname{par}}$ be a tropical curve with parallel rays.
For $f_1, \ldots, f_n \in \operatorname{Rat}(\Gamma_{\operatorname{par}}) \setminus \{ -\infty \}$ and the map $\theta : \Gamma_{\operatorname{par}} \setminus \Gamma_{\operatorname{par}, \infty} \to \boldsymbol{R}^n; x \mapsto (f_1(x), \ldots, f_n(x))$, the following are equivalent:

$(1)$ $f_1, \ldots, f_n$ generate $\operatorname{Rat}(\Gamma_{\operatorname{par}})$ as a semifield over $\boldsymbol{T}$,

$(2)$ $\overline{\boldsymbol{T}(\boldsymbol{X})}_n / \boldsymbol{E}(\operatorname{Im}(\theta))$ is isomorphic to $\operatorname{Rat}(\Gamma_{\operatorname{par}})$ as a $\boldsymbol{T}$-algebra, and

$(3)$ $\theta$ is an isometry on the image with the lattice length and for rays $L_1, L_2 \subset \Gamma_{\operatorname{par}}$, these are parallel if and only if $\theta(L_1 \setminus \Gamma_{\operatorname{par}, \infty})$ and $\theta(L_2 \setminus \Gamma_{\operatorname{par}, \infty})$ are connected sets consisting finitely many line segments (possibly degenerating to a point) and parallel rays in $\boldsymbol{R}^n$.
\end{prop}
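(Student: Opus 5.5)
We prove the cycle $(1)\Rightarrow(3)\Rightarrow(2)\Rightarrow(1)$, using Corollary~\ref{cor4-1-2} throughout. Let $\psi : \overline{\boldsymbol{T}(\boldsymbol{X})}_n \to \operatorname{Rat}(\Gamma_{\operatorname{par}})$ be induced by $X_i \mapsto f_i$. By Corollary~\ref{cor4-1-2}, $\boldsymbol{T}(f_1, \ldots, f_n) \cong \overline{\boldsymbol{T}(\boldsymbol{X})}_n / \operatorname{Ker}(\psi)$, $\operatorname{Ker}(\psi) = \boldsymbol{E}(\boldsymbol{V}(\operatorname{Ker}(\psi)))$ and $\boldsymbol{V}(\operatorname{Ker}(\psi)) = \operatorname{Im}(\theta)$. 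Hence $\boldsymbol{T}(f_1, \ldots, f_n) \cong \overline{\boldsymbol{T}(\boldsymbol{X})}_n / \boldsymbol{E}(\operatorname{Im}(\theta))$ always holds. Condition $(2)$ therefore says that this subsemifield is abstractly isomorphic to $\operatorname{Rat}(\Gamma_{\operatorname{par}})$.

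For $(1)\Rightarrow(3)$: if the $f_i$ generate, then $\psi$ is surjective. Proposition~\ref{prop4-1-2} gives that $\theta$ is an injective local isometry for the lattice length with closed image $\boldsymbol{V}(\operatorname{Ker}(\psi))$. It also gives the forward direction of the parallelity statement: parallel rays go to sets ending in parallel rays. Two points remain.

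First, we need $\theta$ to be an isometry onto its image, not merely an injective local isometry. Since $\theta$ is a continuous injective local isometry between one-dimensional polyhedral spaces with closed image, I would argue that path lengths are preserved, so the intrinsic lattice-length metrics agree.

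Second, we need the converse: nonparallel rays must go to nonparallel rays. Suppose $L_1, L_2$ are nonparallel but their images end in parallel rays of $\boldsymbol{R}^n$. Take a chip firing move $g$ with slope $-1$ at infinity on $L_1$ and slope $0$ on $L_2$, e.g.\ $\operatorname{CF}$ of the closure of the complement of the class of $L_1$, as in the proof of Proposition~\ref{prop4-1-1}. Then $g \in \operatorname{Rat}(\Gamma_{\operatorname{par}}) = \psi(\overline{\boldsymbol{T}(\boldsymbol{X})}_n)$, so $g = h \circ \theta$ for some $h \in \overline{\boldsymbol{T}(\boldsymbol{X})}_n$. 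Because $\theta$ is an isometry near infinity on both rays, Proposition~\ref{prop4-1-3} forces $h$, and hence $g$, to have equal slopes at infinity on $L_1$ and $L_2$. This is a contradiction.

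For $(3)\Rightarrow(2)$: the image $\operatorname{Im}(\theta)$, compactified naturally and with the parallelity inherited from $\boldsymbol{R}^n$, is a tropical curve with parallel rays $\Theta_{\operatorname{par}}$. Its rational function semifield corresponds to $\overline{\boldsymbol{T}(\boldsymbol{X})}_n / \boldsymbol{E}(\operatorname{Im}(\theta))$ by Corollary~\ref{cor4-2-1}, via Theorem~\ref{thm4-1-1} and the natural compactification. By $(3)$, $\theta$ extends to a bijection $\Gamma_{\operatorname{par}} \to \Theta_{\operatorname{par}}$ that is an isometry off the points at infinity and preserves and reflects parallelity. By the remark after Definition~\ref{dfn4-2-1}, it is an isomorphism of tropical curves with parallel rays. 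Pulling back gives $(2)$.

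For $(2)\Rightarrow(1)$: this is the step I expect to be the main obstacle. An abstract isomorphism $\operatorname{Rat}(\Gamma_{\operatorname{par}}) \cong \boldsymbol{T}(f_1, \ldots, f_n)$ does not by itself force the inclusion to be an equality. I would handle it geometrically, as follows.
\begin{enumerate}
\item Combine the isomorphism with the inclusion $\boldsymbol{T}(f_1, \ldots, f_n) \hookrightarrow \operatorname{Rat}(\Gamma_{\operatorname{par}})$ to get an injective $\boldsymbol{T}$-algebra endomorphism of $\operatorname{Rat}(\Gamma_{\operatorname{par}})$.
\item By Theorem~\ref{thm4-2-1}, this endomorphism is $\varphi^{\ast}$ for a surjective morphism $\varphi : \Gamma_{\operatorname{par}} \to \Gamma_{\operatorname{par}}$ that factors through $\theta$.
\item Compare total lengths, or the number of points of valence $\ne 2$ together with the finite edge lengths and the number of parallel classes. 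Since degrees are positive integers, a surjective self-morphism that is not injective or not isometric would strictly change one of these invariants, so $\varphi$ must be an isomorphism.
\end{enumerate}
It then follows that $\theta$ is an injective local isometry reflecting parallelity, which is $(3)$. Finally, $(3)\Rightarrow(1)$ follows as in the proof of Proposition~\ref{prop1-1}: the image of $\psi$ contains enough chip firing moves to generate, by the argument of Proposition~\ref{prop4-1-1}. The finite-length rigidity is delicate when $\Gamma$ has infinite total length, so there I would argue separately on the ray-free core $\Gamma^{\prime}$ and on the parallel classes.
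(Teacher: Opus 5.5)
\textbf{The gap is in your $(2)\Rightarrow(1)$.} Item~3, that a surjective self-morphism that is not injective or not isometric must change one of your invariants, is false. No comparison of invariants can rescue it, because with $(2)$ read as an \emph{abstract} isomorphism the implication itself fails.

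Take $\Gamma_{\operatorname{par}} = [-\infty, \infty]$ with the trivial parallelization, $n = 1$ and $f_1 = 2x$. Then $\operatorname{Im}(\theta) = \boldsymbol{R}$ and $\boldsymbol{E}(\boldsymbol{R})$ is the diagonal, so $\overline{\boldsymbol{T}(\boldsymbol{X})}_1 / \boldsymbol{E}(\boldsymbol{R}) = \overline{\boldsymbol{T}(\boldsymbol{X})}_1 \cong \operatorname{Rat}(\Gamma_{\operatorname{par}})$. Yet $2x$ does not generate $\operatorname{Rat}(\Gamma_{\operatorname{par}})$ (Example~\ref{ex2-6-1}), and $\theta$ is not an isometry. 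One endomorphism of the kind in your item~1 is $\varphi^{\ast}$ with $\varphi(x) = 2x$. This is a bijective morphism of degree $2$ that changes none of your invariants.

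Falling back on the ray-free core does not help either. On $[0,1]$ the tent map $t \mapsto \min\{2t, 2-2t\}$ is a surjective, non-injective morphism. With $f_1(t) := \min\{2t, 2-2t\}$, the abstract version of $(2)$ holds, but neither $(1)$ nor $(3)$ does.

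So $(2)$ must mean that the natural homomorphism $\overline{\boldsymbol{T}(\boldsymbol{X})}_n / \boldsymbol{E}(\operatorname{Im}(\theta)) \to \operatorname{Rat}(\Gamma_{\operatorname{par}})$ induced by $X_i \mapsto f_i$ is an isomorphism. That is the reading the paper uses. Under it, $(1) \Leftrightarrow (2)$ is immediate from Corollary~\ref{cor4-1-2}, since that homomorphism is always injective with image $\boldsymbol{T}(f_1, \ldots, f_n)$. And $(1) \Rightarrow (3)$ is Proposition~\ref{prop4-1-2}. Your use of Proposition~\ref{prop4-1-3} to show that nonparallel rays have nonparallel images is a good way to fill in that direction.

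\textbf{What actually needs proving.} Under this reading the real content is $(3) \Rightarrow (1)$, and the paper proves it directly.
\begin{enumerate}
\item For a ray $L$, the proof of Lemma~3.15 in \cite{JuAe5} gives $F \in \overline{\boldsymbol{T}(\boldsymbol{X})}_n$ with the same slope at infinity along every ray of $\operatorname{Im}(\theta)$ parallel to $\theta(L \setminus \Gamma_{\operatorname{par}, \infty})$.
\item Because $(3)$ says $\theta$ also reflects parallelity, pulling such $F$ back yields the chip firing moves $g_i$ from the proof of Proposition~\ref{prop4-1-1}.
\item Lemmas~3.16 and 3.17 of \cite{JuAe5} supply the $h_{ij}$ and the generators of the ray-free part.
\item Hence $\psi$ is surjective.
\end{enumerate}
Your closing sentence points toward this, but it does not say which functions on $\boldsymbol{R}^n$ are needed or where the ``if and only if'' in $(3)$ is used. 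That is the step to write out, and it replaces the rigidity detour entirely.

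The same applies to your $(3) \Rightarrow (2)$. It helps only if the identification $\operatorname{Rat}(\Theta_{\operatorname{par}}) \cong \overline{\boldsymbol{T}(\boldsymbol{X})}_n / \boldsymbol{E}(\operatorname{Im}(\theta))$ is given by restricting functions on $\boldsymbol{R}^n$ to $\operatorname{Im}(\theta)$, i.e.\ if the coordinate functions generate $\operatorname{Rat}(\Theta_{\operatorname{par}})$, which is exactly the content just described. In that case, composing with $\theta^{\ast}$ gives $(1)$ at once. An abstract isomorphism gives nothing, as the examples above show.
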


\begin{proof}
Proposition~\ref{prop4-1-2} and Corollary~\ref{cor4-1-2} give $(1) \Longleftrightarrow (2) \Longrightarrow (3)$.

Assume $(3)$ holds.
When $n = 1$, it is clearly the case of original tropical curves, and so $(2)$ holds by Proposition~\ref{prop1-1}.
Assume $n \ge 2$.
For a ray $L \subset \Gamma_{\operatorname{par}}$, \cite[the proof of Lemma~3.15]{JuAe5} ensures the existence of a rational function $f \in \overline{\boldsymbol{T}(\boldsymbol{X})}_n \setminus \{ -\infty \}$ which has the same slope at infinity along any ray of $\operatorname{Im}(\theta)$ parallel to $\theta(L \setminus \Gamma_{\operatorname{par}, \infty})$.
This fact and the proof of Proposition~\ref{prop4-1-1} and \cite[Lemmas~3.16 and 3.17]{JuAe5} show that the $\boldsymbol{T}$-algebra homomorphism $\overline{\boldsymbol{T}(\boldsymbol{X})}_n \to \operatorname{Rat}(\Gamma_{\operatorname{par}})$ defined by the correspondence $X_i \mapsto f_i$ is surjective.
By Corollary~\ref{cor4-1-2}, it implies $(2)$.
\end{proof}

Note that in the setting of Proposition~\ref{prop4-2-2}, one (and hence, all) of $(1), (2), (3)$ implies that the image $\operatorname{Im}(\theta)$ coincides with the congruence variety $\boldsymbol{V}(\operatorname{Ker}(\psi))$ for the surjective $\boldsymbol{T}$-algebra homomorpshim $\psi : \overline{\boldsymbol{T}(\boldsymbol{X})}_n \twoheadrightarrow \operatorname{Rat}(\Gamma_{\operatorname{par}})$ induced by the correspondence $X_i \mapsto f_i$ by Proposition~\ref{prop4-1-2}.

\begin{rem}
    \label{rem4-2-1}
\upshape{
Propositions~\ref{prop1-1} and \ref{prop4-2-2} say that when we realize tropical curves with parallel rays as congruence varieties in Euclidean spaces, we have to respect not only their topologies and metrics, but parallelity of rays.
In these propositions, the map $\theta$ can be regarded as an ``isomorphism" of tropical curves with parallel rays by the natural compactification of $\operatorname{Im}(\theta)$ as a tropical curve with parallel rays.
To do so, in Proposition~\ref{prop1-1}, the tropical curve $\Gamma$ must be regarded as a tropical curve with paralllel rays $(\Gamma, \sim_{\operatorname{triv}})$ as in Remark~\ref{rem4-1-1}.
}
\end{rem}

From such a point of view, the following definition is quite natural:

\begin{dfn}
    \label{dfn4-2-2}
\upshape{
Let $\Gamma_{\operatorname{par}}$ be a tropical curve with parallel rays.
For $f_1, \ldots, f_n \in \operatorname{Rat}(\Gamma_{\operatorname{par}}) \setminus \{ -\infty \}$, the image of the map $\theta : \Gamma_{\operatorname{par}} \setminus \Gamma_{\operatorname{par}, \infty} \to \boldsymbol{R}^n; x \mapsto (f_1(x), \ldots, f_n(x))$ is called the \textit{realization} of $\Gamma_{\operatorname{par}}$ if $f_1, \ldots, f_n$ generate $\operatorname{Rat}(\Gamma_{\operatorname{par}})$ as a semifield over $\boldsymbol{T}$.
}
\end{dfn}

For a tropical curve $\Gamma$, we also call picking a finite number of elements $f_1, \ldots, f_n \in \operatorname{Rat}(\Gamma) \setminus \{ -\infty \}$ such that the map $\theta : \Gamma \setminus \Gamma_{\infty} \to \boldsymbol{R}^n; x \mapsto (f_1(x), \ldots, f_n(x))$ is an isometry on its image with the lattice length a \textit{parallelization} of $\Gamma$.
In this case, the inclusion $\boldsymbol{T}(f_1, \ldots, f_n) \hookrightarrow \operatorname{Rat}(\Gamma)$ is again called a \textit{parallelization} of $\Gamma$.

\begin{rem}
    \label{rem4-2-2}
\upshape{
Corollary~\ref{cor4-1-2}, Propositions~\ref{prop4-2-1}, \ref{prop4-2-2} and Theorem~\ref{thm4-2-1} tell us the following:
for a tropical curve $\Gamma$, choosing a finite number of elements $f_1, \ldots, f_n \in \operatorname{Rat}(\Gamma) \setminus \{ -\infty \}$ gives the injective composition $\operatorname{Rat}(\Gamma^{\prime}_{\operatorname{par}}) \cong \overline{\boldsymbol{T}(\boldsymbol{X})}_n / \boldsymbol{E}(\operatorname{Im}(\theta)) \cong \boldsymbol{T}(f_1, \ldots, f_n) \hookrightarrow \operatorname{Rat}(\Gamma^{\prime}) \to \operatorname{Rat}(\Gamma)$.
Here $\theta : \Gamma \setminus \Gamma_{\infty} \to \boldsymbol{R}^n; x \mapsto (f_1(x), \ldots, f_n(x))$ and $\Gamma^{\prime}_{\operatorname{par}}$ is the tropical curve with parallel rays obtained by the natural compactification of $\operatorname{Im}(\theta)$ and $\Gamma^{\prime}$ is the underlying tropical curve of $\Gamma^{\prime}_{\operatorname{par}}$.
The injective $\boldsymbol{T}$-algebra homomorphism $\operatorname{Rat}(\Gamma^{\prime}_{\operatorname{par}}) \hookrightarrow \operatorname{Rat}(\Gamma^{\prime})$ is the parallelization of $\Gamma^{\prime}$ and the images of $f_1, \ldots, f_n$ in $\operatorname{Rat}(\Gamma^{\prime}_{\operatorname{par}})$ give a realization of $\Gamma^{\prime}_{\operatorname{par}}$.
The $\boldsymbol{T}$-algebra homomorphism $\operatorname{Rat}(\Gamma^{\prime}) \to \operatorname{Rat}(\Gamma)$ is induced by $\theta$.
Also $\theta$ induces a natural equivalence relation $\sim_{\theta}$ on rays of $\Gamma$ such that for rays $L_1, L_2 \subset \Gamma$, $L_1 \sim_{\theta} L_2$ if and only if, both $\theta(L_1 \setminus \Gamma_{\infty})$ and $\theta(L_2 \setminus \Gamma_{\infty})$ do not include rays or both include rays that are parallel.
Then, for the tropical curve with parallel rays $\Gamma_{\operatorname{par}}$ defined by the pair $(\Gamma, \sim_{\theta})$, the parallelization $\operatorname{Rat}(\Gamma_{\operatorname{par}}) \hookrightarrow \operatorname{Rat}(\Gamma)$ of $\Gamma$ commutes with the $\boldsymbol{T}$-algebra homomorphisms above:
\begin{align*}
    \begin{tikzpicture}[auto]
\node (a) at (0, 1.5) {$\operatorname{Rat}(\Gamma^{\prime}_{\operatorname{par}})$};
\node (x) at (4, 1.5) {$\operatorname{Rat}(\Gamma_{\operatorname{par}})$};
\node (b) at (0, 0) {$\operatorname{Rat}(\Gamma^{\prime})$};
\node (y) at (4, 0) {$\operatorname{Rat}(\Gamma)$.};
\draw[->] (a) -- (x);
\draw[{Hooks[right]}->] (a) -- (b);
\draw[{Hooks[right]}->] (x) -- (y);
\draw[->] (b) -- (y);
\node at (2,0.75) {$\circlearrowleft$};
\end{tikzpicture}
\end{align*}
}
\end{rem}

\subsection{Disconnected tropical curves with parallel rays and direct sum}
    \label{subsection4.3}

As explained in Subsection~\ref{subsection2.8}, the notion of disconnected tropical curves is just defined by the directed sums of tropical curves.
This is so simple since we need not respect the parallelity of rays.
However, in respect of this property, the situation is more complicated, i.e., the direct sum of tropical curves with parallel rays can be regarded as a disconnected tropical curve with parallel rays, but the converse does not hold.
We explain this in this subsection.

\begin{dfn}
    \label{dfn4-3-1}
\upshape{
A \textit{disconnected tropical curve with parallel rays} $\Gamma_{\operatorname{par}}$ is a disconnected tropical curve $\Gamma = \Gamma_1 \sqcup \cdots \sqcup \Gamma_n$ possessing an equivalence relation $\sim$ on rays of $\Gamma$ such that for rays $L_1$ and $L_2$ of $\Gamma$, if $L_1 \subset L_2$, then $L_1 \sim L_2$ holds.
The disconnected tropical curve $\Gamma$ is called its \textit{underlying disconnected tropical curve}.
We call a ray of $\Gamma$ a \textit{ray} of $\Gamma_{\operatorname{par}}$.
Two rays of $\Gamma_{\operatorname{par}}$ are \textit{parallel} if these are equivalent by $\sim$, otherwise, these are \textit{nonparallel}.
}
\end{dfn}

This definition is just an extension of Definition~\ref{dfn4-1-1} and it is quite natural from a geometric point of view.
Hence the following definition is also natural:

\begin{dfn}
    \label{dfn4-3-2}
\upshape{
Let $\Gamma_{\operatorname{par}}$ be a disconnected tropical curve with parallel rays and $\Gamma$ the underlying disconnected tropical curve of $\Gamma_{\operatorname{par}}$.
A function $f : \Gamma \to \boldsymbol{R} \cup \{ \pm \infty \}$ is a \textit{rational function} on $\Gamma_{\operatorname{par}}$ if, $f \in \operatorname{Rat}(\Gamma)$ and $f$ has the same slope at infinity along any two parallel rays of $\Gamma_{\operatorname{par}}$, or $f \equiv - \infty$.
Let $\operatorname{Rat}(\Gamma_{\operatorname{par}})$ denote the set of all rational functions on $\Gamma_{\operatorname{par}}$, which is a subset of $\operatorname{Rat}(\Gamma)$.
We call $\operatorname{Rat}(\Gamma_{\operatorname{par}})$ the \textit{rational function semifield} of $\Gamma_{\operatorname{par}}$.
}
\end{dfn}

In the setting of Definition~\ref{dfn4-3-2}, clearly $\operatorname{Rat}(\Gamma_{\operatorname{par}})$ is a subsemifield over $\boldsymbol{T}$ of $\operatorname{Rat}(\Gamma)$.

For a disconnected tropical curve with parallel rays $\Gamma_{\operatorname{par}}$ with connected components $\Gamma_{\operatorname{par}, 1}, \ldots, \Gamma_{\operatorname{par}, n}$, where each $\Gamma_{\operatorname{par}, i}$ is considered as a tropical curve with parallel rays inheriting the equivalence relation from that of $\Gamma_{\operatorname{par}}$, one might expect that $\operatorname{Rat}(\Gamma_{\operatorname{par}}) \cong \bowtie_{i = 1}^n \operatorname{Rat}(\Gamma_{\operatorname{par}, i})$ holds.
However, this is not the case.
See the following example.

\begin{ex}
    \label{ex4-3-1}
\upshape{
Let $\Gamma_i$ be a copy of the interval $[-\infty, \infty]$.
For the disconnected tropical curve $\Gamma := \Gamma_1 \sqcup \Gamma_2$ and rays $L_1, L_2 \subset \Gamma$, we define
\begin{align*}
    L_1 \sim L_2 \quad \Longleftrightarrow \quad  -\infty \in L_1, L_2 \text{ or } \infty \in L_1, L_2,
\end{align*}
which is an equivalence relation on rays of $\Gamma$.
Put $\Gamma_{\operatorname{par}}$ the disconnected tropical curve with parallel rays defined by the pair $(\Gamma, \sim)$ and say $\Gamma_{\operatorname{par}, i}$ the tropical curve with parallel rays $(\Gamma_i, \sim_{\Gamma_i})$, where $\sim_{\Gamma_i}$ denotes the restriction of $\sim$ on rays of $\Gamma_i$ and it is an equivalence relation.
Note that $\operatorname{Rat}(\Gamma_{\operatorname{par}, i})$ is just $\operatorname{Rat}(\Gamma_i)$ in this case.
Then for the rational function $f \in \operatorname{Rat}(\Gamma_{\operatorname{par}, 1})$ defined by
\begin{align*}
f(t) := \begin{cases}
    0  &\text{if} \quad t < 0;\\
    t  &\text{if} \quad t \ge 0,
\end{cases}
\end{align*}
the pair $(f, 0)$ is in $\operatorname{Rat}(\Gamma_{\operatorname{par}, 1}) \bowtie \operatorname{Rat}(\Gamma_{\operatorname{par}, 2})$ but not in $\operatorname{Rat}(\Gamma_{\operatorname{par}})$.
}
\end{ex}

The reason why such a thing happens is that in the above setting, the pseudodirect product $\bowtie_{i = 1}^n \operatorname{Rat}(\Gamma_{\operatorname{par}, i})$ is isomorphic to the rational function semifield of the direct sum $\Gamma_{\operatorname{par}, 1} \sqcup \cdots \sqcup \Gamma_{\operatorname{par}, n}$ of $\Gamma_{\operatorname{par}, 1}, \ldots, \Gamma_{\operatorname{par}, n}$ defined as follows:

\begin{dfn}
    \label{dfn4-3-3}
\upshape{
Let $\Gamma_{\operatorname{par}, i}$ be the tropical curve with parallel rays defined by the pair $(\Gamma_i, \sim_i)$ of a tropical curve $\Gamma_i$ and an equivalence relation $\sim_i$ on rays of $\Gamma_i$.
The \textit{direct sum} $\Gamma_{\operatorname{par}, 1} \sqcup \cdots \sqcup \Gamma_{\operatorname{par}, n}$ of $\Gamma_{\operatorname{par}, 1}, \ldots, \Gamma_{\operatorname{par}, n}$ is the disconnected tropical curve with parallel rays defined by the pair $(\Gamma_1 \sqcup \cdots \sqcup \Gamma_n, \sim)$ such that for rays $L_1, L_2 \subset \Gamma_1 \sqcup \cdots \sqcup \Gamma_n$, 
\begin{align*}
L_1 \sim L_2 \quad \Longleftrightarrow \quad \exists i : L_1, L_2 \subset \Gamma_{\operatorname{par}, i} ,  L_1 \sim_i L_2.
\end{align*}
}
\end{dfn}

In the setting of Definition~\ref{dfn4-3-3}, the natural inclusion $\iota_i : \Gamma_{\operatorname{par}, i} \hookrightarrow \Gamma_{\operatorname{par}, 1} \sqcup \cdots \sqcup \Gamma_{\operatorname{par}, n}$ is a local isometry and it clearly induces a $\boldsymbol{T}$-algebra homomorphism $\operatorname{Rat}(\Gamma_{\operatorname{par}, 1} \sqcup \cdots \sqcup \Gamma_{\operatorname{par}, n}) \to \operatorname{Rat}(\Gamma_{\operatorname{par}, i}); f \mapsto f \circ \iota_i$, which is surjective.

\begin{prop}[cf.~{\cite[Lemmas~3.41 and 3.46]{JuAe=Nakajima}}]
    \label{prop4-3-1}
In the above setting, the map $\psi : \operatorname{Rat}(\Gamma_{\operatorname{par}, 1} \sqcup \cdots \sqcup \Gamma_{\operatorname{par}, n}) \to \bowtie_{i = 1}^n \operatorname{Rat}(\Gamma_{\operatorname{par}, i}); f \not= -\infty \mapsto (f \circ \iota_1, \ldots, f \circ \iota_n), -\infty \mapsto -\infty$ is a $\boldsymbol{T}$-algebra isomorphism.
\end{prop}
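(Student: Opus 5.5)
The plan is to check directly that $\psi$ is well defined, is a $\boldsymbol{T}$-algebra homomorphism, and is bijective. The key point is that rays of the direct sum are parallel only when they lie in the same component. So the slope condition defining $\operatorname{Rat}(\Gamma_{\operatorname{par}, 1} \sqcup \cdots \sqcup \Gamma_{\operatorname{par}, n})$ splits into independent conditions on each component.

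\emph{Well-definedness.} Let $f \neq -\infty$ be in the source. Then $f$ is a rational function on the underlying disconnected curve $\Gamma_1 \sqcup \cdots \sqcup \Gamma_n$, whose rational function semifield is $\bowtie_{i=1}^n \operatorname{Rat}(\Gamma_i)$ by Subsection~\ref{subsection2.8} and Proposition~\ref{prop2-8-2}. Hence each restriction $f \circ \iota_i$ lies in $\operatorname{Rat}(\Gamma_i) \setminus \{-\infty\}$; in particular, $f$ cannot vanish identically on a single component. If $L_1 \sim_i L_2$ are rays of $\Gamma_i$, then $\iota_i(L_1) \sim \iota_i(L_2)$ by Definition~\ref{dfn4-3-3}. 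So $f$ has equal slopes at infinity along them, and $f \circ \iota_i \in \operatorname{Rat}(\Gamma_{\operatorname{par}, i})$. Thus $\psi(f)$ lies in the pseudodirect product.

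\emph{Homomorphism and injectivity.} The operations on both sides are pointwise max and sum, computed component by component, and constants map to the diagonal $(t, \ldots, t)$. Hence $\psi$ is a $\boldsymbol{T}$-algebra homomorphism. Injectivity is immediate, because a function on a disjoint union is determined by its restrictions to the components.

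\emph{Surjectivity.} This is the step where the specific form of $\sim$ matters, and it is the only nontrivial point. Given $(f_1, \ldots, f_n)$ with each $f_i \in \operatorname{Rat}(\Gamma_{\operatorname{par}, i}) \setminus \{-\infty\}$, define $f$ on the disjoint union to be $f_i$ on $\Gamma_i$. This $f$ lies in $\operatorname{Rat}(\Gamma_1 \sqcup \cdots \sqcup \Gamma_n)$. Now take two parallel rays $L_1 \sim L_2$. By Definition~\ref{dfn4-3-3} there is some $i$ with $L_1, L_2 \subset \Gamma_i$ and $L_1 \sim_i L_2$. Then $f = f_i$ on both rays, and $f_i$ has equal slopes at infinity along them, so $f$ does too. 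Therefore $f$ lies in the source and $\psi(f) = (f_1, \ldots, f_n)$. Together with $\psi(-\infty) = -\infty$, this shows $\psi$ is bijective, hence a $\boldsymbol{T}$-algebra isomorphism.

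Example~\ref{ex4-3-1} shows why this step needs care. There, parallelity links rays in different components, and the glued function $(f, 0)$ fails the slope condition. The proof above works precisely because Definition~\ref{dfn4-3-3} rules out parallelity across components.
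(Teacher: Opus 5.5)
Your proposal is correct and matches the paper's proof. Surjectivity comes from gluing $f_1, \ldots, f_n$ componentwise, using that $\sim$ in Definition~\ref{dfn4-3-3} only relates rays lying in a single component, and injectivity holds because the images of the $\iota_i$ cover the direct sum; the only cosmetic difference is that the paper gets well-definedness and the homomorphism property of $\psi$ from the universal mapping property (Proposition~\ref{prop2-8-1}) applied to the restriction maps, whereas you verify these by hand.
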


\begin{proof}
The existence and definition of $\psi$ follows from Proposition~\ref{prop2-8-1}.
For $(f_1, \ldots, f_n) \in (\bowtie_{i = 1}^n \operatorname{Rat}(\Gamma_{\operatorname{par}, i})) \setminus \{ -\infty \}$, let $f(x) := f_i(\iota_i^{-1}(x))$ when $x \in \iota_i(\Gamma_{\operatorname{par}, i})$ for $x \in \Gamma_{\operatorname{par}, 1} \sqcup \cdots \sqcup \Gamma_{\operatorname{par}, n}$.
Then, by the definition of $\sim$, the function $f$ is in $\operatorname{Rat}(\Gamma_{\operatorname{par}, 1} \sqcup \cdots \sqcup \Gamma_{\operatorname{par},n})$ and $\psi(f) = (f_1, \ldots, f_n)$.
Thus $\psi$ is surjective.
If $\psi(f) = \psi(g)$, then $f = g$ holds since $\operatorname{Im}(\iota_1) \cup \cdots \cup \operatorname{Im}(\iota_n) = \Gamma_{\operatorname{par}, 1} \sqcup \cdots \sqcup \Gamma_{\operatorname{par}, n}$ holds.
\end{proof}

\begin{rem}
    \label{rem4-3-1}
\upshape{
Proposition~\ref{prop4-3-1} justifies the definition of rational function semifields of disconnected tropical curves in Subsection~\ref{subsection2.8}.
In other words, for tropical curves $\Gamma_1, \ldots, \Gamma_n$, when we regard these tropical curves with parallel rays $\Gamma_{\operatorname{par}, 1}, \ldots, \Gamma_{\operatorname{par}, n}$ as in Remark~\ref{rem4-1-1}, the rational function semifield $\operatorname{Rat}(\Gamma_{\operatorname{par}, 1} \sqcup \cdots \sqcup \Gamma_{\operatorname{par}, n})$ of their direct sum $\Gamma_{\operatorname{par}, 1} \sqcup \cdots \sqcup \Gamma_{\operatorname{par}, n}$ coincides with that $\operatorname{Rat}(\Gamma_1 \sqcup \cdots \sqcup \Gamma_n) =\, \bowtie_{i = 1}^n \operatorname{Rat}(\Gamma_i)$ of the direct sum $\Gamma_1 \sqcup \cdots \sqcup \Gamma_n$.
}
\end{rem}

Again, for a disconnected tropical curve with parallel rays $\Gamma_{\operatorname{par}}$ with connected components $\Gamma_{\operatorname{par}, 1}, \ldots, \Gamma_{\operatorname{par}, n}$, where each $\Gamma_{\operatorname{par}, i}$ is considered as a tropical curve with parallel rays inheriting the equivalence relation from that of $\Gamma_{\operatorname{par}}$, the map $\psi : \operatorname{Rat}(\Gamma_{\operatorname{par}}) \to \bowtie_{i = 1}^n \operatorname{Rat}(\Gamma_{\operatorname{par}, i}); f \mapsto (f|_{\Gamma_{\operatorname{par}, 1}}, \ldots, f|_{\Gamma_{\operatorname{par}, n}})$ is an injective $\boldsymbol{T}$-algebra homomorphism.
In what follows, we sometimes regard $\operatorname{Rat}(\Gamma_{\operatorname{par}})$ as a subsemifield over $\boldsymbol{T}$ of $\bowtie_{i = 1}^n \operatorname{Rat}(\Gamma_{\operatorname{par}, i})$ via this $\psi$.

We can apply the proof of Proposition~\ref{prop4-1-1} to the following:

\begin{prop}
    \label{prop4-3-2}
For a disconnected tropical with parallel rays $\Gamma_{\operatorname{par}}$, its rational function semifield $\operatorname{Rat}(\Gamma_{\operatorname{par}})$ is finitely generated as a semifield over $\boldsymbol{T}$.
\end{prop}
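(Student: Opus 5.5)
We follow the proof of Proposition~\ref{prop4-1-1} almost verbatim; the only new ingredient is that an equivalence class of parallel rays may now be spread over several connected components. Let $\Gamma = \Gamma_1 \sqcup \cdots \sqcup \Gamma_m$ be the underlying disconnected tropical curve. In each $\Gamma_l$ choose a connected subgraph $\Gamma^{\prime}_l$ without rays such that the closure of every connected component of $\Gamma_l \setminus \Gamma^{\prime}_l$ is a ray. Put $\Gamma^{\prime} := \Gamma^{\prime}_1 \sqcup \cdots \sqcup \Gamma^{\prime}_m$.

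Next, list all these rays as $L_{11}, \ldots, L_{1m_1}, \ldots, L_{k1}, \ldots, L_{km_k}$. Rays with the same first index are to be parallel, and rays with different first indices nonparallel; rays in one class may now lie in different components. As generators we take the following functions.
\begin{itemize}
\item The chip firing moves $g_i := \operatorname{CF}\left(\overline{\Gamma \setminus \bigcup_{j} L_{ij}}, \infty\right)$. By the convention of Subsection~\ref{subsection2.8}, on each component $\Gamma_l$ this is the chip firing move of the part of the complement lying in $\Gamma_l$, or constant $0$ if that part is all of $\Gamma_l$.
\item The functions $h_{ij} := \operatorname{CF}(L_{ij}, \varepsilon)$.
\end{itemize}
Both families lie in $\operatorname{Rat}(\Gamma_{\operatorname{par}})$. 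Indeed, $g_i$ has slope $-1$ at infinity along every ray of the $i$th class and $0$ along all other rays, and each $h_{ij}$ is eventually constant along every ray.

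For the finite part, the surjection $\varphi : \Gamma \twoheadrightarrow \Gamma^{\prime}$ collapses each ray to its boundary point $x_{ij}$, and $\iota : \Gamma^{\prime} \hookrightarrow \Gamma$ is the inclusion. The corresponding maps $\psi := \varphi^{\ast}$ and $\phi := \iota^{\ast}$ satisfy $\phi \circ \psi = \operatorname{id}$. Since $\Gamma^{\prime}$ has no rays, $\operatorname{Rat}(\Gamma^{\prime}) \cong \bowtie_{l} \operatorname{Rat}(\Gamma^{\prime}_l)$, and the image of $\psi$ lies in $\operatorname{Rat}(\Gamma_{\operatorname{par}})$ because its elements have slope $0$ on all rays. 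This semifield is finitely generated: take finite generators of each $\operatorname{Rat}(\Gamma^{\prime}_l)$ by \cite[Proposition~3.6]{JuAe3}, pad them by $0$ on the other components, and use \cite[Lemma~3.34]{JuAe=Nakajima} or directly the product trick in the proof of Proposition~\ref{prop3-2-3}.

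Now let $f \in \operatorname{Rat}(\Gamma_{\operatorname{par}}) \setminus \{-\infty\}$. As in the proof of Proposition~\ref{prop4-1-1}, \cite[the proof of Lemma~1.4]{JuAe3} gives $G_{ij}$ in the semifield generated by $g_i$ over $\boldsymbol{T}$ such that $G_{ij}$ agrees with $f(x_{ij})^{\odot(-1)} \odot f$ on $L_{ij}$ and is $0$ on $\overline{\Gamma \setminus \bigcup_j L_{ij}}$. This step is applied componentwise, since $g_i$ restricted to $\Gamma_l$ is the ordinary chip firing move there.

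\textbf{Main obstacle.} We must realize $G_{ij}$ for the whole class simultaneously. This uses that $f$ has a common slope at infinity along all the $L_{ij}$ in the $i$th class, even those on different components, so that one power of $g_i$ controls all of them at once. With that in hand, choose exponents $l_{ij}$ so that
\[
G_{ij} \odot h_{ij}^{\odot l_{ij}} \le G_{i1}, \ldots, G_{im_i} \quad \text{on} \quad \Gamma \setminus L_{ij};
\]
this is possible since each $G_{ij}$ has only finitely many pieces. Then formula \eqref{prop4-1-1:eq1} holds verbatim, now read componentwise. Hence $f$ lies in the subsemifield generated by the $g_i$, the $h_{ij}$, and $\psi$ of the finite generators of $\operatorname{Rat}(\Gamma^{\prime})$, and $\operatorname{Rat}(\Gamma_{\operatorname{par}})$ is finitely generated.
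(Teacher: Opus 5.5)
\textbf{Gap: the choice of the exponents $l_{ij}$.} The step that fails is choosing $l_{ij}$ with $G_{ij}\odot h_{ij}^{\odot l_{ij}} \le G_{i1},\ldots,G_{im_i}$ on $\Gamma\setminus L_{ij}$.

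By the convention of Subsection~\ref{subsection2.8}, which you invoke yourself for $g_i$, the chip firing move $h_{ij}=\operatorname{CF}(L_{ij},\varepsilon)$ is identically $0$ on every component not containing $L_{ij}$. So on a parallel ray $L_{ij'}$ in another component, $G_{ij}\odot h_{ij}^{\odot l_{ij}}=G_{ij}$ for every $l_{ij}$. Equal slopes at infinity only make $G_{ij}-G_{ij'}$ bounded on $L_{ij'}$, not nonpositive.

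Here is a concrete failure in the setting of Example~\ref{ex4-3-1}.
\begin{itemize}
\item Take $\Gamma'_1=\Gamma'_2=[-1,1]$, $L_{11}=[1,\infty]\subset\Gamma_1$ and $L_{12}=[1,\infty]\subset\Gamma_2$.
\item Let $f$ be $\max\{t-1,0\}$ on $\Gamma_1$ and $\max\{t-2,0\}$ on $\Gamma_2$; this lies in $\operatorname{Rat}(\Gamma_{\operatorname{par}})$.
\item Every element of $\boldsymbol{T}(g_1)$ is pointwise a function of $g_1$. Since $g_1$ takes the same values at equal coordinates $t$ on $L_{11}$ and $L_{12}$, necessarily $G_{11}=t-1$ on $L_{12}$.
\item On the other hand, $G_{12}=\max\{t-2,0\}<t-1$ there for $t>1$.
\end{itemize}
Hence $\bigoplus_j G_{1j}\odot h_{1j}^{\odot l_{1j}}=t-1$ on $L_{12}$ for all exponents, and formula~\eqref{prop4-1-1:eq1} returns $t-1$ instead of $f$ on $L_{12}$. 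Your ``main obstacle'' paragraph puts the cross-component difficulty in constructing $G_{ij}$, which is harmless. It actually sits here.

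\textbf{The fix, as in the paper.} You need a factor that is a large negative constant on the other components. Let $e_k\in\operatorname{Rat}(\Gamma_{\operatorname{par}})$ be the function equal to $1$ on the $k$th component and $0$ elsewhere. Replace $G_{ij}\odot h_{ij}^{\odot l_{ij}}$ by $G_{ij}\odot h_{ij}^{\odot l_{ij}}\odot(e_{k_1}\odot\cdots\odot e_{k_b})^{\odot(-l_{ij})}$, where $k_1,\ldots,k_b$ are, for instance, the indices of the components not containing $L_{ij}$. Then:
\begin{itemize}
\item on the component of $L_{ij}$ nothing changes;
\item on $\overline{\Gamma\setminus\bigcup_j L_{ij}}$ in the other components the new term equals $-l_{ij}\le 0$;
\item on each $L_{ij'}$ in another component it equals $(-l_{ij})\odot G_{ij}$, which lies below $G_{ij'}$ for large $l_{ij}$ by the bounded-difference argument.
\end{itemize}
After this replacement, \eqref{prop4-1-1:eq1} goes through. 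No new generators are needed: in your notation $e_k=\psi(\phi(e_k))$ already lies in $\psi(\operatorname{Rat}(\Gamma'))$.

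The rest of your outline coincides with the paper's proof: the ray-free core $\Gamma'$ with $\operatorname{Rat}(\Gamma')\cong\bowtie_l\operatorname{Rat}(\Gamma'_l)$, the functions $g_i$, and the $G_{ij}$.
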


\begin{proof}
In the proof of Proposition~\ref{prop4-1-1}, first, replace connected $\Gamma_{\operatorname{par}}$ with disconnected $\Gamma_{\operatorname{par}}$.
Then, since $\Gamma^{\prime}$ has no rays, when it has distinct connected components $\Gamma^{\prime}_1, \ldots, \Gamma^{\prime}_n$ such that $\Gamma^{\prime} = \bigsqcup_{i = 1}^n \Gamma^{\prime}_i$, the equality $\operatorname{Rat}(\Gamma^{\prime}) = \, \bowtie_{i = 1}^n \operatorname{Rat}(\Gamma^{\prime}_i)$ holds, and hence $\operatorname{Rat}(\Gamma^{\prime})$ is finitely generated as a semifield over $\boldsymbol{T}$ by \cite[Proposition~3.6]{JuAe3} and \cite[Lemma~3.34]{JuAe=Nakajima}.
Next, for distinct connected components $\Gamma_{\operatorname{par}, 1}, \ldots, \Gamma_{\operatorname{par}, a}$ of $\Gamma_{\operatorname{par}}$ such that $\Gamma_{\operatorname{par}} = \bigcup_{i = 1}^a \Gamma_{\operatorname{par}, i}$ as a set, let $e_i := (0, \ldots, 0, 1, 0, \ldots, 0) \in \operatorname{Rat}(\Gamma_{\operatorname{par}}) \subset \, \bowtie_{i = 1}^a \operatorname{Rat}(\Gamma_{\operatorname{par}, i})$ (the $1$ is in the $i$th component).
For each $L_{ij}$, choosing suitable numbers $k_1, \ldots, k_{b_{ij}} \in \{ 1, \ldots, a \}$, the inequality
\begin{align*}
    G_{ij} \odot h_{ij}^{\odot l_{ij}} \odot (e_{k_1} \odot \cdots \odot e_{k_{b_{ij}}})^{\odot (- l_{ij})} \le G_{i1}, \ldots, G_{im_i}
\end{align*}
holds on $\Gamma_{\operatorname{par}} \setminus L_{ij}$.
Then, replacing $G_{ij} \odot h_{ij}^{\odot l_{ij}}$ in equation~(\ref{prop4-1-1:eq1}) in the proof of Proposition~\ref{prop4-1-1} with the left hand side of this inequality, the same argument holds.
\end{proof}

We do not pursue the study of disconnected tropical curves with parallel rays and their rational function semifields in this paper except for subgraphs discussed in the next subsection.
However, we can do it using similar arguments as we have done so far.

\subsection{Subgraphs}
    \label{subsection4.4}

As in Subsection~\ref{subsection3.2}, we consider the notion of subgraphs of tropical curves with parallel rays.

Based on Subsection~\ref{subsection3.2}, the following notion is natural:

\begin{dfn}
    \label{dfn4-4-1}
\upshape{
Let $\Gamma_{\operatorname{par}}$ be a tropical curve with parallel rays.
A subset $\Gamma^{\prime}_{\operatorname{par}}$ of $\Gamma_{\operatorname{par}}$ is a \textit{subgraph} of $\Gamma_{\operatorname{par}}$ if it is a subgraph of the underlying tropical curve of $\Gamma_{\operatorname{par}}$ inheriting the equivalence relation on rays.
Explicitly, for rays of $\Gamma^{\prime}_{\operatorname{par}}$, these are parallel in $\Gamma^{\prime}_{\operatorname{par}}$ if and only if so are they in $\Gamma_{\operatorname{par}}$.
}
\end{dfn}

Corresponding to Proposition~\ref{prop3-2-1}, we have the following proposition:

\begin{prop}
    \label{prop4-4-1}
Let $\Gamma_{\operatorname{par}}$ be a tropical curve with parallel rays and $\Gamma^{\prime}_{\operatorname{par}}$ a connected subgraph of $\Gamma_{\operatorname{par}}$.
Then, regarding $\Gamma^{\prime}_{\operatorname{par}}$ itself as a tropical curve with parallel rays, the restriction $f|_{\Gamma^{\prime}_{\operatorname{par}}}$ on $\Gamma^{\prime}_{\operatorname{par}}$ for any $f \in \operatorname{Rat}(\Gamma_{\operatorname{par}})$ is in $\operatorname{Rat}(\Gamma^{\prime}_{\operatorname{par}})$ and the restriction map $\psi : \operatorname{Rat}(\Gamma_{\operatorname{par}}) \to \operatorname{Rat}(\Gamma^{\prime}_{\operatorname{par}}); f \mapsto f|_{\Gamma^{\prime}_{\operatorname{par}}}$ is a surjective $\boldsymbol{T}$-algebra homomorphism.
\end{prop}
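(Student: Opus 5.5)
The plan is to treat the two claims in turn. The first is that restriction lands in $\operatorname{Rat}(\Gamma^{\prime}_{\operatorname{par}})$. The second is that the restriction map is surjective. For surjectivity I would reduce to Proposition~\ref{prop3-2-1} and then correct the slopes at infinity on rays of $\Gamma$ that are not contained in $\Gamma^{\prime}$.

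\textbf{Well-definedness.} Let $f \in \operatorname{Rat}(\Gamma_{\operatorname{par}}) \setminus \{-\infty\}$. By Proposition~\ref{prop3-2-1}, $f|_{\Gamma^{\prime}}$ lies in $\operatorname{Rat}(\Gamma^{\prime})$, where $\Gamma$ and $\Gamma^{\prime}$ are the underlying curves; by Remark~\ref{rem3-2-1}, $\Gamma^{\prime}$ is not a single point at infinity. Any ray $L^{\prime}$ of $\Gamma^{\prime}_{\operatorname{par}}$ contains a point at infinity of $\Gamma$, and near that point it agrees with a ray $L$ of $\Gamma$; hence $L^{\prime} \sim L$. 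The slope of $f$ at infinity along $L^{\prime}$ therefore equals its slope along $L$. By Definition~\ref{dfn4-4-1}, two rays of $\Gamma^{\prime}_{\operatorname{par}}$ are parallel exactly when they are parallel in $\Gamma_{\operatorname{par}}$, so the restriction has equal slopes at infinity on parallel rays. Thus $\psi$ is well defined, and it is clearly a $\boldsymbol{T}$-algebra homomorphism.

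\textbf{Surjectivity.} Take $f^{\prime} \in \operatorname{Rat}(\Gamma^{\prime}_{\operatorname{par}}) \setminus \{-\infty\}$. First consider the extension produced in the proof of Proposition~\ref{prop3-2-1}. In the case $\Gamma^{\prime} \subset \Gamma \setminus \Gamma_{\infty}$, the function $g$ there is constant outside a bounded neighbourhood of $\Gamma^{\prime}$. So $g$ has slope $0$ at infinity along every ray of $\Gamma$, and $g$ already lies in $\operatorname{Rat}(\Gamma_{\operatorname{par}})$.

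In general, $\Gamma^{\prime}$ contains some points at infinity. I would then construct the extension $g$ directly.
\begin{enumerate}
\item Put $g = f^{\prime}$ on $\Gamma^{\prime}$.
\item Fix a parallel class $C$ of rays of $\Gamma$. If $C$ contains a ray meeting $\Gamma^{\prime}$ at a point at infinity, let $s_C$ be the common slope of $f^{\prime}$ at infinity along the rays of $\Gamma^{\prime}$ in $C$. This is well defined because $f^{\prime} \in \operatorname{Rat}(\Gamma^{\prime}_{\operatorname{par}})$ and parallelism is inherited. Otherwise put $s_C = 0$.
\item On each ray $L \in C$ not contained in $\Gamma^{\prime}$, and on the complement of $\Gamma^{\prime}$ generally, define $g$ piecewise linearly with integer slopes. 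Near each boundary point of $\Gamma^{\prime}$, use a steep slope, as in Proposition~\ref{prop3-2-1}, to reach a prescribed value. Then continue along $L$ with eventual slope $s_C$ toward infinity.
\end{enumerate}
The bounded part of $\Gamma \setminus \Gamma^{\prime}$ has finitely many edges. There, any continuous piecewise integer-slope interpolation exists, for instance by connecting values with slope-$\pm1$ zigzags after adjusting lengths, or simply by taking $g$ constant on a compact core. With these choices $g \in \operatorname{Rat}(\Gamma)$, $g$ has equal slopes at infinity within each parallel class, and $g|_{\Gamma^{\prime}} = f^{\prime}$.

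\textbf{Main obstacle.} The delicate step is making this extension genuinely continuous with integer slopes while fixing the slope at infinity on rays that share an endpoint region with $\Gamma^{\prime}$. I expect to handle it algebraically rather than by hand, imitating the proof of Proposition~\ref{prop4-1-1}. The argument in Proposition~\ref{prop3-2-1}, together with \cite[the proof of Lemma~1.4]{JuAe3}, gives some extension $h \in \operatorname{Rat}(\Gamma)$ of $f^{\prime}$. I would then multiply $h$ by suitable powers of the chip firing moves $\operatorname{CF}(L, \varepsilon)$ and $\operatorname{CF}(\overline{\Gamma \setminus L}, \infty)$ for rays $L \not\subset \Gamma^{\prime}$. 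These factors are chosen to vanish on $\Gamma^{\prime}$ and to adjust the slope at infinity along $L$ to $s_C$. This yields an element of $\operatorname{Rat}(\Gamma_{\operatorname{par}})$ restricting to $f^{\prime}$. The point to check carefully is that these correcting factors are indeed identically zero on $\Gamma^{\prime}$.
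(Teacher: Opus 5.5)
Your proof is correct, but in the case $\Gamma^{\prime} \cap \Gamma_{\infty} \neq \varnothing$ it takes a different route from the paper.

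\textbf{The paper's route.} The paper never writes down a preimage there. It takes disjoint representatives $L_1, \ldots, L_k$ of all rays parallel to a given ray $L^{\prime}$ of $\Gamma^{\prime}_{\operatorname{par}}$. It notes that $\operatorname{CF}\bigl(\overline{\Gamma_{\operatorname{par}} \setminus \bigcup_{i} L_i}, \infty\bigr)$ lies in $\operatorname{Rat}(\Gamma_{\operatorname{par}})$ and restricts into $\operatorname{Rat}(\Gamma^{\prime}_{\operatorname{par}})$. It then invokes the generation argument of Proposition~\ref{prop4-1-1}, applied to $\Gamma^{\prime}_{\operatorname{par}}$. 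The image of $\psi$ is a subsemifield over $\boldsymbol{T}$ containing such moves and the other generators used there, and equation~(\ref{prop4-1-1:eq1}) rebuilds every element from them.

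\textbf{Your route.} You extend each $f^{\prime}$ directly, with one slope $s_C$ per parallel class. This is essentially the construction the paper itself uses later for Proposition~\ref{prop4-4-3}. Your argument is elementary and self-contained. It shows exactly where inherited parallelism enters, namely the well-definedness of $s_C$, and it works verbatim for disconnected subgraphs. The paper's argument is shorter given Proposition~\ref{prop4-1-1}, but it leaves implicit how the chosen rays match those in that proof.

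The step you call the main obstacle is not delicate. Near the (finite) boundary points of $\Gamma^{\prime}$, use steep slopes down to $0$, then stay constant $0$. Far out on each ray whose point at infinity is not in $\Gamma^{\prime}$, switch to slope $s_C$. This already gives an element of $\operatorname{Rat}(\Gamma_{\operatorname{par}})$.

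\textbf{Fixes for the algebraic fallback.} Two details need correcting.

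First, drop $\operatorname{CF}(L, \varepsilon)$. It equals $-\varepsilon$ off the $\varepsilon$-neighbourhood of $L$, so it does not vanish on $\Gamma^{\prime}$. It also changes no slope at infinity, so it is not needed.

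Second, do not index over rays $L \not\subset \Gamma^{\prime}$, since such a ray can meet $\Gamma^{\prime}$ or even have its tail inside it. Index over tails instead: for each point at infinity outside $\Gamma^{\prime}$, pick a tail $L$ disjoint from $\Gamma^{\prime}$ and from the other chosen tails. This is possible because $\Gamma^{\prime}$ is closed. Then $\operatorname{CF}(\overline{\Gamma \setminus L}, \infty)$ is $0$ off $L$ and has slope $-1$ at infinity along $L$.

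Let $h \in \operatorname{Rat}(\Gamma)$ satisfy $h|_{\Gamma^{\prime}} = f^{\prime}$, as given by Proposition~\ref{prop3-2-1}, and let $\sigma_L$ be the slope of $h$ at infinity along $L$. Then $h \odot \bigodot_L \operatorname{CF}(\overline{\Gamma \setminus L}, \infty)^{\odot (\sigma_L - s_C)}$ lies in $\operatorname{Rat}(\Gamma_{\operatorname{par}})$ and restricts to $f^{\prime}$.
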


\begin{proof}
The proof of Proposition~\ref{prop3-2-1} in the first three paragraphs works in this case by replacing $\Gamma$ (resp.~$\Gamma^{\prime}$) with $\Gamma_{\operatorname{par}}$ (resp.~$\Gamma^{\prime}_{\operatorname{par}}$).
When $\Gamma^{\prime}_{\operatorname{par}} \not= \Gamma_{\operatorname{par}}$ and $\Gamma^{\prime}_{\operatorname{par}} \cap \Gamma_{\operatorname{par}, \infty} \not= \varnothing$, for a ray $L^{\prime}$ of $\Gamma^{\prime}_{\operatorname{par}}$, let $L_1 = L^{\prime}, \ldots, L_k$ be distinct parallel rays of $\Gamma_{\operatorname{par}}$ such that $(1)$ if $i \not= j$, then $L_i \cap L_j$ is empty, and $(2)$ if $e$ is a ray of $\Gamma_{\operatorname{par}}$ parallel to $L^{\prime}$, then there exists a unique number $i$ satisfying $e \subset L_i$ or $e \supset L_i$.
Then $\operatorname{CF}\left( \overline{\Gamma_{\operatorname{par}} \setminus \bigcup_{i = 1}^k L_i}, \infty \right) \in \operatorname{Rat}(\Gamma_{\operatorname{par}})$ holds and its image by $\psi$ is in $\operatorname{Rat}(\Gamma^{\prime}_{\operatorname{par}})$, and by the proof of Proposition~\ref{prop4-1-1}, the $\boldsymbol{T}$-algebra homomorphism $\psi$ is surjective.
\end{proof}

The following proposition holds by Theorem~\ref{thm4-1-1} and Corollary~\ref{cor4-1-1} and \cite[Theorem~3.14]{JuAe4}:

\begin{prop}
    \label{prop4-4-2}
Let $\Gamma_{\operatorname{par}}$ and $\Gamma^{\prime}_{\operatorname{par}}$ be tropical curves with parallel rays and $\psi : \operatorname{Rat}(\Gamma_{\operatorname{par}}) \twoheadrightarrow \operatorname{Rat}(\Gamma^{\prime}_{\operatorname{par}})$ a surjective $\boldsymbol{T}$-algebra homomorphism.
Then the corresponding injective morphism $\varphi : \Gamma^{\prime}_{\operatorname{par}} \hookrightarrow \Gamma_{\operatorname{par}}$ satisfying $\psi(f) = f \circ \varphi$ for any $f \in \operatorname{Rat}(\Gamma_{\operatorname{par}})$ is a local isometry and its image $\operatorname{Im}(\varphi)$ is not contained in $\Gamma_{\operatorname{par}, \infty}$.
\end{prop}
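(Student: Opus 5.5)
We will follow the proof of Proposition~\ref{prop3-2-2}, with Theorem~\ref{thm4-2-1} replacing Theorem~\ref{thm3-1-1}. By Theorem~\ref{thm4-2-1}, applied to $\psi$, there is a unique morphism $\varphi : \Gamma^{\prime}_{\operatorname{par}} \to \Gamma_{\operatorname{par}}$ of tropical curves with parallel rays such that $\psi = \varphi^{\ast}$. The remaining tasks are to show that $\varphi$ is injective, that $\operatorname{Im}(\varphi) \not\subset \Gamma_{\operatorname{par}, \infty}$, and that $\varphi$ is a local isometry.

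For injectivity we pass to a realization. Choose generators $f_1, \ldots, f_n$ of $\operatorname{Rat}(\Gamma_{\operatorname{par}})$, which exist by Proposition~\ref{prop4-1-1}. Since $\psi$ is surjective, the elements $\psi(f_1), \ldots, \psi(f_n)$ generate $\operatorname{Rat}(\Gamma^{\prime}_{\operatorname{par}})$. By Proposition~\ref{prop4-2-2}, the map $\theta^{\prime} = (\psi(f_i))_i = (f_i \circ \varphi)_i$ is therefore injective on $\Gamma^{\prime}_{\operatorname{par}} \setminus \Gamma^{\prime}_{\operatorname{par}, \infty}$. Because $\theta^{\prime} = \theta \circ \varphi$ with $\theta = (f_i)_i$, the map $\varphi$ is injective on finite points. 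Injectivity at points at infinity, and the fact that $\operatorname{Im}(\varphi) \not\subset \Gamma_{\operatorname{par}, \infty}$, then come from \cite[Theorem~3.14]{JuAe4} as in Proposition~\ref{prop3-2-2}. Concretely, $\operatorname{Im}(\theta^{\prime}) = \boldsymbol{V}(\operatorname{Ker}(\psi \circ \pi))$ lies inside $\operatorname{Im}(\theta)$, where $\pi : \overline{\boldsymbol{T}(\boldsymbol{X})}_n \twoheadrightarrow \operatorname{Rat}(\Gamma_{\operatorname{par}})$ is the surjection $X_i \mapsto f_i$. This inclusion is compatible with the compactifications, and Corollary~\ref{cor4-1-1} controls how rays go to rays. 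Since $\boldsymbol{V}(\operatorname{Ker}(\psi \circ \pi))$ is nonempty by Theorem~\ref{thm4-1-1}(4), $\varphi$ hits a finite point.

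For the local isometry we copy the chip-firing argument. Fix a finite point $x^{\prime}$ of $\Gamma^{\prime}_{\operatorname{par}}$. The function $\operatorname{CF}(\{x^{\prime}\}, \infty)$ is generally \emph{not} in $\operatorname{Rat}(\Gamma^{\prime}_{\operatorname{par}})$, since it has slope $-1$ on every ray and parallel rays may fail to be compatible. We therefore replace it with $\operatorname{CF}(\{x^{\prime}\}, \varepsilon)$ for small $\varepsilon > 0$. This function is constant near infinity, so it lies in $\operatorname{Rat}(\Gamma^{\prime}_{\operatorname{par}})$, and it still has slope $-1$ in every direction from $x^{\prime}$. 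By surjectivity it equals $f \circ \varphi$ for some $f \in \operatorname{Rat}(\Gamma_{\operatorname{par}})$. On an edge $e^{\prime}$ at $x^{\prime}$ and a nearby point $y^{\prime}$, the slope of $f$ from $\varphi(x^{\prime})$ to $\varphi(y^{\prime})$ is then $-\operatorname{dist}(x^{\prime}, y^{\prime}) / \operatorname{dist}(\varphi(x^{\prime}), \varphi(y^{\prime}))$. This slope must be an integer. Injectivity forces $\operatorname{deg}_{e^{\prime}}(\varphi) \ge 1$, so the ratio is $1/\operatorname{deg}_{e^{\prime}}(\varphi)$. Hence the degree is $1$ on every edge, and $\varphi$ is a local isometry.

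The main obstacle is the injectivity step, specifically making sure that Proposition~\ref{prop4-2-2} and \cite[Theorem~3.14]{JuAe4} really give that $\varphi$ is an embedding, including at points at infinity, in the parallel-ray setting. Corollary~\ref{cor4-1-1} is needed there to control how rays map. Once injectivity holds, the metric part is routine after the $\varepsilon$-truncation of the chip-firing move.
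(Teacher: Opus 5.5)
Your proof is correct and follows essentially the paper's route. Injectivity of $\varphi$ and $\operatorname{Im}(\varphi) \not\subset \Gamma_{\operatorname{par}, \infty}$ come from realizing both semifields (Proposition~\ref{prop4-1-2}, Theorem~\ref{thm4-1-1}) with $\boldsymbol{V}(\operatorname{Ker}(\psi \circ \pi)) \subset \boldsymbol{V}(\operatorname{Ker}(\pi))$, \cite[Theorem~3.14]{JuAe4} and Corollary~\ref{cor4-1-1}, and the metric part is the chip-firing slope argument of Proposition~\ref{prop3-2-2}. One side remark is wrong but harmless: $\operatorname{CF}(\{x^{\prime}\}, \infty)$ does lie in $\operatorname{Rat}(\Gamma^{\prime}_{\operatorname{par}})$, since its slope at infinity is the same ($-1$) along every ray, so your $\varepsilon$-truncation is unnecessary.
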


\begin{prop}
    \label{prop4-4-3}
Let $\Gamma_{\operatorname{par}}$ be a tropical curve with parallel rays and $\Gamma^{\prime}_{\operatorname{par}}$ a subgraph of $\Gamma_{\operatorname{par}}$ with connected components $\Gamma^{\prime}_{\operatorname{par}, 1}, \ldots, \Gamma^{\prime}_{\operatorname{par}, l}$ such that $\Gamma^{\prime}_{\operatorname{par}, j} \not\subset \Gamma_{\operatorname{par}, \infty}$ for each $j = 1, \ldots, l$.
Then the $l$-tuple $\left(f|_{\Gamma^{\prime}_{\operatorname{par}, 1}}, \ldots, f|_{\Gamma^{\prime}_{\operatorname{par}, l}} \right)$ for any $f \in \operatorname{Rat}(\Gamma_{\operatorname{par}})$ is in $\bowtie_{i = 1}^l \operatorname{Rat}(\Gamma_{\operatorname{par}, i}^{\prime})$ and the restriction map $\psi : \operatorname{Rat}(\Gamma_{\operatorname{par}}) \to \bowtie_{i = 1}^l \operatorname{Rat}(\Gamma_{\operatorname{par}, i}^{\prime}); f \mapsto \left(f|_{\Gamma^{\prime}_{\operatorname{par}, 1}}, \ldots, f|_{\Gamma^{\prime}_{\operatorname{par}, l}} \right)$ is a $\boldsymbol{T}$-algebra homomorphism whose image is $\operatorname{Rat}(\Gamma^{\prime}_{\operatorname{par}})$, where $\Gamma^{\prime}_{\operatorname{par}}$ is regarded as a disconnected tropical curve with parallel rays.
Moreover, for each natural surjective $\boldsymbol{T}$-algebra homomorphism $\pi_j : \bowtie_{i = 1}^l \operatorname{Rat}(\Gamma_{\operatorname{par}, i}^{\prime}) \twoheadrightarrow \operatorname{Rat}(\Gamma_{\operatorname{par}, j}^{\prime}); (f^{\prime}_1, \ldots, f^{\prime}_l) \mapsto f^{\prime}_j$, the composition $\pi_j \circ \psi$ coincides with the restriction map $\operatorname{Rat}(\Gamma_{\operatorname{par}}) \twoheadrightarrow \operatorname{Rat}(\Gamma_{\operatorname{par}, j}^{\prime}); f \mapsto f|_{\Gamma_{\operatorname{par}, j}^{\prime}}$, which is surjective.
\end{prop}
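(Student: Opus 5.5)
The statement splits into three parts: the restriction map is well defined, its image is exactly $\operatorname{Rat}(\Gamma^{\prime}_{\operatorname{par}})$ (embedded via restriction to components), and the compositions $\pi_j \circ \psi$ are the restrictions and are surjective. I will treat them in that order.

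\textbf{Well-definedness and the last assertion.} By Proposition~\ref{prop4-4-1}, for each $j$ and $f \in \operatorname{Rat}(\Gamma_{\operatorname{par}}) \setminus \{-\infty\}$, the restriction $f|_{\Gamma^{\prime}_{\operatorname{par}, j}}$ lies in $\operatorname{Rat}(\Gamma^{\prime}_{\operatorname{par}, j})$ and is not $-\infty$. Hence the tuple lies in $\bowtie_{i = 1}^l \operatorname{Rat}(\Gamma^{\prime}_{\operatorname{par}, i})$. By Proposition~\ref{prop2-8-1}, the restriction maps assemble into a $\boldsymbol{T}$-algebra homomorphism $\psi$ with $\pi_j \circ \psi$ equal to the $j$th restriction. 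That restriction is surjective by Proposition~\ref{prop4-4-1}.

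\textbf{The inclusion $\operatorname{Im}(\psi) \subset \operatorname{Rat}(\Gamma^{\prime}_{\operatorname{par}})$.} Let $L_1, L_2$ be parallel rays of $\Gamma^{\prime}_{\operatorname{par}}$, possibly lying in different components. By Definition~\ref{dfn4-4-1} they are parallel in $\Gamma_{\operatorname{par}}$. So $f$ has equal slopes at infinity along them, and $\psi(f)$ satisfies Definition~\ref{dfn4-3-2}.

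\textbf{The reverse inclusion.} This is the main step, in the spirit of the surjectivity part of Proposition~\ref{prop3-2-3}. Take $h = (h_1, \ldots, h_l) \in \operatorname{Rat}(\Gamma^{\prime}_{\operatorname{par}}) \setminus \{-\infty\}$. I would construct $g \in \operatorname{Rat}(\Gamma_{\operatorname{par}})$ with $g|_{\Gamma^{\prime}_{\operatorname{par}, i}} = h_i$ for every $i$.

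First group the rays of $\Gamma_{\operatorname{par}}$ meeting $\Gamma^{\prime}_{\operatorname{par}} \cap \Gamma_{\operatorname{par}, \infty}$ into parallel classes. For each class, $h$ has a common slope $s$ at infinity. For rays of $\Gamma_{\operatorname{par}}$ in that class which are not contained in $\Gamma^{\prime}_{\operatorname{par}}$, the extension must also have slope $s$ at infinity.

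To build $g$, extend each $h_i$ off $\Gamma^{\prime}_{\operatorname{par}, i}$ as in the proof of Proposition~\ref{prop3-2-1}: use steep slopes at the boundary points until a constant is reached. On each ray outside $\Gamma^{\prime}_{\operatorname{par}}$ parallel to a ray of $\Gamma^{\prime}_{\operatorname{par}}$, end with slope $s$ near infinity. On rays parallel to no ray of $\Gamma^{\prime}_{\operatorname{par}}$, end with slope $0$. All of these choices are possible because the components are disjoint closed sets and the rays outside $\Gamma^{\prime}_{\operatorname{par}}$ have a free finite part on which to interpolate with integer slopes. The result is a continuous piecewise affine function with integer slopes and matching slopes along each parallel class, hence an element of $\operatorname{Rat}(\Gamma_{\operatorname{par}})$ with $\psi(g) = h$.

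\textbf{Expected obstacle.} The delicate case is a ray of $\Gamma_{\operatorname{par}}$ that contains a ray of $\Gamma^{\prime}_{\operatorname{par}}$ without being contained in $\Gamma^{\prime}_{\operatorname{par}}$. There the tail is already fixed by $h$, and the constraint is automatically compatible. I would handle this by first choosing a model in which every ray of $\Gamma_{\operatorname{par}}$ meeting $\Gamma^{\prime}_{\operatorname{par}} \cap \Gamma_{\operatorname{par}, \infty}$ has its infinite end inside $\Gamma^{\prime}_{\operatorname{par}}$. Alternatively, I would mimic the generator argument of Proposition~\ref{prop4-4-1} using the chip firing moves $\operatorname{CF}(\overline{\Gamma_{\operatorname{par}} \setminus \bigcup L_i}, \infty)$ together with the elements $e_i$ from the proof of Proposition~\ref{prop4-3-2}.
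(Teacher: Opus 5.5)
Your proposal is correct and follows essentially the paper's own proof. It obtains well-definedness from Propositions~\ref{prop4-4-1} and \ref{prop2-8-1}, gets $\operatorname{Im}(\psi) \subset \operatorname{Rat}(\Gamma^{\prime}_{\operatorname{par}})$ from the inherited parallelism, and proves the reverse inclusion by explicitly extending $(f^{\prime}_1, \ldots, f^{\prime}_l)$: the extension is constant away from a small neighborhood of $\Gamma^{\prime}_{\operatorname{par}}$, except on rays parallel to rays of $\Gamma^{\prime}_{\operatorname{par}}$, where the common slope at infinity is imposed. Your ``expected obstacle'' is already covered by that construction, since a ray whose point at infinity lies in $\Gamma^{\prime}_{\operatorname{par}}$ has its tail in $\Gamma^{\prime}_{\operatorname{par}}$, so the fallback via chip firing moves and the $e_i$ is unnecessary.
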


\begin{proof}
The existence and definition of $\psi$ is clear by Propositions~\ref{prop4-4-1} and \ref{prop2-8-1}.
As $\Gamma^{\prime}_{\operatorname{par}}$ inherits the equivalence relation on rays of $\Gamma_{\operatorname{par}}$, the image $\psi(f)$ is in $\operatorname{Rat}(\Gamma^{\prime}_{\operatorname{par}})$.
Conversely, for any $(f^{\prime}_1, \ldots, f^{\prime}_l) \in \operatorname{Rat}(\Gamma^{\prime}_{\operatorname{par}}) \setminus \{ -\infty \} \subset \, \bowtie_{i = 1}^l \operatorname{Rat}(\Gamma^{\prime}_{\operatorname{par}, i})$, let $f$ be a rational function on $\Gamma_{\operatorname{par}}$ such that $(1)$ $f|_{\Gamma^{\prime}_{\operatorname{par}, i}} = f^{\prime}_i$ and $(2)$ $f$ is constant zero on the outside of the $\varepsilon$-neighborhood of $\Gamma^{\prime}_{\operatorname{par}}$ in $\Gamma_{\operatorname{par}}$ other than rays parallel to some rays of $\Gamma^{\prime}_{\operatorname{par}}$ for a sufficiently small positive number $\varepsilon$ and $(3)$ for such a ray $L$ of $\Gamma_{\operatorname{par}}$, $f$ has the same slope at infinity along $L$ to that at infinity along rays of $\Gamma^{\prime}_{\operatorname{par}}$ parallel to $L$.
Note that such $f$ clearly exists.
Then $\psi(f) = (f^{\prime}_1, \ldots, f^{\prime}_l)$ holds.
\end{proof}

\begin{prop}
    \label{prop4-4-4}
Let $\Gamma_{\operatorname{par}}$ (resp.~$\Gamma^{\prime}_{\operatorname{par}}$) be a tropical curve with parallel rays (resp.~a disconnected tropical curve with parallel rays) and $\psi : \operatorname{Rat}(\Gamma_{\operatorname{par}}) \twoheadrightarrow \operatorname{Rat}(\Gamma^{\prime}_{\operatorname{par}})$ a surjective $\boldsymbol{T}$-algebra homomorphism.
Then the corresponding injective map $\varphi : \Gamma^{\prime}_{\operatorname{par}} \hookrightarrow \Gamma_{\operatorname{par}}$ satisfying $\psi(f) = f \circ \varphi$ for any $f \in \operatorname{Rat}(\Gamma_{\operatorname{par}})$ is a local isometry and each connected component of its image $\operatorname{Im}(\varphi)$ is not contained in $\Gamma_{\operatorname{par}, \infty}$.
\end{prop}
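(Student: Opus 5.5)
The plan is to reduce Proposition~\ref{prop4-4-4} to its connected counterpart, Proposition~\ref{prop4-4-2}, one component at a time, in the same way that Proposition~\ref{prop3-2-4} was reduced to Proposition~\ref{prop3-2-2}.

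\emph{Step 1: the map $\varphi$.} Write $\Gamma^{\prime}_{\operatorname{par}, 1}, \ldots, \Gamma^{\prime}_{\operatorname{par}, l}$ for the connected components of $\Gamma^{\prime}_{\operatorname{par}}$, each with the inherited parallelism. By the discussion after Remark~\ref{rem4-3-1}, $\operatorname{Rat}(\Gamma^{\prime}_{\operatorname{par}})$ sits inside $\bowtie_{i = 1}^l \operatorname{Rat}(\Gamma^{\prime}_{\operatorname{par}, i})$. For each $j$ consider the composition
\[
\psi_j := \pi_j \circ \psi : \operatorname{Rat}(\Gamma_{\operatorname{par}}) \to \operatorname{Rat}(\Gamma^{\prime}_{\operatorname{par}, j}).
\]
I first check that $\psi_j$ is surjective. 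Take $f^{\prime}_j \in \operatorname{Rat}(\Gamma^{\prime}_{\operatorname{par}, j}) \setminus \{ -\infty \}$. I extend it to an element of $\operatorname{Rat}(\Gamma^{\prime}_{\operatorname{par}})$ as follows: on every other component $\Gamma^{\prime}_{\operatorname{par}, i}$, use a function that is constant near its finite part and, on each ray parallel to some ray of $\Gamma^{\prime}_{\operatorname{par}, j}$, has the slope at infinity that $f^{\prime}_j$ prescribes for that class. Such functions exist because prescribing slopes at infinity on finitely many rays is always possible, as in condition~$(3)$ of the proof of Proposition~\ref{prop4-4-3}. Surjectivity of $\psi$ then yields a preimage in $\operatorname{Rat}(\Gamma_{\operatorname{par}})$, so $\psi_j$ is surjective. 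Theorem~\ref{thm4-2-1} now gives a unique morphism $\varphi_j : \Gamma^{\prime}_{\operatorname{par}, j} \to \Gamma_{\operatorname{par}}$ with $\psi_j = \varphi_j^{\ast}$. Define $\varphi$ to be $\varphi_j$ on $\Gamma^{\prime}_{\operatorname{par}, j}$; then $\psi(f) = f \circ \varphi$ holds componentwise.

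\emph{Step 2: local isometry and components at infinity.} By Proposition~\ref{prop4-4-2} applied to each surjective $\psi_j$, every $\varphi_j$ is an injective local isometry and $\operatorname{Im}(\varphi_j) \not\subset \Gamma_{\operatorname{par}, \infty}$. Since each $\operatorname{Im}(\varphi_j)$ is connected and contains a finite point, every connected component of $\operatorname{Im}(\varphi)$ also contains a finite point, so none of them is contained in $\Gamma_{\operatorname{par}, \infty}$.

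\emph{Step 3: injectivity across components (the main obstacle).} I expect the hard part to be showing that $\varphi_i$ and $\varphi_j$ have disjoint images for $i \neq j$. Suppose instead that $\varphi_i(x) = \varphi_j(y) = z$. Every $f \in \operatorname{Rat}(\Gamma_{\operatorname{par}})$ then satisfies $\psi(f)_i(x) = \psi(f)_j(y)$, so it suffices to produce an element $h$ of $\operatorname{Rat}(\Gamma^{\prime}_{\operatorname{par}})$ whose $i$th component at $x$ differs from its $j$th component at $y$; surjectivity of $\psi$ then gives a contradiction.
\begin{itemize}
\item If $z$ is a finite point, then $x$ and $y$ are finite, and $h := e_i$ (the element $1$ on the $i$th component and $0$ elsewhere) lies in $\operatorname{Rat}(\Gamma^{\prime}_{\operatorname{par}})$ and works, as in the proof of Proposition~\ref{prop4-3-2}.
\item If $z$ is a point at infinity, then $x$ and $y$ are points at infinity lying on rays of $\Gamma^{\prime}_{\operatorname{par}}$. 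Since the local isometries $\varphi_i, \varphi_j$ map these rays onto parts of the same ray of $\Gamma_{\operatorname{par}}$, and parallelism is inherited, the two rays are parallel. Take $h := e_i \odot g$, where $g$ is the chip firing function on $\Gamma^{\prime}_{\operatorname{par}}$ that has slope $-1$ at infinity along every ray in this parallel class. Then the constant offset from $e_i$ keeps the slopes at infinity equal, but the two values, after subtracting their finite parts, still differ. Concretely, I would compare slopes rather than values: each $f \circ \varphi_i$ and $f \circ \varphi_j$ restricted to the two rays is $f$ on overlapping tails of one ray, and I would choose $h$ which is constant $0$ on one tail and $1$ plus the slope term on the other, so that no $f$ can restrict to both.
\end{itemize}
Once images are disjoint, $\varphi$ is injective, and the proposition follows.
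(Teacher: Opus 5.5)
Your proposal is correct in outline, but it takes a different route from the paper. The paper settles the statement in one line by citing Proposition~\ref{prop4-3-2}, Proposition~\ref{prop4-4-2} and \cite[Theorem~3.14]{JuAe4}. Finite generation of $\operatorname{Rat}(\Gamma^{\prime}_{\operatorname{par}})$ puts the disconnected curve into the congruence-variety framework, where the cited theorem gives the injective map $\varphi$ for all components at once. Proposition~\ref{prop4-4-2} then supplies the local isometry and the condition at infinity.

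You instead work component by component:
\begin{itemize}
\item You prove by hand that each $\pi_j \circ \psi$ is surjective. Your extension has to respect parallelism \emph{across} components, which is exactly the point, and it is correct.
\item You get $\varphi_j$ from Theorem~\ref{thm4-2-1} and its local properties from Proposition~\ref{prop4-4-2}.
\item You then separate the images of distinct components by an element of $\operatorname{Rat}(\Gamma^{\prime}_{\operatorname{par}})$.
\end{itemize}
This avoids the finite generation of the disconnected semifield and makes explicit the surjectivity of the component maps, which the paper leaves implicit. The paper's route is shorter and gets injectivity of $\varphi$ for free from the realization machinery.

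One correction to Step~3: it is not the main obstacle, and your second bullet does not work as written.
\begin{itemize}
\item The element $e_i \odot g$ takes the value $-\infty$ at both $x$ and $y$, so it separates nothing there.
\item The ``compare slopes'' variant with a fixed offset need not separate either. A fixed constant can be matched by the relative position of the two tails inside the common ray of $\Gamma_{\operatorname{par}}$. Slope $0$ on one tail against a nonzero slope on the other is forbidden, because the rays are parallel.
\end{itemize}
The bullet is also unnecessary, because your first bullet works verbatim when $z \in \Gamma_{\operatorname{par}, \infty}$. Indeed, $\psi(f) = e_i$ means $f \circ \varphi_i \equiv 1$ on all of $\Gamma^{\prime}_{\operatorname{par}, i}$ and $f \circ \varphi_j \equiv 0$ on all of $\Gamma^{\prime}_{\operatorname{par}, j}$, points at infinity included. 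So $f(z)$ would be both $1$ and $0$. Alternatively, two images sharing a point at infinity share a tail of that ray, hence a finite point.
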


\begin{proof}
It is clear by Propositions~\ref{prop4-3-2}, \ref{prop4-4-2} and \cite[Theorem~3.14]{JuAe4}.
\end{proof}

\subsection{Weights}
    \label{subsection4.5}

In this subsection, we define weights on edges of tropical curves with parallel rays.
This notion is described by degrees on edges of morphisms:

\begin{dfn}
    \label{dfn4-5-1}
\upshape{
Let $\Gamma_{\operatorname{par}}$ and $\Gamma^{\prime}_{\operatorname{par}}$ be tropical curves with parallel rays, respectively, and $\varphi : \Gamma_{\operatorname{par}} \to \Gamma^{\prime}_{\operatorname{par}}$ a morphism.
This $\varphi$ is a \textit{weight} of $\Gamma^{\prime}_{\operatorname{par}}$ if it is bijective and for two rays of $\Gamma_{\operatorname{par}}$, these are parallel if and only if so are these images by $\varphi$.
Then we also call the injective $\boldsymbol{T}$-algebra homomorphism $\psi : \operatorname{Rat}(\Gamma^{\prime}_{\operatorname{par}}) \hookrightarrow \operatorname{Rat}(\Gamma_{\operatorname{par}}); f \mapsto f \circ \varphi$ a \textit{weight} of $\Gamma^{\prime}_{\operatorname{par}}$.
When this $\varphi$ (or $\psi$) is a weight of $\Gamma^{\prime}_{\operatorname{par}}$, for an edge $e^{\prime} \subset \Gamma^{\prime}_{\operatorname{par}}$ such that $\varphi$ has the degree $\operatorname{deg}_{\varphi^{-1}(e^{\prime})}(\varphi)$ on $\varphi^{-1}(e^{\prime})$, we call $\operatorname{deg}_{\varphi^{-1}(e^{\prime})}(\varphi)$ the \textit{weight} on $e^{\prime}$.
}
\end{dfn}

\begin{ex}
    \label{ex4-5-1}
\upshape{
Let $\Gamma := [-\infty, \infty]$.
Both of the maps $\varphi_1 : \Gamma \to \Gamma; x \mapsto x$ and $\varphi_2 :\Gamma \to \Gamma; x \mapsto 2x$ are weights of $\Gamma$.
Then every edge of $\Gamma$ has weight one, two, respectively.
For the rational functions $f_1(x) = x$ and $f_2(x) = 2x$ on $\Gamma$, the former $f_1$ generates $\operatorname{Rat}(\Gamma)$, and $f_1 = f_1 \circ \varphi_1$ and $f_2 = f_1 \circ \varphi_2$ hold, and so choosing $f_1$ and $f_2$ in $\operatorname{Rat}(\Gamma)$ corresponds to giving these weights $\varphi_1$ and $\varphi_2$, respectively.
}
\end{ex}

\begin{prop}
    \label{prop4-5-1}
Assume $\psi$ in Definition~\ref{dfn4-5-1} is a weight of $\Gamma^{\prime}_{\operatorname{par}}$.
For any finite generating set $\{f^{\prime}_1, \ldots, f^{\prime}_n\} \subset \operatorname{Rat}(\Gamma^{\prime}_{\operatorname{par}}) \setminus \{-\infty \}$ of $\operatorname{Rat}(\Gamma^{\prime}_{\operatorname{par}})$ as a semifield over $\boldsymbol{T}$ and an edge $e^{\prime}$ of $\Gamma^{\prime}_{\operatorname{par}}$ where all $f^{\prime}_i$s have constant (integer) slopes, the greatest common divisor $m_f$ of the slopes on $\varphi^{-1}(e^{\prime})$ of $\psi(f^{\prime}_1), \ldots, \psi(f^{\prime}_n)$ coincides with the weight on $e^{\prime}$.
\end{prop}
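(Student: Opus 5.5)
The slope of $\psi(f'_i) = f'_i \circ \varphi$ on $e := \varphi^{-1}(e')$ is a product of two integers. If $f'_i$ has slope $s_i$ on $e'$ and $\varphi$ has degree $d := \operatorname{deg}_e(\varphi)$ on $e$, then for finite points $x, y$ of $e$ we get $|f'_i(\varphi(x)) - f'_i(\varphi(y))| = |s_i| \cdot d \cdot \operatorname{dist}_e(x,y)$. We orient $e$ and $e'$ compatibly; this is possible because $\varphi$ is bijective, hence monotone on $e$. Then the slope of $\psi(f'_i)$ on $e$ equals $d s_i$. The gcd of these slopes is therefore
\begin{align*}
m_f = \gcd(d s_1, \ldots, d s_n) = d \cdot \gcd(s_1, \ldots, s_n).
\end{align*}
Note that $d \neq 0$, since $\varphi$ is bijective, so $e$ is not contracted. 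It therefore suffices to prove $\gcd(s_1, \ldots, s_n) = 1$. Refining the model if necessary, we may assume that $e'$ is an edge of the model in Definition~\ref{dfn4-5-1} and that $e$ is mapped onto $e'$.

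The key step is to show that the slopes of generators on $e'$ are coprime. I would use that $f'_1, \ldots, f'_n$ generate $\operatorname{Rat}(\Gamma'_{\operatorname{par}})$ as a semifield over $\boldsymbol{T}$. By Proposition~\ref{prop4-2-2}, the map $\theta' : x \mapsto (f'_1(x), \ldots, f'_n(x))$ is then an isometry onto its image with respect to the lattice length. On $e'$ the map $\theta'$ is affine with direction vector $(s_1, \ldots, s_n) \in \boldsymbol{Z}^n$. The lattice length of $\theta'(\overline{xy})$ equals $\gcd(s_1, \ldots, s_n) \cdot \operatorname{dist}(x,y)$, because $(s_1, \ldots, s_n)/\gcd$ is primitive. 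Isometry then forces $\gcd(s_1, \ldots, s_n) = 1$.

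As an alternative that avoids Proposition~\ref{prop4-2-2}, one can argue directly. Every element of $\boldsymbol{T}(f'_1, \ldots, f'_n)$ is locally, on a subsegment of $e'$, of the form $a \odot \bigodot_i f_i'^{\odot k_i}$ with $k_i \in \boldsymbol{Z}$, since tropical sums and quotients are piecewise given by such monomials. Its slope there therefore lies in $\gcd(s_i)\boldsymbol{Z}$. A chip firing move supported in the interior of $e'$ has slope $\pm 1$ and lies in $\operatorname{Rat}(\Gamma'_{\operatorname{par}})$. Hence $\gcd(s_i) = 1$.

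The main obstacle I expect is bookkeeping rather than substance. The hypothesis only says the $f'_i$ have constant slope on $e'$, so one must make sure that $\psi(f'_i)$ has constant slope on all of $\varphi^{-1}(e')$. This holds because $\varphi$ is a bijective morphism dilating $e$ uniformly by $d$. One should also handle the case where $e'$ is a ray, with slopes taken at infinity; the same computation applies there. Combining the steps gives $m_f = d \cdot 1 = \operatorname{deg}_{\varphi^{-1}(e')}(\varphi)$, which is the weight on $e'$.
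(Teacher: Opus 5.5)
Your proposal is correct, and its direct variant is essentially the paper's argument. The paper gets $m_f \ge d$ because every slope of a pulled-back function on $\varphi^{-1}(e')$ is divisible by $d = \operatorname{deg}_{\varphi^{-1}(e')}(\varphi)$, which is your factorization $m_f = d\cdot\gcd(s_1,\ldots,s_n)$. It gets $m_f \le d$ because the pulled-back chip firing move $\psi(\operatorname{CF}(\overline{U},\varepsilon))$ around an interior point of $e'$, whose nonzero slopes there are $\pm d$, lies in the semifield generated by $\psi(f'_1),\ldots,\psi(f'_n)$; this is your coprimality argument, carried out after pulling back. Your primary route through Proposition~\ref{prop4-2-2} is also valid, but it only repackages the same coprimality fact as the lattice-length isometry property of generating sets.
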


\begin{proof}
Let $x^{\prime}$ be a point of $e^{\prime}$ other than its endpoint(s) and $\varepsilon$ a sufficiently small positive number.
Then the degree $\operatorname{deg}_{\varphi^{-1}(e^{\prime})}(\varphi)$ coincides with the absolute value of the slopes of $\psi(\operatorname{CF}(\overline{U}, \varepsilon))$ on $\varphi^{-1}(\overline{U})$, where $U$ denotes the $\varepsilon$-neighborhood of $x^{\prime}$ and $\overline{U}$ is the closure of $U$ in $\Gamma^{\prime}_{\operatorname{par}}$.
Since $\psi(f^{\prime}_1), \ldots, \psi(f^{\prime}_n)$ generates $\psi(\operatorname{Rat}(\Gamma^{\prime}_{\operatorname{par}}))$ as a semifield over $\boldsymbol{T}$, the value $m_f$ is less than or equal to $\operatorname{deg}_{\varphi^{-1}(e^{\prime})}(\varphi)$.
Conversely, as $\psi = \varphi^{\ast}$, the slopes of all elements of $\operatorname{Rat}(\Gamma^{\prime}_{\operatorname{par}}) \setminus \{ - \infty \}$ on $\varphi^{-1}(e^{\prime})$ are divided by $\operatorname{deg}_{\varphi^{-1}(e^{\prime})}(\varphi)$, and so $m_f \ge \operatorname{deg}_{\varphi^{-1}(e^{\prime})}(\varphi)$.
\end{proof}

Using the notion of localization in the next subsection, we can translate this proposition as follows:

\begin{cor}
    \label{cor4-5-1}
As in the setting of Proposition~\ref{prop4-5-1}, for any point $x$ of $\varphi^{-1}(e^{\prime})$ other than its endpoint(s), let $\pi_x : \operatorname{Rat}(\Gamma_{\operatorname{par}}) \twoheadrightarrow \operatorname{Rat}(\Gamma_{\operatorname{par}})_x \cong ((\boldsymbol{R} \times \boldsymbol{Z}^2) \cup \{ -\infty \}, \boxplus, \boxdot) = R_2$ be the localization of $\operatorname{Rat}(\Gamma^{\prime}_{\operatorname{par}})$ at $x$.
Then for any finite generating set $\{ f_1^{\prime}, \ldots, f_n^{\prime} \} \subset \operatorname{Rat}(\Gamma^{\prime}_{\operatorname{par}}) \setminus \{ -\infty \}$ of $\operatorname{Rat}(\Gamma^{\prime}_{\operatorname{par}})$ as a semifield over $\boldsymbol{T}$, the greatest common divisor of any one of $\boldsymbol{Z}^2$-components of $\pi_x(\psi(f^{\prime}_1)), \ldots, \pi_x(\psi(f^{\prime}_n))$ is equal to the weight on $e^{\prime}$.
\end{cor}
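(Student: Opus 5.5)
The plan is to reduce the corollary to Proposition~\ref{prop4-5-1} by identifying what the localization $\pi_x$ records about a rational function near $x$. I will take the description of the localization from the next subsection as given. For a finite point $x$ of valence two, $\operatorname{Rat}(\Gamma_{\operatorname{par}})_x$ is identified with $R_2 = ((\boldsymbol{R} \times \boldsymbol{Z}^2) \cup \{ -\infty \}, \boxplus, \boxdot)$, and $\pi_x$ sends $f \neq -\infty$ to $(f(x), s_1(f), s_2(f))$. Here $s_1(f), s_2(f)$ are the outgoing slopes of $f$ at $x$ in the two directions. Two rational functions have the same image exactly when they agree on some neighborhood of $x$, and $\boxplus, \boxdot$ are the operations induced by $\oplus, \odot$ on germs.

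First I fix $x$ in the interior of $\varphi^{-1}(e^{\prime})$. Since $\varphi$ is a bijective morphism, $x$ is a finite point of valence two and the localization is indeed $R_2$. Next I compute $\pi_x(\psi(f^{\prime}_i))$. By hypothesis each $f^{\prime}_i$ has a constant integer slope $\sigma_i$ along $e^{\prime}$. On $\varphi^{-1}(e^{\prime})$, $\varphi$ is affine with dilation factor $d := \operatorname{deg}_{\varphi^{-1}(e^{\prime})}(\varphi)$. Hence $\psi(f^{\prime}_i) = f^{\prime}_i \circ \varphi$ has constant slope $d\sigma_i$ on $\varphi^{-1}(e^{\prime})$. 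Because $x$ is an interior point of an edge on which the slope is constant, the two outgoing slopes are $d\sigma_i$ and $-d\sigma_i$ (up to the orientation convention). So the two $\boldsymbol{Z}^2$-components of $\pi_x(\psi(f^{\prime}_i))$ are $\pm$ the slope of $\psi(f^{\prime}_i)$ on $\varphi^{-1}(e^{\prime})$.

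Taking greatest common divisors, either choice of $\boldsymbol{Z}^2$-component gives, up to sign, exactly the slopes appearing in Proposition~\ref{prop4-5-1}. The gcd is insensitive to signs, so it equals $m_f$. Proposition~\ref{prop4-5-1} then identifies $m_f$ with the weight on $e^{\prime}$, which completes the argument.

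The main obstacle is the first step: pinning down the identification $\operatorname{Rat}(\Gamma_{\operatorname{par}})_x \cong R_2$ precisely, in particular that the $\boldsymbol{Z}^2$-coordinates really are the two outgoing slopes at $x$. This depends on the construction given in Subsection~\ref{subsection4.6} and Appendix~\ref{appendixA}, and I would cite it from there. Once that identification is in place, the rest is the direct slope computation above.
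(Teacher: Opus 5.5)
Your proposal is correct and matches the paper's intended argument. The paper states this corollary without a separate proof, as a direct translation of Proposition~\ref{prop4-5-1} through the isomorphism $\operatorname{Rat}(\Gamma_{\operatorname{par}})_x \cong R_2$ of Proposition~\ref{prop4-6-3}, whose $\boldsymbol{Z}^2$-coordinates are the two outgoing slopes at $x$, hence $\pm$ the slopes of $\psi(f'_i)$ on $\varphi^{-1}(e')$, so the gcd of either coordinate is $m_f$. (One small correction: $x$ is two-valent because it is an interior point of an edge, not because $\varphi$ is bijective.)
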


We give a weighted version of Definition~\ref{dfn4-2-2}:

\begin{dfn}
    \label{dfn4-5-2}
\upshape{
Let $\Gamma_{\operatorname{par}}$ and $\Gamma^{\prime}_{\operatorname{par}}$ be tropical curves with parallel rays, respectively, and $\psi : \operatorname{Rat}(\Gamma^{\prime}_{\operatorname{par}}) \hookrightarrow \operatorname{Rat}(\Gamma_{\operatorname{par}})$ a weight of $\Gamma^{\prime}_{\operatorname{par}}$.
For $f^{\prime}_1, \ldots, f^{\prime}_n \in \operatorname{Rat}(\Gamma^{\prime}_{\operatorname{par}}) \setminus \{ -\infty \}$, the image of the map $\theta : \Gamma_{\operatorname{par}} \setminus \Gamma_{\operatorname{par}, \infty} \to \boldsymbol{R}^n; x \mapsto (\psi(f^{\prime}_1)(x), \ldots, \psi(f^{\prime}_n)(x))$ is called the \textit{weighted realization} of $\Gamma^{\prime}_{\operatorname{par}}$ if $f^{\prime}_1, \ldots, f^{\prime}_n$ generate $\operatorname{Rat}(\Gamma^{\prime}_{\operatorname{par}})$ as a semifield over $\boldsymbol{T}$.
}
\end{dfn}

\subsection{Localization}
    \label{subsection4.6}

In this subsection, we consider the notion of localization motivated by the point of view of geometry.
To do so, we first give a semifield over $\boldsymbol{T}$ consisting of tropical Laurent monomials and $-\infty$.

Let $M_n$ be the set of all tropical Laurent monomials in $\boldsymbol{T}[\boldsymbol{X}^{\pm}]_n$ together with $-\infty$.
This becomes a semifield over $\boldsymbol{T}$ by the natural operations $\boxplus$ and $\boxdot$ defined as follows:
for $a, b \in \boldsymbol{R}$ and $\boldsymbol{i}, \boldsymbol{j} \in \boldsymbol{Z}^n$,
\begin{align*}
a \odot \boldsymbol{X}^{\odot \boldsymbol{i}} \boxplus b \odot \boldsymbol{X}^{\odot \boldsymbol{j}} &:= \begin{cases}
a \odot \boldsymbol{X}^{\odot \boldsymbol{i}} \quad \text{if }a > b,\\
a \odot \boldsymbol{X}^{\odot (\boldsymbol{i} \oplus \boldsymbol{j})} \quad \text{if }a = b,\\
b \odot \boldsymbol{X}^{\odot \boldsymbol{j}} \quad \text{if }a < b,
\end{cases}\\
a \odot \boldsymbol{X}^{\odot \boldsymbol{i}} \boxplus (-\infty) &:= a \odot \boldsymbol{X}^{\odot \boldsymbol{i}},\\
- \infty \boxplus a \odot \boldsymbol{X}^{\odot \boldsymbol{i}} &:= a \odot \boldsymbol{X}^{\odot \boldsymbol{i}},\\
- \infty \boxplus (-\infty) &:= -\infty,\\
a \odot \boldsymbol{X}^{\odot \boldsymbol{i}} \boxdot b \odot \boldsymbol{X}^{\odot \boldsymbol{j}} &:= (a \odot b) \odot \boldsymbol{X}^{\odot (\boldsymbol{i} \odot \boldsymbol{j})},\\
a \odot \boldsymbol{X}^{\odot \boldsymbol{i}} \boxdot (- \infty) &:= -\infty,\\
-\infty \boxdot a \odot \boldsymbol{X}^{\odot \boldsymbol{i}} &:= -\infty, \text{ and}\\
-\infty \boxdot (-\infty) &:= -\infty.
\end{align*}

Let $M_n := (M_n, \boxplus, \boxdot)$.

\begin{prop}
    \label{prop4-6-1}
The above $M_n$ is a semifield over $\boldsymbol{T}$.
Moreover, the map
\begin{align*}
\psi :\, \boldsymbol{T}[\boldsymbol{X}^{\pm}]_n \to M_n;
\bigoplus_{\boldsymbol{i} \in \boldsymbol{Z}^n} a_{\boldsymbol{i}} \odot \boldsymbol{X}^{\odot \boldsymbol{i}} \mapsto \boxplus_{\boldsymbol{i} \in \boldsymbol{Z}^n} a_{\boldsymbol{i}} \odot \boldsymbol{X}^{\odot \boldsymbol{i}},
-\infty \mapsto -\infty
\end{align*}
is a surjective $\boldsymbol{T}$-algebra homomorphism.
\end{prop}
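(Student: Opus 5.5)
The plan is to first describe $\boxplus$ explicitly on finite families, and then read every axiom off that description. I read $\boldsymbol{i} \oplus \boldsymbol{j}$ as the componentwise maximum and $\boldsymbol{i} \odot \boldsymbol{j}$ as the componentwise sum in $\boldsymbol{Z}^n$. The key observation is the following. For finitely many monomials $m_k = a_k \odot \boldsymbol{X}^{\odot \boldsymbol{i}_k}$, set $c := \max_k a_k$ and $A := \{ \boldsymbol{i}_k \mid a_k = c \}$. Then the (iterated) $\boxplus$-sum is $c \odot \boldsymbol{X}^{\odot \max A}$, where $\max A$ is the componentwise maximum of $A$, and terms equal to $-\infty$ are simply ignored. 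Call this formula $(\ast)$. Commutativity of $\boxplus$ is immediate from the definition. For associativity, I check that $(m_1 \boxplus m_2) \boxplus m_3$ and $m_1 \boxplus (m_2 \boxplus m_3)$ both equal the value of $(\ast)$ by a case distinction on which of $a_1, a_2, a_3$ attain the maximum, using associativity of the componentwise maximum. Once associativity holds, $(\ast)$ follows for arbitrary finite families by induction.

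Next I verify the remaining semifield axioms. Commutativity and associativity of $\boxdot$ are inherited from $(\boldsymbol{R}, +)$ and $(\boldsymbol{Z}^n, +)$. The element $0 \odot \boldsymbol{X}^{\odot \boldsymbol{0}}$ is the multiplicative identity, $-\infty$ is the additive identity and is absorbing, and $(-a) \odot \boldsymbol{X}^{\odot(-\boldsymbol{i})}$ is the inverse of $a \odot \boldsymbol{X}^{\odot \boldsymbol{i}}$. For distributivity, I compute $(m_1 \boxplus m_2) \boxdot m_3$ with $m_3 = b \odot \boldsymbol{X}^{\odot \boldsymbol{k}}$. Multiplying by $m_3$ adds $b$ to every coefficient, so it preserves which coefficients are maximal, and it adds $\boldsymbol{k}$ to every exponent. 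Since $\max(\boldsymbol{i}, \boldsymbol{j}) + \boldsymbol{k} = \max(\boldsymbol{i} + \boldsymbol{k}, \boldsymbol{j} + \boldsymbol{k})$, this product agrees with $m_1 \boxdot m_3 \boxplus m_2 \boxdot m_3$. Finally, the map $\boldsymbol{T} \to M_n$, $a \mapsto a \odot \boldsymbol{X}^{\odot \boldsymbol{0}}$, $-\infty \mapsto -\infty$, is an injective semiring homomorphism, because $\boxplus$ on constants is just $\max$. Hence $M_n$ is a semifield over $\boldsymbol{T}$.

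For $\psi$, well-definedness is clear: an element of $\boldsymbol{T}[\boldsymbol{X}^{\pm}]_n \setminus \{-\infty\}$ is a formal finite sum with distinct exponents, and by $(\ast)$ the $\boxplus$-sum does not depend on the order of the terms. Write $c_F$ for the maximal coefficient of $F$ and $A_F$ for the set of exponents attaining it, so that $\psi(F) = c_F \odot \boldsymbol{X}^{\odot \max A_F}$.
\begin{itemize}
\item For $F \oplus G$, the coefficient at $\boldsymbol{i}$ is $\max(a_{\boldsymbol{i}}, b_{\boldsymbol{i}})$. Hence $c_{F \oplus G} = \max(c_F, c_G)$, and $A_{F \oplus G}$ is the union of those $A_F$, $A_G$ whose maximum attains this value. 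Comparing with $(\ast)$ gives $\psi(F \oplus G) = \psi(F) \boxplus \psi(G)$.
\item For $F \odot G$, the coefficient at $\boldsymbol{k}$ is $\max_{\boldsymbol{i} + \boldsymbol{j} = \boldsymbol{k}} (a_{\boldsymbol{i}} + b_{\boldsymbol{j}})$. Hence $c_{F \odot G} = c_F + c_G$, and $A_{F \odot G} = A_F + A_G$ (Minkowski sum). Since $\max(A_F + A_G) = \max A_F + \max A_G$ componentwise, we get $\psi(F \odot G) = \psi(F) \boxdot \psi(G)$.
\item The cases involving $-\infty$, and the facts that $\psi$ sends constants to constants and $0 \mapsto 0 \odot \boldsymbol{X}^{\odot \boldsymbol{0}}$, are trivial.
\end{itemize}
Surjectivity holds because each monomial, and $-\infty$, is its own image.

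I expect the main obstacle to be the bookkeeping behind $(\ast)$, namely the associativity case analysis. The multiplicativity of $\psi$ also needs care, specifically the identity $A_{F \odot G} = A_F + A_G$. For this I must argue that a decomposition $\boldsymbol{k} = \boldsymbol{i} + \boldsymbol{j}$ attains $c_F + c_G$ exactly when $a_{\boldsymbol{i}} = c_F$ and $b_{\boldsymbol{j}} = c_G$. That holds because $a_{\boldsymbol{i}} \le c_F$ and $b_{\boldsymbol{j}} \le c_G$, so the sum can reach $c_F + c_G$ only with equality in both.
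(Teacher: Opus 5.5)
Your proof is correct, but you verify the homomorphism property by a different mechanism than the paper. The paper calls the semifield axioms for $M_n$ straightforward and then argues structurally. It uses only that $\oplus$ and $\boxplus$ agree on two terms with the same exponent. Then $\psi(F \oplus G) = \psi(F) \boxplus \psi(G)$ follows by regrouping $\boxplus$-sums using associativity and commutativity in $M_n$. For products, $\psi(F) \boxdot \psi(G)$ is expanded by distributivity in $M_n$, and the terms with $\boldsymbol{i} \odot \boldsymbol{j} = \boldsymbol{k}$ are collected into $c_{\boldsymbol{k}} \odot \boldsymbol{X}^{\odot \boldsymbol{k}}$. In effect this is the universal property of the Laurent polynomial semiring. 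Once $M_n$ is a commutative semiring over $\boldsymbol{T}$, ``evaluating the formal sum in $M_n$'' is automatically a $\boldsymbol{T}$-algebra homomorphism, and one never needs to know what a $\boxplus$-sum looks like. You instead establish the normal form $(\ast)$ and check additivity and multiplicativity by computing $c_F$ and $A_F$ directly, with the Minkowski-sum identity $A_{F \odot G} = A_F + A_G$ as the key step. Your route is longer for the second half, but it buys two things. First, it actually carries out the associativity check that the paper leaves to the reader; your case analysis behind $(\ast)$ is exactly that check. Second, it gives the closed form $\psi(F) = c_F \odot \boldsymbol{X}^{\odot \max A_F}$. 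This shows that $\psi(F)$ records only $F(\boldsymbol{0})$ and the one-sided slopes of $F$ at the origin in the positive coordinate directions, which ties $M_n$ to the localization at points studied later in this subsection. Incidentally, you have the identities right: $-\infty$ for $\boxplus$ and $0 \odot \boldsymbol{X}^{\odot \boldsymbol{0}}$ for $\boxdot$. The paper's proof states them the other way round.
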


\begin{proof}
The first assertion is straightforward.
The identity with respect to $\boxplus$ is the real number zero, and that with respect to $\boxdot$ is $-\infty$.
For $a \odot \boldsymbol{X}^{\odot \boldsymbol{i}} \in M_n \setminus \{ -\infty \}$, its multiplicative inverse is $a^{\odot (-1)} \odot \boldsymbol{X}^{\odot (-\boldsymbol{i})}$.

By definition, $\psi(0) = 0$ and $\psi(-\infty) = -\infty$ hold.
Let $f = \bigoplus_{\boldsymbol{i} \in \boldsymbol{Z}^n} a_{\boldsymbol{i}} \odot \boldsymbol{X}^{\odot \boldsymbol{i}}, g = \bigoplus_{\boldsymbol{i} \in \boldsymbol{Z}^n} b_{\boldsymbol{i}} \odot \boldsymbol{X}^{\odot \boldsymbol{i}} \in \boldsymbol{T}[\boldsymbol{X}^{\pm}]_n \setminus \{ -\infty \}$.
Here, all $a_{\boldsymbol{i}}$ and $b_{\boldsymbol{j}}$ except for a finite number of $\boldsymbol{i}, \boldsymbol{j} \in \boldsymbol{Z}^n$ are $-\infty$.
Since $a_{\boldsymbol{i}} \odot \boldsymbol{X}^{\odot \boldsymbol{i}} \oplus b_{\boldsymbol{i}} \odot \boldsymbol{X}^{\odot \boldsymbol{i}} = a_{\boldsymbol{i}} \odot \boldsymbol{X}^{\odot \boldsymbol{i}} \boxplus b_{\boldsymbol{i}} \odot \boldsymbol{X}^{\odot \boldsymbol{i}}$ holds by definition, 
\begin{align*}
\psi(f \oplus g) &= \psi \left( \bigoplus_{\boldsymbol{i} \in \boldsymbol{Z}^n} (a_{\boldsymbol{i}} \oplus b_{\boldsymbol{i}}) \odot \boldsymbol{X}^{\odot \boldsymbol{i}} \right)\\
&
= \boxplus_{\boldsymbol{i} \in \boldsymbol{Z}^n} (a_{\boldsymbol{i}} \oplus b_{\boldsymbol{i}}) \odot \boldsymbol{X}^{\odot \boldsymbol{i}}\\
&= \boxplus_{\boldsymbol{i} \in \boldsymbol{Z}^n} (a_{\boldsymbol{i}} \odot \boldsymbol{X}^{\odot \boldsymbol{i}} \oplus b_{\boldsymbol{i}} \odot \boldsymbol{X}^{\odot \boldsymbol{i}})\\
&= \boxplus_{\boldsymbol{i} \in \boldsymbol{Z}^n} (a_{\boldsymbol{i}} \odot \boldsymbol{X}^{\odot \boldsymbol{i}} \boxplus b_{\boldsymbol{i}} \odot \boldsymbol{X}^{\odot \boldsymbol{i}})\\
&= \left( \boxplus_{\boldsymbol{i} \in \boldsymbol{Z}^n} a_{\boldsymbol{i}} \odot \boldsymbol{X}^{\odot \boldsymbol{i}} \right) \boxplus \left( \boxplus_{\boldsymbol{i} \in \boldsymbol{Z}^n} b_{\boldsymbol{i}} \odot \boldsymbol{X}^{\odot \boldsymbol{i}} \right)\\
&= \psi(f) \boxplus \psi(g)
\end{align*}
hold.
Also for $c_{\boldsymbol{k}} := \bigoplus_{\boldsymbol{i}, \boldsymbol{j} \in \boldsymbol{Z}^n :\, \boldsymbol{i} \odot \boldsymbol{j} = \boldsymbol{k}} a_{\boldsymbol{i}} \odot b_{\boldsymbol{j}}$,
\begin{align*}
\psi(f \odot g) &= \psi \left( \bigoplus_{\boldsymbol{k} \in \boldsymbol{Z}^n} c_{\boldsymbol{k}} \odot \boldsymbol{X}^{\odot \boldsymbol{k}} \right)\\
&= \boxplus_{\boldsymbol{k} \in \boldsymbol{Z}^n} c_{\boldsymbol{k}} \odot \boldsymbol{X}^{\odot \boldsymbol{k}}\\
\psi(f) \boxdot \psi(g) &= \left( \boxplus_{\boldsymbol{i} \in \boldsymbol{Z}^n} a_{\boldsymbol{i}} \odot \boldsymbol{X}^{\odot \boldsymbol{i}} \right) \boxdot \left( \boxplus_{\boldsymbol{j} \in \boldsymbol{Z}^n} b_{\boldsymbol{j}} \odot \boldsymbol{X}^{\odot \boldsymbol{j}} \right)\\
&= \boxplus_{\boldsymbol{k} \in \boldsymbol{Z}^n} c_{\boldsymbol{k}} \odot \boldsymbol{X}^{\odot \boldsymbol{k}},
\end{align*}
and hence $\psi(f \odot g) = \psi(f) \boxdot \psi(g)$ holds.
Clearly $m \in M_n \subset \boldsymbol{T}[\boldsymbol{X}^{\pm}]_n$ goes to $m$ by $\psi$.
In conclusion, $\psi$ is a surjective $\boldsymbol{T}$-algebra homomorphism.
\end{proof}

\begin{rem}
    \label{rem4-6-1}
\upshape{
The semifield $M_n$ is one of $\boldsymbol{T}$-extension defined in \cite[Definition~2.9]{Ito}.
More precisely, $M_n$ is the $\boldsymbol{T}$-extension of the subsemifield consisting of all Laurent monomials in $\boldsymbol{B}[\boldsymbol{X}^{\pm}]_n$ possessing $-\infty$.
}
\end{rem}

Note that by taking out coefficients and indices of tropical Laurent monomials, the triple $R_n := ((\boldsymbol{R} \times \boldsymbol{Z}^n) \cup \{ -\infty \}, \boxplus, \boxdot)$ defined as follows is a semifield over $\boldsymbol{T}$ isomorphic to $M_n$ as a $\boldsymbol{T}$-algebra:
for $(a, (i_1, \ldots, i_n)), (b, (j_1, \ldots, j_n)) \in \boldsymbol{R} \times \boldsymbol{Z}^n$,
\begin{align*}
(a, (i_1, \ldots, i_n)) \boxplus (b, (j_1, \ldots, j_n)) &:= \begin{cases}
(a, (i_1, \ldots, i_n)) \quad \text{if }a > b,\\
(a, (\operatorname{max}\{ i_1, j_1 \}, \ldots, \operatorname{max}\{ i_n, j_n \})) \quad \text{if }a = b,\\
(b, (j_1, \ldots, j_n)) \quad \text{if }a < b,
\end{cases}\\
(a, (i_1, \ldots, i_n)) \boxplus (-\infty) &:= (a, (i_1, \ldots, i_n)),\\
-\infty \boxplus (a, (i_1, \ldots, i_n)) &:= (a, (i_1, \ldots, i_n)),\\
-\infty \boxplus (-\infty) &:= -\infty,\\
(a, (i_1, \ldots, i_n)) \boxdot (b, (j_1, \ldots, j_n)) &:= (a + b, (i_1 + j_1, \ldots, i_n + j_n)),\\
(a, (i_1, \ldots, i_n)) \boxdot (-\infty) &:= -\infty,\\
-\infty \boxdot (a, (i_1, \ldots, i_n)) &:= -\infty, \text{ and}\\
-\infty \boxdot (-\infty) &:= -\infty.
\end{align*}

Let us put $R_0 := \boldsymbol{T}$.
For $n \ge 1$ and $k = 1, \ldots, n$, by forgetting the $k$th component of $\boldsymbol{Z}^n$, we have the surjective map
\begin{align*}
R_n \twoheadrightarrow R_{n - 1};
(a, (i_1, \ldots, i_n)) \mapsto (a, (i_1, \ldots, i_{k - 1}, i_{k + 1}, \ldots, i_n)), -\infty \mapsto -\infty,
\end{align*}
which is a $\boldsymbol{T}$-algebra homomorphism.
In particular, if $n = 1$ and $k = 1$, then its target is just $R_0 = \boldsymbol{T}$.
Moreover, when $n \ge 2$, for $j < k$, forgetting the $j$th component of $\boldsymbol{Z}^{n - 1}$ of $R_{n - 1}$ after doing the $k$th component of $\boldsymbol{Z}^n$ of $R_n$ coincides with forgetting the $(k - 1)$th component of $\boldsymbol{Z}^{n - 1}$ of $R_{n - 1}$ after doing the $j$th component of $\boldsymbol{Z}^n$ of $R_n$, i.e., these give the same surjective $\boldsymbol{T}$-algebra homomorphism $R_n \twoheadrightarrow R_{n - 2}$.

Next we give our localization for rational function semifields of tropical curves with parallel rays:

\begin{prop}
    \label{prop4-6-2}
Let $\Gamma_{\operatorname{par}}$ be a tropical curve with parallel rays and $\Gamma^{\prime}_{\operatorname{par}} \subset \Gamma_{\operatorname{par}}$ a subgraph of $\Gamma_{\operatorname{par}}$.
For $f, g \in \operatorname{Rat}(\Gamma_{\operatorname{par}})$, let $\sim_{\Gamma^{\prime}_{\operatorname{par}}}$ be
\begin{align*}
&~ f \sim_{\Gamma^{\prime}_{\operatorname{par}}} g\\
\Longleftrightarrow &~ \text{there exists a neighborhood }U \text{ of } \Gamma^{\prime}_{\operatorname{par}} \text{ such that } f|_U = g|_U.
\end{align*}
Then $\{ (f, g) \in \operatorname{Rat}(\Gamma_{\operatorname{par}})^2 \,|\,  f \sim_{\Gamma^{\prime}_{\operatorname{par}}} g \}$ is a congruence on $\operatorname{Rat}(\Gamma_{\operatorname{par}})$.
\end{prop}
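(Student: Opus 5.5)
We check the five axioms of a congruence in Subsection~\ref{subsection2.2} for the relation $E := \{ (f, g) \in \operatorname{Rat}(\Gamma_{\operatorname{par}})^2 \,|\, f \sim_{\Gamma^{\prime}_{\operatorname{par}}} g \}$. Here a neighborhood of $\Gamma^{\prime}_{\operatorname{par}}$ means a subset of the underlying tropical curve $\Gamma$ containing an open set that contains $\Gamma^{\prime}_{\operatorname{par}}$. The one structural fact we use is that the neighborhoods of $\Gamma^{\prime}_{\operatorname{par}}$ are closed under finite intersections. We also use that $\Gamma$ itself is a neighborhood, so the relation is nonvacuous.

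Reflexivity is immediate by taking $U = \Gamma$. Symmetry is immediate since the condition $f|_U = g|_U$ is symmetric in $f$ and $g$.

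For transitivity, let $f|_U = g|_U$ and $g|_{U^{\prime}} = h|_{U^{\prime}}$. Then $U \cap U^{\prime}$ is again a neighborhood of $\Gamma^{\prime}_{\operatorname{par}}$, and $f = g = h$ holds on it.

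For compatibility with the operations, suppose $f|_U = g|_U$ and $h|_{U^{\prime}} = k|_{U^{\prime}}$, and put $W := U \cap U^{\prime}$. On finite points of $W$, the operations $\oplus$ and $\odot$ are computed pointwise as $\max$ and $+$. Hence $(f \oplus h)|_{W \setminus \Gamma_{\infty}} = (g \oplus k)|_{W \setminus \Gamma_{\infty}}$, and likewise for $\odot$.

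The only point needing care is the points at infinity lying in $W$, because there the operations are defined by continuous extension rather than pointwise. We handle this as follows. Let $x \in W \cap \Gamma_{\infty}$. Then $W$ contains a small half-open segment of the leaf edge ending at $x$, consisting of finite points. On this segment the two functions $f \oplus h$ and $g \oplus k$ coincide. Their values at $x$ are the limits along this segment, and these limits therefore agree. The same argument applies to $\odot$. The case where one of the functions is $-\infty$ is trivial, since then the corresponding sum or product is $-\infty$ or the other argument, identically on $W$.

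Hence $(f \oplus h, g \oplus k)$ and $(f \odot h, g \odot k)$ lie in $E$, and $E$ is a congruence. We expect no real obstacle. The only subtle step is the continuity argument at points at infinity, and it needs only the fact that every neighborhood of such a point contains finite points accumulating to it.
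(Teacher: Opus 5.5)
Your argument is correct and is precisely the routine verification the paper has in mind; the paper's proof consists only of the words ``This is straightforward.'' One point to tighten is the handling of points at infinity. With your definition of neighborhood, $W = U \cap U^{\prime}$ may contain a point at infinity $x \notin \Gamma^{\prime}_{\operatorname{par}}$ without containing any finite points near $x$. At such an $x$ the value of $f \odot h$ is not determined by $f(x)$ and $h(x)$ (e.g.\ when these are $+\infty$ and $-\infty$). So you should first shrink $U$ and $U^{\prime}$ to the open sets containing $\Gamma^{\prime}_{\operatorname{par}}$ that they contain; after that, your continuity argument at points at infinity goes through verbatim.
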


\begin{proof}
This is straightforward.
\end{proof}

Let $\Gamma_{\operatorname{par}}$ be a tropical curve with parallel rays.
For $x \in \Gamma_{\operatorname{par}} \setminus \Gamma_{\operatorname{par}, \infty}$, let $\operatorname{Rat}(\Gamma_{\operatorname{par}})_x$ denote the quotient semifield $\operatorname{Rat}(\Gamma_{\operatorname{par}}) / \sim_{ \{x \}}$ over $\boldsymbol{T}$.
Let $n$ be the valence of $x$ and $e_1, \ldots, e_n$ distinct outgoing directions from $x$.

\begin{prop}
    \label{prop4-6-3}
In the above setting, the map
\begin{align*}
\psi :\, &\operatorname{Rat}(\Gamma_{\operatorname{par}})_x \to R_n;\\ &[f] \not= -\infty \mapsto (f(x), (\operatorname{sl}_f(e_1), \ldots, \operatorname{sl}_f(e_n))), -\infty \mapsto -\infty
\end{align*}
is a well-defined $\boldsymbol{T}$-algebra isomorphism, where $[f]$ denotes the equivalence class of $f \in \operatorname{Rat}(\Gamma_{\operatorname{par}})$ under $\sim_{ \{ x \} }$ and $\operatorname{sl}_f(e_i)$ the outgoing slope of $f$ at $x$ in the direction $e_i$.
\end{prop}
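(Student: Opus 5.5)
The plan is to check, in order, that $\psi$ is well defined, that it is a $\boldsymbol{T}$-algebra homomorphism, that it is injective, and that it is surjective. Throughout, $x$ is a finite point of valence $n$. A sufficiently small closed neighborhood $U$ of $x$ is a star made of $n$ segments $[x, y_i]$ in the directions $e_1, \ldots, e_n$. Every $f \in \operatorname{Rat}(\Gamma_{\operatorname{par}}) \setminus \{-\infty\}$ has finitely many pieces, so after shrinking $U$ we may assume $f$ is affine on each segment.

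\emph{Well-definedness.} If $f \sim_{\{x\}} g$, then $f$ and $g$ agree on a neighborhood of $x$. Hence $f(x) = g(x)$ and $\operatorname{sl}_f(e_i) = \operatorname{sl}_g(e_i)$ for all $i$. The class of $-\infty$ consists only of $-\infty$, because a rational function that is not $-\infty$ is finite at the finite point $x$.

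\emph{Homomorphism property.} Choose $U$ on which both $f$ and $g$ are affine along each direction.
\begin{itemize}
\item For $f \odot g$, the value at $x$ is $f(x) + g(x)$ and the slopes add. This is exactly $\boxdot$ in $R_n$.
\item For $f \oplus g$ with $f(x) > g(x)$, continuity gives $f \oplus g = f$ near $x$, which matches the first case of $\boxplus$. The case $f(x) < g(x)$ is symmetric.
\item For $f \oplus g$ with $f(x) = g(x)$, along each direction $e_i$ the maximum of two affine functions that agree at $x$ has outgoing slope $\max\{\operatorname{sl}_f(e_i), \operatorname{sl}_g(e_i)\}$. This matches the second case of $\boxplus$.
\item Constants $t \in \boldsymbol{T}$ map to $(t, \boldsymbol{0})$. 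This is the structure map $\boldsymbol{T} \to R_n$ induced from $M_n$ via $R_n \cong M_n$, since in $M_n$ the constant $t$ is $t \odot \boldsymbol{X}^{\odot \boldsymbol{0}}$.
\end{itemize}

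\emph{Injectivity.} Suppose $\psi([f]) = \psi([g])$. Choose $U$ as above, on which $f$ and $g$ are affine along each segment. They then share the value at $x$ and the slope on each segment, so they coincide on $U$. Therefore $[f] = [g]$.

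\emph{Surjectivity.} This is the only step with content, because of the parallel-ray constraint in Definition~\ref{dfn4-1-2}. Given $(a, (s_1, \ldots, s_n)) \in \boldsymbol{R} \times \boldsymbol{Z}^n$, I construct $f$ as follows:
\begin{itemize}
\item $f(x) = a$;
\item on a small segment $[x, y_i]$ of length $\varepsilon$, $f$ has slope $s_i$;
\item beyond $y_i$, $f$ is brought back to a constant using slopes $\pm 1$, and is constant outside a $2\max_i(|s_i|)\varepsilon$-neighborhood of $x$.
\end{itemize}
Take $\varepsilon$ small enough that this neighborhood is a star at $x$ containing no vertex other than $x$. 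Then $f$ is constant on every ray outside the neighborhood. Because $x$ is finite, we may choose this neighborhood to avoid all points at infinity. So $f$ has slope $0$ at infinity along every ray, the parallel-ray condition holds trivially, and $f \in \operatorname{Rat}(\Gamma_{\operatorname{par}})$.

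A model for the correction is $f = a \odot \bigodot_i$ (a suitable power of a chip firing move supported near $[x, y_i]$). Concretely, the slopes can be realized as a tropical product of functions of the form $\operatorname{CF}(\cdot, \varepsilon)^{\odot \pm 1}$. The only subtlety is slopes at infinity, and it disappears because $f$ is eventually constant. Hence $\psi$ is bijective, and a bijective $\boldsymbol{T}$-algebra homomorphism is an isomorphism.
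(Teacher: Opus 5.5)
Your proposal is correct and follows the paper's proof: well-definedness comes from agreement near $x$; the homomorphism and injectivity checks are ones the paper leaves as easy; and surjectivity uses a rational function with prescribed value and outgoing slopes at $x$ that is constant outside a small neighborhood of $x$, so it automatically satisfies the parallel-ray condition. The only difference is that the paper treats the degenerate case $n = 0$ separately (there $\Gamma_{\operatorname{par}}$ is a point and $\psi$ is the identity of $\boldsymbol{T}$), and your argument covers that case trivially.
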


\begin{proof}
When $n = 0$, it is clear that $\operatorname{Rat}(\Gamma_{\operatorname{par}})_x = \boldsymbol{T} = R_0$ and $\psi$ is just the identity map of $\boldsymbol{T}$.
Let $n \ge 1$.
For $f \in \operatorname{Rat}(\Gamma_{\operatorname{par}}) \setminus \{ -\infty \}$, the value $f(x)$ is a real number since $x$ is a finite point.
Also, if $f_1, f_2 \in [f]$, then $f_1(x) = f_2(x)$ and $\operatorname{sl}_{f_1}(e_i) = \operatorname{sl}_{f_2}(e_i)$ hold by the definition of $\sim_{\{x \}}$.
Therefore $(f(x), (\operatorname{sl}_f(e_1), \ldots, \operatorname{sl}_f(e_n))) \in \boldsymbol{R} \times \boldsymbol{Z}^n$ and this is independent of the choice of representative.
It is easy to check that $\psi$ is an injective $\boldsymbol{T}$-algebra homomorphism.
For a sufficiently small positive number $\varepsilon$ and $(a, (i_1, \ldots, i_n)) \in \boldsymbol{R} \times \boldsymbol{Z}^n$, let $f$ be a rational function on $\Gamma_{\operatorname{par}}$ such that $(1)$ $f(x) = a$, $(2)$ $\operatorname{sl}_f(e_j) = i_j$, and $(3)$ $f$ is constant (the real number) zero on $\Gamma_{\operatorname{par}}$ outside of the $\varepsilon$-neighborhood of $x$.
Clearly at least one of such $f$ exists, and for such $f$, the image $\psi([f])$ is $(a, (i_1, \ldots, i_n))$.
Thus $\psi$ is surjective.
\end{proof}

The following corollary is clear:

\begin{cor}
    \label{cor4-6-1}
In addition to the setting in Proposition~\ref{prop4-6-3}, let $w : \operatorname{Rat}(\Gamma_{\operatorname{par}}) \hookrightarrow \operatorname{Rat}(\Gamma^{\prime}_{\operatorname{par}})$ be a weight of $\Gamma_{\operatorname{par}}$ and $\varphi : \Gamma^{\prime}_{\operatorname{par}} \to \Gamma_{\operatorname{par}}$ the corresponding morphism.
Let $x^{\prime} := \varphi^{-1}(x), e^{\prime}_i := \varphi^{-1}(e_i)$, $w_i$ the weight on $e_i$ and $\pi^{\prime} : \operatorname{Rat}(\Gamma^{\prime}_{\operatorname{par}}) \twoheadrightarrow \operatorname{Rat}(\Gamma^{\prime}_{\operatorname{par}})_{x^{\prime}}$ the natural surjective $\boldsymbol{T}$-algebra homomorphism and $\psi^{\prime} : \operatorname{Rat}(\Gamma^{\prime}_{\operatorname{par}})_{x^{\prime}} \to R_n$ the $\boldsymbol{T}$-algebra isomorphism defined by the same way for $\psi$ in Proposition~\ref{prop4-6-3}.
Then $\psi^{\prime}(\pi^{\prime}(w(\operatorname{Rat}(\Gamma_{\operatorname{par}}))))$ is $(\boldsymbol{R} \times w_1\boldsymbol{Z} \times \cdots \times w_n\boldsymbol{Z}) \cup \{ -\infty \}$.
\end{cor}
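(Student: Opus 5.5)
We have to show two inclusions. Write $\varphi : \Gamma^{\prime}_{\operatorname{par}} \to \Gamma_{\operatorname{par}}$ for the bijective morphism underlying the weight $w$, so that $w = \varphi^{\ast}$. Fix $\varepsilon>0$ small enough that the $\varepsilon$-neighborhood $U$ of $x$ is a star consisting of $n$ half-open segments in the directions $e_1,\ldots,e_n$, and that $\varphi^{-1}(U)$ is likewise a star around $x^{\prime}$ with branches in the directions $e^{\prime}_1,\ldots,e^{\prime}_n$. Since $\varphi$ is bijective and continuous, it restricts to a homeomorphism between these stars; on the $i$th branch it multiplies distances by $\deg_{e^{\prime}_i}(\varphi)=w_i$, by Definition~\ref{dfn4-5-1} and the definition of morphisms.

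\emph{Inclusion} $\subset$. Take $f\in\operatorname{Rat}(\Gamma_{\operatorname{par}})\setminus\{-\infty\}$. Then $w(f)=f\circ\varphi$ takes the real value $f(x)$ at $x^{\prime}$. On the branch in direction $e^{\prime}_i$, a point at distance $t$ from $x^{\prime}$ is sent to the point at distance $w_i t$ from $x$ in direction $e_i$. Hence
\[
\operatorname{sl}_{f\circ\varphi}(e^{\prime}_i)=w_i\cdot \operatorname{sl}_f(e_i)\in w_i\boldsymbol{Z}.
\]
Therefore $\psi^{\prime}(\pi^{\prime}(w(f)))=(f(x),(w_1\operatorname{sl}_f(e_1),\ldots,w_n\operatorname{sl}_f(e_n)))$, and $-\infty$ maps to $-\infty$.

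\emph{Inclusion} $\supset$. Take $(a,(w_1 i_1,\ldots,w_n i_n))$ with $a\in\boldsymbol{R}$ and $i_j\in\boldsymbol{Z}$. By the surjectivity part of Proposition~\ref{prop4-6-3}, applied to $\Gamma_{\operatorname{par}}$ at $x$, there is $f\in\operatorname{Rat}(\Gamma_{\operatorname{par}})$ with $f(x)=a$ and $\operatorname{sl}_f(e_j)=i_j$, constant zero outside a small neighborhood of $x$. Such an $f$ has slope zero at infinity along every ray, so it lies in $\operatorname{Rat}(\Gamma_{\operatorname{par}})$ regardless of the parallel structure. The computation above then gives
\[
\psi^{\prime}(\pi^{\prime}(w(f)))=(a,(w_1i_1,\ldots,w_ni_n)),
\]
which proves the reverse inclusion.

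The only delicate step is the identification of outgoing directions and the slope formula. It requires that the valence of $x^{\prime}$ equal $n$, which holds because $\varphi$ is a homeomorphism near $x^{\prime}$. It also requires that $\varphi$ be linear with factor $w_i$ on a small initial segment of each branch. Choosing models as in the definition of morphisms with $x^{\prime}$ and $x$ as vertices, or shrinking $\varepsilon$, ensures both.
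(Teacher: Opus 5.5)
Your proof is correct and is exactly the verification the paper leaves implicit when it calls the corollary ``clear'' after Proposition~\ref{prop4-6-3}. Pulling back along the bijective morphism $\varphi$ multiplies the outgoing slope in direction $e^{\prime}_i$ by $w_i=\operatorname{deg}_{e^{\prime}_i}(\varphi)$, and the surjectivity construction in Proposition~\ref{prop4-6-3} realizes every $(a,(i_1,\ldots,i_n))$ at $x$, which together give both inclusions.
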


We regard $\operatorname{Rat}(\Gamma_{\operatorname{par}})_x$ as a localization of $\operatorname{Rat}(\Gamma_{\operatorname{par}})$ at the point $x$ when $\Gamma_{\operatorname{par}}$ is unweighted as in Proposition~\ref{prop4-6-3}, and $\pi^{\prime}(w(\operatorname{Rat}(\Gamma_{\operatorname{par}})))$ when $\Gamma_{\operatorname{par}}$ is weighted as in Corollary~\ref{cor4-6-1}.
Moreover, in both cases, if $\Psi : \operatorname{Rat}(\Gamma_{\operatorname{par}}) \twoheadrightarrow \boldsymbol{T}$ and $\Psi^{\prime} : \operatorname{Rat}(\Gamma^{\prime}_{\operatorname{par}}) \twoheadrightarrow \boldsymbol{T}$ are the surjective $\boldsymbol{T}$-algebra homomorphisms corresponding to $x \in \Gamma_{\operatorname{par}} \setminus \Gamma_{\operatorname{par}, \infty}$ and $x^{\prime} \in \Gamma^{\prime}_{\operatorname{par}} \setminus \Gamma^{\prime}_{\operatorname{par}, \infty}$, respectively, then these are passing through $\pi$ and $\pi^{\prime}$, respectively, where $\pi$ denotes the natural surjective $\boldsymbol{T}$-algebra homomorphism $\operatorname{Rat}(\Gamma_{\operatorname{par}}) \twoheadrightarrow \operatorname{Rat}(\Gamma_{\operatorname{par}})_x$.
In other words, if $p : R_n \twoheadrightarrow R_0 = \boldsymbol{T}$ denotes the map forgetting all components of $\boldsymbol{Z}^n$, i.e., the projection, then $\Psi = p \circ \psi \circ \pi$ and $\Psi^{\prime} = p \circ \psi^{\prime} \circ \pi^{\prime}$ hold.

From now on, we characterize the notion of valence of a point of tropical curves (with parallel rays) with an algebraic language:

\begin{prop}
    \label{prop4-6-4}
Let $\Gamma_{\operatorname{par}}$ be a tropical curve with parallel rays and $\psi : \operatorname{Rat}(\Gamma_{\operatorname{par}}) \twoheadrightarrow \boldsymbol{T}$ be a surjective $\boldsymbol{T}$-algebra homomorphism.
If there exists a surjective $\boldsymbol{T}$-algebra homomorphism $\phi : \operatorname{Rat}(\Gamma_{\operatorname{par}}) \twoheadrightarrow R_n$ such that $\psi = p \circ \phi$ with the projection $p : R_n \twoheadrightarrow \boldsymbol{T}$ for some $n$, then there exists a surjective $\boldsymbol{T}$-algebra homomorphism $\theta : \operatorname{Rat}(\Gamma_{\operatorname{par}})_x \twoheadrightarrow R_n$ such that $\phi = \theta \circ \pi$ holds, where $x \in \Gamma_{\operatorname{par}} \setminus \Gamma_{\operatorname{par}, \infty}$ is the point corresponding to $\psi$ and $\pi$ denotes the natural surjective $\boldsymbol{T}$-algebra homomorphism $\operatorname{Rat}(\Gamma_{\operatorname{par}}) \twoheadrightarrow \operatorname{Rat}(\Gamma_{\operatorname{par}})_x$.
Moreover, the valence of $x$ is greater than or equal to $n$.
\end{prop}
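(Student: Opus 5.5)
The plan is to show first that $\operatorname{Ker}(\pi) \subset \operatorname{Ker}(\phi)$. The factorization then follows from the fundamental homomorphism theorem \cite[Proposition~2.4.4]{Giansiracusa=Giansiracusa}, and the valence bound from a group-theoretic argument on the $\boldsymbol{Z}^n$-parts.

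\emph{Reduction of the kernel inclusion.} Let $f \sim_{\{x\}} g$ with $f, g \neq -\infty$. Put $h := f \odot g^{\odot(-1)}$, which is constantly $0$ near $x$. Since $\phi$ is a $\boldsymbol{T}$-algebra homomorphism of semifields, it suffices to show $\phi(h) = (0,\boldsymbol{0})$, the multiplicative identity of $R_n$. Writing $h = (h \oplus 0) \odot (h^{\odot(-1)} \oplus 0)^{\odot(-1)}$, both factors are $\ge 0$, equal to $0$ near $x$, and lie in $\operatorname{Rat}(\Gamma_{\operatorname{par}})$. So we may assume $u := h \ge 0$ with $u = 0$ on a neighbourhood $U$ of $x$. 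Since $p \circ \phi = \psi$ is evaluation at $x$, we have $\phi(u) = (0,\boldsymbol{j})$. From $u \oplus 0 = u$ and the definition of $\boxplus$ we get $\boldsymbol{j} \ge \boldsymbol{0}$ componentwise. It remains to prove $\boldsymbol{j} \le \boldsymbol{0}$.

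\emph{The expected obstacle.} The step I expect to be hardest is producing an identity in $\operatorname{Rat}(\Gamma_{\operatorname{par}})$ that forces $\boldsymbol{j} \le \boldsymbol{0}$. I would first test functions $k \le 0$ with $k(x) = 0$ and $\phi(k) = (0,\boldsymbol{v})$, where $k$ has sufficiently negative slopes away from $U$. A natural candidate is $k = \operatorname{CF}(\{x\},\infty)$, which lies in $\operatorname{Rat}(\Gamma_{\operatorname{par}})$ because it has slope $-1$ at infinity along every ray. Because $u$ has finitely many pieces and hence bounded slopes, there is $M$ with $u \odot k^{\odot M} \le 0$. Applying $\phi$ to $u \odot k^{\odot M} \oplus 0 = 0$ gives $\boldsymbol{j} + M\boldsymbol{v} \le \boldsymbol{0}$.

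This closes the argument only if $\boldsymbol{v} = \boldsymbol{0}$, and I do not see how to guarantee that for this choice of $k$. The refinement I would try next is to replace $k$ by a function whose $\phi$-image is forced to be trivial. For example, take $k$ equal to $0$ on a smaller neighbourhood $U' \subset U$ of $x$ and to a large negative multiple of $\operatorname{CF}(\overline{U'},\infty)$ outside $U'$; then $k \le 0$ and $k = 0$ near $x$. The difficulty is circular: showing $\phi(k) = (0,\boldsymbol{0})$ for such $k$ is an instance of the very claim we want. To break the circularity I would look for a second identity, comparing $k$ with its negatives or with $k \oplus 0$, or a sandwich between two such functions, that pins the slope part of $\phi(k)$ to $\boldsymbol{0}$. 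Once $\boldsymbol{j} = \boldsymbol{0}$, the map $\theta$ with $\phi = \theta \circ \pi$ exists and is surjective because $\phi$ is.

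\emph{Valence bound.} Let $m := \operatorname{val}(x)$. By Proposition~\ref{prop4-6-3}, $\theta$ yields a surjective $\boldsymbol{T}$-algebra homomorphism $\Theta : R_m \to R_n$, compatible with $p$ because $\psi = p \circ \phi$. Compatibility means $\Theta$ sends the subgroup $\{0\} \times \boldsymbol{Z}^m$ of units with real part $0$ into $\{0\} \times \boldsymbol{Z}^n$. Conversely, any preimage of some $(0,\boldsymbol{j})$ must have real part $0$. Hence $\Theta$ restricts to a surjective group homomorphism $\boldsymbol{Z}^m \to \boldsymbol{Z}^n$, which forces $m \ge n$.
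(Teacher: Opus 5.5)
Your reduction to a function $u \ge 0$ in $\operatorname{Rat}(\Gamma_{\operatorname{par}})$ that vanishes on a neighbourhood $U$ of $x$, with $\phi(u) = (0,\boldsymbol{j})$ and $\boldsymbol{j} \ge \boldsymbol{0}$, is exactly the paper's. Your valence argument, restricting to the unit groups $\{0\}\times\boldsymbol{Z}^m \twoheadrightarrow \{0\}\times\boldsymbol{Z}^n$, is also fine; it is the content of Proposition~\ref{prop-appendix3}.

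However, the proposal has a genuine gap at its central step: you never prove $\phi(u) = (0,\boldsymbol{0})$, as you acknowledge. Your test functions fail for a structural reason. Every $k$ you try has $k(x) = 0$, so $\phi(u)$ and $\phi(k)$ have the same real part. On such elements $\boxplus$ just takes the componentwise maximum of the $\boldsymbol{Z}^n$-parts. Identities like $u \odot k^{\odot M} \oplus 0 = 0$ therefore only give inequalities such as $\boldsymbol{j} + M\boldsymbol{v} \le \boldsymbol{0}$, with the unknown $\boldsymbol{v}$ in the way, and your refinement is circular. Without this step, neither $\operatorname{Ker}(\pi) \subset \operatorname{Ker}(\phi)$ nor $\theta$ is established, and so the valence bound is not proved either.

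The missing idea is to use a test function with strictly positive value at $x$, so that $\boxplus$ discards $\phi(u)$ entirely, and then combine this with multiplication.

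Choose $\varepsilon > 0$ so that the $\varepsilon$-neighbourhood of $x$ lies in $U$, and put $k := \varepsilon \odot \operatorname{CF}(\{x\}, \varepsilon)$. This $k$ has the following properties:
\begin{itemize}
\item it lies in $\operatorname{Rat}(\Gamma_{\operatorname{par}})$, since it is constantly $0$ outside the $\varepsilon$-neighbourhood and so has slope $0$ at infinity along every ray;
\item it satisfies $k \ge 0$ and $k(x) = \varepsilon$;
\item it vanishes wherever $u$ can be nonzero.
\end{itemize}
Since $u, k \ge 0$ and at every point one of them is $0$, we get $u \oplus k = u \odot k$.

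Now apply $\phi$. Because $p \circ \phi = \psi$ is evaluation at $x$, the real parts of $\phi(u)$ and $\phi(k)$ are $u(x) = 0$ and $k(x) = \varepsilon$. So the left side becomes $\phi(u) \boxplus \phi(k) = \phi(k)$, while the right side is $\phi(u) \boxdot \phi(k)$. Cancelling the unit $\phi(k)$ gives $\phi(u) = (0,\boldsymbol{0})$. This is the paper's argument, which it phrases as a contradiction with $\phi(h \oplus 0) \neq (0,\boldsymbol{0})$. With this step in place, the rest of your argument, including the valence bound, goes through.
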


\begin{proof}
If $n = 0$, then the assertions are clear.
Let $n \ge 1$.
Assume that there exists $(f, g) \in \operatorname{Ker}(\pi)$ which is not in $\operatorname{Ker}(\phi)$.
By definition, if $f = -\infty$, then so is $g$, and this contradicts $(f, g) \not\in \operatorname{Ker}(\phi)$.
Thus $f, g \not= -\infty$.
Let $h := f \odot g^{\odot (-1)}$.
If $\phi(h \oplus 0) = \phi(h^{\odot (-1)} \oplus 0) = (0, \boldsymbol{0})$, then, as $h = (h \oplus 0) \odot (h^{\odot (-1)} \oplus 0)^{\odot (-1)}$ and $(0, \boldsymbol{0}) \boxdot (0, \boldsymbol{0})^{\boxdot (-1)} = (0, \boldsymbol{0})$ hold, $\phi(h)$ must coincide with $(0, \boldsymbol{0})$.
This means that $\phi(f) = \phi(h \odot g) = \phi(h) \boxdot \phi(g) = (0, \boldsymbol{0}) \boxdot \phi(g) = \phi(g)$.
Therefore at least one of $\phi(h \oplus 0)$ and $\phi(h^{\odot (-1)} \oplus 0)$ is not $(0, \boldsymbol{0})$.
Without loss of generality, it is assumed that the former is so.
Under this assumption, since $\phi(h \oplus 0) \boxplus (0, \boldsymbol{0}) = \phi(h \oplus 0) \boxplus \phi(0) = \phi((h \oplus 0) \oplus 0) = \phi(h \oplus 0)$, all nonzero components of $\phi(h \oplus 0)$ are positive.
Because $(f, g) \in \operatorname{Ker}(\pi)$, there exists a positive number $\varepsilon$ such that $f$ and $g$ coincide on the $\varepsilon$-neighborhood $U$ of $x$, and so $h$ is constant (the real number) zero on $U$.
Therefore, as $h(x) = 0 < \varepsilon = \varepsilon \odot \operatorname{CF}(\{ x \}, \varepsilon)(x)$ and $(h \oplus 0) \oplus \varepsilon \odot \operatorname{CF}(\{ x \}, \varepsilon) = (h \oplus 0) \odot (\varepsilon \odot \operatorname{CF}(\{ x \}, \varepsilon))$ hold,
\begin{align*}
&~ \phi(\varepsilon \odot \operatorname{CF}(\{ x \} ,\varepsilon)) = \phi(h \oplus 0) \boxplus \phi(\varepsilon \odot \operatorname{CF}(\{ x \}, \varepsilon))\\
=&~ \phi((h \oplus 0) \oplus \varepsilon \odot \operatorname{CF}(\{ x \}, \varepsilon)) = \phi((h \oplus 0) \odot (\varepsilon \odot \operatorname{CF}(\{ x \}, \varepsilon)))\\
=&~ \phi(h \oplus 0) \boxdot \phi(\varepsilon \odot \operatorname{CF}(\{ x\}, \varepsilon)),
\end{align*}
which is a contradiction.
Hence, there does not exist such $(f, g)$, in other words, $\operatorname{Ker}(\pi) \subset \operatorname{Ker}(\phi)$ holds.
By the universality of $\operatorname{Rat}(\Gamma_{\operatorname{par}})_x$, the desired $\boldsymbol{T}$-algebra homomorphism $\theta$ is induced.
The last statement follows from Proposition~\ref{prop-appendix3}.
\end{proof}

Therefore, in the setting of Proposition~\ref{prop4-6-4}, we know that the valence of $x$ is the maximum number $n$ of $R_n$ such that there exists a surjective $\boldsymbol{T}$-algebra homomorphism $\phi : \operatorname{Rat}(\Gamma_{\operatorname{par}}) \twoheadrightarrow R_n$ satisfying $\psi = p \circ \phi$.

\subsection{Finitely generated \texorpdfstring{$\boldsymbol{T}$}{T}-modules}
    \label{subsection4.7}

In this subsection, we focus on finitely generated $\boldsymbol{T}$-modules in rational function semifields of tropical curves with parallel rays and define their degrees.

\begin{prop}
    \label{prop4-7-1}
Let $\Gamma_{\operatorname{par}}$ be a tropical curve with parallel rays.
For $f_1, \ldots, f_n \in \operatorname{Rat}(\Gamma_{\operatorname{par}}) \setminus \{ -\infty \}$, let $R$ denote the $\boldsymbol{T}$-module generated by $f_1, \ldots, f_n$.
For any point $x \in \Gamma_{\operatorname{par}}$ and any generators $g_1, \ldots, g_l \in R \setminus \{ -\infty \}$ of $R$ as a $\boldsymbol{T}$-module, the equality
\begin{align*}
&~ \operatorname{min}\{ \operatorname{div}(f_1)(x), \ldots, \operatorname{div}(f_n)(x)\}\\
=&~ \operatorname{min}\{ \operatorname{div}(g_1)(x), \ldots, \operatorname{div}(g_l)(x)\}
\end{align*}
holds.
\end{prop}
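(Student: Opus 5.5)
Define, for a finite set $\{h_1,\ldots,h_m\} \subset \operatorname{Rat}(\Gamma_{\operatorname{par}}) \setminus \{-\infty\}$ and a point $x$, the quantity $\mu_x(h_1,\ldots,h_m) := \operatorname{min}_j \operatorname{div}(h_j)(x)$. The plan is to show $\mu_x(g_1,\ldots,g_l) \ge \mu_x(f_1,\ldots,f_n)$ whenever every $g_k$ lies in the $\boldsymbol{T}$-module generated by the $f_i$. Exchanging the roles of the two generating sets then gives the reverse inequality, since each $f_i$ lies in the module generated by the $g_k$. So the whole statement reduces to one local inequality for a single element $g = \bigoplus_i t_i \odot f_i$ with $t_i \in \boldsymbol{T}$, not all $-\infty$: we need $\operatorname{div}(g)(x) \ge \operatorname{min}_i \operatorname{div}(f_i)(x)$.

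To prove this inequality, first discard the indices with $t_i = -\infty$. This only enlarges the minimum on the right, so it is harmless. Tropical scalar multiplication by a real $t_i$ does not change $\operatorname{div}$, so we may replace $f_i$ by $t_i \odot f_i$ and assume $g = f_1 \oplus \cdots \oplus f_m$. Now work locally at $x$.

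\textbf{Finite point.} If $x$ is a finite point, let $I$ be the set of indices with $f_i(x) = g(x)$. On a small neighborhood of $x$, the function $g$ equals $\bigoplus_{i \in I} f_i$. In each outgoing direction $e$, the slope satisfies $\operatorname{sl}_g(e) = \operatorname{max}_{i \in I} \operatorname{sl}_{f_i}(e)$. Equivalently, one can use the localization $\operatorname{Rat}(\Gamma_{\operatorname{par}})_x \cong R_{\operatorname{val}(x)}$ of Proposition~\ref{prop4-6-3}, where $\boxplus$ is exactly this rule. Hence
\begin{align*}
\operatorname{div}(g)(x) = \sum_e \operatorname{max}_{i \in I} \operatorname{sl}_{f_i}(e) \ge \sum_e \operatorname{sl}_{f_{i_0}}(e) = \operatorname{div}(f_{i_0})(x) \ge \operatorname{min}_i \operatorname{div}(f_i)(x)
\end{align*}
for any fixed $i_0 \in I$.

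\textbf{Point at infinity.} If $x$ is a point at infinity, there is only one direction, along the ray $L$ ending at $x$. Near $x$ each $f_i$ is affine on $L$, and the outgoing slope at $x$ is minus the slope toward $x$. Since the functions are finite on the finite part of $L$ with finitely many pieces, the maximum $g$ eventually agrees near $x$ with some single $f_{i_0}$. This is the one with the largest slope toward $x$, ties being broken by the constant term. So $\operatorname{div}(g)(x) = \operatorname{div}(f_{i_0})(x) \ge \operatorname{min}_i \operatorname{div}(f_i)(x)$.

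The parallel-ray structure plays no role here: $\operatorname{Rat}(\Gamma_{\operatorname{par}})$ is a subsemifield of $\operatorname{Rat}(\Gamma)$ and $\operatorname{div}$ is computed in $\Gamma$.

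The main obstacle is the bookkeeping at points at infinity, where the values $\pm\infty$ must be handled. There I would argue directly with the eventual dominance of one term along the ray, rather than through localization, which is only defined at finite points. The rest is the one-line symmetric argument.
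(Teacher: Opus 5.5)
Your proof is correct and follows the same route as the paper. The paper also writes each $g_j$ as $\bigoplus_i a_{ji}\odot f_i$, compares slopes at $x$ to get $\min_j \operatorname{div}(g_j)(x) \ge \min_i \operatorname{div}(f_i)(x)$, and concludes by symmetry. Your version is more careful at two points. You restrict to the indices active at $x$, and you treat points at infinity separately, using the fact that one term eventually dominates along the ray. This makes precise the paper's looser claim that the slope of $g_j$ dominates that of every $f_i$ with $a_{ji}\neq-\infty$ in every direction: that claim is literally true only for active terms at finite points.
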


\begin{proof}
If $\Gamma_{\operatorname{par}}$ consists of only one point $x$, then clearly $\operatorname{div}(f_i)(x) = \operatorname{div}(g_j)(x) = 0$ for any $i = 1, \ldots, n, j = 1, \ldots, l$.
Thus the assertion holds.

Assume that $\Gamma_{\operatorname{par}}$ does not consist of only one point.
As $g_j \in R \setminus \{ -\infty \}$ and $f_1, \ldots, f_n$ generate $R$ as a $\boldsymbol{T}$-module, there are $a_{j1}, \ldots, a_{jn} \in \boldsymbol{T}$ such that $g_j = a_{j1} \odot f_1 \oplus \cdots \oplus a_{jn} \odot f_n$ and at least one of $a_{j1}, \ldots, a_{jn}$ is not $-\infty$.
Then, in any direction from $x$, the slope of $g_j$ is greater than or equal to that of $f_i$ such that $a_{ji} \not= -\infty$.
This implies that the inequality
\begin{align*}
&~ \operatorname{min}\{ \operatorname{div}(f_1)(x), \ldots, \operatorname{div}(f_n)(x)\}\\
\le&~ \operatorname{min}\{ \operatorname{div}(g_1)(x), \ldots, \operatorname{div}(g_l)(x)\} 
\end{align*}
holds.
As $g_1, \ldots, g_l$ are also generators of $R$ as a $\boldsymbol{T}$-module, by the same argument, the opposite inequality holds.
\end{proof}

Note that for a point of $\Gamma_{\operatorname{par}}$, it is a pole of some $f_i$ if and only if it is a pole of some $g_j$ by this proposition.
This guaranties that the following definition is independent of the choice of generators of $R$ as a $\boldsymbol{T}$-module.

\begin{dfn}
    \label{dfn4-7-1}
\upshape{
As in the setting of Proposition~\ref{prop4-7-1}, let $x_1, \ldots, x_k$ be the poles of $f_1, \ldots, f_n$.
Then we call the nonnegative integer
\begin{align*}
- \sum_{i = 1}^k \operatorname{min} \{ \operatorname{div}(f_1)(x_i), \ldots, \operatorname{div}(f_n)(x_i) \}    
\end{align*}
the \textit{degree} of $R$ and write it as $\operatorname{deg}(R)$.
For $R = \boldsymbol{T}$, let $\operatorname{deg}(\boldsymbol{T}) := 0$ and for $R = \{ -\infty \}$, let $\operatorname{deg}(\{-\infty\}) := - \infty$.
}
\end{dfn}

With this definition, in the context of \cite[Theorem~35]{Haase=Musiker=Yu}, the degree of $R(D)$ coincides with $\operatorname{deg}(D)$, see Proposition~\ref{prop4-8-2}.
Thus, this is a notion that is already partially studied in \cite{Haase=Musiker=Yu}.

In addition, we know that this definition depends on the inclusion $R \hookrightarrow \operatorname{Rat}(\Gamma_{\operatorname{par}})$.
More precisely, for a finitely generated $\boldsymbol{T}$-module $R^{\prime} \subset \operatorname{Rat}(\Gamma_{\operatorname{par}})$, even when $R^{\prime}$ is isomorphic to $R$ as a $\boldsymbol{T}$-module, these may have different degrees.
For example, for $f(x) := x$ with $x \in [-\infty, \infty]$, the $\boldsymbol{T}$-module $R$ generated by $f$ is isomorphic to the $\boldsymbol{T}$-module $R^{\prime}$ generated by $f^{\odot 2}$ as a $\boldsymbol{T}$-module, but $R$ has degree one and $R^{\prime}$ has degree two.

If $\psi : \operatorname{Rat}(\Gamma_{\operatorname{par}}) \to \operatorname{Rat}(\Gamma_{\operatorname{par}})$ is a $\boldsymbol{T}$-algebra isomorphism, then the degree of $R$ equals that of $\psi(R)$, which is a finitely generated $\boldsymbol{T}$-module.

As in the setting of Definition~\ref{dfn4-7-1}, if $x_i$ is a point at infinity, then we can choose a finite point $y_i$ of a ray containing $x_i$ such that the outgoing slopes of $f_1, \ldots, f_n$ at $y_i$ toward $x_i$ coincide with $-\operatorname{div}(f_1)(x_i), \ldots, -\operatorname{div}(f_n)(x_i)$, respectively.
Hence, we can write the degree of $R$ using only finite points, and thus Subsection~\ref{subsection4.6} guaranties us to write it in a trivial algebraic way.

\subsection{Harmonicity and balancing condition}
    \label{subsection4.8}

In this subsection, we give an algebraic description of the balancing condition for tropical curves with parallel rays, as for tropicalized curves in $\boldsymbol{R}^n$, in terms of harmonicity.

We define harmonicity at a point of a rational function on a tropical curve with parallel rays.
This is an analogue of the notion of discrete harmonic functions introduced by Heilbronn in \cite{Heilbronn}.

\begin{dfn}
    \label{dfn4-8-1}
\upshape{
Let $\Gamma_{\operatorname{par}}$ be a tropical curve with parallel rays and $f \in \operatorname{Rat}(\Gamma_{\operatorname{par}}) \setminus \{ -\infty \}$.
For a point $x \in \Gamma_{\operatorname{par}}$, the function $f$ is \textit{harmonic at $x$} if $\operatorname{div}(f)(x) = 0$.
}
\end{dfn}

Note that the harmonicity at a point is a local property, and hence in fact we need not consider the parallelity of rays, and without the parallelity, the harmonicity is already described in \cite[Subsection~2.9]{Amini=Baker=Brugalle=Rabinoff}.

Let $\Gamma_{\operatorname{par}}$ be a tropical curve with parallel rays other than one point and $x \in \Gamma_{\operatorname{par}} \setminus \Gamma_{\operatorname{par}, \infty}$.
For $n := \operatorname{val}(x)$, let $\pi_x : \operatorname{Rat}(\Gamma_{\operatorname{par}}) \twoheadrightarrow \operatorname{Rat}(\Gamma_{\operatorname{par}})_x \cong R_n = ((\boldsymbol{R} \times \boldsymbol{Z}^n) \cup \{ -\infty\}, \boxplus, \boxdot)$ be the natural surjective $\boldsymbol{T}$-algebra homomorphism.
Since $(\boldsymbol{R} \times \boldsymbol{Z}^n, \boxdot = +)$ is a group, the map $\omega : (\boldsymbol{R} \times \boldsymbol{Z}^n, \boxdot = +) \to (\boldsymbol{Z}, +); (a, (i_1, \ldots, i_n)) \mapsto i_1 + \cdots + i_n$ is a group homomorphism.
The following is clear by definition:

\begin{prop}
    \label{prop4-8-1}
In the above setting, for $f \in \operatorname{Rat}(\Gamma_{\operatorname{par}}) \setminus \{ -\infty \}$, the following are equivalent:

$(1)$ $f$ is harmonic at $x$, and 

$(2)$ $\pi_x(f)$ is in the kernel of $\omega$, i.e., $\omega(\pi_x(f)) = 0$.
\end{prop}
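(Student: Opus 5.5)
The plan is to unwind the definitions on both sides and compare them through the isomorphism $\psi : \operatorname{Rat}(\Gamma_{\operatorname{par}})_x \to R_n$ of Proposition~\ref{prop4-6-3}. Throughout, $\pi_x(f)$ is identified with the element $\psi([f]) = (f(x), (\operatorname{sl}_f(e_1), \ldots, \operatorname{sl}_f(e_n))) \in \boldsymbol{R} \times \boldsymbol{Z}^n$. Here $e_1, \ldots, e_n$ are the $n = \operatorname{val}(x)$ outgoing directions at $x$. Since $f \neq -\infty$, this element lies in $\boldsymbol{R} \times \boldsymbol{Z}^n$, which is where $\omega$ is defined.

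First, I compute $\omega(\pi_x(f))$ directly from the definition of $\omega$:
\begin{align*}
\omega(\pi_x(f)) = \operatorname{sl}_f(e_1) + \cdots + \operatorname{sl}_f(e_n).
\end{align*}
Second, I recall from Subsection~\ref{subsection2.6} what $\operatorname{div}(f)(x)$ is for a finite point $x$. It is the sum of the outgoing slopes of $f$ at $x$ over all outgoing directions: positive when $x$ is a zero, negative when $x$ is a pole, and zero otherwise. So $\operatorname{div}(f)(x) = \sum_{i=1}^n \operatorname{sl}_f(e_i)$.

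Combining the two steps gives $\omega(\pi_x(f)) = \operatorname{div}(f)(x)$. Hence $f$ is harmonic at $x$ in the sense of Definition~\ref{dfn4-8-1} if and only if $\omega(\pi_x(f)) = 0$, i.e.\ $\pi_x(f) \in \operatorname{Ker}(\omega)$.

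The only point needing care, and the main obstacle such as it is, is to confirm that the directions $e_1, \ldots, e_n$ used in Proposition~\ref{prop4-6-3} are exactly all the outgoing directions entering the definition of $\operatorname{div}(f)(x)$. Since $n$ is the valence of $x$, they are. The identification is also independent of the chosen representative of $[f]$, by the well-definedness part of Proposition~\ref{prop4-6-3}. Finally, because $\Gamma_{\operatorname{par}}$ is not a single point, $n \geq 1$, so the degenerate case $n=0$, where both sides are trivially zero, needs no separate treatment.
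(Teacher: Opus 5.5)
Your proposal is correct and is exactly the definitional unwinding the paper has in mind (it states this proposition as clear by definition and gives no further proof): under the isomorphism of Proposition~\ref{prop4-6-3}, $\omega(\pi_x(f))$ is the sum of the outgoing slopes of $f$ at $x$ over all $\operatorname{val}(x)$ directions, which is precisely $\operatorname{div}(f)(x)$.
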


Let $\Gamma_{\operatorname{par}}$ be a tropical curve with parallel rays.
For $f_1, \ldots, f_n \in \operatorname{Rat}(\Gamma_{\operatorname{par}}) \setminus \{ -\infty \}$, let $R$ denote the $\boldsymbol{T}$-module generated by $f_1, \ldots, f_n$.

\begin{prop}
    \label{prop4-8-2}
In the above setting, if $f_1, \ldots, f_n$ are harmonic at every point of $\Gamma_{\operatorname{par}} \setminus \Gamma_{\operatorname{par}, \infty}$, then they are extremals of $R$.
Then for $f \in R \setminus \{ -\infty \}$, the following are equivalent:

$(1)$ $f$ is an extremal of $R$,

$(2)$ there exist $i$ and $a \in \boldsymbol{R}$ such that $f = a \odot f_i$, and

$(3)$ $f$ is harmonic at every point of $\Gamma_{\operatorname{par}} \setminus \Gamma_{\operatorname{par}, \infty}$.
\end{prop}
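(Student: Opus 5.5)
The whole argument rests on one local inequality at each finite point, and I carry it out in the following order. Fix $f \in R \setminus \{ -\infty \}$ and write $f = a_1 \odot f_1 \oplus \cdots \oplus a_n \odot f_n$ with some $a_i \not= -\infty$. At a finite point $x$, let $I_x$ be the set of indices $i$ with $a_i \not= -\infty$ and $a_i + f_i(x) = f(x)$. In each outgoing direction $e$ from $x$, the slope of $f$ equals $\operatorname{max}_{i \in I_x} \operatorname{sl}_{f_i}(e)$. Summing over directions gives
\begin{align*}
\operatorname{div}(f)(x) \ge \operatorname{div}(f_i)(x) \quad \text{for every } i \in I_x,
\end{align*}
with equality if and only if $f_i$ has the maximal slope among $I_x$ in every direction at $x$. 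Under the hypothesis that all $f_i$ are harmonic at finite points, this shows that every $f \in R \setminus \{ -\infty \}$ satisfies $\operatorname{div}(f)(x) \ge 0$ at every finite point $x$.

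\textbf{(3) $\Rightarrow$ (2).} Suppose $f$ is harmonic at all finite points. Then $\operatorname{div}(f)$ is supported on $\Gamma_{\operatorname{par}, \infty}$, and the same holds for each $f_i$. Fix an index $i_0$ with $a_{i_0} \not= -\infty$ and put $h := f \odot (a_{i_0} \odot f_{i_0})^{\odot (-1)}$. Then $h \ge 0$ everywhere, and $h$ is harmonic at every finite point.
\begin{itemize}
\item If $h$ attains its minimum $0$ at some finite point $y$, consider the set $Z = \{ h = 0 \}$. At any boundary point of $Z$ (away from infinity), every outgoing slope of $h$ is $\ge 0$ and at least one is $> 0$, which contradicts harmonicity.
\item If instead $h$ is positive at all finite points, I move to a nearby point where $a_{i_0} \odot f_{i_0}$ attains the maximum. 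Such points must exist because $f$ is the maximum of the finitely many $a_i \odot f_i$ (replace $i_0$ by an index active at some finite point).
\end{itemize}
In either case $Z$ is closed and open in the finite part. Since $\Gamma \setminus \Gamma_\infty$ is connected, $h \equiv 0$, so $f = a_{i_0} \odot f_{i_0}$.

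\textbf{(2) $\Rightarrow$ (3)} is immediate from the hypothesis.

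\textbf{(1) $\Leftrightarrow$ (2) and extremality of the $f_i$.} By \cite[Proposition~8]{Haase=Musiker=Yu}, the extremals form the unique minimal generating set up to scalars, and each extremal lies among the generators up to scalars. This gives (1) $\Rightarrow$ (2). For the converse and for the first assertion, suppose $a \odot f_i = g \oplus h$ with $g, h \in R$. By the local inequality, at each finite point $x$ where $g$ is active we get $0 = \operatorname{div}(a \odot f_i)(x) \ge \operatorname{div}(g)(x) \ge 0$. Hence $g$ is active with matching slopes near every finite point where it is active. The set where $g = a \odot f_i$ is therefore open and closed in the finite part, so it is empty or everything. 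If it is empty for both $g$ and $h$, we contradict $a \odot f_i = g \oplus h$. So $a \odot f_i = g$ or $a \odot f_i = h$, and $a \odot f_i$ is an extremal.

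\textbf{Main obstacle.} The delicate step is the open-closed argument for the contact set $\{ g = a \odot f_i \}$. Equality of slopes in all directions at $x$ does not by itself force equality on a whole neighborhood edge segment, since $g$ could leave along a direction later. The fix is that $g \le a \odot f_i$ everywhere together with equal value and equal slopes at $x$, applied along each edge at successive breakpoints, propagates equality until the next vertex. The finiteness of pieces makes this induction terminate. I would write that propagation carefully, and handle the points at infinity by continuity.
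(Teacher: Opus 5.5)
Your proof is correct. It runs on the same local fact as the paper's: if $u \le F$ with $u(x) = F(x)$ at a finite point $x$ and $\operatorname{div}(u)(x) \ge \operatorname{div}(F)(x)$, then all outgoing slopes agree and $u = F$ near $x$. This is combined with $\operatorname{div}(g)(x) \ge 0$ for every $g \in R \setminus \{-\infty\}$, which both you and the paper derive from harmonicity of the generators.

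The organization differs in two places.

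\emph{Extremality of $f_i$.} The paper argues by contradiction at a point $x$ where neither $g$ nor $h$ agrees with $f_i$ on any neighborhood. The existence of such a point is only sketched (``taking $\delta$ bigger and bigger''), and making it rigorous already needs harmonicity. Your open--closed argument for the contact set $\{ g = a \odot f_i \}$ in the connected set $\Gamma_{\operatorname{par}} \setminus \Gamma_{\operatorname{par}, \infty}$ avoids needing such a point. It only uses a summand that is active somewhere, so it is the cleaner version of that step.

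\emph{The equivalences.} The paper is economical: the single extremality argument, rerun for an arbitrary harmonic element of $R$, gives $(3) \Rightarrow (1)$, and $(1) \Rightarrow (2)$ is imported from \cite[Proposition~8]{Haase=Musiker=Yu}. You instead prove $(3) \Rightarrow (2)$ directly by a minimum principle for $f \odot (a_{i_0} \odot f_{i_0})^{\odot(-1)}$. This yields the more concrete statement that a harmonic $f \in R$ is a translate of \emph{every} generator active at some finite point. You then close the cycle with $(2) \Rightarrow (1)$ from your extremality argument. Note that $(1) \Rightarrow (2)$ is elementary anyway: iterate extremality on $f = \bigoplus_j a_j \odot f_j$.

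Two clean-ups. First, the second bullet in $(3) \Rightarrow (2)$ disappears if you choose $i_0$ from the start as an index active at some finite point, which is always possible. Second, your ``main obstacle'' is not one. Openness of the contact set only needs equality on some small neighborhood of each contact point, and that follows at once from equal value and equal slopes plus finiteness of pieces. No propagation along edges is required.
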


\begin{proof}
Assume that $f_i = g \oplus h$ for $g, h \in R$ and $f_i \not= g$ and $f_i \not= h$.
Then there exists $x \in \Gamma_{\operatorname{par}} \setminus \Gamma_{\operatorname{par}, \infty}$ such that $f_i|_{U(x, \varepsilon)} \not= g|_{U(x, \varepsilon)}, h|_{U(x, \varepsilon)}$ for all $\varepsilon > 0$ and the $\varepsilon$-neighborhood $U(x, \varepsilon)$ of $x$.
In fact, otherwise, for all $y \in \Gamma_{\operatorname{par}} \setminus \Gamma_{\operatorname{par}, \infty}$, there exists $\delta > 0$ such that $f_i|_{U(y, \delta)} = g|_{U(y, \delta)}$ or $f_i|_{U(y, \delta)} = h|_{U(y, \delta)}$ for the $\delta$-neighborhood $U(y, \delta)$ of $y$.
If $f_i|_{U(y, \delta)} = g|_{U(y, \delta)}$ holds for such a $y$, then, taking $\delta$ bigger and bigger, it is verified that the equality $f_i = g \oplus h$ and the assumption above ensure $f_i = g$ on $\Gamma_{\operatorname{par}} \setminus \Gamma_{\operatorname{par}, \infty}$.
Both $f_i$ and $g$ are continuous, this means that $f_i = g$ holds on the whole of $\Gamma_{\operatorname{par}}$.
When $f_i|_{U(y, \delta)} = h|_{U(y, \delta)}$, the same things hold.
Hence $\operatorname{div}(f_i)(x) > \operatorname{div}(g)(x), \operatorname{div}(h)(x)$ hold.
Note that $g = -\infty$ (resp.~$h = -\infty$) implies that $f_i = h$ (resp.~$f_i = g$), and so both $g$ and $h$ cannot be $-\infty$ and define the principal divisors $\operatorname{div}(g)$ and $\operatorname{div}(h)$, respectively.
Since both $g$ and $h$ are in $R$ and $f_1, \ldots, f_n$ are harmonic at $x$, the values $\operatorname{div}(g)(x)$ and $\operatorname{div}(h)(x)$ must be nonnegative.
This contradicts the assumption that $f_i$ is harmonic at $x$.

For the last assertion, $(2) \Longrightarrow (3)$ holds by definition, $(3) \Longrightarrow (1)$ follows from the above argument, and $(1) \Longrightarrow (2)$ is verified by \cite[Proposition~8 and its proof]{Haase=Musiker=Yu}.
\end{proof}

In Proposition~\ref{prop4-8-2} (or slightly weaker, when $f_1, \ldots, f_n$ have no poles in $\Gamma_{\operatorname{par}} \setminus \Gamma_{\operatorname{par}, \infty}$), we know that the degree of $R$ coincides with the sum of degrees of poles of $f_1 \oplus \cdots \oplus f_n$, and also equals the sum of degrees of zeros of $f_1 \oplus \cdots \oplus f_n \oplus M$ with a sufficiently large positive number $M$.

To connect harmonicity to the balancing condition, we consider the map consisting of rational functions on a tropical curve which is harmonic at every point other than points at infinity as in Proposition~\ref{prop4-8-2}.

\begin{prop}
    \label{prop4-8-4}
Let $\Gamma$ be a tropical curve that is not a point and $f_1, \ldots, f_n \in \operatorname{Rat}(\Gamma) \setminus \{ -\infty \}$.
If all $f_1, \ldots, f_n$ are harmonic at every point of $\Gamma \setminus \Gamma_{\infty}$ and the map $\theta : \Gamma \setminus \Gamma_{\infty} \to \boldsymbol{R}^n; x \mapsto (f_1(x), \ldots, f_n(x))$ is injective, then the image $\operatorname{Im} (\theta)$ is the support of the balanced one-dimensional $\boldsymbol{R}$-rational polyhedral complex $\{ \theta(x), \theta(e \setminus \Gamma_{\infty}) \,|\, x \in V(G) \setminus \Gamma_{\infty}, e \in E(G) \}$ in $\boldsymbol{R}^n$ given as follows.
Let $(G, l)$ be the canonical model for $\Gamma$.
The image $\theta(e \setminus \Gamma_{\infty})$ for an edge $e$ of $G$ has the greatest common divisor of the slopes of $f_1, \ldots, f_n$ on $e$ as its multiplicity $m(\theta(e \setminus \Gamma_{\infty}))$.
Moreover, for the surjective $\boldsymbol{T}$-algebra homomorphism $\psi : \overline{\boldsymbol{T}(\boldsymbol{X})}_n \twoheadrightarrow \boldsymbol{T}(f_1, \ldots, f_n)$ given by the correspondence $X_i \mapsto f_i$, the congruence variety $\boldsymbol{V}(\operatorname{Ker}(\psi))$ coincides with $\operatorname{Im}(\theta)$.
\end{prop}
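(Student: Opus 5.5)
We split the proof into three parts: the congruence-variety statement, the polyhedral structure with multiplicities, and the balancing condition at each vertex.

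\emph{Congruence variety.} We treat this first, since it is formal given earlier results. Regard $\Gamma$ as a tropical curve with parallel rays $(\Gamma, \sim_{\theta})$ as in Remark~\ref{rem4-2-2}, so that $f_1, \ldots, f_n \in \operatorname{Rat}(\Gamma_{\operatorname{par}})$. Corollary~\ref{cor4-1-2} then gives $\boldsymbol{V}(\operatorname{Ker}(\psi)) = \operatorname{Im}(\theta)$ directly. It also gives that $\operatorname{Im}(\theta)$ is a connected finite union of $\boldsymbol{R}$-rational polyhedral sets of dimension at most one.

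\emph{Polyhedral structure and multiplicities.} Next we fix the canonical model $(G,l)$, after subdividing so that every $f_i$ has constant slope on each edge. We then check that the cells $\theta(x)$ and $\theta(e \setminus \Gamma_\infty)$ form a polyhedral complex. On an edge $e$ with slope vector $\boldsymbol{s}_e = (s_{1}, \ldots, s_{n}) \in \boldsymbol{Z}^n$, the map $\theta$ is affine. Injectivity of $\theta$ forces $\boldsymbol{s}_e \neq \boldsymbol{0}$, so $\theta(e \setminus \Gamma_\infty)$ is a segment or a ray. It is $\boldsymbol{R}$-rational, with direction $\boldsymbol{s}_e = m_e \boldsymbol{u}_e$, where $m_e = \gcd(s_1, \ldots, s_n)$ and $\boldsymbol{u}_e$ is primitive. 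Injectivity also ensures that distinct cells meet only in images of common vertices, so we get a genuine complex.

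The multiplicity $m_e$ is the expansion factor of $\theta$ in the lattice length; compare Proposition~\ref{prop4-5-1} and Example~\ref{ex4-5-1}. One subtlety must be handled: the subdivided vertices of valence two need not coincide with canonical-model vertices. Harmonicity at such a point says the two outgoing slope vectors sum to zero. Hence $\theta$ continues straight with the same $m_e$, and these points may be removed. This shows the complex given by the canonical model is well defined with constant multiplicity on each edge.

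\emph{Balancing.} The main step is balancing at each finite vertex $x$. Let $e_1, \ldots, e_k$ be the outgoing directions at $x$, where $k = \operatorname{val}(x)$. The outgoing slope vector of $\theta$ along $e_j$ is $(\operatorname{sl}_{f_1}(e_j), \ldots, \operatorname{sl}_{f_n}(e_j)) = m_{e_j} \boldsymbol{u}_{e_j}$. Here $\boldsymbol{u}_{e_j}$ is the primitive vector from $\theta(x)$ toward $\theta(e_j)$. Harmonicity of $f_i$ at $x$ (Definition~\ref{dfn4-8-1}) reads $\sum_j \operatorname{sl}_{f_i}(e_j) = 0$ for every $i$. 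Taking all coordinates together gives
\[
\textstyle\sum_{j} m_{e_j} \boldsymbol{u}_{e_j} = \boldsymbol{0},
\]
which is exactly the balancing condition. Equivalently, it says $\omega(\pi_x(f_i)) = 0$ for all $i$, by Proposition~\ref{prop4-8-1}.

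We also need that the directions $\boldsymbol{u}_{e_j}$ correspond bijectively to the cells incident to $\theta(x)$. This is where injectivity of $\theta$ enters again: two directions at $x$ cannot map to the same cell, and no further cells can pass through $\theta(x)$.

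The case where $\Gamma$ is a loop-only circle, or the exceptional canonical models, is handled in the same way after choosing any vertex. Points at infinity impose no condition, since they do not lie in $\operatorname{Im}(\theta)$.

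I expect the main obstacle to be not the balancing identity itself, which is immediate, but the verification that the collection is an honest polyhedral complex matching the canonical model with well-defined multiplicities. That is, $\theta$ must glue edges correctly, and valence-two harmonic points must not create spurious vertices or change multiplicities. Injectivity together with harmonicity is what rules these out.
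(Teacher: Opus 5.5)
Your argument matches the paper's proof. Balancing at each finite vertex comes from summing the outgoing slope vectors $m_{e_j}\boldsymbol{u}_{e_j}$ and using $\sum_j \operatorname{sl}_{f_i}(e_j)=0$. Injectivity of $\theta$ gives the one-to-one correspondence between outgoing directions at $x$ and cells at $\theta(x)$. The last assertion is read off from Corollary~\ref{cor4-1-2}. Your observation that harmonicity at two-valent points makes $\theta$ straight across each canonical edge fills in a step the paper only asserts.

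One step needs correcting. You cannot in general use the parallelization $\sim_\theta$ of Remark~\ref{rem4-2-2} to conclude $f_1,\ldots,f_n\in\operatorname{Rat}(\Gamma_{\operatorname{par}})$. Suppose two rays of $\Gamma$ map to parallel rays with the same primitive direction $\boldsymbol{u}$ but different multiplicities $m_1\neq m_2$. This happens, for instance, for a plane cubic with two parallel ends of weights $2$ and $1$. Then $f_i$ has slopes $m_1u_i\neq m_2u_i$ at infinity along these $\sim_\theta$-parallel rays, so $f_i\notin\operatorname{Rat}(\Gamma,\sim_\theta)$.

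The fix is to use the trivial parallelization $\sim_{\operatorname{triv}}$ of Remark~\ref{rem4-1-1}, for which $\operatorname{Rat}(\Gamma,\sim_{\operatorname{triv}})=\operatorname{Rat}(\Gamma)$. Corollary~\ref{cor4-1-2} then applies verbatim and gives $\boldsymbol{V}(\operatorname{Ker}(\psi))=\operatorname{Im}(\theta)$. This is implicitly what the paper does.

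A minor point: the circle case is not "handled in the same way" but is vacuous. On a circle, harmonicity forces each $f_i$ to be constant, which contradicts injectivity. For the same reason, finite leaves and loops cannot occur under the hypotheses.
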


\begin{proof}
Let $x \in V(G) \setminus \Gamma_{\infty}$.
Since each $f_i$ is harmonic at $x$, there exists a sufficiently small positive number $\varepsilon > 0$ such that $\sum_{j = 1}^m (f_i(y_j) - f_i(x)) = 0$ with the points $y_1, \ldots, y_m$ such that $\operatorname{dist}(x, y_j) = \varepsilon$, where $m$ is the valence of $x$.
If $s_{i, j}$ denotes the outgoing slope of $f_i$ at $x$ toward $y_j$, then $f_i(y_j) - f_i(x) = \varepsilon s_{i, j}$, and thus $\sum_{j = 1}^m s_{i, j} = 0$ hold.
Under the assumption that $f_1, \ldots, f_n$ are harmonic at every point of $\Gamma \setminus \Gamma_{\infty}$, by the definitions of $(G, l)$ and $\theta$, the image $\theta(V(G) \setminus \Gamma_{\infty})$ gives $\operatorname{Im}(\theta)$ an $\boldsymbol{R}$-rational polyhedral complex structure in $\boldsymbol{R}^n$.
Since $\theta$ is injective, the correspondence between $y_1, \ldots, y_m$ and $\theta(y_1), \ldots, \theta(y_m)$ is one-to-one.
With the primitive vector $\boldsymbol{v}_j$ from $\theta(x)$ toward $\theta(y_j)$, the coordinates of the point $\theta(y_j)$ coincide with $\theta(x) + (f_1(y_j) - f_1(x), \ldots, f_n(y_j) - f_n(x)) = \theta(x) + \varepsilon (s_{1, j}, \ldots, s_{n, j}) = \theta(x) + \varepsilon m(\theta(e_j \setminus \Gamma_{\infty})) \boldsymbol{v}_j$, where $e_j \in E(G)$ such that $\overline{xy_j} \subset e_j$.
Therefore
\begin{align*}
\sum_{j = 1}^m m(\theta(e_j \setminus \Gamma_{\infty})) \boldsymbol{v}_j = \sum_{j = 1}^m  (s_{1, j}, \ldots, s_{n, j} ) = \boldsymbol{0 }
\end{align*}
hold.
The last assertion follows from Corollary~\ref{cor4-1-2}.
\end{proof}

In Proposition~\ref{prop4-8-4}, the assumption of the injectivity of $\theta$ is not essential, but when we consider the converse as follows, it appears naturally:

\begin{prop}
    \label{prop4-8-5}
Let $\Gamma^{\prime}$ be the support of a balanced one-dimensional $\boldsymbol{R}$-rational polyhedral complex $\Sigma$ in $\boldsymbol{R}^n$ and $\Gamma_{\operatorname{par}}$ the tropical curve with parallel rays whose topology structure coincides with that of the natural compactification $\overline{\Gamma^{\prime}}$ of $\Gamma^{\prime}$ as a tropical curve and whose length of the edge $e$ corresponding to an edge $e^{\prime}$ of $\Sigma$ is that of $e^{\prime}$ multiplied by $\frac{1}{m(e^{\prime})}$, where $m(e^{\prime})$ denotes the multiplicity of $\Sigma$ on $e^{\prime}$, and whose parallelity of rays is given by that of unbounded one-dimensional cells of $\Sigma$ in $\boldsymbol{R}^n$.
Then the composition $f_i := X_i \circ \theta$ of the $i$th coordinate function $X_i : \boldsymbol{R}^n \to \boldsymbol{R}; (x_1, \ldots, x_n) \mapsto x_i$, which is in $\overline{\boldsymbol{T}(\boldsymbol{X})}_n$, and the natural map $\theta : \Gamma_{\operatorname{par}} \setminus \Gamma_{\operatorname{par}, \infty} \to \Gamma^{\prime} \subset \boldsymbol{R}^n$ induces a rational function on $\Gamma_{\operatorname{par}}$ which is harmonic at every point of $\Gamma_{\operatorname{par}} \setminus \Gamma_{\operatorname{par}, \infty}$.
\end{prop}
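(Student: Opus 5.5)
The plan is to check three things in turn: that each $f_i$ is a rational function on the underlying tropical curve $\Gamma$ of $\Gamma_{\operatorname{par}}$, that it has equal slopes at infinity along parallel rays, and that it is harmonic at every finite point. The reparametrization of lengths is chosen exactly so that integrality and balancing transfer.

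First I would fix a polyhedral structure and a model. Take the one-dimensional cells of $\Sigma$ as the edges of a model $(G,l)$ of $\Gamma$. Each vertex of $\Sigma$ gives a vertex of $G$, and each unbounded cell gives a leaf edge of infinite length ending at a point at infinity. For a cell $e'$ with primitive direction $\boldsymbol{v}$, a point of $e'$ is $p+t\,\boldsymbol{v}$. Its lattice distance from the vertex $p$ is $t$, so its distance in $\Gamma_{\operatorname{par}}$ is $s=t/m(e')$. Hence $\theta(s)=p+s\,m(e')\boldsymbol{v}$ on $e$. Then $f_i=X_i\circ\theta$ is affine on $e$ with slope $m(e')v_i\in\boldsymbol{Z}$. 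There are finitely many cells, so $f_i$ is continuous and piecewise affine with integer slopes and finitely many pieces. It is real-valued at finite points, and we extend it continuously by $\pm\infty$ (or a constant) at the points at infinity. Thus $f_i\in\operatorname{Rat}(\Gamma)$.

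Second I would handle parallel rays. Two unbounded cells $e'_1,e'_2$ are parallel in the sense of Promise~\ref{prom4-1-1} when they have the same primitive direction $\boldsymbol{v}$. The slope of $f_i$ at infinity along the corresponding ray is $m(e'_k)v_i$. This is where I expect the main obstacle: these slopes agree only if $m(e'_1)=m(e'_2)$, which the balancing condition does not force. I would resolve it as follows.
\begin{itemize}
\item Either read the statement's "parallelity given by that of unbounded cells" as including the multiplicity, i.e.\ parallel means the same vector $m(e')\boldsymbol{v}$.
\item Or, following Corollary~\ref{cor4-1-1}, where parallel images come with equal expansion factors at infinity, note that only rays with the same weighted direction can be identified consistently.
\end{itemize}
With that reading, the slopes coincide and $f_i\in\operatorname{Rat}(\Gamma_{\operatorname{par}})$ by Definition~\ref{dfn4-1-2}.

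Third I would prove harmonicity, reversing the computation in the proof of Proposition~\ref{prop4-8-4}.
\begin{itemize}
\item At a finite point interior to an edge, the two outgoing slopes of $f_i$ are $\pm m(e')v_i$, so they cancel.
\item At a vertex $x$ with incident cells $e'_1,\ldots,e'_k$ and primitive outgoing vectors $\boldsymbol{v}_j$, the outgoing slope of $f_i$ along $e_j$ is $m(e'_j)(\boldsymbol{v}_j)_i$. Hence $\operatorname{div}(f_i)(x)=\bigl(\sum_j m(e'_j)\boldsymbol{v}_j\bigr)_i=0$ by the balancing condition.
\end{itemize}
Therefore each $f_i$ is harmonic at every point of $\Gamma_{\operatorname{par}}\setminus\Gamma_{\operatorname{par},\infty}$, as in Definition~\ref{dfn4-8-1}.
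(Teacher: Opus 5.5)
Your argument matches the paper's. The slope of $f_i$ on $e$ is $m(e')$ times the lattice slope of $X_i$ on $e'$, which is integral by $\boldsymbol{R}$-rationality, and harmonicity at a vertex is exactly the balancing condition. The paper writes this with points $y_j$ at a common small distance from $x$ and division points $z_j$, which is your identity $\sum_j m(e'_j)\boldsymbol{v}_j=\boldsymbol{0}$ in another form.

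Your worry about parallel rays is well founded, and it is a step the paper's proof skips. The paper only says that $\boldsymbol{R}$-rationality makes each $f_i$ a rational function on $\Gamma_{\operatorname{par}}$; it never checks the equal-slopes-at-infinity condition of Definition~\ref{dfn4-1-2}. With parallelity meaning equal unit direction (Promise~\ref{prom4-1-1}), the conclusion actually fails. Take $F=0\oplus X_2\oplus(-3)\odot X_2^{\odot 3}\oplus X_1$. The balanced curve $\boldsymbol{V}(F)$ has the rays $\{x_2=0,\ x_1\le 0\}$ of weight $1$ and $\{x_2=3/2,\ x_1\le 3/2\}$ of weight $2$, both in direction $(-1,0)$. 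On the rescaled $\Gamma_{\operatorname{par}}$, the function $f_1$ has slopes $-1$ and $-2$ along them.

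So you should present your fix as a correction of the statement, not as a ``reading'' of it. Either add the hypothesis that parallel unbounded cells have equal multiplicity, or use the parallelization by equal weighted direction $m(e')\boldsymbol{v}$. This correction is consistent with Definitions~\ref{dfn4-2-1}(2) and \ref{dfn4-5-1}, which already require parallel rays of a weighted curve to carry the same weight at infinity. With that change your proof is complete.
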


\begin{proof}
For each edge $e$ of $\Gamma$ and $e^{\prime} := \theta(e \setminus \Gamma_{\operatorname{par}, \infty})$, the slope of $f_i$ on $e$ is that of the $i$th coordinate function $X_i$ of $\boldsymbol{R}^n$ on $e^{\prime}$ multiplied by $m(e^{\prime})$ with the lattice length.
Since $\Sigma$ is $\boldsymbol{R}$-rational, each $f_i$ induces a rational function on $\Gamma_{\operatorname{par}}$ and for any point $x$ of $\Gamma_{\operatorname{par}} \setminus \Gamma_{\operatorname{par}, \infty}$ and the distinct points $y_1, \ldots, y_m$ that have the same sufficiently small distance from $x$,
\begin{align*}
\sum_{j = 1}^m (f_i(y_j) - f_i(x)) &= \sum_{j = 1}^m (X_i(\theta(y_j)) - X_i(\theta(x)))\\
&= \sum_{j = 1}^m m(\theta(e_j \setminus \Gamma_{\operatorname{par}, \infty})) (X_i(z_j) - X_i(\theta(x)) = 0
\end{align*}
hold.
Here $e_j$ is the one-dimensional cell of $\Sigma$ containing $\theta(y_j)$ and $z_j$ is the internal division point that divides the segment $\overline{\theta(y_j) \theta(x)}$ in the ratio $m(e_i \setminus \Gamma_{\operatorname{par}, \infty}) - 1 : 1$.
Note that the last equality follows from the fact that the vectors from $\theta(x)$ to $z_1, \ldots, z_m$ have the same lattice length and $\Sigma$ is balanced.
\end{proof}

\begin{rem}
    \label{rem4-8-1}
\upshape{
Combining Remark~\ref{rem4-2-2} and Propositions~\ref{prop4-5-1}, \ref{prop4-8-4}, \ref{prop4-8-5}, as in the setting of Remark~\ref{rem4-2-2}, if $f_1, \ldots, f_n$ are harmonic at every point other than points at infinity and the map $\theta$ is injective, then the $\boldsymbol{T}$-algebra homomorphism $\operatorname{Rat}(\Gamma^{\prime}) \to \operatorname{Rat}(\Gamma)$ (resp.~$\operatorname{Rat}(\Gamma^{\prime}_{\operatorname{par}}) \to \operatorname{Rat}(\Gamma_{\operatorname{par}})$) is a weight of $\Gamma^{\prime}$ (resp.~$\Gamma^{\prime}_{\operatorname{par}}$) and the weighted realization of $\Gamma^{\prime}_{\operatorname{par}}$ by $f_1, \ldots, f_n$ preserves the weight $\operatorname{Rat}(\Gamma^{\prime}_{\operatorname{par}}) \hookrightarrow \operatorname{Rat}(\Gamma_{\operatorname{par}})$ and the parallelity of rays.
}
\end{rem}

\begin{dfn}
    \label{dfn4-8-3}
\upshape{
Let $\Gamma_{\operatorname{par}}$ and $\Gamma^{\prime}_{\operatorname{par}}$ be tropical curves with parallel rays, respectively.
Let $\psi : \operatorname{Rat}(\Gamma^{\prime}_{\operatorname{par}}) \hookrightarrow \operatorname{Rat}(\Gamma_{\operatorname{par}})$ be a weight of $\Gamma^{\prime}_{\operatorname{par}}$.
Then $\Gamma^{\prime}_{\operatorname{par}}$ (or $\operatorname{Rat}(\Gamma^{\prime}_{\operatorname{par}})$) has a \textit{realization balanced with respect to $\psi$} if there exists a finite generating set $\{ f^{\prime}_1, \ldots, f^{\prime}_n \} \subset \operatorname{Rat}(\Gamma^{\prime}_{\operatorname{par}}) \setminus \{ -\infty \}$ of $\operatorname{Rat}(\Gamma^{\prime}_{\operatorname{par}})$ as a semifield over $\boldsymbol{T}$ such that all $\psi(f^{\prime}_1), \ldots, \psi(f^{\prime}_n)$ are harmonic at every point of $\Gamma_{\operatorname{par}} \setminus \Gamma_{\operatorname{par}, \infty}$.
For such $\{ f^{\prime}_1, \ldots, f^{\prime}_n \}$, the image of $\theta : \Gamma_{\operatorname{par}} \setminus \Gamma_{\operatorname{par}, \infty} \to \boldsymbol{R}^n; x \mapsto (\psi(f^{\prime}_1)(x), \ldots, \psi(f^{\prime}_n)(x))$ is called a \textit{balanced weighted realization of $\Gamma^{\prime}_{\operatorname{par}}$ of degree $\operatorname{deg}(R)$}, where $R$ is the $\boldsymbol{T}$-module generated by $\psi(f^{\prime}_1), \ldots, \psi(f^{\prime}_n)$, which coincides with the image $\psi(R^{\prime})$ of the $\boldsymbol{T}$-module $R^{\prime}$ generated by $f^{\prime}_1, \ldots, f^{\prime}_n$ by $\psi$.
}
\end{dfn}

By Proposition~\ref{prop4-8-2} and \cite[Proposition~3.3.10 and its proof]{Maclagan=Sturmfels}, the following holds:

\begin{thm}
    \label{thm4-8-1}
In the setting of Definition~\ref{dfn4-8-3}, assume that $\theta$ gives a balanced weighted realization of $\Gamma^{\prime}_{\operatorname{par}}$.
If $n = 2$ and $\Gamma^{\prime}_{\operatorname{par}}$ is not a point, then there exists a tropical Laurent polynomial $F \in \boldsymbol{T}[\boldsymbol{X}^{\pm}]_2 \setminus \{ -\infty \}$ whose tropical hypersurface $\boldsymbol{V}(F)$ coincides with $\operatorname{Im}(\theta)$ respecting the weight $\psi$ and the corresponding Newton polygon is unique up to translations.
When $F$ is taken as a tropical polynomial of minimum degree, the degree of $F$ (as a tropical polynomial) coincides with that of $R$.
\end{thm}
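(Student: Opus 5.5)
We plan to reduce the statement to the classical structure theorem for balanced weighted one-dimensional polyhedral complexes in $\boldsymbol{R}^2$ (\cite[Proposition~3.3.10 and its proof]{Maclagan=Sturmfels}) and then compare degrees using Proposition~\ref{prop4-8-2}.

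\emph{Step 1: $\operatorname{Im}(\theta)$ is a balanced weighted complex.} By Definition~\ref{dfn4-8-3}, the functions $\psi(f^{\prime}_1), \psi(f^{\prime}_2)$ are harmonic at every finite point of $\Gamma_{\operatorname{par}}$. Since $f^{\prime}_1, f^{\prime}_2$ generate $\operatorname{Rat}(\Gamma^{\prime}_{\operatorname{par}})$ and $\psi$ is a weight (the morphism $\varphi$ is bijective), the map $\theta$ equals the realization map of $\Gamma^{\prime}_{\operatorname{par}}$ composed with $\varphi$. This realization map is injective by Proposition~\ref{prop4-2-2}, so $\theta$ is injective. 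Proposition~\ref{prop4-8-4} then gives $\operatorname{Im}(\theta)$ the structure of a balanced one-dimensional $\boldsymbol{R}$-rational polyhedral complex. Its multiplicity on the image of an edge is the gcd of the slopes of $\psi(f^{\prime}_1), \psi(f^{\prime}_2)$ there. By Proposition~\ref{prop4-5-1}, this gcd is the weight of $\psi$ on that edge, so the multiplicities respect $\psi$. Since $\Gamma^{\prime}_{\operatorname{par}}$ is not a point, the complex is genuinely one-dimensional. It is also rational and connected.

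\emph{Step 2: existence and uniqueness of $F$.} We apply \cite[Proposition~3.3.10]{Maclagan=Sturmfels}. A balanced rational weighted one-dimensional complex in $\boldsymbol{R}^2$ is $\boldsymbol{V}(F)$ for some tropical Laurent polynomial $F$ with the same weights. The dual subdivision of such a complex is determined up to translation: the edges of the Newton polygon are the primitive outward directions of the rays rotated by $90^\circ$, with lengths given by the total weights. Hence the Newton polygon is unique up to translation. This gives the first two claims.

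\emph{Step 3: the degree.} This is the main obstacle. Take $F$ of minimal degree, i.e.\ the Newton polygon translated into $\boldsymbol{Z}_{\ge 0}^2$ touching both axes. Its degree is then the lattice width of the polygon in the direction $(1,1)$. Equivalently, the degree is the sum of weighted ray contributions along the directions $(-1,0)$, $(0,-1)$ and $(1,1)$. More precisely, it is the sum over rays of weight times $\max\{-v_1, -v_2, 0\}$-type quantities, where $\boldsymbol{v}=(v_1,v_2)$ is the primitive direction. On the other side, by Proposition~\ref{prop4-8-2} and the remark after it, the degree of $R$ equals the sum of pole degrees of $\psi(f^{\prime}_1)\oplus\psi(f^{\prime}_2)$. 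The two functions are harmonic at finite points, so all these poles lie at points at infinity. At the point at infinity of a ray with direction $\boldsymbol{v}$ and weight $m$, the slopes toward infinity are $m v_1$ and $m v_2$. The pole order there is $m\max\{v_1,v_2\}$ when this is positive. Summing over all rays and using the balancing identity $\sum m\boldsymbol{v}=\boldsymbol{0}$, we must match this total with the lattice width of the Newton polygon in direction $(1,1)$. This identity is exactly where we expect the care to be needed: one has to read off the width edge by edge from the dual polygon, and relate a maximum of the two coordinates to the corresponding edge lengths. We would verify it by writing the boundary of the Newton polygon as a sum of rotated weighted ray vectors.
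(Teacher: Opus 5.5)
Your Steps 1 and 2 follow the same route as the paper, whose proof is only a citation of Proposition~\ref{prop4-8-2} and \cite[Proposition~3.3.10]{Maclagan=Sturmfels}. Your derivation of the injectivity of $\theta$ is fine: you factor $\theta$ through the bijection $\varphi$ and use Proposition~\ref{prop4-2-2}. The multiplicities are also handled correctly, via Propositions~\ref{prop4-8-4} and \ref{prop4-5-1}. Your pole count $\operatorname{deg}(R) = \sum_L m_L \max\{v_{L,1}, v_{L,2}, 0\}$ over the rays $L$ is correct as well.

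The gap is Step 3. The identity that carries the degree assertion is left unverified. Moreover, the two formulas you propose for the Newton-polygon side are wrong, so the identity as you set it up is false.
\begin{itemize}
\item The minimal degree is not the lattice width of $P$ in direction $(1,1)$. For $F = X_1 \oplus X_2$, the polygon $P = \operatorname{conv}\{(1,0),(0,1)\}$ has width $0$ in that direction, but $\operatorname{deg} F = 1$. The correct quantity is $\operatorname{deg} F = h_P(1,1) + h_P(-1,0) + h_P(0,-1) = \max_P(i+j) - \min_P i - \min_P j$, where $h_P(\boldsymbol{w}) = \max_{\boldsymbol{p} \in P} \langle \boldsymbol{w}, \boldsymbol{p} \rangle$. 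This is translation invariant because the three vectors sum to $\boldsymbol{0}$.
\item In the paper's max convention, the rays of $\boldsymbol{V}(F)$ point along the \emph{outer} normals of $P$. So the relevant ray contribution is $\max\{v_1, v_2, 0\}$, not $\max\{-v_1,-v_2,0\}$. The latter gives $2$ for the tropical line $\boldsymbol{V}(0 \oplus X_1 \oplus X_2)$, whose degree is $1$ and whose $\operatorname{deg}(R)$ (your own pole count) is also $1$.
\end{itemize}

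The missing computation is short. Let $E$ be an edge of $P$ with primitive outer normal $\boldsymbol{u}_E$. Its lattice length $\ell_E$ equals the total weight of the rays of $\operatorname{Im}(\theta)$ in direction $\boldsymbol{u}_E$, and its counterclockwise edge vector is $\ell_E(-u_{E,2}, u_{E,1})$. Traverse $\partial P$ counterclockwise.
\begin{itemize}
\item The edges with $\boldsymbol{u}_E$ in the cone spanned by $(0,-1)$ and $(1,1)$ form a chain from a vertex minimizing $j$ to a vertex $\boldsymbol{p}_B = (i_B, j_B)$ maximizing $i+j$. On these edges $\max\{0,u_{E,1},u_{E,2}\} = u_{E,1}$, and $\ell_E u_{E,1}$ is the vertical rise of $E$. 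So this chain contributes $j_B - \min_P j$.
\item The next chain consists of edges with normals between $(1,1)$ and $(-1,0)$. There $\ell_E \max\{0,u_{E,1},u_{E,2}\} = \ell_E u_{E,2}$, which is the leftward run of $E$. This chain contributes $i_B - \min_P i$.
\item The remaining edges have normals in the cone spanned by $(-1,0)$ and $(0,-1)$, and contribute $0$.
\end{itemize}
Hence $\sum_E \ell_E \max\{0, u_{E,1}, u_{E,2}\} = \operatorname{deg} F$; this is the mixed area of $P$ with the standard simplex. It equals $\sum_L m_L \max\{v_{L,1}, v_{L,2}, 0\} = \operatorname{deg}(R)$. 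The global balancing identity you invoke is not needed for this step.
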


\subsection{Intersection numbers}
    \label{subsection4.9}

Let $\Gamma_{\operatorname{par}, i}$ and $\Gamma^{\prime}_{\operatorname{par}, i}$ be tropical curves with parallel rays.
Assume that $\psi_i : \operatorname{Rat}(\Gamma^{\prime}_{\operatorname{par}, i}) \hookrightarrow \operatorname{Rat}(\Gamma_{\operatorname{par}, i})$ is a weight of $\Gamma^{\prime}_{\operatorname{par}, i}$ and $f^{\prime}_1, \ldots, f^{\prime}_n \in \operatorname{Rat}(\Gamma^{\prime}_{\operatorname{par}, 1}) \setminus \{ -\infty \}$ (resp.~$g^{\prime}_1, \ldots, g^{\prime}_m \in \operatorname{Rat}(\Gamma^{\prime}_{\operatorname{par}, 2}) \setminus \{ -\infty \}$) are generators of $\operatorname{Rat}(\Gamma^{\prime}_{\operatorname{par}, 1})$ (resp.~$\operatorname{Rat}(\Gamma^{\prime}_{\operatorname{par}, 2})$) as a semifield over $\boldsymbol{T}$.
When $n = m$, for the weighted realizations $\operatorname{Im}(\theta)$ and $\operatorname{Im}(\phi)$ with $\theta : \Gamma_{\operatorname{par}, 1} \setminus \Gamma_{\operatorname{par}, 1, \infty} \to \boldsymbol{R}^n; x \mapsto (\psi_1(f^{\prime}_1)(x), \ldots, \psi_1(f^{\prime}_n)(x))$ and $\phi : \Gamma_{\operatorname{par}, 2} \setminus \Gamma_{\operatorname{par}, 2, \infty} \to \boldsymbol{R}^n; y \mapsto (\psi_2(g^{\prime}_1)(y), \ldots, \psi_2(g^{\prime}_n)(y))$, we can consider these intersections, i.e., points of $\operatorname{Im}(\theta) \cap \operatorname{Im}(\phi)$.
In this subsection, we consider the case of $n = 2$.

\begin{dfn}[{cf.~\cite[Definitions~3.6.5 and 3.6.11]{Maclagan=Sturmfels}}]
    \label{dfn4-9-1}
\upshape{
In the above setting, for a point $z$ of $\boldsymbol{R}^2$, the weighted realizations $\operatorname{Im}(\theta)$ and $\operatorname{Im}(\phi)$ \textit{intersect transversally at $x$} if

$(1)$ $z$ is a point of the intersection $\operatorname{Im}(\theta) \cap \operatorname{Im}(\phi)$,

$(2)$ both $x := \theta^{-1}(z)$ and $y := \phi^{-1}(z)$ are two-valent points,

$(3)$ $x$ (resp.~$y$) is not a zero or a pole of either $\psi_1(f^{\prime}_1)$ or $\psi_1(f^{\prime}_2)$ (resp.~$\psi_2(g^{\prime}_1)$ or $\psi_2(g^{\prime}_2)$), and

$(4)$ the integer vectors $\boldsymbol{d}_1 := (\operatorname{sl}_{x}(\psi_1(f^{\prime}_1)), \operatorname{sl}_{x}(\psi_1(f^{\prime}_2)))$ and $\boldsymbol{d}_2 := (\operatorname{sl}_{y}(\psi_2(g^{\prime}_1)), \operatorname{sl}_{y}(\psi_2(g^{\prime}_2)))$, and $\boldsymbol{d}_1$ and $-\boldsymbol{d}_2$, are not parallel, respectively, where $\operatorname{sl}_{x}(\psi_1(f^{\prime}_i))$ (resp.~$\operatorname{sl}_{y}(\psi_2(g^{\prime}_i))$) denotes the outgoing slope of $\psi_1(f^{\prime}_i)$ (resp.~$\psi_2(g^{\prime}_i)$) at $x$ (resp.~$y$) in any direction.

When $\operatorname{Im}(\theta)$ and $\operatorname{Im}(\phi)$ intersect transversally at $z$, the absolute value of the determinant of the matrix $(\t \boldsymbol{d}_1 \t \boldsymbol{d}_2)$ is called the \textit{intersection number with multiplicities} of $\operatorname{Im}(\theta)$ and $\operatorname{Im}(\phi)$ at $z$, where $\t \boldsymbol{d}_i$ denotes the transpose of the vector $\boldsymbol{d}_i$.
}
\end{dfn}

Note that in $(4)$ of Definition~\ref{dfn4-9-1}, there are two choices of outgoing directions at $x$ (resp.~$y$) by $(2)$, but by $(3)$, the integer vector $\boldsymbol{d}_1$ (resp.~$\boldsymbol{d}_2$) is unique up to sign, and hence $(4)$ is independent of the choice of these directions.

When both $\operatorname{Im}(\theta)$ and $\operatorname{Im}(\phi)$ are balanced, by Theorem~\ref{thm4-8-1}, there are tropical polynomials $F_1, F_2 \in \boldsymbol{T}[\boldsymbol{X}]_2$ such that $\boldsymbol{V}(F_1) = \operatorname{Im}(\theta)$ and $\boldsymbol{V}(F_2) = \operatorname{Im}(\phi)$ respecting the weights $\psi_1$ and $\psi_2$ and the degrees, respectively.
Then these intersection numbers coincide with the classical intersection numbers for $\boldsymbol{V}(F_1)$ and $\boldsymbol{V}(F_2)$.
That is, for a point $z$ of $\boldsymbol{R}^2$, $\operatorname{Im}(\theta)$ and $\operatorname{Im}(\phi)$ intersect transversally at $z$ if and only if so do $\boldsymbol{V}(F_1)$ and $\boldsymbol{V}(F_2)$, and then the intersection number with multiplicities of $\operatorname{Im}(\theta)$ and $\operatorname{Im}(\phi)$ is equal to that of $\boldsymbol{V}(F_1)$ and $\boldsymbol{V}(F_2)$.

The following is a consequence of the classical intersection theory for tropical varieties:

\begin{prop}
    \label{prop4-9-1}
In the above setting, if $\operatorname{Im}(\theta)$ and $\operatorname{Im}(\phi)$ intersect transversally at every point of $\operatorname{Im}(\theta) \cap \operatorname{Im}(\phi)$, then the sum of all intersection numbers with multiplicities is at most the product of the degrees of the realizations $\operatorname{Im}(\theta)$ and $\operatorname{Im}(\phi)$.
\end{prop}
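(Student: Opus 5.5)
The plan is to reduce the statement to the tropical Bézout theorem for plane tropical curves, using Theorem~\ref{thm4-8-1} and the dictionary set up in this subsection. The statement presumes that both $\operatorname{Im}(\theta)$ and $\operatorname{Im}(\phi)$ are balanced weighted realizations, so that their degrees are defined. Then Theorem~\ref{thm4-8-1} gives tropical polynomials $F_1, F_2 \in \boldsymbol{T}[\boldsymbol{X}]_2$ of minimum degree with the following properties: $\boldsymbol{V}(F_1) = \operatorname{Im}(\theta)$ and $\boldsymbol{V}(F_2) = \operatorname{Im}(\phi)$ as weighted sets, respecting the weights $\psi_1$ and $\psi_2$; and $\operatorname{deg}(F_1)$ and $\operatorname{deg}(F_2)$ equal the degrees of the $\boldsymbol{T}$-modules $R_1$ and $R_2$ generated by $\psi_1(f^{\prime}_1), \psi_1(f^{\prime}_2)$ and by $\psi_2(g^{\prime}_1), \psi_2(g^{\prime}_2)$, respectively.

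Next I would verify the local comparison stated just before the proposition. Fix $z \in \operatorname{Im}(\theta) \cap \operatorname{Im}(\phi)$ and put $x = \theta^{-1}(z)$, which exists uniquely by injectivity. By Proposition~\ref{prop4-5-1} and the computation in the proof of Proposition~\ref{prop4-8-4}, the vector $\boldsymbol{d}_1$ equals $m\boldsymbol{v}$, where $\boldsymbol{v}$ is the primitive direction of the edge of $\boldsymbol{V}(F_1)$ through $z$ and $m$ is its multiplicity. Conditions $(2)$ and $(3)$ of Definition~\ref{dfn4-9-1} say exactly that $z$ lies in the relative interior of an edge of $\boldsymbol{V}(F_1)$, and likewise for $\boldsymbol{V}(F_2)$. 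Condition $(4)$ says the two edges are not parallel. So the intersection is transversal in the sense of \cite[Definition~3.6.5]{Maclagan=Sturmfels}, and $|\det(\t\boldsymbol{d}_1\,\t\boldsymbol{d}_2)| = m_1m_2|\det(\boldsymbol{v}_1,\boldsymbol{v}_2)|$ is the classical tropical intersection multiplicity.

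Finally, I would apply tropical Bézout: for transversally intersecting plane tropical curves, the sum of the local multiplicities equals the mixed area of the Newton polygons, \cite[Theorem~4.6.9 / Section~3.6]{Maclagan=Sturmfels}. The Newton polygon of $F_i$ is contained in the dilated standard simplex $\operatorname{deg}(F_i)\Delta$. Mixed area is monotone under inclusion of polygons, so the mixed area is at most $\operatorname{MV}(\operatorname{deg}(F_1)\Delta, \operatorname{deg}(F_2)\Delta) = \operatorname{deg}(F_1)\operatorname{deg}(F_2) = \operatorname{deg}(R_1)\operatorname{deg}(R_2)$. This gives the claimed inequality; equality would hold when the Newton polygons are full simplices, which is why the statement only says ``at most''.

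The main obstacle is the bookkeeping in the second step. One must confirm that "respecting the weight" in Theorem~\ref{thm4-8-1} really identifies the weights of $\boldsymbol{V}(F_i)$ with the gcd of slopes given by Proposition~\ref{prop4-5-1}. One must also check that Definition~\ref{dfn4-9-1}$(3)$ excludes vertices of $\boldsymbol{V}(F_i)$. This uses harmonicity: a two-valent point where neither coordinate function has a zero or pole is one where the slopes are opposite, so the image does not bend there. Once this is in place, the rest is the cited classical theory.
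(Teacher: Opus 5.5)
Your proposal is correct and follows the same route as the paper. The paper uses Theorem~\ref{thm4-8-1} to replace both balanced weighted realizations by tropical plane curves $\boldsymbol{V}(F_1)$ and $\boldsymbol{V}(F_2)$, with matching weights and with degrees equal to those of the $\boldsymbol{T}$-modules. It identifies the intersection numbers of Definition~\ref{dfn4-9-1} with the classical transverse multiplicities and then appeals to classical tropical B\'ezout/Bernstein. Your mixed-area monotonicity step, using $\operatorname{Newt}(F_i)\subset\operatorname{deg}(F_i)\Delta$, just makes explicit why the conclusion is an inequality.

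The one case you should add is the degenerate one excluded by Theorem~\ref{thm4-8-1}, where some $\Gamma^{\prime}_{\operatorname{par},i}$ is a point. There no intersection point can be transversal, since the valence is $0\neq 2$. So the intersection is empty, the sum is $0$, and the bound holds trivially.
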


More precisely, for example, see \cite[Subsections~3.4, 3.6, 4.6, 6.7]{Maclagan=Sturmfels} and \cite{Jensen=Yu}.

\appendix
\section{}
    \label{appendixA}

This appendix is devoted to the study on $R_n = ((\boldsymbol{R} \times \boldsymbol{Z}^n) \cup \{ -\infty \}, \boxplus, \boxdot)$ in Subsection~\ref{subsection4.6}.
Recall here that $R_0$ is defined as $\boldsymbol{T}$.

\begin{prop}
    \label{prop-appendix1}
The semifield $R_n$ over $\boldsymbol{T}$ is generated by one element when $n = 1, 2$ and by two elements when $n \ge 3$.
For each $n$, this is minimum among the numbers of elements of generating sets of $R_n$ as a semifield over $\boldsymbol{T}$.
\end{prop}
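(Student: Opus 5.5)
The plan is to reduce everything to the $\boldsymbol{Z}^n$-components. I would show that, after normalising real parts, the subsemifield of $R_n$ generated by some elements is $(\boldsymbol{R} \times L) \cup \{ -\infty \}$. Here $L \subset \boldsymbol{Z}^n$ is the sub-lattice-ordered group generated by the $\boldsymbol{Z}^n$-components of the generators, i.e. the smallest subset closed under $+$, $-$ and coordinatewise $\max$. The counts in Proposition~\ref{prop-appendix1} then become statements about lattice-ordered subgroups of $\boldsymbol{Z}^n$.

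\emph{Step 1 (normalisation).} Let $s = (a, \boldsymbol{v})$ be a generator. Since $(-a, \boldsymbol{0}) \in \boldsymbol{T} \subset R_n$, we have $s \boxdot (-a, \boldsymbol{0}) = (0, \boldsymbol{v})$, and this element generates the same subsemifield over $\boldsymbol{T}$ as $s$. So I may assume all generators have real part $0$.

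\emph{Step 2 (description of the subsemifield).} Let $L$ be the smallest subset of $\boldsymbol{Z}^n$ containing $\boldsymbol{0}$ and the $\boldsymbol{Z}^n$-components of the generators, closed under $+$, $-$ and coordinatewise $\max$. I would check that $S_L := (\boldsymbol{R} \times L) \cup \{ -\infty \}$ is a subsemifield over $\boldsymbol{T}$: under $\boxplus$ one either keeps one summand or takes the coordinatewise max of the $\boldsymbol{Z}^n$-parts, and $\boxdot$ and inversion act on $\boldsymbol{Z}^n$-parts by $+$ and $-$. So the generated subsemifield lies in $S_L$. Conversely, from $(0, \boldsymbol{u}), (0, \boldsymbol{w})$ the operations produce $(0, \boldsymbol{u} + \boldsymbol{w})$, $(0, -\boldsymbol{u})$ and $(0, \boldsymbol{u}) \boxplus (0, \boldsymbol{w}) = (0, \max(\boldsymbol{u}, \boldsymbol{w}))$. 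Multiplying by constants $(c, \boldsymbol{0})$ then gives all of $S_L$. Hence $R_n$ is generated by the given elements if and only if $L = \boldsymbol{Z}^n$.

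\emph{Step 3 (upper bounds).} For $n = 1$, take $\boldsymbol{v} = (1)$. For $n = 2$, take $\boldsymbol{v} = (1, -1)$: then $\max(\boldsymbol{v}, \boldsymbol{0}) = (1, 0)$ and $\boldsymbol{v} - (1, 0) = (0, -1)$, so $L = \boldsymbol{Z}^2$. For $n \ge 3$, take $\boldsymbol{e} = (1, \ldots, 1)$ and $\boldsymbol{u} = (1, 2, \ldots, n)$. For each $k$, the vector $\max(\boldsymbol{u} - k\boldsymbol{e}, \boldsymbol{0}) - \max(\boldsymbol{u} - (k+1)\boldsymbol{e}, \boldsymbol{0})$ is the indicator vector of $\{ k+1, \ldots, n \}$. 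Taking differences of consecutive indicator vectors yields every standard basis vector, so $L = \boldsymbol{Z}^n$.

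\emph{Step 4 (lower bounds).} For $n \ge 1$ the empty set generates only $\boldsymbol{T} \neq R_n$, so at least one generator is needed. For $n \ge 3$, I must show that a single $\boldsymbol{v}$ never gives $L = \boldsymbol{Z}^n$. I claim $L \subset \boldsymbol{Z}\boldsymbol{v}^+ + \boldsymbol{Z}\boldsymbol{v}^-$, where $\boldsymbol{v}^{\pm} := \max(\pm \boldsymbol{v}, \boldsymbol{0})$. This set contains $\boldsymbol{0}$ and $\boldsymbol{v}$ and is a group. Since $\boldsymbol{v}^+$ and $\boldsymbol{v}^-$ have disjoint supports, $p\boldsymbol{v}^+ - q\boldsymbol{v}^-$ has coordinates $p v_i$ where $v_i > 0$, $q v_i$ where $v_i < 0$, and $0$ elsewhere. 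Hence
\begin{align*}
\max(p\boldsymbol{v}^+ - q\boldsymbol{v}^-, \, p^{\prime}\boldsymbol{v}^+ - q^{\prime}\boldsymbol{v}^-) = \max(p, p^{\prime})\boldsymbol{v}^+ - \min(q, q^{\prime})\boldsymbol{v}^-,
\end{align*}
so the set is closed under $\max$. Its rank is at most $2 < n$, so $L \neq \boldsymbol{Z}^n$, and by Step 2 two generators are necessary.

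The main obstacle is Step 2, namely making the equality between the generated subsemifield and $S_L$ rigorous. The key point is that $\boxplus$ of elements with different real parts contributes nothing new, while the normalisation of Step 1 keeps every real part reachable through constants. Once Step 2 is in place, the rest is elementary lattice-ordered group arithmetic in $\boldsymbol{Z}^n$.
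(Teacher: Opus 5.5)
Your proof is correct. It reaches the result by a somewhat different, more structural route than the paper's.

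\textbf{How the paper argues.} The paper works element by element.

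\emph{Upper bound.} It exhibits $(0,1)$, $(0,(1,-1))$ and, for $n \ge 3$, the pair $(0,(1,0,1,\ldots,1))$, $(0,(0,1,1,2,\ldots,n-2))$. It then extracts the standard basis vectors one at a time by iterated $\boxdot$-combinations followed by $\boxplus (0,\boldsymbol{0})$.

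\emph{Lower bound.} It reduces to $n=3$ via the forgetful surjection $R_{3+m} \twoheadrightarrow R_3$. It argues that a single generator of $R_3$ must have all $\boldsymbol{Z}^3$-entries nonzero, and claims without proof that they must be $\pm 1$. It then asserts that none of the remaining sign patterns generates.

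\textbf{How your proof differs.} Your Step 2 replaces this with a structural description. The generated subsemifield is $(\boldsymbol{R} \times L) \cup \{-\infty\}$, where $L$ is the smallest subset of $\boldsymbol{Z}^n$ containing $\boldsymbol{0}$ and the exponent vectors and closed under $+$, $-$ and coordinatewise $\max$. This turns the whole question into arithmetic in a lattice-ordered subgroup of $\boldsymbol{Z}^n$.

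Your invariant $L \subset \boldsymbol{Z}\boldsymbol{v}^{+} + \boldsymbol{Z}\boldsymbol{v}^{-}$ then gives a complete proof that one element never suffices when $n \ge 3$. It works uniformly in $n$, with no reduction to $n = 3$. It supplies exactly the steps the paper leaves as assertions: the ``$\pm 1$'' claim and the ``cannot generate'' claim.

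It also explains why the threshold sits at $n = 3$: a single element only ever produces a subgroup of rank at most $2$. Your generating pair $(1,\ldots,1)$, $(1,2,\ldots,n)$, with the truncation-difference trick, is also easier to verify than the paper's iterative extraction.

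\textbf{Trade-off.} The paper's approach is more hands-on and avoids the small verification in Step 2. Yours costs that verification but buys rigour and a conceptual explanation of the counts.
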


\begin{proof}
In the case $n = 1$, it is clear, that is, $(0, 1) \in \boldsymbol{R} \times \boldsymbol{Z}$ is a generator of $R_n$ and it is minimum.

For $n = 2$, the element $(0, (1, -1)) \in \boldsymbol{R} \times \boldsymbol{Z}^2$ is a minimum generator of $R_2$.
In fact, since clearly $(0, (1, 0))$ and $(0, (0, 1))$ generate $R_2$, and $(0, (1, -1)) \boxplus (0, (0, 0)) = (0, (1, 0))$ and $(0, (1, -1))^{\boxdot (-1)} \boxplus (0, (0, 0)) = (0, (-1, 1)) \boxplus (0, (0, 0)) = (0, (0, 1))$ hold, it is true.

Let $n \ge 3$.
In this case, the elements $\boldsymbol{v}_1 := (0, (1, 0, 1, \ldots, 1))$ and $\boldsymbol{v}_2 := (0, (0, 1, 1, 2, \ldots, n - 2))$ are generators of $R_n$.
It follows from the fact that 
\begin{align*}
\boldsymbol{e}_1^{\prime} := (0, (1, 0, \ldots, 0)), \boldsymbol{e}_2^{\prime} := (0, (0, 1, 0, \ldots, 0)), \ldots, \boldsymbol{e}_n^{\prime} := (0, (0, \ldots, 0, 1))    
\end{align*}
generate $R_n$ and 
\begin{align*}
&\boldsymbol{v}_1 \boxdot \boldsymbol{v}_2^{\boxdot (-1)} \boxplus (0, (0, \ldots, 0)) = \boldsymbol{e}_1^{\prime},\\
&\boldsymbol{v}_2 \boxdot \left( \boldsymbol{v}_1 \boxdot (\boldsymbol{e}_1^{\prime})^{\boxdot (-1)} \right)^{\boxdot (-(n - 2))} \boxplus (0, (0, \ldots, 0)) = \boldsymbol{e}_2^{\prime},\\
& \left( \boldsymbol{v}_1 \boxdot (\boldsymbol{e}_1^{\prime})^{\boxdot (-1)} \right)^{\boxdot 2} \boxdot \left( \boldsymbol{v}_2 \boxdot (\boldsymbol{e}_2^{\prime})^{\boxdot (-1)} \right)^{\boxdot (-1)} \boxplus (0, (0, \ldots, 0)) = \boldsymbol{e}_3^{\prime}
\end{align*}
and by repeating this operation, the semifield over $\boldsymbol{T}$ generated by $\boldsymbol{v}_1$ and $\boldsymbol{v}_2$ contains $\boldsymbol{e}_1^{\prime}, \ldots, \boldsymbol{e}_n^{\prime}$.

The remaining is to show that two is the minimum number.
Since there exist surjective $\boldsymbol{T}$-algebra homomorphisms $R_{3 + m} \twoheadrightarrow R_3$ for any positive integer $m$ such as the ignoring maps, it suffices to see the case $n = 3$. 
Assume that $\boldsymbol{w} \in \boldsymbol{R} \times \boldsymbol{Z}^3$ generates $R_3$ as a semifield over $\boldsymbol{T}$.
If one of the $\boldsymbol{Z}^3$-components of $\boldsymbol{w}$ is zero, then, because all elements of the semifield over $\boldsymbol{T}$ generated by $\boldsymbol{w}$ have zero as that component, $\boldsymbol{w}$ cannot generate $R_3$ with operations $\boxplus$ and $\boxdot$.
Hence, all $\boldsymbol{Z}^3$-components of $\boldsymbol{w}$ are nonzero.
More precisely, these must be one or minus one.
So all possibilities are $(1, 1, 1), (1, 1, -1), (1, -1, -1), (-1, -1, -1)$ and these permutations.
But any element of $R_3$ with one of these elements as the $\boldsymbol{Z}^3$-component cannot generate $R_3$.
In conclusion, there exists no such $\boldsymbol{w}$.
\end{proof}

\begin{prop}
    \label{prop-appendix2}
For any $n \in \boldsymbol{Z}_{\ge 1}$, the Krull dimension of $R_n$ is two.
\end{prop}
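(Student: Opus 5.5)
We prove the two inequalities $\operatorname{dim}R_n \ge 2$ and $\operatorname{dim}R_n \le 2$ separately. Throughout, a prime congruence $P$ on a semifield over $\boldsymbol{T}$ is treated through its quotient $R_n/P$. The working assumption (from the standard theory of prime congruences, e.g.\ Joó--Mincheva) is that this quotient is a totally ordered semifield: its nonzero part is a totally ordered abelian group $G_P$, with $\boxplus$ acting as $\operatorname{max}$.

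\textbf{Lower bound.} We exhibit a chain $P_0 \subsetneq P_1 \subsetneq P_2$.
\begin{itemize}
\item Let $P_0$ be the kernel of the forgetting map $R_n \twoheadrightarrow R_1$ from Subsection~\ref{subsection4.6}, keeping only the first $\boldsymbol{Z}$-component. $R_1$ is totally ordered by the lexicographic order on $\boldsymbol{R} \times \boldsymbol{Z}$, and $\boxplus$ is exactly the lexicographic maximum there. Moreover $R_1$ is multiplicatively cancellative. So its diagonal congruence is prime, and hence $P_0$ is prime.
\item Let $P_1$ be the kernel of the projection $p : R_n \twoheadrightarrow \boldsymbol{T}$.
\item Let $P_2$ be the kernel of $R_n \twoheadrightarrow \boldsymbol{T} \twoheadrightarrow \boldsymbol{B}$. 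This identifies all non-$(-\infty)$ elements.
\end{itemize}
Each of $P_1, P_2$ is the kernel of a map onto a totally ordered cancellative semifield, so each is prime. The inclusions are strict. For $P_0 \subsetneq P_1$, the pair $((0,\boldsymbol{e}_1),(0,\boldsymbol{0}))$ lies in $P_1$ but not in $P_0$. For $P_1 \subsetneq P_2$, the pair $((1,\boldsymbol{0}),(0,\boldsymbol{0}))$ lies in $P_2$ but not in $P_1$. This gives $\operatorname{dim}R_n \ge 2$.

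\textbf{Upper bound: structure of a single quotient.} Fix a prime $P$ and write $\bar x$ for images in $G_P$. We establish three facts.
\begin{itemize}
\item \emph{The $\boldsymbol{Z}^n$-part is infinitesimal.} For $\varepsilon>0$ we have $(0,\boldsymbol{v}) \boxplus (\varepsilon,\boldsymbol{0}) = (\varepsilon,\boldsymbol{0})$ in $R_n$. Hence $\overline{(0,\boldsymbol{v})} \le \bar\varepsilon$ for every $\varepsilon>0$, and symmetrically $\overline{(0,\boldsymbol{v})} \ge \overline{-\varepsilon}$.
\item \emph{Each basis direction is nonnegative.} Put $u_i := \overline{(0,\boldsymbol{e}_i)}$. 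From $(0,\boldsymbol{e}_i) \boxplus (0,\boldsymbol{0}) = (0,\boldsymbol{e}_i)$ we get $u_i = \operatorname{max}(u_i,0)$, so $u_i \ge 0$.
\item \emph{At most one direction survives.} For $i \ne j$, the componentwise maximum gives $(0,\boldsymbol{e}_i) \boxplus (0,\boldsymbol{e}_j) = (0,\boldsymbol{e}_i+\boldsymbol{e}_j)$. Passing to $G_P$, this reads $u_i + u_j = \operatorname{max}(u_i,u_j)$, which forces $\operatorname{min}(u_i,u_j)=0$. Since all $u_i \ge 0$, at most one $u_i$ is nonzero.
\end{itemize}
Consequently $G_P$ is generated by the image $\bar{\boldsymbol{R}}$ of $\boldsymbol{R}$ together with a cyclic subgroup $H$ of elements that are infinitesimal relative to $\bar{\boldsymbol{R}}$. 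Also, $\bar{\boldsymbol{R}}$ is either a copy of $\boldsymbol{R}$ or trivial, since $\boldsymbol{R}$ has no proper nontrivial convex subgroups.

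\textbf{Upper bound: counting the chain.} Primes $Q \supseteq P$ correspond to proper primes on $R_n/P$. These are classified, for a totally ordered semifield whose nonzero part is a group, by convex subgroups of $G_P$: collapse a convex subgroup, or collapse everything nonzero as in $\boldsymbol{B}$. By the structure just established, the convex subgroups of $G_P$ are among $\{0\}$, $H$ and $G_P$. Hence any chain of primes starting at $P$ has length at most two beyond $P$ when $\bar{\boldsymbol{R}} \ne 0$ and $H \ne 0$, and is correspondingly shorter otherwise. Applying this to the smallest member of an arbitrary chain gives $\operatorname{dim}R_n \le 2$.

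\textbf{Main obstacle.} The delicate step is this last correspondence between primes containing $P$ and convex subgroups of $G_P$. This is where I would lean on the Joó--Mincheva description of prime congruences of semifields, rather than reprove it. The relation $u_i+u_j=\operatorname{max}(u_i,u_j)$ is the key new input, and it is what prevents long chains coming from the rank-$n$ lattice.
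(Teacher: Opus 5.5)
Your proof is correct, and it reaches the upper bound by a different route than the paper.

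\textbf{The paper's route.} The paper never examines the primes of $R_n$ directly. It realizes $R_n$ as the localization $\operatorname{Rat}(\Gamma)_x$ at the center of a star with $n$ legs (Proposition~\ref{prop4-6-3}). So $R_n$ is a quotient of $\operatorname{Rat}(\Gamma)$, whose Krull dimension is two by \cite[Theorem~3.25]{JuAe=Nakajima}, and this gives the upper bound. For the lower bound it uses the surjection $R_n \twoheadrightarrow R_1$ together with $\dim R_1 = 2$, taken from the Jo\'o--Mincheva results on totally ordered cancellative $\boldsymbol{B}$-algebras.

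\textbf{Comparison.} Your lower bound is the same idea made explicit: your chain $P_0 \subsetneq P_1 \subsetneq P_2$ is the pull-back of the chain in $R_1$. Your upper bound, however, is intrinsic. The identity $(0,\boldsymbol{e}_i)\boxplus(0,\boldsymbol{e}_j)=(0,\boldsymbol{e}_i+\boldsymbol{e}_j)$ forces every prime quotient to be $\boldsymbol{B}$, $\boldsymbol{T}$, or $R_1$ through a single coordinate. This avoids the external theorem on $\operatorname{Rat}(\Gamma)$ and proves more. The spectrum of $R_n$ consists of exactly $n+2$ primes, and the $n$ minimal ones are the kernels of the forgetful maps $R_n \twoheadrightarrow R_1$. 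This also gives Proposition~\ref{prop-appendix3} at once. A surjection $R_n \twoheadrightarrow R_m$ pulls back the $m$ primes of $R_m$ with quotient $R_1$ to distinct primes of $R_n$ with quotient $R_1$, and there are only $n$ of these.

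\textbf{Details to write out.}
\begin{itemize}
\item When $\bar{\boldsymbol{R}} \cong \boldsymbol{R}$, you should note that $H \cap \bar{\boldsymbol{R}} = 0$, because elements of $H$ are infinitesimal and $\boldsymbol{R}$ is archimedean. Then $G_P \cong \boldsymbol{R} \times \boldsymbol{Z}$ lexicographically, and its convex subgroups really are $\{0\} \subsetneq H \subsetneq G_P$.
\item ``Collapse everything nonzero'' is just the case $C = G_P$, not an additional option. This is what makes your count of at most two steps above $P$ correct.
\item The correspondence you flag as the main obstacle is routine. A proper congruence on a semifield is determined by the class of $(0,\boldsymbol{0})$, which is a convex subgroup. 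Conversely, each convex subgroup yields a totally ordered cancellative quotient, hence a prime.
\end{itemize}
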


\begin{proof}
If $\Gamma$ is the tropical curve obtained from $n$-copies of a unit segment $[0, 1]$ by gluing them at all $0$s to be connected, then it has an $n$-valent point $x$.
Thus there exist surjective $\boldsymbol{T}$-algebra homomorphisms $\operatorname{Rat}(\Gamma) \twoheadrightarrow \operatorname{Rat}(\Gamma)_x \cong R_n$ and $R_n \twoheadrightarrow R_1$ and $R_1 \twoheadrightarrow R_0 = \boldsymbol{T}$.
Since $\operatorname{Rat}(\Gamma)$ has Krull dimension two by \cite[Theorem~3.25]{JuAe=Nakajima} and $\boldsymbol{T}$ has Krull dimension one, $R_n$ has Krull dimension one or two by \cite[Proposition~2.13]{Joo=Mincheva}.
As $R_1$ is a cancellative, i.e., for $f, g, h \in R_1$, if $f \boxdot g = f \boxdot h$, then $f = -\infty$ or $g = h$ holds, and totally ordered, i.e., for $f, g \in R_1$, one of $f \boxplus g = f$ or $f \boxplus g = g$ holds, $\boldsymbol{B}$-algebra by definition, \cite[Proposition~2.10]{Joo=Mincheva2} and \cite[Proposition~2.13]{Joo=Mincheva} guarantee that $R_1$ has Krull dimension two.
Thus, for any $n \ge 1$, the semifield $R_n$ over $\boldsymbol{T}$ has Krull dimension two.
\end{proof}

\begin{prop}
    \label{prop-appendix3}
For $n, m \ge 0$, if there exists a surjective $\boldsymbol{T}$-algebra homomorphism $\psi : R_n \twoheadrightarrow R_m$, then $n \ge m$ holds.
In particular, if $\psi$ is a $\boldsymbol{T}$-algebra isomorphism, then $n$ coincides with $m$.
\end{prop}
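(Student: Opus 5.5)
We need an invariant of $R_n$ that is preserved, or can only decrease, under surjective $\boldsymbol{T}$-algebra homomorphisms and that detects $n$. The natural choice is the structure of the idempotent-like elements "above" $0 = (0,\boldsymbol{0})$. Let $U(S) := \{ s \in S \,|\, s \oplus 0 = s \text{ and } (s^{\odot(-1)} \oplus 0)^{\odot(-1)} = \ldots \}$ be the set of elements whose value part equals that of $0$. Algebraically, set
\begin{align*}
N(S) := \{ s \in S \setminus \{0_S\} \,|\, \forall a \in \boldsymbol{R}_{>0},\ s \oplus a = a \text{ and } s^{\odot(-1)} \oplus a = a \}.
\end{align*}
In $R_n$, $N(R_n)$ is exactly $\{0\} \times \boldsymbol{Z}^n$, because $(b,\boldsymbol{i}) \boxplus (a,\boldsymbol{0}) = (a,\boldsymbol{0})$ for every $a>0$ iff $b \le 0$, and the inverse condition forces $b \ge 0$. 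So $N(R_n)$ is a group under $\boxdot$ isomorphic to $\boldsymbol{Z}^n$, and it is a lattice under $\boxplus$ (coordinatewise max).

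The first step is to check that a surjective $\boldsymbol{T}$-algebra homomorphism $\psi : R_n \twoheadrightarrow R_m$ maps $N(R_n)$ onto $N(R_m)$. The inclusion $\psi(N(R_n)) \subset N(R_m)$ is immediate, since the defining conditions are equations involving only constants of $\boldsymbol{T}$, and $\psi$ preserves nonzero elements by \cite[Lemma~2.2]{JuAe4}. For surjectivity onto $N(R_m)$, take $(0,\boldsymbol{j})$ and a preimage $(b,\boldsymbol{i})$. Then $\psi(b,\boldsymbol{i}) = \psi((b,\boldsymbol{0}) \boxdot (0,\boldsymbol{i})) = (b,\boldsymbol{0}) \boxdot \psi(0,\boldsymbol{i})$. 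Since $\psi(0,\boldsymbol{i}) \in N(R_m)$ has value $0$, comparing values gives $b = 0$. Hence $\psi$ restricts to a surjective group homomorphism $\boldsymbol{Z}^n \twoheadrightarrow \boldsymbol{Z}^m$ of the multiplicative groups.

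A surjective group homomorphism $\boldsymbol{Z}^n \to \boldsymbol{Z}^m$ forces $n \ge m$ by rank: tensor with $\boldsymbol{Q}$ to get a surjective linear map $\boldsymbol{Q}^n \to \boldsymbol{Q}^m$. This gives the first claim. For the isomorphism case, apply the first claim to both $\psi$ and $\psi^{-1}$ to get $n = m$.

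The only delicate point is the surjectivity of the restriction to $N$, specifically the value comparison. This requires that the value component is an honest invariant, namely that the projection $p : R_m \to \boldsymbol{T}$ satisfies $p \circ \psi = p$. I would verify this directly from the facts that $\psi$ fixes $\boldsymbol{T}$ and is monotone for the order $s \le t \iff s \oplus t = t$. In $R_m$, $s \le a$ for a constant $a$ means $p(s) < a$, or $p(s) = a$ with all slope components $\le 0$. Monotonicity then forces $p(\psi(s)) \le p(s)$, and applying the same argument to $s^{-1}$ gives the reverse inequality, so $p(\psi(s)) = p(s)$.
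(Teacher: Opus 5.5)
Your proof is correct, and its core is the paper's: for $n,m\ge 1$ the paper just asserts that $\psi$ induces a surjective homomorphism of free abelian groups $\boldsymbol{Z}^n\twoheadrightarrow\boldsymbol{Z}^m$. You justify that step by restricting $\psi$ to the subgroup $N(R_n)=\{0\}\times\boldsymbol{Z}^n$, which you characterize using only constants of $\boldsymbol{T}$.

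The genuine difference is the case $n=0$, $m\ge 1$. The paper treats it by comparing Krull dimensions, via Proposition~\ref{prop-appendix2} and \cite[Proposition~2.13]{Joo=Mincheva}. Your argument covers all $n,m\ge 0$ at once, because $N(R_0)=N(\boldsymbol{T})=\{0\}$ is trivial, so it needs no outside input. (For $n=0$ it is even more immediate: the only $\boldsymbol{T}$-algebra homomorphism $\boldsymbol{T}\to R_m$ is the structure map, whose image $\{(a,\boldsymbol{0})\}\cup\{-\infty\}$ is proper when $m\ge 1$.)

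Two cleanups. First, delete the abandoned, incomplete definition of $U(S)$. Second, your last paragraph is unnecessary. In the surjectivity step, the fact that $\psi(0,\boldsymbol{i})$ has value $0$ already follows from the inclusion $\psi(N(R_n))\subset N(R_m)$ you proved just before, so you never need $p\circ\psi=p$; your verification of it is nonetheless correct.

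There is also a shorter route to the induced surjection. $\psi$ sends nonzero elements to nonzero elements, and only $-\infty$ maps to $-\infty$. So $\psi$ restricts to a surjective group homomorphism $(\boldsymbol{R}\times\boldsymbol{Z}^n,\boxdot)\to(\boldsymbol{R}\times\boldsymbol{Z}^m,\boxdot)$ fixing $\boldsymbol{R}\times\{\boldsymbol{0}\}$ pointwise. It therefore induces a surjection on the quotients by that subgroup, which is presumably what the paper means by ``clear''.
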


\begin{proof}
When $n, m \ge 1$, since $\psi$ induces a surjective group homomorphism between free abelian groups $(\boldsymbol{Z}^n, \boxdot = +) \twoheadrightarrow (\boldsymbol{Z}^m, \boxdot = +)$, the statement is clear.
The case of $m = 0$ is also clear.
If $n = 0$ and $m \ge 1$, then Proposition~\ref{prop-appendix2} and \cite[Proposition~2.13]{Joo=Mincheva} give a contradiction.
\end{proof}

\end{document}